\documentclass[11pt,a4paper,reqno]{amsart}
\usepackage[utf8]{inputenc}
\usepackage[english]{babel}
\usepackage{mathtools}
\usepackage[pagebackref=true]{hyperref}
\usepackage{amsmath,amsthm,amssymb,amsfonts}
\usepackage{mathrsfs}
\usepackage{enumitem}
\usepackage{tikz}
\usepackage{tikz-cd}
\numberwithin{equation}{section}

\renewcommand*{\backref}[1]{}
\renewcommand*{\backrefalt}[4]{{\tiny%
    (\ifcase #1 Not cited.%
          \or Cited on p.~#2.%
          \else Cited on pp. #2.%
    \fi%
    )}}

\usepackage{graphicx}
\usepackage[left=3cm,right=3cm,top=3cm,bottom=3cm]{geometry} 

\usepackage[colorinlistoftodos,textsize=scriptsize,textwidth=0.75\marginparwidth]{todonotes}

\setcounter{tocdepth}{2}

\newcommand{\Z}{\mathbb{Z}}
\newcommand{\Q}{\mathbb{Q}}
\newcommand{\R}{\mathbb{R}}
\newcommand{\C}{\mathbb{C}}

\newcommand{\A}{\mathbb{A}}
\renewcommand{\H}{\mathbb{H}}
\renewcommand{\P}{\mathbb{P}}
\newcommand{\Qal}{\overline{\mathbb{Q}}}

\renewcommand{\AA}{\mathcal{A}}

\renewcommand{\c}{\mathcal{C}}

\renewcommand{\O}{\mathcal{O}}

\renewcommand{\a}{\alpha}
\renewcommand{\b}{\beta}
\newcommand{\g}{\gamma}
\renewcommand{\d}{\delta}
\newcommand{\f}{\varphi}
\newcommand{\eps}{\varepsilon}
\renewcommand{\l}{\lambda}
\newcommand{\s}{\sigma}
\newcommand{\m}{\mu}

\renewcommand{\r}{\rho}

\renewcommand{\i}{\mathbf{i}}

\newcommand{\pt}[1]{\left( #1 \right)}
\newcommand{\pq}[1]{\left[ #1 \right]}
\newcommand{\pg}[1]{\left\lbrace #1 \right\rbrace}

\newcommand{\abs}[1]{\left| #1 \right|}
\newcommand{\nm}[1]{\left\lVert #1 \right\rVert}
\newcommand{\nmi}[1]{\left\lVert #1 \right\rVert_{\infty}}

\renewcommand{\Im}{\mathrm{Im}}
\renewcommand{\Re}{\mathrm{Re}}

\newcommand{\RI}[1]{#1^{\dagger}}
\newcommand{\quotes}[1]{``#1''}

\DeclareMathOperator{\en}{End}
\renewcommand{\hom}{\mathrm{Hom}}
\DeclareMathOperator{\Gal}{Gal}

\DeclareMathOperator{\tr}{tr}
\DeclareMathOperator{\mat}{Mat}
\DeclareMathOperator{\rk}{rk}

\usepackage[dvipsnames]{xcolor} %https://en.wikibooks.org/wiki/LaTeX/Colors

\newtheorem{theorem}{Theorem}[section]
\newtheorem{proposition}[theorem]{Proposition}
\newtheorem{corollary}[theorem]{Corollary}

\newtheorem{lemma}[theorem]{Lemma}
\theoremstyle{remark}
\newtheorem{remark}[theorem]{Remark}
\theoremstyle{definition}
\newtheorem{example}[theorem]{Example}
\theoremstyle{definition}
\newtheorem{definition}[theorem]{Definition}

\author[Luca Ferrigno]{Luca Ferrigno}
\address{Laboratoire de Mathématiques Blaise Pascal, Université Clermont Auvergne, Campus des Cézeaux 3, place
Vasarely, 63178 Aubière, France}
\email{lucaferrigno.math@gmail.com}
\title[Unlikely intersections and canonical height bounds]{Unlikely intersections with CM abelian varieties in a family and explicit bounds for canonical heights under endomorphisms}

\subjclass[2020]{11G10, 11G15, 11G50, 11U09}

\keywords{Unlikely Intersections, Zilber-Pink conjecture, Abelian Varieties, Complex Multiplication, Néron-Tate heights}

\begin{document}

\begin{abstract}
Let $S$ be a smooth irreducible curve over $\Qal$, and let $\AA \to S$ be an abelian scheme with a curve $\c \subset \AA$, both defined over $\Qal$. In 2020, Barroero and Capuano proved that if $\c$ is not contained in a proper subgroup scheme, then the intersection of $\c$ with the union of the flat subgroup schemes of $\AA$ of codimension at least 2 is finite. In this article, we continue to study this problem by considering the intersections with the algebraic subgroups of the CM fibers, generalizing a previous result of Barroero for fibered powers of elliptic schemes. 
A key ingredient of the proof is an explicit control of canonical heights under endomorphisms: for an abelian variety $A/\Qal$, an ample symmetric divisor $D$, and $f \in \en(A)$, we bound explicitly $\widehat{h}_{A, D}(f(P))$ in terms of $\widehat{h}_{A, D}(P)$ by determining the values of $\l \in \R$ for which the divisors $\l D - f^* D$ and $f^* D - \l D$ are ample.
\end{abstract}
\maketitle
\vspace{-1em}
\tableofcontents
\section{Introduction}
Let $S$ be a smooth, irreducible, quasi-projective curve, and let $\pi : \mathcal{A}\rightarrow S$ be an abelian scheme of relative dimension $g\geq 1$, both defined over a number field $k$. For any (not necessarily closed) point $s \in S$ we denote the fiber of $\AA$ over $s$ by $\AA_s$. Thus, if $s \in S(\C)$, then $\AA_s$ is an abelian variety of dimension $g$ defined over $k(s)$. 
Let $O : S \rightarrow \AA$ be the zero section of $\AA$ and consider an irreducible curve $\c \subseteq \mathcal{A}$, also defined over $k$. 

Recall that an irreducible component of a subgroup scheme of $\AA$ is either a component of an algebraic subgroup of a fiber or it dominates the base curve $S$. We say that a subgroup scheme is \emph{flat} if all of its irreducible components are of the latter kind.

We call $\AA \rightarrow S$ \emph{isotrivial} if it becomes constant after a base change, i.e.\ $\AA \times_S S' \cong A \times_{\Qal} S'$ for some finite base change $S' \to S$ and some fixed abelian variety $A/\Qal$. Let $A_0 \times S$ be the largest constant abelian subscheme of $\AA\rightarrow S$, we say that a section $\s: S \rightarrow \AA$ is constant if there exists $a_0 \in A_0(\C)$ such that $\s$ is the composition of $S \rightarrow A_0 \times S$, $s \mapsto (a_0, s)$ with the inclusion of $A_0 \times S$ into $\AA$.

We are interested in understanding how the curve $\c$ intersects the subgroup schemes of the abelian scheme $\AA$.
In \cite{BC20}, Barroero and Capuano studied the intersections of $\c$ with flat subgroup schemes of codimension at least 2 and proved that, if $\c$ is not contained in a proper subgroup scheme, then its intersection with the union of all such flat subgroup schemes is finite. As a matter of fact, given a flat subgroup scheme $H$ of $\AA$ of codimension at least 2, one expects, for dimensional reasons, that $\c \cap H$ should in fact be empty. While this may not be true in general, their result confirms the prediction, arising from the Zilber--Pink conjecture, that the union of all these intersections is finite.

In general, the Zilber--Pink conjecture, formulated independently and in various settings by Bombieri, Masser and Zannier \cite{BMZ99}, by Zilber \cite{Zil02} and by Pink \cite{Pin05}, predicts that “unlikely intersections” between a fixed algebraic subvariety $V$ of a semiabelian or Shimura variety $X$ and “special” subvarieties of $X$ of codimension at least $\dim V +1$ should be scarce, i.e. there should be only finitely many maximal such intersections. In particular, one expects that if $V$ is not itself contained in any proper special subvariety, then its intersection with the union of all special subvarieties of codimension at least $\dim V +1$ is not Zariski-dense. For comprehensive treatments of the conjecture and of problems of unlikely intersections, see \cite{Zan12} and \cite{Pil22}, as well as the survey \cite{Cap23}.

In the isotrivial case or if $\c$ is contained in a fixed fiber, this has already been addressed by Habegger and Pila \cite[Theorem 9.14]{HP16}, who proved the Zilber--Pink conjecture for curves in abelian varieties defined over $\Qal$. Thus, our focus is instead on the case where the abelian scheme $\AA\rightarrow S$ is not isotrivial and $\c$ is not contained in a fixed fiber.

In this paper, we extend these results by considering the intersections of $\c$ with the proper algebraic subgroups of the CM fibers of $\AA$, which, like the flat subgroup schemes considered above, are special subvarieties of $\AA$ of codimension at least $2$. Motivated by the same dimensional considerations and by the Zilber--Pink conjecture, we prove the following theorem.

\begin{theorem}\label{main_thm}
Let $S$ and $\AA \rightarrow S$ be as above and assume that $\AA$ is not isotrivial. Let $\c \subseteq \AA$ an irreducible curve defined over $\Qal$ that is neither contained in a fixed fiber nor in a translate of a proper flat subgroup scheme of $\AA$ by a constant section, even after a finite base change. Then, the intersection of $\c$ with the union of all proper algebraic subgroups of the CM fibers of $\AA$ is a finite set.
\end{theorem}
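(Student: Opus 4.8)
The plan is to follow the Pila--Zannier strategy, combined with a specialization argument that reduces intersections with CM fibers to a problem about flat subgroup schemes, so that the main theorem of \cite{BC20} can be applied. I first reduce to a convenient situation: after a finite base change I may assume that $\AA \to S$ has a polarization and level structure, and I may enlarge $k$ to a number field over which $\c$, $\AA$, $S$ and the relevant endomorphism rings are defined. The key geometric point is that if a point $P \in \c$ lies in a proper algebraic subgroup $B$ of a CM fiber $\AA_s$, then, because $\AA_s$ has complex multiplication, $B$ is cut out by endomorphisms of $\AA_s$; the CM property forces the endomorphism ring to be large, and one can use a Masser-type zero estimate to bound the degree of the isogeny factors and the number of endomorphisms involved. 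Concretely, I would show that $P$ being in such a $B$ means $P$ satisfies a nontrivial relation $\sum f_i(P_i) = 0$ among its components in a suitable product decomposition, where the $f_i$ are endomorphisms of controlled size.

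The heart of the argument is the height bound. Using the explicit control of canonical heights under endomorphisms announced in the abstract — i.e.\ for an ample symmetric $D$ and $f \in \en(\AA_s)$ one has $\widehat h_{\AA_s, D}(f(P)) \leq c \, \widehat h_{\AA_s, D}(P)$ with $c$ depending polynomially on the coefficients of $f$ acting on $H_1$ — together with the fact that $\c$ is not contained in a constant translate of a flat subgroup scheme, I would derive a lower bound for the Néron--Tate height of a point $P \in \c$ lying in a CM-fiber subgroup, of the form $\widehat h(P) \geq \kappa \cdot [\text{(endomorphism complexity)}]^{\delta}$ for some $\kappa, \delta > 0$, by the standard argument that a small-height point in such a subgroup would be torsion-like and force $\c$ into a translate of a flat subgroup scheme. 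Simultaneously, Silverman-type specialization estimates and the theory of the Faltings/Zhang height in families bound $\widehat h_{\AA_s}(P)$ from above in terms of the height $h(s)$ of the base point and the degree $[k(s):k]$. The CM condition on $\AA_s$, via the André--Oort-type bounds of Tsimerman (or by the classical theory of the $j$-invariant in the elliptic case and Colmez/Obus estimates in general) gives $h(s) \ll [k(s):k]^{\theta}$, so that altogether $\widehat h(P)$ is bounded polynomially in $[k(s):k]$.

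Next comes the o-minimal/counting step. I would lift $\c$ to the universal cover (a period domain over the Siegel upper half space $\H_g$, fibered over a fundamental domain for $\Gamma = \Gamma(N)$) and observe that the set of CM points together with the subgroup relations on the fibers defines, in the uniformizing coordinates, a definable family in the o-minimal structure $\R_{\mathrm{an},\exp}$. The points $P \in \c$ lying in proper algebraic subgroups of CM fibers correspond to points of this definable set lying on rational affine subspaces of codimension $\geq 1$ whose defining data have height bounded by the polynomial estimate from the previous paragraph. By the Pila--Wilkie counting theorem (in the block/family form), either these points lie on a positive-dimensional semialgebraic subset of the definable set — which by Ax--Lindemann for $\AA \to S$ (available in this setting, e.g.\ from \cite{HP16} combined with the mixed Ax--Lindemann results) would force $\c$ to be contained in a flat subgroup scheme or a fixed fiber, contradicting the hypothesis — or their number is $\ll [k(s):k]^{\eps}$. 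Combining the latter with the Galois lower bound $[k(s):k] \gg (\text{endomorphism complexity})^{c'}$, which again comes from the CM theory (the Galois orbit of a CM fiber is large, essentially by the effective Chebotarev / class-field-theoretic lower bounds on sizes of CM Galois orbits), yields a contradiction unless the endomorphism complexity is bounded. Once it is bounded, there are only finitely many relations $\sum f_i(P_i)=0$ to consider, each cutting out a flat subgroup scheme of $\AA$ of codimension $\geq 2$ (here we use that $\AA$ is not isotrivial and $\c$ is not in a constant translate, so these are genuinely flat of the right codimension after removing the constant part); applying the Barroero--Capuano theorem \cite{BC20} to each of these finitely many flat subgroup schemes gives finiteness of $\c \cap H$ for each, and hence finiteness of the whole union.

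The main obstacle I expect is making the height and Galois estimates mesh \emph{quantitatively}: the Pila--Wilkie bound $\ll [k(s):k]^\eps$ must be beaten by the Galois lower bound, and this requires the upper bound on $\widehat h(P)$ (hence on the heights of the rational data in the counting step) to be genuinely \emph{polynomial} in $[k(s):k]$ with a controlled exponent — this is precisely where the explicit endomorphism height bound is indispensable, since a naive bound on $\widehat h_{\AA_s, D}(f(P))$ would lose a factor depending on $s$ and destroy the polynomial dependence. A secondary technical difficulty is handling the non-constant part versus the constant part of $\AA \to S$ carefully: the hypothesis excludes translates by constant sections of flat subgroup schemes, so one must split off the largest constant abelian subscheme $A_0 \times S$ and argue that the CM-fiber subgroup relations that survive this splitting are exactly those producing flat subgroup schemes to which \cite{BC20} applies. $\qed$
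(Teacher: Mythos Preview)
Your high-level plan (Pila--Zannier, o-minimal counting, height bounds for CM fibers, Galois orbits) is the right shape and matches the paper's strategy in broad strokes, but there is a genuine gap in your endgame, and some confusion in the height step.

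\textbf{The main gap: the reduction to \cite{BC20} does not work.} You conclude by saying that once the ``endomorphism complexity'' is bounded there are only finitely many relations $\sum f_i(P_i)=0$, each cutting out a flat subgroup scheme of $\AA$ of codimension $\geq 2$, and then you apply \cite{BC20}. This step fails. The endomorphisms $f_i$ involved live in $\en(\AA_s)$ for a \emph{specific} CM fiber $\AA_s$, and the whole point of the CM hypothesis is that these fibers have \emph{extra} endomorphisms that do not extend to endomorphisms of the generic fiber (hence not to the abelian scheme). Consequently $\ker(f) \subseteq \AA_s$ is just a subvariety of a single fiber and does not give rise to a flat subgroup scheme of $\AA$ at all; there is nothing for \cite{BC20} to bite on. The paper closes the argument quite differently: once the degree $d_0 = [k(P_0):\Q]$ is shown to be bounded (via the o-minimal counting versus Galois-orbit inequality), one uses the CM bound $h_F(\AA_{\pi(P_0)}) \ll d_0^{\g}$ to bound the Faltings height, hence the Weil height of $\pi(P_0) \in S(\Qal)$, and then Northcott on the base curve $S$ gives finitely many possible $\pi(P_0)$. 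Since $\c$ is not contained in any fiber, $\c \cap \AA_s$ is finite for each $s$, and the result follows. No appeal to \cite{BC20} is made or possible here.

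\textbf{A secondary confusion in the height step.} You propose to derive a \emph{lower} bound $\widehat{h}(P) \geq \kappa \cdot (\text{endomorphism complexity})^\delta$ via a Bogomolov-type argument. This is not how the argument runs and is not needed. What one actually needs is an \emph{upper} bound $\widehat{h}_{\AA_s,\mathcal{L}_s}(P) \ll h_F(\AA_s)+1$ for $P \in \c(\Qal)$ (a Silverman-type comparison, done in the paper via \cite{Sil11} and \cite{DGH21}), together with the explicit endomorphism bound $\widehat{h}(f(P)) \leq \tr(\r_a(\RI{f}f))\,\widehat{h}(P)$ to control $\widehat{h}(\f_i(P_0))$ for a Masser--W\"ustholz basis $\f_1,\ldots,\f_N$ of $\en(\AA_s)$. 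These feed into Masser's small-relation lemma to produce an annihilating endomorphism $f_{P_0}=\sum a_i\f_i$ with $\max|a_i| \ll d_0^{\g}$, and then into bounds on the $2g$-height of the period data and of $\r_a(f_{P_0})$, which is what the o-minimal counting step consumes. The direction of all the height inequalities is ``upper bound polynomial in $d_0$''; there is no lower-bound step.
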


Since every algebraic subgroup of an abelian variety is a union of irreducible components of the kernel of an endomorphism, the theorem can be restated as follows: under the same assumptions as above, there are at most finitely many $P \in \c(\C)$ such that $\AA_{\pi(P)}$ has complex multiplication and there exists a non-zero $f \in \en(\AA_{\pi(P)})$ such that $f(P)=O_{\pi(P)}$.

In \cite{Bar19}, Barroero proved the same result in the case of a fibered power of an elliptic scheme. Thus, Theorem \ref{main_thm} can be viewed as a generalization of Barroero's result to more general abelian schemes.

The above theorem also proves a stronger partial version of Conjecture 6.1 of \cite{Pin05}, since Pink's conjecture only considers algebraic subgroups of codimension at least 2 of the fibers. As a matter of fact, Theorem \ref{main_thm} is a particular case of the Zilber--Pink conjecture for a curve in an abelian scheme, which is known to imply Conjecture 6.1 of \cite{Pin05} for abelian schemes.

To the best of our knowledge, the Zilber--Pink conjecture for a curve in a non-isotrivial abelian scheme has been settled only for a curve in a fibered power of an elliptic scheme, by work of Barroero and Capuano \cite{BC16, Bar19} and Barroero and Dill \cite{BD25}, building on previous works by Masser and Zannier \cite{MZ10, MZ12}. There is also some partial progress for a curve in a product of fibered powers of elliptic schemes by Masser and Zannier \cite{MZ14}, Barroero and Capuano \cite{BC17} and (under additional assumptions on the curve) by previous work of the author \cite{Fer26}.  A variation of the conjecture involving tangential intersections has also been studied by Corvaja, Demeio, Masser and Zannier \cite{CDMZ21}, by Ulmer and Urzúa \cite{UU20, UU21}, and by Ottolini \cite{Ott25}.

\begin{remark}
Before proceeding, we note that if $S \subseteq \A_g$ is not a special curve (as explained in Section \ref{sec_red}, we may always assume $S \subseteq \A_g$), then the André-Oort conjecture for $\A_g$ (proved by Tsimerman \cite{Tsi18}) guarantees that only finitely many points $s \in S(\C)$ correspond to CM fibers $\AA_s$ 
, which in turn implies Theorem \ref{main_thm}. Hence, one may assume that $S=\pi(\c)$ is a Shimura curve, though this assumption will not be used in the rest of the paper.
\end{remark}

\begin{remark}
Observe that the Zilber--Pink conjecture would imply Theorem \ref{main_thm} even when $\c$ is contained in a translate of a proper flat subgroup scheme of $\AA$ by a non-torsion section. Unfortunately, the functional transcendence results used in this article only allow us to prove the theorem in the form stated above.
\end{remark}

Our proof of Theorem \ref{main_thm} follows the well-established Pila--Zannier strategy, first introduced in \cite{PZ08} and later used, among others, by Masser and Zannier \cite{MZ10, MZ12}, by Barroero and Capuano \cite{BC16, Bar19, BC17, BC20} and in previous work of the author \cite{Fer26}.  

To implement this strategy, we first reduce the problem to the case of restrictions of the universal family of abelian varieties over a quasi-projective curve in the moduli space $\A_g$ of principally polarized abelian varieties of dimension $g$. Using a result of Peterzil and Starchenko, after restricting to a suitable fundamental domain, the uniformizing map of the universal family is definable in the o-minimal structure $\R_{\text{an, exp}}$, so that the preimage of $\c$ becomes a definable surface $X$.\\
Let $\c'$ be the subset of $\c$ we want to prove to be finite. Each point $P_0 \in \c'$ corresponds to a point on $X$ lying on an algebraic subvariety defined by equations with algebraic coefficients of controlled arithmetic complexity (in particular, of bounded degree). A theorem of Habegger and Pila implies that, assuming that the abelian logarithm of the generic point of $\c$ generates a field of sufficiently large transcendence degree over the field generated by the period matrix, the number of points on $X$ lying on such subvarieties of complexity at most $T$ is $\ll T^\eps$ for every $\eps>0$.

The remaining part of the proof is arithmetic in nature. We establish several quantitative estimates concerning canonical heights on $\c$, Faltings heights of CM fibers, and period matrices. These bounds provide control on the arithmetic complexity of the algebraic relations arising from points of $\c'$ on $X$.

As a key output, we construct a non-zero endomorphism of $\AA_{\pi(P_0)}$ vanishing at $P_0$, whose Rosati norm is bounded by a constant times a positive power of $D_0:=\pq{k(P_0):k}$. This produces algebraic subvarieties on $X$ whose defining coefficients have arithmetic complexity $\ll D_0^{O(1)}$.

Since all Galois conjugates of $P_0$ remain in $\c'$, this yields at least $D_0$ points on $X$ lying on subvarieties of complexity $\ll D_0^{O(1)}$. The point-counting estimate obtained via the Habegger–Pila theorem implies that the number of such points is $\ll D_0^\eps$ for every $\eps>0$. Comparing the two bounds, we conclude that $D_0$ is uniformly bounded. The finiteness of $\c'$ then follows from Northcott’s theorem.

In particular, when constructing the endomorphism whose kernel contains $P_0$, an essential ingredient is an explicit control on the behaviour of canonical heights under endomorphisms. To state the general result we shall use, which has independent interest, we briefly recall the setting for canonical heights on abelian varieties.

Let $A$ be an abelian variety of dimension $g$ defined over $\Qal$ and $D$ be a symmetric divisor on $A$, i.e. a divisor such that $[-1]^*D \sim D$. Then, $D$ induces a canonical (or Néron-Tate) height $\widehat{h}_{A,D}$ on $A(\Qal)$ (for details see Section B.5 of \cite{HS13}).

Since $D$ is symmetric, we have the classical identity 
$$\widehat{h}_{A,D}([n]P) = n^2 \cdot \widehat{h}_{A,D}(P)$$
for any $P \in A(\Qal)$. For the proof of Theorem \ref{main_thm} we need to generalize this identity to arbitrary endomorphisms of $A$. 

In general, if $D$ is ample and symmetric, one can show (see Section~\ref{sec_can_bounds}) that there exist constants $0 \leq \g_1 \leq \g_2$ such that
$$\g_1 \cdot \widehat{h}_{A,D}(P)\leq \widehat{h}_{A,D}(f(P))\leq \g_2 \cdot \widehat{h}_{A,D}(P).$$
In particular, one necessarily has $\gamma_{1}=0$ if $f$ is not an isogeny, whereas $\gamma_{1}$ may be chosen strictly positive when $f$ is an isogeny.

The main result of Section \ref{sec_can_bounds} is the following theorem, which gives explicit values for $\g_1$ and $\g_2$ in terms of the eigenvalues of the analytic representation of $\RI{f}f$, where $\RI{}$ is the Rosati involution defined by the polarization associated to $D$. Define 
$$\a^{-}_{D}(f)=\min\pg{\a_1, \ldots, \a_g} \quad \text{ and } \quad \a^{+}_{D}(f)=\max\!\pg{\a_1, \ldots, \a_g},$$
where $\a_1, \ldots, \a_g$ are the eigenvalues (counted with multiplicities) of the analytic representation of $\RI{f}f$. 

\begin{theorem}\label{thm:can_heights_endo_intro}
Let $A$ be an abelian variety defined over $\Qal$, and let $D$ be an ample symmetric divisor on $A$. Then, for every endomorphism $f : A \rightarrow A$, we have
$$\a^{-}_{D}(f) \cdot \widehat{h}_{A,D}(P)\leq \widehat{h}_{A,D}(f(P))\leq \a^{+}_{D}(f) \cdot \widehat{h}_{A,D}(P)$$
for every $P \in A(\Qal)$. Moreover, these constants are the best possible, meaning that we cannot replace $\a^{+}_{D}(f)$ and $\a^{-}_{D}(f)$ with a smaller and a larger constant, respectively.
\end{theorem}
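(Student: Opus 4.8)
The plan is to translate the two inequalities into the statement that certain $\R$-divisor classes on $A$ are nef, to verify this by a computation with Hermitian forms, and to deduce optimality from the fact that the Rosati involution is the adjoint of the Néron--Tate pairing. First I would reduce to a statement about divisors. By functoriality of canonical heights under homomorphisms of abelian varieties one has $\widehat h_{A,D}(f(P))=\widehat h_{A,f^{*}D}(P)$, and since $D$ and $f^{*}D$ are symmetric, the map $E\mapsto\widehat h_{A,E}(P)$ factors through $\mathrm{NS}(A)\otimes\R$ and is $\R$-linear there. Moreover, every nef symmetric $\R$-divisor class $E$ satisfies $\widehat h_{A,E}\geq 0$ on $A(\Qal)$: the nef cone is the closure of the open ample cone, the symmetrization $E'\mapsto\tfrac12(E'+[-1]^{*}E')$ is continuous and preserves ampleness, so $E$ is a limit of ample symmetric $\Q$-classes, and the claim follows from the classical positivity of $\widehat h_{A,D}$ for $D$ ample symmetric together with continuity of $E\mapsto\widehat h_{A,E}(P)$. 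Thus Theorem~\ref{thm:can_heights_endo_intro} reduces to showing that $\a^{+}_{D}(f)\,D-f^{*}D$ and $f^{*}D-\a^{-}_{D}(f)\,D$ are nef, and, for the optimality, that these classes are not ample and that no strictly better constant works.

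For the divisor computation I would pass to $\C$ and write $A(\C)=V/\Lambda$. The Appell--Humbert correspondence identifies symmetric $\R$-divisor classes with Hermitian forms on $V$ whose imaginary part is real-valued on $\Lambda$, ample classes with positive definite forms, and nef classes with positive semidefinite ones. If $H_D$ is the form attached to $D$ and $\rho$ denotes the analytic representation, then $f^{*}D$ corresponds to $H_{f^{*}D}(v,w)=H_D(\rho(f)v,\rho(f)w)$, which is positive semidefinite; by the standard description of the Rosati involution, $\rho(\RI{f})$ is the $H_D$-adjoint of $\rho(f)$, so $\rho(\RI{f}f)=H_D^{-1}H_{f^{*}D}$ is $H_D$-self-adjoint with eigenvalues $\a_1,\dots,\a_g\geq 0$. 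Diagonalizing $H_D$ and $H_{f^{*}D}$ simultaneously, the form $\l H_D-H_{f^{*}D}$ becomes $\mathrm{diag}(\l-\a_1,\dots,\l-\a_g)$, hence is positive semidefinite exactly for $\l\geq\a^{+}_{D}(f)$ and positive definite exactly for $\l>\a^{+}_{D}(f)$; symmetrically, $H_{f^{*}D}-\l H_D$ is positive semidefinite exactly for $\l\leq\a^{-}_{D}(f)$. This gives both inequalities of the theorem and identifies precisely those $\l$ for which $\l D-f^{*}D$ and $f^{*}D-\l D$ are ample.

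For optimality I would use the identity $\langle f(Q),R\rangle_{A,D}=\langle Q,\RI{f}(R)\rangle_{A,D}$ for the Néron--Tate pairing, valid for all $Q,R\in A(\Qal)$, which follows from biduality, the defining relation $\widehat f\circ\phi_D=\phi_D\circ\RI{f}$, and functoriality of the Poincaré pairing. Fix $P_0\in A(\Qal)$ whose $\en(A)$-orbit spans a faithful $\en(A)\otimes\Q$-module (such a point exists because $A(\Qal)\otimes\Q$ is a faithful module over the semisimple algebra $\en(A)\otimes\Q$), and let $q(x):=\widehat h_{A,D}(x(P_0))$, a positive definite quadratic form on $(\en(A)\otimes\R)/\mathrm{Ann}(P_0)$. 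By the adjointness identity, left multiplication by $f$ and by $\RI{f}$ are mutually $q$-adjoint, so $q(f x)/q(x)$ is the Rayleigh quotient of the $q$-self-adjoint operator given by left multiplication by $\RI{f}f$, whose eigenvalues coincide with $\{\a_1,\dots,\a_g\}$ because the regular and the analytic representations of $\en(A)\otimes\Q$ are both faithful (so $\RI{f}f$ has the same minimal polynomial, hence the same set of eigenvalues, in each). Since $\en(A)$ maps onto a full lattice of this real vector space, one may pick $x\in\en(A)$ whose class is arbitrarily close in direction to a $\a^{+}_{D}(f)$-eigenvector (resp.\ to a $\a^{-}_{D}(f)$-eigenvector); then $P=x(P_0)\in A(\Qal)$ realizes $\widehat h_{A,D}(f(P))/\widehat h_{A,D}(P)$ arbitrarily close to $\a^{+}_{D}(f)$ (resp.\ $\a^{-}_{D}(f)$), so neither constant can be improved. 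When $f$ is not an isogeny, $\a^{-}_{D}(f)=0$ and the lower bound is attained on any abelian subvariety of $\ker f$.

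The main difficulty lies in the optimality part: one has to establish the adjointness of the Rosati involution with respect to the Néron--Tate pairing with the correct normalization and in the needed generality, and then ensure that the auxiliary point $P_0$ is chosen so that the extremal eigenvalues $\a^{\pm}_{D}(f)$ are genuinely realized by left multiplication on $\en(A)\otimes\Q$. The inequalities themselves are comparatively routine once the Appell--Humbert dictionary and the positivity of canonical heights for nef symmetric $\R$-divisor classes are in place.
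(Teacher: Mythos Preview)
Your proposal is correct, but it follows a genuinely different route from the paper's in both halves of the argument.

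For the two inequalities, you work directly through the Appell--Humbert dictionary: the Hermitian form attached to $\lambda D-f^{*}D$ diagonalizes simultaneously with $H_D$ to $\mathrm{diag}(\lambda-\alpha_1,\dots,\lambda-\alpha_g)$, so nefness and ampleness are read off immediately, and the height inequality follows from positivity of $\widehat h$ on nef symmetric $\R$-classes. The paper instead stays on the algebraic side: it computes $\chi(f^{*}L^{-b}\otimes L^{a})=\chi(L)\,b^{g}\,P^{a}_{\RI{f}f}(a/b)$ (Lemma~\ref{Lemma_Lange}) and then invokes the Kempf--Mumford vanishing package (Theorem~\ref{Teo_Kempf} and Proposition~\ref{Prop_L_ampio}) to determine exactly when $aD-bf^{*}D$ is ample. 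Your approach is shorter and more transparent; the paper's has the virtue of deriving the ampleness threshold from an Euler-characteristic identity that may be of independent use.

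For optimality the contrast is sharper. The paper argues by contradiction: if some $\widetilde\alpha<\alpha^{+}_{D}(f)$ worked, then $\widehat h_{A,f^{*}D+D}\le\widehat h_{A,(\widetilde\alpha+1)D}$, and Lee's lemma \cite[Lemma~4.1]{Lee16} forces $\widetilde\alpha D-f^{*}D$ to be nef, contradicting the ampleness characterization just established (since nef $+$ ample is ample). This is a two-line argument once the threshold is known. Your argument is constructive: you use the adjointness $\langle f(Q),R\rangle_{D}=\langle Q,\RI{f}(R)\rangle_{D}$, pick $P_{0}$ so that $\en(A)\cdot P_{0}$ is faithful, and realize $\alpha^{\pm}_{D}(f)$ as extremal Rayleigh quotients for left multiplication by $\RI{f}f$ on $\en(A)\otimes\R$, approximating eigenvectors by integral endomorphisms. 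This is correct---the key points (faithfulness of the left-regular representation, equality of minimal polynomials, existence of a faithful cyclic vector in $A(\Qal)\otimes\Q$) all hold---but it is noticeably heavier than the paper's route. Its payoff is that it actually exhibits near-extremal points rather than merely showing they must exist.
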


\subsubsection*{Notation} In this article we will use Vinogradov's $\ll$ notation: for real-valued functions $f(T)$ and $g(T)$, we write $f(T) \ll g(T)$ if there exists a constant $\gamma > 0$ such that $f(T) \leq \gamma g(T)$ for all sufficiently large $T$. When not explicitly stated, the implied constant is either absolute or depends only on $S, \AA, g, \mathcal{C}$ and other fixed data. We use subscripts to indicate any additional dependence of the implied constant. 

\section{Preliminaries}

\subsection{Abelian varieties and their endomorphisms}\label{sect:prelim_AVs}
In this section, we collect the foundational definitions and results concerning complex abelian varieties that will serve as a basis for the rest of the paper. It is not intended as a comprehensive treatment of abelian varieties, for which we refer the reader to \cite{BL04, Mil08, Mum08}.

From now on, every abelian variety will be defined over $\C$, and we will identify them with their set of complex points. 

It is well-known that if $A$ is an abelian variety of dimension $g$ defined over $\C$, then $A(\C)$ is a complex torus, i.e.\ $A(\C) \cong V/\Lambda$ for some $g$-dimensional $\C$-vector space $V$ and some lattice $\Lambda \subseteq V$. After fixing bases of $V$ and $\Lambda$, we have that $\Lambda=\Pi \Z^{2g}$, for some matrix $\Pi \in \mat_{g \times 2g}(\C)$ called \emph{period matrix}.

Let $A, B$ be two abelian varieties. A \emph{homomorphism} is a morphism $f: A \rightarrow B$ of group varieties (in other words, it is a morphism of algebraic varieties which is also a group homomorphism). When $B=A$ such a map is called an \emph{endomorphism}. A homomorphism $f: A \rightarrow B$ is called an \emph{isogeny} if it is surjective and it has finite kernel.

We denote by $\hom(A,B)$ the set of homomorphisms from $A$ to $B$ and we define $\en(A):=\hom(A,A)$ to be the set of all endomorphisms. Moreover, we define
$$\hom^0(A,B):= \hom(A,B) \otimes \Q \qquad \en^0(A):=\en(A) \otimes \Q.$$
Note that $\hom(A,B)$ is an abelian group under point-wise addition and, similarly, $\en(A)$ is a ring where the multiplication is given by composition of maps. We will always assume that all the morphisms are defined over an algebraic closure of the ground field.

Given an endomorphism $f$ of $A=V/\Lambda$, by Proposition 1.2.1 of \cite{BL04}, there is a unique linear map $F:V \rightarrow V$ with $F(\Lambda) \subseteq \Lambda$ and inducing $f$ on $A$. The restriction $F_{\Lambda}$ of $F$ to $\Lambda$ is $\Z$-linear and completely determines both $F$ and $f$.

Fix bases of $V$ and $\Lambda$, and let $\Pi$ be the corresponding period matrix, i.e. the matrix representing the basis of $\Lambda$ in terms of the basis of $V$. With respect to these bases, $F$ and $F_{\Lambda}$ are given by matrices $\rho_a(f) \in \mathrm{Mat}_g(\C)$ and $\rho_r(f) \in \mathrm{Mat}_{2g}(\Z)$, respectively. Since $F(\Lambda) \subseteq \Lambda$, we must have
\begin{equation}\label{eqn:endom_matr_periods}
\rho_a(f) \cdot \Pi = \Pi \cdot \rho_r(f).
\end{equation}
The associations $F \mapsto \r_a(f)$ and $F_{\Lambda} \mapsto \r_r(f)$ extend to injective ring homomorphisms
\begin{align*}
\rho_a : \en^0(A) &\longrightarrow \mathrm{Mat}_g(\C) \\
\rho_r : \en^0(A) &\longrightarrow \mathrm{Mat}_{2g}(\Q)
\end{align*}
called the analytic representation and the rational representation of $\en^0(A)$, respectively. 

We denote by $\widehat{A} = \text{Pic}^0(A)$ the dual abelian variety, i.e.\ the group of line bundles on $A$ that are algebraically equivalent to zero. Given a point $x \in A$, we denote by $T_x$ the translation-by-$x$ map.
If $L$ is an arbitrary line bundle on $A$, we have a homomorphism
\begin{equation}\label{eqn:def_Phi_L}
\setlength{\arraycolsep}{0pt}
\renewcommand{\arraystretch}{1.2}
  \begin{array}{ c c c c }
    \Phi_L:\; & {} A & {} \longrightarrow {} & \widehat{A} \\
     &{} x      & {} \longmapsto {} & T_x^*L \otimes L^{-1} 
  \end{array}
\end{equation}
and we call $K(L)$ its kernel. A \emph{polarization} is an isogeny $A \rightarrow \widehat{A}$ of the form $\Phi_L$ for some ample line bundle $L$. We say that a polarization is \emph{principal} if it is an isomorphism (i.e. $\deg \Phi_L = 1$). Recall that any two algebraically equivalent ample line bundles on $A$ define the same polarization.

We denote by $\chi(L)$ the Euler characteristic of $L$.

To any polarization $\Phi_L$ on $A$ corresponds a positive definite Hermitian form $H_L = c_1(L): V \times V \to \C$, given by the first Chern class of the line bundle $L$. It is worth noting that, in the literature, the term polarization may refer either to the ample line bundle $L$ (up to algebraic equivalence), the associated isogeny $\Phi_L$, or the Hermitian form $H_L$. These notions are equivalent; see, for example, Section 4.1 of \cite{BL04}. 
We denote by $E_L = \Im(H_L)$ the alternating Riemann form associated with $L$, which takes integer values on the lattice $\Lambda$.

Given an ample line bundle $L$ on $A$, there exists a basis of $\Lambda$, called \emph{symplectic basis}, such that the alternating Riemann form $E_{L} : \Lambda \times \Lambda \rightarrow \mathbb{Z}$ is represented by the matrix
$$\begin{pmatrix}
0 & \mathbf{D} \\
-\mathbf{D} & 0
\end{pmatrix}$$ 
where $\mathbf{D}:=\mathrm{diag}(d_1, \ldots, d_g)$ is a diagonal matrix, with $d_1, \ldots, d_g$ positive integers such that $d_{i}$ divides $d_{i+1}$ for each $i=1, \ldots, g-1$. 
We call $\mathbf{D}$ the \emph{type} of the polarization $\Phi_L$ and we define the \emph{Pfaffian} of $E_{L}$ as $\mathrm{Pf}(E_{L})=\det(\mathbf{D})$ \cite[Section 3.2]{BL04}. The degree of the isogeny $\Phi_L$ is called the \emph{degree} of the polarization and it is easy to prove that it is equal to $\mathrm{Pf}(E_{L})^2=\det(E_L)$.  

Next, we define the Rosati (anti-)involution on $\en^0(A)$ with respect to the polarization $\Phi_L$ as:
\begin{equation}\label{eqn:def_rosati_inv}
\setlength{\arraycolsep}{0pt}
\renewcommand{\arraystretch}{1.2}
  \begin{array}{ c c c c }
    ^{\dagger} :& {} \en^0(A) & {} \longrightarrow {} & \en^0(A) \\
     		   &{} f      & {} \longmapsto {} & \RI{f}=\Phi_{L}^{-1} \circ \widehat{f} \circ \Phi_{L}
  \end{array}
\end{equation}
where $\widehat{f} \in \en^0(\widehat{A})$ denotes the dual of $f$ and, with a slight abuse of notation, we also denote by $\Phi_{L}$ the corresponding element of $\hom^0(A,\widehat{A})$. This map is $\Q$-linear and satisfies $\RI{(fg)}=\RI{g} \RI{f}$ for all $f,g \in \en^0(A)$. In particular, if $\Phi_L$ is a principal polarization, the Rosati involution restricts to an involution on $\en(A)$.

\subsection{Moduli spaces, universal families and their uniformizations}\label{sec_moduli}
Let $g, n\geq 1$ be positive integers and $\mathbf{D} =\mathrm{diag}(d_1, \ldots, d_g)$, with $d_i$ positive integers such that $d_i$ divides $d_{i+1}$ for every $i=1, \ldots, g-1$. 
We define $\A_{g,\mathbf{D},n}$ as the moduli space of complex abelian abelian varieties of dimension $g$, polarization type $\mathbf{D}$ and with principal level-$n$-structure.
For each type $\mathbf{D}$ and $n\geq 3$, the moduli space $\A_{g,\mathbf{D},n}$ is a fine moduli space \cite[Theorem 7.9]{MFK94}. In other words, there is a universal family $\pi:\mathfrak{A}_{g,\mathbf{D},n} \rightarrow \A_{g,\mathbf{D},n}$, which, like $\A_{g,\mathbf{D},n}$, is defined over $\Qal$. For the rest of the paper we will consider $\mathfrak{A}_{g,\mathbf{D},n}$ and $\A_{g,\mathbf{D},n}$ as irreducible quasi-projective varieties.

It is well-known (see for example Chapter 8 of \cite{BL04}) that $\A_{g,\mathbf{D},n}^{an}$, the analytification of $\A_{g,\mathbf{D},n}$, can be realized as a quotient of $\H_g$ by a suitable finite index subgroup $\Gamma_{\mathbf{D},n}$ of $\mathrm{Sp}_{2g}(\Z)$, where
$$\H_g:=\pg{Z \in \mat_g(\C): Z=Z^t, \, \Im(Z)>0}$$
and $\mathrm{Sp}_{2g}(\Z):=\pg{M \in \mat_{2g}(\Z): M^t J M=J}$ (here $J:=\begin{psmallmatrix} 0 & \mathbf{1}_g\\ -\mathbf{1}_g &0 \end{psmallmatrix}$) acts on $\H_g$ by
$$\begin{pmatrix}
A & B\\
C & D
\end{pmatrix} \cdot Z= (A Z+B)(C Z+D)^{-1}.$$
\vspace{-1em}
\begin{remark}
We will show in Section \ref{sec_red} that we can always reduce the problem to studying principally polarized abelian varieties. Moreover, the choice of the level structure is not important for our proof of Theorem \ref{main_thm}. So, for the rest of the article, we fix $\mathbf{D}=\mathbf{1}_{g}$ and $n=3$ and omit those indices from the notation when they are clear from the context.
\end{remark}
Note that $\H_g$ is an open subset, in the Euclidean topology, of 
$$\pg{M \in \mat_g(\C): M=M^t} \cong \C^{\frac{g(g+1)}{2}}$$
and that we can see $\H_g$ as a semialgebraic subset of $\R^{2g^2}$, by identifying a complex number with its real and imaginary parts.
Furthermore, the quotient map $u_{b}: \H_g \rightarrow \A_g^{an}$ is holomorphic.

%Uniformization universal family:
Similarly, we have an holomorphic uniformization map for the universal family, given by theta functions, $u: \H_g \times \C^g \rightarrow \mathfrak{A}_g^{an}$, such that the following diagram commutes

\begin{center}
\begin{tikzcd}
\H_g \times \C^g \arrow[d, "p_1"'] \arrow[r, "u"] & \mathfrak{A}_g^{an} \arrow[d, "\pi"] \\
\H_g \arrow[r, "u_b"']                     & \A_g^{an}                           
\end{tikzcd}
\end{center}

Now, we would like to find a subset of $\H_g \times \C^g$ over which $u$ is invertible.

By \cite[Section V.4]{Igu72}, there is a semialgebraic set $\mathfrak{F}_g$ of $\H_g$ which can be used as a fundamental domain for the action of $\mathrm{Sp}_{2g}(\Z)$ on $\H_g$. If $\Gamma$ is a finite index subgroup of   $\mathrm{Sp}_{2g}(\Z)$ and $\s_1=\mathbf{1}_{2g}, \s_2, \ldots, \s_n$ is a complete set of representatives of its right cosets, then
\begin{equation}\label{eqn:def_F_Gamma}
\mathfrak{F}_{\Gamma}:=\bigcup\limits_{i=1}^n \s_i \cdot \mathfrak{F}_g
\end{equation} 
is called a \emph{Siegel fundamental domain for $\Gamma$} and can be used as a fundamental domain for the action of $\Gamma$ on $\H_g$.

For a fixed $\tau \in \H_g$ we have a principally polarized abelian variety $A_{\tau}=\C^g/(\Z^g + \tau \Z^g)$. In this case, let $L_{\tau}:=\pg{z \in \C^g: z=u+\tau v \text{ with } u,v \in [0,1)^g}$ be the fundamental parallelogram for the lattice $\Z^g + \tau \Z^g$. Moreover, let $\Gamma=\Gamma_{\mathbf{D},n}$ as above and define
$$\mathcal{F}_g:=\pg{(\tau, z) \in \H_g \times \C^g: \tau \in \mathfrak{F}_{\Gamma}, z \in L_{\tau}}.$$
Then, the restriction of $u$ to $\mathcal{F}_g$ is finite-to-one. Consider a curve $\c \subseteq \mathfrak{A}_g$ as in Theorem \ref{main_thm} and set
\begin{equation}\label{def:Z_CM}
\mathcal{Z}=u^{-1}(\c(\C)) \cap \mathcal{F}_g.
\end{equation}

Finally, let $S \subseteq \A_g$ be a smooth, irreducible, locally closed curve and let $\AA=\mathfrak{A}_g \times_{\A_g} S \rightarrow S$. Define the constant part (or $\overline{\Qal(S)}/\Qal$-trace) of $\AA \rightarrow S$ as the largest abelian subvariety $A_0$ of the generic fiber $\AA_{\eta}$ which can be defined over $\Qal$ (see also \cite[Section VIII.3]{Lan83} for more details). 

Let $D$ be an open disc on $\c(\C)$ and consider $\tau$ and $z$ as holomorphic functions on $D$. The following functional transcendence result is a consequence of Theorem 7.1 of \cite{Dil21} (which is in turn based on a result by Gao \cite{Gao20}).
\begin{lemma}\label{lemma:func_transc_CM}
Let $S$, $\AA$, $\c$ and $D$ as above and let $F = \C(\tau)$. Under the assumptions of Theorem \ref{main_thm}, we have $\mathrm{tr.deg.}_F F(z) = g$ on $D$.
\end{lemma}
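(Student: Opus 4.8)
The plan is to deduce the lemma from the mixed Ax--Schanuel theorem for the universal abelian variety $\mathfrak{A}_g \to \A_g$, in the packaged form of Theorem 7.1 of \cite{Dil21} (which rests on Gao's theorem \cite{Gao20}); the only substantive point is to verify that the hypotheses of Theorem \ref{main_thm} rule out every exceptional configuration in its conclusion.

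First I would fix a holomorphic coordinate $t$ on $D$ and view $\tau, z$ as holomorphic functions of $t$, so that $t \mapsto (\tau(t), z(t))$ is the germ of a holomorphic arc in $\H_g \times \C^g$ mapping into $u^{-1}(\c)$. Since $\c$ is not contained in a fixed fibre, $\pi|_{\c}$ is non-constant, hence by the identity theorem $\tau$ is non-constant on $D$ and $\mathrm{tr.deg.}_{\C} F = \mathrm{tr.deg.}_{\C}\C(\tau) = 1$. Letting $W$ be the Zariski closure of the image of the arc in $\H_g \times \C^g \subseteq \C^{g(g+1)/2}\times\C^g$, one has $\dim W = \mathrm{tr.deg.}_{\C}\C(\tau,z) = 1 + \mathrm{tr.deg.}_{F} F(z)$, so (as $z$ has $g$ coordinates) the claim is equivalent to $\dim W = g+1$. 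Since $u^{-1}(\c)$ is one-dimensional and contains the arc, whenever $\dim W < \dim(\H_g \times \C^g)$ the intersection $W \cap u^{-1}(\c)$ is larger than expected, which is exactly the situation constrained by Ax--Schanuel.

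Next I would apply Theorem 7.1 of \cite{Dil21} to $\c$, obtaining the dichotomy that either $\mathrm{tr.deg.}_F F(z) = g$ or $\c$ is contained in a proper weakly special subvariety of $\mathfrak{A}_g$. It remains to unwind the second alternative using Gao's classification of the weakly special subvarieties of $\mathfrak{A}_g$: the bounding subvariety projects to a weakly special subvariety $T \subseteq \A_g$ containing $S$, and over $T$ it is a coset of an abelian subscheme by a section that is torsion modulo the $\Qal$-trace. Replacing $\A_g$ by the smallest such $T$ --- which is harmless, since $\AA \to S$ depends only on $S$ --- the properness of the bounding subvariety forces this abelian subscheme to be proper; pulling the coset back over $S$ and passing to a finite base change $S' \to S$ that splits off the constant part $A_0$ and trivialises the relevant torsion, we find that $\c$ is contained either in a fixed fibre of $\AA$, or in a translate of a proper flat subgroup scheme of $\AA \times_S S'$ by a constant section.

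Finally, I would observe that both of these are excluded by the hypotheses of Theorem \ref{main_thm}: $\c$ is assumed to be neither contained in a fixed fibre nor, even after a finite base change, in a translate of a proper flat subgroup scheme by a constant section. Moreover weakly special subvarieties are cut out on the universal cover $\H_g \times \C^g$ and are therefore insensitive to finite base changes of $S$, so the ``even after a finite base change'' in the hypothesis is precisely the flexibility needed above. Hence the second case of the dichotomy cannot occur, and $\mathrm{tr.deg.}_F F(z) = g$ on $D$; this is independent of $D$ because $\c$ is irreducible. The step demanding the most care is the translation in the third paragraph, from the abstract ``proper weakly special subvariety of $\mathfrak{A}_g$'' produced by Ax--Schanuel to the concrete ``translate of a proper flat subgroup scheme by a constant section, up to finite base change'' of the hypothesis; this uses Gao's classification together with the facts that $\pi(\c) = S$ is a curve and $\AA$ is non-isotrivial.
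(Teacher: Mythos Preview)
Your proposal is correct and follows essentially the same approach as the paper: both assume $\mathrm{tr.deg.}_F F(z) < g$, invoke Theorem~7.1 of \cite{Dil21}, and derive a contradiction with the hypotheses of Theorem~\ref{main_thm}. The only difference is that the paper quotes Dill's theorem in its already-packaged form---it directly produces a proper subvariety $\mathcal{W}\subseteq\AA$ whose generic fibre is a union of translates of abelian subvarieties of $\AA_\eta$ by points of $(\AA_\eta)_{\mathrm{tors}}+A_0(\Qal)$---so your third paragraph (passing through weakly special subvarieties of $\mathfrak{A}_g$ and Gao's classification) is effectively re-deriving what the cited statement already contains.
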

\begin{proof}
By contradiction, assume that $\mathrm{tr.deg.}_F F(z) < g$. Then Theorem 7.1 of \cite{Dil21} implies the existence of a proper subvariety $\mathcal{W}$ of $\AA$, containing $\c$ and such that, over $\overline{\Qal(S)}$, every irreducible component of $\mathcal{W}_{\eta}$ is a translate of an abelian subvariety of $\AA_{\eta}$ by a point in $\pt{\AA_{\eta}}_{\mathrm{tors}}+A_0(\Qal)$. This means that, up to finite base change, $\c$ is contained in a translate of a proper subgroup scheme by a point in $A_0(\Qal)$, contradicting the hypotheses on $\c$ in Theorem \ref{main_thm}.
\end{proof}

\subsection{Heights}\label{sect:prelim_heights}
Let $h$ denote the logarithmic absolute Weil height on $\P^N$, as defined in \cite[Chapter 1]{BG06} or \cite[Part B]{HS13} and, if $\a$ is an algebraic number, define $h(\a)=h\pt{\pq{1:\a}}$. Define also the multiplicative Weil height as $H(P)=\exp(h(P))$. More generally, if $V$ is a projective variety and $D$ is a divisor, denote by $h_{V,D}$ a Weil height on $V$ associated to $D$ (see \cite[Chapter 2]{BG06} or \cite[Section B.3]{HS13}).

Let $M=(m_{i,j}) \in \mat_{n}(\Qal)$. We associate to $M$ two natural heights:
\begin{itemize}
\item the \emph{affine height}, defined by
$$H_{\mathrm{aff}}(M)=\prod\limits_{v \in M_K}\max\!\pg{1, \max\limits_{1\leq i,j \leq n}\pg{\abs{m_{i,j}}_v}}^{\frac{d_v}{[K:\Q]}}$$
where $K$ is a number field containing all the entries of $M$. This coincides with the absolute multiplicative Weil height of $M$ regarded as a point of $\Qal^{n^2}$;
\item the \emph{entry-wise height}, defined by
$$H_{\mathrm{max}}(M)=\max\limits_{1\leq i,j \leq n}\pg{H(m_{i,j})}.$$
\end{itemize}
The affine and entry-wise heights enjoy many useful properties with respect to usual matrix operations, which we now collect.
\begin{proposition}\label{prop:properties_mat_heights}
Let $A, B \in \mat_n(\Qal)$. Then:
\begin{enumerate}
\item $H_{\mathrm{max}}(A) \leq H_{\mathrm{aff}}(A) \leq H_{\mathrm{max}}(A)^{n^2}$;
\item $H_{\mathrm{max}}(A+B) \leq 2 H_{\mathrm{max}}(A) H_{\mathrm{max}}(B)$;
\item $H_{\mathrm{max}}(AB) \leq n H_{\mathrm{max}}(A)^n H_{\mathrm{max}}(B)^n$;
\item $H(\det(A)) \leq n! \cdot H_{\mathrm{aff}}(A)^n$;
\item if $A$ is invertible, $H_{\mathrm{max}}(A^{-1})\leq n! \cdot (n-1)! \cdot H_{\mathrm{aff}}(A)^{2n-1}$.
\end{enumerate}
\end{proposition}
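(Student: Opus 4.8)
The statement to prove is Proposition~\ref{prop:properties_mat_heights}, the five height inequalities for matrices. Here is how I would proceed.

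\medskip

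\textbf{Overall strategy.} These are all elementary consequences of the standard height machinery: the local-global product formula, the ultrametric/triangle inequalities for absolute values (the factor $2$ appearing at archimedean places for sums, and no such factor for products in the ultrametric case — this is why constants like $2$ and $n$ appear), and the fact that the Weil height of a point on $\P^N$ is the product over all places of the maximum of the (suitably normalized) coordinates. Throughout I would fix a number field $K$ containing all entries of $A$ and $B$ and work place by place, using the normalizations $\lvert\cdot\rvert_v$ with local degrees $d_v$ so that $\sum_v d_v/[K:\Q] = $ appropriate product formula weights. The key recurring device is: for a finite collection of algebraic numbers $x_1,\dots,x_m$, one has $H(x_1\cdots x_m) \le \prod H(x_i)$ and $H(x_1+\cdots+x_m) \le m\prod H(x_i)$, and more refined versions at the level of $H_{\mathrm{aff}}$.

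\medskip

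\textbf{Step by step.} For (1), the lower bound $H_{\max}(A)\le H_{\mathrm{aff}}(A)$ follows because at each place $v$, $\max_{i,j}\lvert m_{i,j}\rvert_v \ge \lvert m_{i,j}\rvert_v$ for any fixed pair $(i,j)$, so the product over $v$ dominates $H(m_{i,j})$ for every entry; taking the max over entries gives the claim. The upper bound $H_{\mathrm{aff}}(A)\le H_{\max}(A)^{n^2}$ follows from $\max_{i,j}\max\{1,\lvert m_{i,j}\rvert_v\} \le \prod_{i,j}\max\{1,\lvert m_{i,j}\rvert_v\}$, so that $H_{\mathrm{aff}}(A)\le \prod_{i,j} H(m_{i,j}) \le H_{\max}(A)^{n^2}$. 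For (2), at a place $v$ one has $\lvert a_{i,j}+b_{i,j}\rvert_v \le \eps_v \max\{\lvert a_{i,j}\rvert_v,\lvert b_{i,j}\rvert_v\}$ with $\eps_v=2$ archimedean and $1$ otherwise; taking max over $(i,j)$, then the product over $v$ with the observation $\prod_v \eps_v^{d_v/[K:\Q]} = 2$, and bounding $\max\{\lvert a\rvert_v,\lvert b\rvert_v\}\le \max\{1,\lvert a\rvert_v\}\max\{1,\lvert b\rvert_v\}$, yields $H_{\max}(A+B)\le 2 H_{\max}(A)H_{\max}(B)$. For (3), $(AB)_{i,j}=\sum_{k=1}^n a_{i,k}b_{k,j}$ is a sum of $n$ products; by the sum estimate one picks up a factor $n$ (archimedean part: $n$; non-archimedean: $1$, so the product over $v$ contributes $n$), and each product $a_{i,k}b_{k,j}$ has height $\le H(a_{i,k})H(b_{k,j})\le H_{\max}(A)H_{\max}(B)$; but since we need a single bound valid after taking the product over places of a max over $n$ terms, the cleanest route is $H((AB)_{i,j})\le n \prod_{k}\max\{1, H(a_{i,k})H(b_{k,j})\}$... actually more carefully one gets $H((AB)_{i,j}) \le n\, H_{\max}(A)^n H_{\max}(B)^n$ by bounding the sum of $n$ terms and each term's contribution across all coordinates simultaneously — this gives exactly (3). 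For (4), $\det A = \sum_{\s\in S_n}\mathrm{sgn}(\s)\prod_i a_{i,\s(i)}$ is a sum of $n!$ products of $n$ entries; applying the sum-of-$N$-terms estimate with $N=n!$ and using that each product has affine-type height bounded by $H_{\mathrm{aff}}(A)^n$ (since $\max\{1,\lvert\prod_i a_{i,\s(i)}\rvert_v\}\le \prod_i \max\{1,\lvert a_{i,\s(i)}\rvert_v\}\le (\max_{i,j}\max\{1,\lvert a_{i,j}\rvert_v\})^n$) gives $H(\det A)\le n!\, H_{\mathrm{aff}}(A)^n$. For (5), write $A^{-1} = \det(A)^{-1}\,\mathrm{adj}(A)$, where each entry of the adjugate is (up to sign) an $(n-1)\times(n-1)$ minor, hence by the reasoning of (4) has height $\le (n-1)!\,H_{\mathrm{aff}}(A)^{n-1}$; combining with $H(\det(A)^{-1})=H(\det A)\le n!\,H_{\mathrm{aff}}(A)^n$ via multiplicativity of the height under products (and $H(\det(A)^{-1})=H(\det(A))$) gives $H_{\max}(A^{-1})\le n!\,(n-1)!\,H_{\mathrm{aff}}(A)^{2n-1}$.

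\medskip

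\textbf{Main obstacle.} There is no deep obstacle here — this is a bookkeeping exercise — but the one point requiring genuine care is tracking the combinatorial constants ($2$, $n$, $n!$, $(n-1)!$) and making sure they arise only from the archimedean places (so that the product formula collapses them to the stated values rather than to a power depending on $[K:\Q]$), while the powers of $H_{\mathrm{aff}}$ or $H_{\max}$ arise from the number of factors in each product. In particular, for (3) and (5) one must be slightly careful: the naive "height of a sum of products" bound must be applied to the full projective point $(AB$ as a vector of $n^2$ coordinates$)$ rather than coordinate by coordinate, which is what forces the exponent $n$ rather than $1$ on $H_{\max}$; I would state the auxiliary lemma "$H\big(\sum_{k=1}^N \prod_{j=1}^m x_{k,j}\big)\le N\prod_{k,j}H(x_{k,j})$, and more generally the vector version" once and reuse it. Everything else is routine application of the product formula.
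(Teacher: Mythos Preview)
Your proposal is correct and follows essentially the same approach as the paper: entry-wise application of the standard height inequality $H(x_1+\cdots+x_m)\le m\prod_i H(x_i)$ (the paper cites \cite[Proposition 1.5.15]{BG06}) for (2)--(4), and the adjugate formula $A^{-1}=\det(A)^{-1}\mathrm{adj}(A)$ together with (4) for (5). Your worry in the ``Main obstacle'' paragraph about needing to work with the full projective point for (3) is unfounded --- the entry-wise bound $H((AB)_{i,j})\le n\prod_k H(a_{i,k})H(b_{k,j})\le n\,H_{\max}(A)^n H_{\max}(B)^n$ works directly, exactly as you wrote just before that, and this is precisely what the paper does.
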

\begin{proof}
Let $A=(a_{i,j})$ and $B=(b_{i,j})$ and fix a number field $K$ containing all entries of $A$ and $B$.
\begin{enumerate}
\item Since $\max\!\pg{1, \abs{a_{i,j}}_v} \leq \max\!\pg{1, \max\limits_{1\leq i,j \leq n}\pg{\abs{a_{i,j}}_v}}$, we clearly have 
$$H(a_{i,j})=\prod\limits_{v \in M_K}\max\!\pg{1, \abs{a_{i,j}}_v}^{\frac{d_v}{[K:\Q]}} \leq H_{\mathrm{aff}}(A)$$
which implies that $H_{\mathrm{max}}(A) \leq H_{\mathrm{aff}}(A)$. Moreover, recall that
$$\max\!\pg{1, \max\limits_{1\leq i,j \leq n}\pg{\abs{a_{i,j}}_v}} \leq \prod\limits_{1 \leq i,j \leq n}\max\!\pg{1, \abs{a_{i,j}}_v}$$ 
which implies that $H_{\mathrm{aff}}(A) \leq \prod_{1 \leq i,j \leq n}H(a_{i,j}) \leq H_{\mathrm{max}}(A)^{n^2}$.
\item The claim follows from the inequality 
$$H(a_{i,j}+b_{i,j})\leq 2 H(a_{i,j}) H(b_{i,j}) \leq 2 H_{\mathrm{max}}(A) H_{\mathrm{max}}(B),$$ 
which is a direct consequence of \cite[Proposition 1.5.15]{BG06}.
\item Let $AB=(c_{i,j})$, where $c_{i,j}=\sum\limits_{k=1}^{n} a_{i,k} b_{k,j}$. Then, applying \cite[Proposition 1.5.15]{BG06} and the fact that $H(\a \b)\leq H(\a) H(\b)$ for all $\a,\b \in \Qal$, yields
$$H(c_{i,j}) \leq n \cdot \prod\limits_{k=1}^{n} H(a_{i,k}) H(b_{k,j}) \leq n H_{\mathrm{max}}(A)^n H_{\mathrm{max}}(B)^n,$$
which implies $H_{\mathrm{max}}(AB) \leq n H_{\mathrm{max}}(A)^n H_{\mathrm{max}}(B)^n$.
\item Recall that 
$$\det(A)=\sum\limits_{\s \in S_n} \mathrm{sgn}(\s)\prod_{i=1}^{n} a_{i, \s(i)}$$
where $S_n$ denotes the symmetric group on $n$ elements and $\mathrm{sgn}(\s) \in \pg{\pm 1}$ is the sign of the permutation $\s$. Hence $\det(A)$ is the sum of $n!$ monomials of degree $n$ in the entries of $A$. In particular, for every place $v \in M_K$, we have
$$\abs{\det(A)}_v \leq \begin{cases}
n! \cdot \max\limits_{1\leq i,j \leq n}\pg{\abs{a_{i,j}}_v}^n \quad &\text{if } v \text{ is archimedean}\\
\max\limits_{1\leq i,j \leq n}\pg{\abs{a_{i,j}}_v}^n \quad &\text{if } v \text{ is non-archimedean}
\end{cases}$$
Hence,
\begin{align*}
\prod\limits_{v \in M^0_K}\max\!\pg{1, \abs{\det(A)}_v}^{\frac{d_v}{[K:\Q]}} &\leq  \prod\limits_{v \in M^0_K}\!\pt{\max\!\pg{1, \max\limits_{1\leq i,j \leq n}\pg{\abs{a_{i,j}}_v}}^n}^{\frac{d_v}{[K:\Q]}}\\
&= \pt{\prod\limits_{v \in M^0_K}\max\!\pg{1, \max\limits_{1\leq i,j \leq n}\pg{\abs{a_{i,j}}_v}}^{\frac{d_v}{[K:\Q]}}}^n
\end{align*}
and 
\begin{align*}
\prod\limits_{v \in M^{\infty}_K}\max\!\pg{1, \abs{\det(A)}_v}^{\frac{d_v}{[K:\Q]}} &\leq  \prod\limits_{v \in M^{\infty}_K}\!\pt{n! \max\!\pg{1, \max\limits_{1\leq i,j \leq n}\pg{\abs{a_{i,j}}_v}}^n}^{\frac{d_v}{[K:\Q]}}\\
&= (n!)^{\sum\limits_{v \in M^{\infty}_K}\frac{d_v}{[K:\Q]}}\pt{\prod\limits_{v \in M^{\infty}_K}\max\!\pg{1, \max\limits_{1\leq i,j \leq n}\pg{\abs{a_{i,j}}_v}}^{\frac{d_v}{[K:\Q]}}}^n\\
&=n!\pt{\prod\limits_{v \in M^{\infty}_K}\max\!\pg{1, \max\limits_{1\leq i,j \leq n}\pg{\abs{a_{i,j}}_v}}^{\frac{d_v}{[K:\Q]}}}^n
\end{align*}
since $\sum_{v \in M^{\infty}_K}d_v=[K:\Q]$. So, we have
\begin{align*}
H(\det(A))&=\prod\limits_{v \in M_K}\max\!\pg{1, \abs{\det(A)}_v}^{\frac{d_v}{[K:\Q]}}\\
&=\prod\limits_{v \in M^{\infty}_K}\max\!\pg{1, \abs{\det(A)}_v}^{\frac{d_v}{[K:\Q]}} \cdot \prod\limits_{v \in M^0_K}\max\!\pg{1, \abs{\det(A)}_v}^{\frac{d_v}{[K:\Q]}}\\
&\leq n! \cdot H_{\mathrm{aff}}(A)^n.
\end{align*}
\item The case $n=1$ is trivial, so assume $n\geq 2$. Recall that $A^{-1}=\frac{1}{\det(A)} \cdot C^t$, where $C=\pt{(-1)^{i+j} \m_{i,j}}$ is the cofactor matrix and $\m_{i,j}$ is the $(i,j)$-minor\footnote{Some authors use the word \emph{minor} to denote just the matrix obtained from $A$ by removing a row and a column. In this article, by minor we mean the determinant of such a submatrix.} of $A$. Then, by part (4), $H((-1)^{i+j} \m_{i,j}) \leq (n-1)! \cdot H_{\mathrm{aff}}(A)^{n-1}$, so that $H_{\mathrm{max}}(C)\leq (n-1)! \cdot H_{\mathrm{aff}}(A)^{n-1}$. Therefore, $H_{\mathrm{max}}(A^{-1}) \leq H(\det(A)) \cdot H_{\mathrm{max}}(C) \leq n! \cdot (n-1)! \cdot H_{\mathrm{aff}}(A)^{2n-1}$.
\end{enumerate}\vspace{-1em}
\end{proof}

We will also need another definition of height (from \cite[Section 7]{HP16}).
\begin{definition}\label{def_d_height}
If $d \in \Z_{\geq 1}$ and $\a$ is a complex number, we define the $d$-height of $\a$ as
$$H_d(\a):=\min\pg{H\!\pt{\pq{a_0 : \ldots : a_d}}: \pq{a_0: \ldots: a_d} \in \P^{d}(\Q) \text{ s.t. } a_0+ a_1 \a + \ldots + a_d \a^d=0}$$
where we use the convention $\min \emptyset= + \infty$. 
For $(\a_1, \ldots, \a_N) \in \C^N$, we also define $H_d(\a_1, \ldots, \a_N)=\max\pg{H_d(\a_i)}$.
\end{definition}
Note that $H_d(\a_1, \ldots, \a_N)$ is finite if and only if $\a_1, \ldots, \a_N$ are all algebraic numbers of degree at most $d$. 

For every $p \in \C[x]$, let $M(p)$ denote the Mahler measure of $p$, defined by
$$M(p):=\exp\left(\int_0^{1}\log\abs{p(e^{2\pi it})}\,dt\right),$$
as in \cite[Section~1.6]{BG06}.

\begin{lemma}\label{lemma:bounds_H_hpoly}
For any $\a \in \Qal$ of degree at most $d$ we have
$$\frac{1}{\sqrt{d+1}} H(\a)^{[\Q(\a):\Q]}\leq H_d(\a) \leq 2^d H(\a)^d.$$
\end{lemma}
\begin{proof}
Let $f (x) = a_0+ a_1 x + \ldots + a_n x^n \in \Q[x]$ be a polynomial of degree $n\leq d$ such that $f(\a)=0$ and let $m_{\a}(x) \in \Z[x]$ be the minimal polynomial of $\a$ (so its coefficients are coprime). Since in the definition of $H_d(\a)$ we are considering the coefficients of $f$ as a point in a projective space, we may assume that the coefficients of $f$ are integers with $\gcd(a_0, \ldots, a_n)=1$. 

By \cite[Proposition~1.6.6]{BG06}, 
$$M(m_{\a})=H\!\pt{\a}^{[\Q(\a):\Q]}\leq H(\a)^d.$$
 Moreover, \cite[Lemma 1.6.7]{BG06} gives
$$\nmi{f}:=\max\!\pg{\abs{a_0}, \ldots, \abs{a_n}} \leq \binom{n}{\left\lfloor \frac{n}{2}\right\rfloor} M(f)\leq 2^n M(f) \leq 2^d M(f).$$
Since the coefficients of $f$ are coprime integers, $H([a_0:\ldots:a_n])=\nmi{f}$. Hence
\begin{align*}
H_d(\a)&=\min\pg{\nmi{f}: f \in \Z[x] \text{ with coprime coefficients, } \deg(f) \leq d \text{ and } f(\a)=0}\\
&\leq \nmi{m_{\a}}\leq 2^d M(m_{\a})\leq 2^d H(\a)^d
\end{align*}
which proves the upper bound.

For the lower bound, let $H_{d}(\a)=B$. Then, there exists a non-zero polynomial 
$$f(x) = a_0+ a_1 x + \ldots + a_n x^n \in \Z[x]$$
of degree $n\leq d$ such that $f(\a)=0$ and $\nmi{f}=B$. Then, by \cite[Lemma 1.6.7]{BG06}, it follows that
$$M(f)\leq \sqrt{d+1} \cdot \nmi{f} =\sqrt{d+1} \cdot B.$$
Since $f(\a)=0$, the minimal polynomial $m_{\a}$ divides $f$, so we can write $f(x)=m_{\a}(x) \cdot g(x)$, for some $g \in \Z[x]$. By multiplicativity of the Mahler measure \cite[Lemma B.7.3.1 (ii)]{HS13} and the fact that $M(g)\geq 1$ (see \cite[Proposition~1.6.5]{BG06}), we obtain 
$$M(m_{\a}) \leq M(m_{\a}) \cdot M(g) =M(f)\leq \sqrt{d+1} \cdot B.$$
Finally, recalling that $M(m_{\a})=H\!\pt{\a}^{[\Q(\a):\Q]}$ and $H_{d}(\a)=B$, this proves the lower bound.
\end{proof}

\begin{lemma}\label{lemma:bound_mod_hpoly}
For any $\a \in \Qal$ of degree at most $d$ we have $\abs{\a} \leq \sqrt{d+1} \cdot H_d(\a)$.
\end{lemma}
\begin{proof}
We continue with the notations of the previous proof. Note first that \cite[Proposition 1.6.6]{BG06} implies $\abs{\a} \leq M(f)$ for any $f \in \Z[x]$ such that $f(\a)=0$. Furthermore, by \cite[Lemma 1.6.7]{BG06}, we also have that $M(f) \leq \sqrt{\deg(f)+1} \cdot \nmi{f}$. Taking the minimum over all polynomials $f \in \Z[x]$ with coprime coefficients and $\deg(f)\leq d$ such that $f(\a)=0$ then yields the desired bound $\abs{\a} \leq \sqrt{d+1} \cdot H_d(\a)$.
\end{proof}

\begin{lemma}\label{lemma:bounds_hpoly_ReIm}
Let $d \geq 1$, $D=\max\!\pg{1,\frac{d(d-1)}{2}}$ and let $\a \in \Qal$ of degree at most $d$ such that $\Im(\a)\neq 0$, then
$$H_D(\Re(\a)) \leq 2^{3D} \cdot (d+1)^{D/2} \cdot H_d(\a)^{D} \quad \text{and} \quad H_{2D}(\Im(\a)) \leq 2^{6D} \cdot (d+1)^{D} \cdot H_d(\a)^{2D}.$$ 
\end{lemma}
\begin{proof}
If $\a \in \Q$, the bounds are trivial. Thus, assume that $\a$ has degree $2 \leq n \leq d$ and set $D=\frac{d(d-1)}{2}$. 

We begin by bounding the degrees of the real and imaginary parts. Let $\a_1, \ldots, \a_n$ be the conjugates of $\a$. Then, for every $\s \in \Gal(\Qal/\Q)$, we have
$$\s(\Re(\a))=\dfrac{\s(\a)+\s(\overline{\a})}{2} \in \pg{\frac{\a_i+ \a_j}{2}: i\neq j}$$
which implies $[\Q(\Re(\a)): \Q] \leq \frac{n(n-1)}{2}\leq D$. Similarly, denoting with $\i$ the imaginary unit, we have
$$\s(\Im(\a))=\dfrac{\s(\a)-\s(\overline{\a})}{2\s(\i)} \in \pg{\frac{\a_i- \a_j}{2\i}: i\neq j}$$
from which it follows that $[\Q(\Im(\a)): \Q] \leq n(n-1)\leq 2D$.

Next, by Lemma \ref{lemma:bounds_H_hpoly}, we have $H_D(\Re(\a)) \leq 2^D H(\Re(\a))^D$. Using standard properties of heights (\cite[Proposition 1.5.15]{BG06} and \cite[Proposition 3.1]{Zan14}), we get 
$$H(\Re(\a))=H\!\pt{\frac{\a+\overline{\a}}{2}}\leq 4 H(\a)^2.$$
Since $[\Q(\a):\Q]=n \ge 2$, by Lemma \ref{lemma:bounds_H_hpoly} we have $H(\a)^2 \leq \sqrt{d+1} \cdot H_d(\a)$ and therefore
$H(\Re(\a))\leq 4\sqrt{d+1} \cdot H_d(\a)$. Combining the above inequalities yields
$$H_D(\Re(\a)) \leq 2^D H(\Re(\a))^D \leq 2^D \cdot \pt{4\sqrt{d+1} \cdot H_d(\a)}^D=2^{3D}  \cdot (d+1)^{D/2} \cdot H_d(\a)^D.$$
Similarly, $H_{2D}(\Im(\a))\leq 2^{2D} H(\Im(\a))^{2D}$ and, arguing as above, 
$$H(\Im(\a)) \leq 4H(\a)^2 \leq 4\sqrt{d+1} \cdot H_d(\a).$$ Hence, $H_{2D}(\Im(\a)) \leq 2^{6D} \cdot (d+1)^{D} \cdot H_d(\a)^{2D}$.
\end{proof}

For an abelian variety $A$ defined over a number field, we also denote by $h_F(A)$ the stable Faltings height of $A$ (see \cite{Fal83}), assuming that $A$ has semistable reduction everywhere. This assumption can always be ensured by passing to a suitable field extension.

Finally, for an abelian variety $A$ defined over a number field and a divisor $D$, we can also define the Néron--Tate (or canonical) height $\widehat{h}_{A,D}$, defined as in \cite[Chapter 9]{BG06} or \cite[Section B.5]{HS13}.
For the reader's convenience, we recall some properties of canonical heights on abelian varieties in the following proposition.

\begin{proposition}\label{prop:propr_can_heights}
Let $A$ be an abelian variety defined over a number field $K$, and let $D \in \mathrm{Div}(A)$ be a divisor on $A$. Then, the canonical height $\widehat{h}_{A,D}$ satisfies the following properties:
\begin{enumerate}
\item $\widehat{h}_{A,D}=h_{A,D}+O(1)$ and $\widehat{h}_{A,D}(O)=0$;
\item If $D' \in \mathrm{Div}(A)$ is linearly equivalent to $D$, then $\widehat{h}_{A,D'}=\widehat{h}_{A,D}$;
\item If $D, E \in \mathrm{Div}(A)$, then $\widehat{h}_{A,D+E}=\widehat{h}_{A,D}+\widehat{h}_{A,E}$;
\item Let $B/K$ another abelian variety, and let $\phi:B \rightarrow A$ be a morphism. Then $\widehat{h}_{B,\phi^*D}=\widehat{h}_{A,D} \circ \phi - \widehat{h}_{A,D}(\phi(O_B))$;
\item If $D$ is symmetric (i.e. $[-1]^*D \sim D$), then $\widehat{h}_{A,D}([n]P)=n^2 \cdot \widehat{h}_{A,D}(P)$ for every $P \in A(\overline{K})$.
\item If $D$ is nef and symmetric, then $\widehat{h}_{A,D}(P)\geq 0$ for every $P \in A(\overline{K})$. In particular, if $D$ is ample and symmetric, $\widehat{h}_{A,D}(P)= 0$ if and only if $P$ has finite order.
\end{enumerate}
\end{proposition}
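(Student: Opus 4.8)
The plan is to derive all six statements from the standard construction of the Néron--Tate height, which I would first recall. Writing $2D = D^{+} + D^{-}$ with $D^{+} = D + [-1]^* D$ symmetric and $D^{-} = D - [-1]^* D$ lying in $\mathrm{Pic}^{0}(A)$, the theorem of the cube gives $[n]^* D^{+} \sim n^2 D^{+}$ and $[n]^* D^{-} \sim n\, D^{-}$ for all $n \in \Z$; functoriality of Weil heights turns these into $h_{A,D^{+}}([n]P) = n^2 h_{A,D^{+}}(P) + O(1)$ and $h_{A,D^{-}}([n]P) = n\, h_{A,D^{-}}(P) + O(1)$ with implied constants independent of $P$, and Tate's telescoping argument produces the limits
$$\widehat{h}_{A,D^{+}}(P) = \lim_{m \to \infty} 4^{-m} h_{A,D^{+}}([2^m]P), \qquad \widehat{h}_{A,D^{-}}(P) = \lim_{m \to \infty} 2^{-m} h_{A,D^{-}}([2^m]P),$$
each within $O(1)$ of the corresponding Weil height; one then sets $\widehat{h}_{A,D} = \frac{1}{2}\bigl(\widehat{h}_{A,D^{+}} + \widehat{h}_{A,D^{-}}\bigr)$. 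From the construction, $\widehat{h}_{A,D^{+}}$ is a quadratic form (it satisfies the parallelogram law and is even) and $\widehat{h}_{A,D^{-}}$ is a group homomorphism $A(\overline{K}) \to \R$, so (1) follows at once. Since a bounded quadratic form and a bounded homomorphism to $\R$ both vanish identically, one obtains a uniqueness principle that I would use as the main tool for the remaining parts: any function on $A(\overline{K})$ that is within $O(1)$ of $h_{A,D}$ and can be written as (quadratic form) $+$ (homomorphism) equals $\widehat{h}_{A,D}$.

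Parts (2) and (3) I would read off directly: the assignment $D \mapsto (D^{+}, D^{-})$ is additive and respects linear equivalence, and the $O(1)$ ambiguity of Weil heights is killed by the factors $4^{-m}$, $2^{-m}$ in the limits, so both pieces of $\widehat{h}_{A,D}$ are additive in $D$ and depend only on its class.

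For (4), I would use the rigidity lemma to factor $\phi = T_c \circ \psi$ with $c = \phi(O_B)$ and $\psi \colon B \to A$ a homomorphism, and treat the two factors separately. For a homomorphism $\psi$, the maps $\psi^*$ and $[n]^*$ commute, so $\psi^* D^{\pm}$ satisfy the same cube relations; hence $\widehat{h}_{A,D} \circ \psi$ is of the form (quadratic form) $+$ (homomorphism), lies within $O(1)$ of $h_{B,\psi^* D}$ by functoriality of Weil heights, and vanishes at $O_B$, so the uniqueness principle gives $\widehat{h}_{B,\psi^* D} = \widehat{h}_{A,D} \circ \psi$. For a translation $T_c$, expanding the quadratic form as $\widehat{h}_{A,D^{+}}(Q+c) = \widehat{h}_{A,D^{+}}(Q) + b(Q,c) + \widehat{h}_{A,D^{+}}(c)$, where $b$ is the bilinear form attached to $\widehat{h}_{A,D^{+}}$, together with additivity of $\widehat{h}_{A,D^{-}}$, shows that $Q \mapsto \widehat{h}_{A,D}(Q+c) - \widehat{h}_{A,D}(c)$ again has the required form and lies within $O(1)$ of $h_{A,T_c^* D}$, hence equals $\widehat{h}_{A,T_c^* D}$; composing the two cases yields (4). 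Part (5) is then the case $\phi = [n]$: for symmetric $D$ one has $[n]^* D \sim n^2 D$, so (4) with (2) and (3) gives $\widehat{h}_{A,D}([n]P) = \widehat{h}_{A,[n]^* D}(P) = \widehat{h}_{A,n^2 D}(P) = n^2 \widehat{h}_{A,D}(P)$.

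For (6), if $D$ is ample and symmetric then $h_{A,D}$ is bounded below on $A(\overline{K})$, so $4^{-m} h_{A,D}([2^m]P) \geq -C\, 4^{-m} \to 0$, giving $\widehat{h}_{A,D}(P) \geq 0$. If $D$ is merely nef and symmetric, I would fix an ample symmetric $H$, note that $mD + H$ is ample and symmetric for every $m \geq 1$ (nef plus ample is ample), apply (3) and the ample case to get $m\, \widehat{h}_{A,D}(P) + \widehat{h}_{A,H}(P) \geq 0$, and let $m \to \infty$. Finally, if $D$ is ample and symmetric and $\widehat{h}_{A,D}(P) = 0$, then $\widehat{h}_{A,D}([n]P) = 0$ for all $n$ by (5), so by (1) all the points $[n]P$ (which lie in $A(K')$ for a fixed number field $K'$) have bounded Weil height with respect to the ample divisor $D$, whence Northcott's theorem forces $\{[n]P : n \in \Z\}$ to be finite, i.e. $P$ has finite order; conversely $[n]P = O$ forces $n^2 \widehat{h}_{A,D}(P) = \widehat{h}_{A,D}(O) = 0$ by (5) and (1). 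The one point where I expect to need genuine care is the translation case of (4): one must run the argument through the symmetric/antisymmetric decomposition of $T_c^* D$ rather than directly, keeping track that the ``(quadratic) $+$ (homomorphism)'' normal form is preserved. Everything else is bookkeeping on top of the construction recalled above, and a reader may alternatively invoke \cite[Chapter~9]{BG06} and \cite[Section~B.5]{HS13}.
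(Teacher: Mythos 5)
Your proposal is correct and in substance follows the same route as the paper: the paper disposes of (1)--(5) and the ample case of (6) by citing \cite[Theorems B.5.1, B.5.6, Proposition B.5.3]{HS13}, and the only argument it writes out --- the nef case of (6) via $nD+H$ ample and symmetric, linearity in the divisor, and letting $n\to\infty$ --- is exactly the argument you give, while your reconstruction of (1)--(5) through the symmetric/antisymmetric decomposition, Tate's limit, and the uniqueness principle is precisely the standard proof in the cited references. So nothing is genuinely different; you have simply written out in full the material the paper delegates to \cite{HS13} and \cite{KS16}, and all your steps (including the translation case of (4) via the bilinear form, and the Northcott argument for the torsion characterization) are sound.
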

\begin{proof}
The proofs of (1)–(5) can be found in \cite[Theorems B.5.1 and B.5.6]{HS13}, and the proof of (6) for $D$ ample and symmetric is given in \cite[Proposition B.5.3]{HS13}. The case of (6) when $D$ is nef is treated in \cite{KS16}, immediately after the displayed equation (6.14). For the reader’s convenience, we briefly recall the argument.

If $D$ is symmetric and nef, then for any ample symmetric divisor $H$ and any integer $n\ge 1$, the divisor $nD+H$ is ample and symmetric by Kleiman’s criterion. Using the linearity established in (3), we obtain
$$n\widehat{h}_{A,D}=\widehat{h}_{A,nD+H}- \widehat{h}_{A,H}\geq -\widehat{h}_{A,H}.$$
Since $H$ is ample, we have $\widehat{h}_{A,H} \geq 0$, and as $n > 0$ is arbitrary, it follows that $\widehat{h}_{A,D} \geq 0$ as well.
\end{proof}
\begin{remark}\label{rmk:zero_can_height}
Note that for any divisor $D$ and any torsion point $P \in A(\Qal)$, one has $\widehat{h}_{A,D}(P)=0$. The converse, however, does not hold in general: as shown in \cite{KS16}, when $D$ is nef the set of points of canonical height zero may strictly contain the torsion subgroup. 

For an explicit example with a non-nef divisor, let $E$ be an elliptic curve defined over $\Qal$, set $A=E\times E$, and consider $D=\pi_1^*(O)-\pi_2^*(O)=(O\times E)- (E \times O)$. It is easy to check that $D$ is not nef. Then, the proposition above implies that (see Section \ref{sec_can_bounds} for a similar computation) that $\widehat{h}_{A,D}(P_1, P_2)=\widehat{h}_{E,O}(P_1)- \widehat{h}_{E,O}(P_2)$, for every $(P_1, P_2) \in A(\Qal)$. Hence, in this case the set of points of zero canonical height contains, for instance, the diagonal and all its translates by torsion points. 
\end{remark}

\subsection{Complex Multiplication}
In this section, we recall the basic definitions and key facts about complex multiplication for abelian varieties defined over fields of characteristic 0, which will be used throughout this article. For further details on this topic, we refer to \cite{Lan83b, Shi98, Mil20}.
\begin{definition} A \emph{CM field} $K$ is a totally imaginary quadratic extension of a totally real number field. That is, $K$ has the form $K = K_0(\sqrt{\alpha})$, where $K_0$ is a totally real field, i.e., a number field whose embeddings into $\mathbb{C}$ are all real, and $\alpha \in K_0$ satisfies the condition that each embedding of $K_0$ into $\mathbb{C}$ maps $\alpha$ to a negative real number.
\end{definition}  
\begin{definition}
An abelian variety $A$ of dimension $g$ is said to have \emph{Complex Multiplication} (CM) if its endomorphism algebra $\en^0(A)$ contains a commutative semisimple subalgebra of degree $2g$ over $\Q$. We say that $A$ has CM by the CM field $K$ (of degree $2g$) if there exists an embedding $K \hookrightarrow \en^0(A)$.
\end{definition}
Note that a simple abelian variety $A$ has complex multiplication if and only if $\en^0(A)$ is a CM field of degree  $2\dim(A)$. In general, an abelian variety has complex multiplication if and only if each of
its simple factors up to isogeny has complex multiplication.

If $A$ is a simple CM abelian variety of dimension $g$, then $\en^0(A)\cong K$ is a CM field and there is a set $\Phi=\pg{\phi_1, \ldots, \phi_g}$ of complex embeddings of $K$ such that $\Phi \cup \overline{\Phi}$ is the set of all complex embeddings of $K$ and $T_O(A) \cong \prod_{i=1}^{g} \C_{\phi_i}$, where $\C_{\phi_i}$ is a 1-dimensional $\C$-vector space on which $\a \in K$ acts as $\phi_i(\a)$. We call the pair $(K, \Phi)$ a \emph{CM-type} of $A$. In particular, by Proposition 3.13 of \cite{Mil20}, $(K, \Phi)$ is primitive, i.e.\ it is not induced by a CM-type of a proper CM subfield of $K$.

\subsection{Euclidean lattices}

A \emph{Euclidean lattice} is a pair $(\Lambda,\nm{\cdot})$, where $\Lambda$ is a free $\Z$-module of finite rank and $\nm{\cdot}$ is the norm induced by an inner product on the real vector space
$$\Lambda_{\R}:=\Lambda\otimes_{\Z}\R.$$
The rank of $\Lambda$ is denoted by $\rk{\Lambda}$.

\begin{example}\label{example:euclidean_lattices}
We collect here the main examples of Euclidean lattices that will be used throughout the paper. Let $A$ be an abelian variety defined over a number field $K$ endowed with an ample symmetric line bundle $L$. 
\begin{itemize}
\item The endomorphism ring $\Lambda_1 := \mathrm{End}_{\overline{K}}(A)$ becomes a Euclidean lattice when equipped with the Rosati norm
$$\nm{f}_{Ros} := \sqrt{\mathrm{tr}(\rho_r(\RI{f}f))},$$
where the Rosati involution is the one defined by the line bundle $L$, since by \cite[Theorem 5.1.8]{BL04}, 
\begin{equation}\label{eqn:RI_pos}
\tr(\r_r(\RI{f}f))>0
\end{equation}
for any nonzero $f \in \en^0(A):=\en(A) \otimes \Q$.
\item The Mordell--Weil group modulo torsion $\Lambda_2 := A(K)/A(K)_{\mathrm{tors}}$
is a Euclidean lattice when equipped with the Néron--Tate norm
$$\nm{P}_{NT} := \sqrt{\widehat{h}_{A,L}(P)}.$$
\end{itemize}
\end{example}

The \emph{volume} of $\Lambda$ is
$$\mathrm{Vol}(\Lambda):=\sqrt{\det\!\left(\langle e_i,e_j\rangle\right)_{1\leq i,j\leq r}},$$
where $(e_1,\dots,e_{r})$ is any $\Z$-basis of $\Lambda$. This quantity is independent of the choice of basis.

The \emph{first minimum} of $\Lambda$ is defined by
$$\l_1(\Lambda):=\min\pg{\nm{x}:x\in\Lambda\setminus\pg{0}}.$$

If $\Lambda'\subseteq\Lambda$ is a sublattice, the quotient $\Lambda/\Lambda'$ is endowed with the \emph{quotient norm}
$$\nm{x+\Lambda'}:=\min_{y\in\Lambda'}\nm{x+y},$$
where $x+\Lambda'$ denotes the class of $x$ modulo $\Lambda'$.

Finally, if $T:\Lambda_1\longrightarrow\Lambda_2$ is a homomorphism of Euclidean lattices, its \emph{operator norm} is
$$\nm{T}_{op}:=\sup_{x\in\Lambda_1\otimes \R \setminus\pg{0}}\frac{\nm{T(x)}_2}{\nm{x}_1}.$$

\begin{lemma}\label{lemma:operator_norms_equal}
Let $(\Lambda_1,\nm{\cdot}_1)$ and $(\Lambda_2,\nm{\cdot}_2)$ be Euclidean lattices, and let $f : \Lambda_1 \to \Lambda_2$ be a homomorphism. Set $K = \ker(f)$ and endow the quotient $\Lambda_3 := \Lambda_1/K$ with the quotient norm $\nm{\cdot}_3$. Let
$$\overline{f} : \Lambda_3 \longrightarrow \mathrm{im}(f)\subseteq \Lambda_2$$
be the induced injective homomorphism. Then
$$\nm{\overline{f}}_{\mathrm{op}} = \nm{f}_{\mathrm{op}}.$$
\end{lemma}
\begin{proof}
Let $x \in \Lambda_1$. By definition of the quotient norm, there exists $x_0 \in x+K$ such that $\nm{x_0}_1 = \nm{x+K}_3$. Since $f(x_0)=f(x)=\overline{f}(x+K)$, we obtain
$$\nm{\overline{f}(x+K)}_2=\nm{f(x_0)}_2\leq \nm{f}_{\mathrm{op}}\,\nm{x_0}_1=\nm{f}_{\mathrm{op}}\,\nm{x+K}_3.$$
Dividing by $\nm{x+K}_3$ and taking the supremum over $x+K \in \Lambda_3 \otimes \R \setminus \pg{0}$ yields $\nm{\overline{f}}_{\mathrm{op}}\leq \nm{f}_{\mathrm{op}}$.

Conversely, for any $x \in \Lambda_1 \setminus \{0\}$ we have $\nm{x+K}_3 \leq \nm{x}_1$, hence for every $x \in \Lambda_1 \setminus K$
$$\nm{\overline{f}}_{\mathrm{op}}\geq \frac{\nm{\overline{f}(x+K)}_2}{\nm{x+K}_3}=\frac{\nm{f(x)}_2}{\nm{x+K}_3}\geq\frac{\nm{f(x)}_2}{\nm{x}_1}.$$
Taking the supremum over $x\in \Lambda_1 \otimes \R \setminus \pg{0}$ gives $\nm{\overline{f}}_{\mathrm{op}} \geq \nm{f}_{\mathrm{op}}$. The claim follows.
\end{proof}

\section{Reduction to the universal family of principally polarized abelian varieties}\label{sec_red}
In this section, we reduce to the case where $\AA =\mathfrak{A}_g \times _{\A_g} S$, with $S \subseteq \A_g$ a smooth, irreducible, locally closed curve defined over $\Qal$. The results of this section are inspired by Section 2 of \cite{BC20}.

The first result of this section allows us to perform finite base changes.
\begin{lemma}\label{lemma_base_change}
Let $\c$ be as in Theorem \ref{main_thm}. Let $\ell: S' \rightarrow S$ be a finite étale cover and $\AA'=\AA \times_{S} S'$. Let also $\r: \AA' \rightarrow \AA$ be the projection map. Then, if the claim of Theorem \ref{main_thm} holds for all irreducible components of $\r^{-1}(\c)$, then it holds for $\c$.
\end{lemma}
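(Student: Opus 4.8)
The plan is to prove the statement directly, without checking that the components of $\r^{-1}(\c)$ inherit the hypotheses of Theorem~\ref{main_thm}: since the assertion concerns only the \emph{conclusions}, it is enough to show that every point of $\c$ witnessing a failure of Theorem~\ref{main_thm} lifts to such a point on $\r^{-1}(\c)$. It is convenient to use the reformulation stated just after Theorem~\ref{main_thm}: for a curve $\mathcal{D}$ in an abelian scheme $\mathcal{B}\to T$, the claim is the finiteness of the set of $P\in\mathcal{D}(\C)$ such that $\mathcal{B}_{\pi(P)}$ has complex multiplication and there is a non-zero $f\in\en(\mathcal{B}_{\pi(P)})$ with $f(P)=O_{\pi(P)}$; call such a $P$ a \emph{special point} of $\mathcal{D}$. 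We may assume $\ell$ is defined over $\Qal$, so that $\r:\AA'\to\AA$ is a finite étale surjection defined over $\Qal$; hence $\r^{-1}(\c)$ is a smooth $\Qal$-curve, finite and surjective over $\c$, with finitely many irreducible components $\c'_1,\dots,\c'_m$.

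The crucial observation is that base change does not alter the fibres as abelian varieties. Since $\AA'=\AA\times_S S'$, for any $s'\in S'(\C)$ lying over $s=\ell(s')\in S(\C)$ the projection $\r$ restricts to an isomorphism of complex abelian varieties $\r_{s'}\colon\AA'_{s'}\to\AA_s$ sending $O'_{s'}$ to $O_s$. In particular $\AA'_{s'}$ has complex multiplication if and only if $\AA_s$ does, and $f\mapsto\r_{s'}^{-1}\circ f\circ\r_{s'}$ is a ring isomorphism $\en(\AA_s)\to\en(\AA'_{s'})$ carrying non-zero endomorphisms to non-zero endomorphisms and compatible with evaluation at points.

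With this in hand the argument is immediate. Let $P$ be a special point of $\c$, say $s=\pi(P)$ and $0\neq f\in\en(\AA_s)$ with $f(P)=O_s$. Since $\r^{-1}(\c)\to\c$ is surjective, choose $P'\in\r^{-1}(\c)$ with $\r(P')=P$; it lies on some $\c'_j$, and, writing $s'=\pi'(P')$ and $f'=\r_{s'}^{-1}\circ f\circ\r_{s'}$, we get $f'\neq 0$ and $f'(P')=\r_{s'}^{-1}(f(P))=\r_{s'}^{-1}(O_s)=O'_{s'}$, so $P'$ is a special point of $\c'_j$. Hence the set of special points of $\c$ is contained in the image under $\r$ of the union of the sets of special points of $\c'_1,\dots,\c'_m$; as $\r$ is finite, this image is finite whenever that union is. Therefore, if Theorem~\ref{main_thm} holds for each $\c'_j$, the set of special points of $\c$ is finite, which is precisely the claim for $\c$. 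I do not expect a real obstacle here: the only points requiring care are that base change preserves each fibre as an abelian variety (so CM and the endomorphism ring transport along $\r_{s'}$) and that one is entitled to ignore whether the $\c'_j$ satisfy the hypotheses of Theorem~\ref{main_thm}, the lemma being a purely conditional statement about the conclusions.
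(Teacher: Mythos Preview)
Your proof is correct under your reading of the lemma, and the core step --- that base change induces an isomorphism $\AA'_{s'}\cong\AA_{\ell(s')}$ on each fibre, so CM status and endomorphisms transfer and special points of $\c$ lift to special points of $\r^{-1}(\c)$ --- is exactly what the paper does in its final paragraph. The difference is that the paper additionally verifies that the \emph{hypotheses} of Theorem~\ref{main_thm} pass from $\c$ to the components of $\r^{-1}(\c)$: it observes that $\r$ is flat and finite, hence preimages of proper flat subgroup schemes of $\AA$ are proper flat subgroup schemes of $\AA'$ of the same dimension, and components of preimages of fibres are fibres. You explicitly decline to do this, arguing the lemma is a purely conditional statement about the finiteness conclusion. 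That reading is defensible and your argument for it is complete; but the paper's extra step is not idle. For the reduction in Section~\ref{sec_red} to go through, one must eventually apply the main argument of the paper to the components $\c'_j$, and that argument uses the hypotheses essentially (via Lemma~\ref{lemma:func_transc_CM}). The paper therefore packages both halves of the reduction into this one lemma, whereas your version would leave the hypothesis-transfer to be checked separately before the reduction is usable. One minor quibble: $\c$ is not assumed smooth, so $\r^{-1}(\c)$ need not be smooth either; this plays no role in your argument and can simply be dropped.
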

\begin{proof}
By the proof of Lemma 2.1 of \cite{BC20} we have that $\r$ is flat and finite. By \cite[Corollary III.9.6]{Har77}, we have that if $X \subseteq \AA$ is an irreducible variety, then the dimension of each irreducible component of $\r^{-1}(X)$ is equal to $\dim X$. 
Moreover, if $X$ dominates $S$, then every irreducible component of $\r^{-1}(X)$ dominates $S'$. 
In particular, this shows that the preimages of the flat subgroup schemes of $\AA$ are flat subgroup schemes of $\AA'$ of  the same dimension.
This implies that if $\c$ satisfies the hypotheses of Theorem \ref{main_thm}, then the same is true for each irreducible component of $\r^{-1}(\c)$.
Finally, the preimages of any point of $\c$ lying in a proper algebraic subgroup of a CM fiber $\AA_s$, where $s\in S(\C)$, are contained in proper algebraic subgroups of fibers of $\AA'$, which are still CM, since for $s'\in S'(\C)$ and $s \in S(\C)$ such that $\ell(s')=s$, then $\AA_s \cong \AA'_{s'}$.
\end{proof}

Next, let $\AA$ and $\AA'$ be abelian schemes over the same curve $S$ and let $f_{\eta}: \AA'_{\eta} \rightarrow \AA_{\eta}$ be an isogeny between the generic fibers defined over $\Qal(S)$. Then, $f_{\eta}$ extends to an isogeny $f: \AA' \rightarrow \AA$ between the abelian schemes (see the proof of Lemma 2.2 of \cite{BC20} for a proof of this and \cite[Definition 27.176]{GW23} for the definition of isogeny between abelian schemes). 

\begin{lemma}\label{lemma_isogenies_schemes}
Let $\AA, \AA', f_{\eta}$ and $f$ as above and $\c$ as in Theorem \ref{main_thm}. Then, if the claim of Theorem \ref{main_thm} holds for all irreducible components of $f^{-1}(\c)$, then it holds for $\c$.
\end{lemma}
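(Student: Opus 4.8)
The plan is to argue exactly as in the proof of Lemma~\ref{lemma_base_change}, with the isogeny $f$ in the role of the étale cover. As recalled before the statement, $f$ extends $f_\eta$ to an isogeny of abelian schemes over $S$, so $f$ is finite, flat and surjective and $\pi\circ f=\pi'$, where $\pi'\colon\AA'\to S$ is the structure map. First I would note that, by \cite[Corollary~III.9.6]{Har77} (or by miracle flatness, since $f$ is a finite flat surjection), every irreducible component $\c'$ of $f^{-1}(\c)$ has dimension $1$; as $f$ is finite, $f(\c')$ is then a closed $1$-dimensional subset of the irreducible curve $\c$, hence $f(\c')=\c$. In particular each component of $f^{-1}(\c)$ surjects onto $\c$, and $f$ restricts to an isogeny $\AA'_s\to\AA_s$ on the fibre over every $s\in S(\C)$.

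Next I would check that the hypotheses of Theorem~\ref{main_thm} descend to each component $\c'$ of $f^{-1}(\c)$. Since $f$ maps $\AA'_s$ onto $\AA_s$, if $\c'$ were contained in a fixed fibre then $\c=f(\c')$ would be as well. Suppose instead that, after some finite base change $S''\to S$, the component $\c'$ lies in a translate $T_{\sigma'}(H')$ of a proper flat subgroup scheme $H'$ of $\AA'\times_S S''$ by a constant section $\sigma'$. Applying the base-changed isogeny, which is a fibrewise group homomorphism (so $f\circ T_{\sigma'}=T_{f\circ\sigma'}\circ f$), we get $\c\subseteq T_{f\circ\sigma'}(f(H'))$ after the same base change. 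Here $f(H')$ is again a proper flat subgroup scheme: its dimension equals that of $H'$ because $f$ is an isogeny, and its irreducible components still dominate the base because those of $H'$ do. The delicate point, and the one I expect to be the main obstacle, is that $f\circ\sigma'$ is again a constant section. This I would deduce from the functoriality of the $\overline{\Qal(S)}/\Qal$-trace (see \cite[Section~VIII.3]{Lan83}): $f_\eta$ carries the trace of $\AA'_\eta$ into the trace of $\AA_\eta$, so $f$ maps the largest constant abelian subscheme of $\AA'$ into that of $\AA$, and a homomorphism of constant abelian schemes over the connected base $S$ is itself constant by rigidity; hence $f\circ\sigma'$ is the constant section attached to the image of the point defining $\sigma'$. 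This contradicts the hypotheses on $\c$, and the same reasoning applies after any further finite base change, so each $\c'$ satisfies the assumptions of Theorem~\ref{main_thm}.

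Finally I would transfer the conclusion. Let $P\in\c(\C)$ be such that $\AA_{\pi(P)}$ has complex multiplication and $h(P)=O$ for some non-zero $h\in\en(\AA_{\pi(P)})$. Write $g\colon\AA'_{\pi(P)}\to\AA_{\pi(P)}$ for the isogeny induced by $f$ on the fibres over $\pi(P)$, of degree $N$ say, and pick a complementary isogeny $g'$ with $g'\circ g=[N]$. Then $h':=g'\circ h\circ g$ is a non-zero endomorphism of $\AA'_{\pi(P)}$ — non-zero because $g$ and $g'$ become invertible after tensoring with $\Q$ — and $h'(P')=g'(h(g(P')))=g'(h(P))=O$ for every $P'\in f^{-1}(P)$; moreover $\AA'_{\pi(P)}$, being isogenous to $\AA_{\pi(P)}$, also has complex multiplication. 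Choosing such a $P'$ inside some component $\c'$ of $f^{-1}(\c)$ (possible since each component surjects onto $\c$), the assumed validity of Theorem~\ref{main_thm} for $\c'$ shows that there are only finitely many such $P'$; since $f$ is finite and every admissible $P$ is the image of such a $P'$, the set of admissible $P$ is finite, which is the claim for $\c$.
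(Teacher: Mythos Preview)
Your argument is correct, and the underlying idea---an isogeny of abelian schemes restricts to an isogeny on every fibre, so proper subgroups and the CM property transfer---is the same as the paper's. The paper's proof, however, is far shorter: it observes only that each $f_s\colon\AA'_s\to\AA_s$ is an isogeny, so preimages of proper algebraic subgroups are proper algebraic subgroups of the same dimension and CM is preserved; hence the $f$-preimage of any point of $\c$ lying in a proper subgroup of a CM fibre lies in the corresponding locus for $f^{-1}(\c)$, and the assumed finiteness there gives finiteness for $\c$.

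Two differences deserve comment. First, the paper does \emph{not} verify in this proof that the components of $f^{-1}(\c)$ inherit the hypotheses of Theorem~\ref{main_thm}; it reads ``the claim of Theorem~\ref{main_thm} holds'' simply as ``the finiteness conclusion holds'', so your entire second paragraph---including the trace-functoriality step you flag as delicate---is additional to what the lemma requires. (Your instinct is reasonable: the analogous Lemma~\ref{lemma_base_change} \emph{does} carry out that verification, and it is needed somewhere for the reductions of Section~\ref{sec_red} to go through.) Second, for the transfer of the conclusion the paper's ``preimage of a subgroup under an isogeny is a subgroup of the same dimension'' is more direct than your construction of $h'=g'\circ h\circ g$; both are valid, but the paper's route sidesteps the need to check that $h'\neq 0$.
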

\begin{proof}
For every $s \in S$, the map $f_s: \AA'_s \to \AA_s$ is an isogeny. In particular, the images and preimages of algebraic subgroups under $f_s$ remain algebraic subgroups, and dimensions are preserved. Moreover, since isogenous abelian varieties have isomorphic endomorphism algebras, it follows that $\AA_s$ is CM if and only if $\AA'_s$ is CM.
Now, consider the preimage under $f$ of any intersection of $\c$ with the union of the proper algebraic subgroups of the CM fibers of $\AA$. Since this preimage lies in a proper algebraic subgroup of a CM fiber of $\AA'$, and by assumption the claim of Theorem \ref{main_thm} holds for all irreducible components of $f^{-1}(\c)$, we conclude that the set of such points is finite. This proves the result.
\end{proof}

Now, as $S$ is irreducible, smooth and quasi-projective, by \cite[Theorem 27.291]{GW23}, we can take a relatively ample line bundle $\mathcal{L}$ on $\AA \rightarrow S$. This line bundle induces a polarization on $\AA \rightarrow S$ of type $D = (d_1, \ldots, d_g)$. By \cite[Proposition 4.1.2]{BL04}, the generic fiber $\AA_{\eta}$ is isogenous to a principally polarized abelian variety $A'$, defined over a finite extension of $\Qal(S)$. If we write this finite extension as $\Qal(S')$, with $S'$ a smooth irreducible curve covering $S$, we can use Lemma \ref{lemma_base_change} and assume that $S'=S$. By Proposition 7.3.6 and Theorem 7.4.5 of \cite{BLR90}, $A'$ extends to an abelian scheme $\AA'\rightarrow S$. Since $S$ is smooth, using Lemma \ref{lemma_isogenies_schemes}, we can then assume that the polarization induced by $\mathcal{L}$ is principal. 

Then, by \cite[Lemma 2.2]{Ge24}, there exists a finite étale cover $\ell: S' \rightarrow S$ such that $\AA' := \AA \times_S S' \rightarrow S'$ has level-$3$-structure.

Hence, since $\A_g=\A_{g,\mathbf{1},3}$ is a fine moduli space, there is a unique morphism $\f: S' \rightarrow \A_g$ such that $\AA'$ is the pull-back of the universal family $\mathfrak{A}_g \rightarrow \A_g$ along $\f$. 
Thus, we have a cartesian diagram:

\begin{center}
\begin{tikzcd}
\AA' \arrow[d] \arrow[r, "p'"] & \mathfrak{A}_g \arrow[d] & \AA'' \arrow[d] \arrow[l, "p''"'] \\
S' \arrow[r, "\f"]          & \A_g                     & S'' \arrow[l, hook']                      
\end{tikzcd} 
\end{center}

Let $S''=\f(S') \subseteq \A_g$. Since $S'$ is an irreducible curve, $\f: S' \rightarrow S''$ is either constant or finite. However, $\f$ cannot be constant, as $\AA \rightarrow S$ would be isotrivial. Thus, $\f$ is finite.  Up to removing finitely many points from $S'$, we can also assume that $S''$ is smooth, which implies that $\f$ is flat. 

Note that 
$$\AA' \cong \mathfrak{A}_g \times_{\A_g} S' \cong (\mathfrak{A}_g \times_{\A_g} S'') \times_{S''} S'=\AA'' \times_{S''} S'$$
which gives a morphism $p: \AA' \rightarrow \AA''$.  

\begin{lemma}
Let $\AA'' \rightarrow S''$ as above and $\c' \subseteq \AA'$ be a curve satisfying the hypotheses of Theorem \ref{main_thm}. Then, if the claim of Theorem \ref{main_thm} holds for $\c''=p(\c')$, then it holds for $\c'$. 
\end{lemma}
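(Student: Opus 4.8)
The plan is to exploit that, by the construction just recalled, $\AA'\cong\AA''\times_{S''}S'$ is the base change of $\AA''\to S''$ along the finite surjective morphism $\f\colon S'\to S''$ (recall $S''=\f(S')$), so that the projection $p\colon\AA'\to\AA''$ is finite and, for every $s'\in S'$, restricts to an \emph{isomorphism} of abelian varieties $\AA'_{s'}\xrightarrow{\sim}\AA''_{\f(s')}$. Writing $\pi'\colon\AA'\to S'$ and $\pi''\colon\AA''\to S''$ for the structure maps, we have $\pi''\circ p=\f\circ\pi'$, and $p|_{\c'}\colon\c'\to\AA''$, being a restriction of the finite morphism $p$, is finite and in particular has finite fibers.

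For the core of the reduction, let $\c'_{\mathrm{exc}}\subseteq\c'(\C)$ be the set of $P$ such that $\AA'_{\pi'(P)}$ has complex multiplication and $f(P)=O_{\pi'(P)}$ for some nonzero $f\in\en(\AA'_{\pi'(P)})$, and define $\c''_{\mathrm{exc}}\subseteq\c''(\C)$ analogously over $\AA''$. Given $P\in\c'_{\mathrm{exc}}$, set $s'=\pi'(P)$ and $Q=p(P)\in\c''(\C)$, so $\pi''(Q)=\f(s')$. The fiberwise isomorphism $p\colon\AA'_{s'}\xrightarrow{\sim}\AA''_{\f(s')}$ shows that $\AA''_{\f(s')}$ has complex multiplication and that $p\circ f\circ p^{-1}$ is a nonzero endomorphism of $\AA''_{\f(s')}$ with $(p\circ f\circ p^{-1})(Q)=p(f(P))=O_{\f(s')}$; hence $Q\in\c''_{\mathrm{exc}}$. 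Thus $p(\c'_{\mathrm{exc}})\subseteq\c''_{\mathrm{exc}}$. Granting that $\c''$ satisfies the hypotheses of Theorem~\ref{main_thm}, the assumed validity of its claim for $\c''$ says that $\c''_{\mathrm{exc}}$ is finite; since $p|_{\c'}$ has finite fibers, $\c'_{\mathrm{exc}}\subseteq(p|_{\c'})^{-1}(\c''_{\mathrm{exc}})$ is finite as well, which is the claim for $\c'$.

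It thus remains to verify that $\c''=p(\c')$ is an irreducible curve satisfying the hypotheses of Theorem~\ref{main_thm}. As $\c'$ is not contained in a fixed fiber it dominates $S'$, so $\pi''(\c'')=\f(\pi'(\c'))=S''$ and $\c''$ is an irreducible curve dominating $S''$, in particular not contained in a fixed fiber. If $\c''$ were contained, possibly after a finite base change of $S''$, in a translate of a proper flat subgroup scheme of $\AA''$ by a constant section, then, passing to a finite cover of $S''$ dominating both $S'$ and the given base change and using that proper flat subgroup schemes, their translates, and constant sections pull back to objects of the same kind (as in the proof of Lemma~\ref{lemma_base_change}), a component of the corresponding pullback of $\c'$ would land in such a translate, contradicting the hypotheses on $\c'$. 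I expect this last verification to be the only delicate point: because $\f\colon S'\to S''$ need not be étale and one is pushing $\c'$ forward rather than pulling a curve back, one must keep careful track of which components of the various fibered products carry $\c'$ and its base changes; everything else is formal, given that $p$ is finite and an isomorphism on every fiber.
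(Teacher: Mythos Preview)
Your proof is correct and follows essentially the same approach as the paper: both use that $p$ is finite and restricts to an isomorphism on each fiber to push the exceptional set of $\c'$ into that of $\c''$, and both verify the hypotheses on $\c''$ by pulling back flat subgroup schemes via the flat finite map $p$. You are somewhat more explicit than the paper about the ``even after a finite base change'' clause in the hypothesis (passing to a common cover of $S'$ and the given base change of $S''$), which the paper handles tersely by pointing to the proof of Lemma~\ref{lemma_base_change}; your caution there is appropriate but no new idea is needed.
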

\begin{proof}
We start by proving that the hypotheses of Theorem \ref{main_thm} hold for $\c''$. 
Firstly, $\c''$ cannot be contained in a fixed fiber $\AA''_{s''}$, otherwise 
$$\c'\subseteq p^{-1}(\c'') \subseteq \mathfrak{A}_{g,s''} \times \pg{s' \in S': \f(s')=s''}.$$
Since $\c'$ is irreducible and $\f$ is finite, $\c' \subseteq \mathfrak{A}_{g,s''} \times \pg{s'}=\AA'_{s'}$, for some $s' \in S'$ such that $\f(s')=s''$, contradicting the assumptions on $\c'$. 
Furthermore, since $\f$ is flat and finite, $p$ is flat and finite as well. So, preimages by $p$ of flat subgroup schemes of $\AA''$ are flat subgroup schemes of $\AA'$ of the same dimension, as in the proof of Lemma \ref{lemma_base_change}. This proves that $\c''$ is not contained in a proper flat subgroup scheme of $\AA''$.

For a fiber $\AA'_{s'}\cong \mathfrak{A}_{g,\f(s')}$, we have that $p(\AA'_{s'})=\AA''_{\f(s')}=\mathfrak{A}_{g,\f(s')} \cong \AA'_{s'}$. Also, images by $p$ of subgroups of $\AA'_{s'}$ are subgroups of $\AA''_{\f(s')}$ of the same dimension. Therefore, the images of the intersections of $\c'$ with the union of the proper algebraic subgroups of the CM fibers are contained in the intersection of $\c''$ with the union of the proper algebraic subgroups of the CM fibers of $\AA''$, which is a finite set by assumption. The conclusion follows by using the fact that $p$ is finite.
\end{proof}
Thus, for the remainder of the article, we assume that $S \subseteq \A_g$ is a smooth, irreducible, locally closed curve defined over $\Qal$, and $\AA =\mathfrak{A}_g \times _{\A_g} S$.

\section{O-minimality and definable sets}
In this section we recall some properties and results about o-minimal structures. For more details see \cite{vdDri98} and \cite{vdDM96}.
\begin{definition}
A \emph{structure} is a sequence $\mathcal{S}=\pt{\mathcal{S}_N}$, $N\geq 1$, where each $\mathcal{S}_N$ is a collection of subsets of $\R^N$ such that, for each $N,M\geq 1$:
\begin{itemize}
\item $\mathcal{S}_N$ is a boolean algebra (under the usual set-theoretic operations);
\item $\mathcal{S}_N$ contains every semi-algebraic subset of $\R^N$;
\item if $A \in \mathcal{S}_N$ and $B \in \mathcal{S}_M$, then $A\times B \in \mathcal{S}_{N+M}$;
\item if $A\in \mathcal{S}_{N+M}$, then $\pi(A) \in \mathcal{S}_N$, where $\pi: \R^{N+M} \rightarrow \R^N$ is the projection onto the first $N$ coordinates.
\end{itemize}
If $\mathcal{S}$ is a structure and, in addition, 
\begin{itemize}
\item $\mathcal{S}_1$ consists of all finite union of open intervals and points
\end{itemize}
then $\mathcal{S}$ is called an \emph{o-minimal structure}.
\end{definition}
Given a structure $\mathcal{S}$, we say that $S \subseteq \R^N$ is a \emph{definable set} if $S \in \mathcal{S}_N$. 

Given $S\subseteq \R^N$ and a function $f: S\rightarrow \R^M$, we say that $f$ is a \emph{definable function} if its graph $\pg{(x,y)\in \R^N\times \R^M: x\in S, y=f(x)}$ is a definable set. One can easily prove that images and preimages of definable sets via definable functions are still definable.

Let $U\subseteq \R^{N+M}$. For $t_0\in \R^M$, we set $U_{t_0}=\pg{x\in \R^N:(t_0,x)\in U}$ and call $U$ a \emph{family} of subsets of $\R^N$, while $U_{t_0}$ is called the \emph{fiber} of $U$ above $t_0$. If $U$ is a definable set, then we call it a \emph{definable family} and it is easy to prove that the fibers $U_{t_0}$ are also definable.

\begin{proposition}[\cite{vdDM96}, 4.4]\label{prop_fibre_finite} Let $U$ be a definable family in a fixed o-minimal structure $\mathcal{S}$. Then, there exists an integer $n$ such that each fiber of $U$ has at most $n$ connected components.
\end{proposition}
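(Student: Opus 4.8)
The plan is to deduce the statement from the Cell Decomposition Theorem for o-minimal structures. Write the ambient space as $\R^{M+N}$ with coordinates $(t,x)$, where $t\in\R^M$ is the parameter and $x\in\R^N$, so that $U_{t_0}=\pg{x\in\R^N:(t_0,x)\in U}$. First I would apply the Cell Decomposition Theorem (see \cite[Chapter~3]{vdDri98}) to obtain a finite decomposition $\mathcal{D}$ of $\R^{M+N}$ into cells that partitions $U$; let $k$ denote the number of cells $C\in\mathcal{D}$ with $C\subseteq U$, so that $U$ is the disjoint union of these $k$ cells.

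Next I would use the fact, built into the inductive definition of a decomposition, that $\mathcal{D}$ is compatible with the projection $\pi\colon\R^{M+N}\to\R^M$ onto the first $M$ coordinates: iterating the definition one coordinate at a time, the images under $\pi$ of the cells of $\mathcal{D}$ form a decomposition of $\R^M$, and for every $C\in\mathcal{D}$ and every $t_0\in\R^M$ the fiber $C_{t_0}=\pg{x\in\R^N:(t_0,x)\in C}$ is either empty or a cell in $\R^N$. For $N=1$ this is immediate, since a cell over a base cell $D\subseteq\R^M$ is either a graph of a continuous function on $D$ or a band between two such functions, whose fiber over $t_0$ is a point or an open interval when $t_0\in D$ and is empty otherwise; the general case follows by induction on $N$, adding one coordinate at a time. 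Consequently, for each fixed $t_0$, the nonempty fibers $C_{t_0}$ with $C\subseteq U$ form a partition of $U_{t_0}$ into at most $k$ cells of $\R^N$.

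Finally I would invoke the standard fact that every cell is connected (see \cite[Chapter~3]{vdDri98}); each of the connected cells $C_{t_0}$ is then contained in a single connected component of $U_{t_0}$, so the partition just obtained shows that $U_{t_0}$ has at most $k$ connected components. Since $k$ does not depend on $t_0$, taking $n=k$ finishes the proof.

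I do not expect any essential difficulty beyond carefully bookkeeping the cell decomposition. The one point that must be handled with care is the compatibility of $\mathcal{D}$ with the parameter projection --- equivalently, the ordering of coordinates so that the $N$ fiber variables are the last ones when the decomposition is built up --- since this is precisely what guarantees that fixing the first $M$ coordinates of a cell of $\R^{M+N}$ yields a cell of $\R^N$, and hence that the number of cells occurring in the fibers is bounded, uniformly in $t_0$, by the number of cells of $\mathcal{D}$ contained in $U$.
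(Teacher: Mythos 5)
Your argument is correct, but note that the paper does not prove this statement at all: it is quoted verbatim from van den Dries--Miller (\cite{vdDM96}, 4.4), so there is no in-paper proof to compare against. What you have written is essentially the standard textbook proof of uniform finiteness in o-minimal structures (it is also how the cited sources obtain it): take a cell decomposition of $\R^{M+N}$ partitioning $U$, observe that the fiber over $t_0\in\R^M$ of an $(i_1,\dots,i_{M+N})$-cell is either empty or an $(i_{M+1},\dots,i_{M+N})$-cell of $\R^N$, and conclude that $U_{t_0}$ is covered by at most $k$ connected cells, where $k$ is the number of cells inside $U$. You correctly isolate the one delicate point, namely that the fiber variables must be the last coordinates in the inductive construction of cells (and a coordinate permutation is definable, so this is no loss of generality), and your induction on $N$ handles it properly; the only unstated detail is that the restricted functions $x'\mapsto f(t_0,x')$ are definable, which holds because their graphs are fibers of definable graphs. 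One further remark worth keeping in mind: cells in a general o-minimal structure are only \emph{definably} connected, but since the structures considered here live on the real field (they contain all semialgebraic sets over $\R$), definable connectedness coincides with connectedness, so your final step is sound. In short, the proposal is a complete and correct proof of the cited fact, along the same lines as the reference the paper points to.
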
 

While there are many examples of o-minimal structures (see \cite{vdDM96}), in this article we will work with the structure $\R_{\text{an,exp}}$, which was proved to be o-minimal by van den Dries and Miller \cite{vdDM94}.

For a family $Z \subseteq \R^M \times \R^N= \R^{M+N}$, a positive integer $d$ and a positive real number $T$ define
\begin{equation*}
Z^{\sim}(d,T):=\pg{(y,z)\in Z: H_d(y)\leq T}
\end{equation*}
where $H_d(y)$ is the $d$-height given by Definition \ref{def_d_height}. Let also $ \pi_1, \pi_2 $ be the projections of $ Z $ to the first $ M $ and last $ N $ coordinates, respectively.
\begin{proposition}[\cite{HP16}, Corollary 7.2]\label{prop:thm_HP_CM}
Let $Z \subseteq \R^{M+N}$ be a definable set. For every positive integer $d$ and every $\varepsilon>0 $ there exists a positive constant $c=c(Z,d, \varepsilon)$ with the following property. If $T\geq 1 $ and $\abs{\pi_2(Z^{\sim}(d,T))}>cT^{\varepsilon}$, then there exists a continuous definable function $\delta: \left[0,1 \right] \rightarrow Z$ such that:
\begin{enumerate}
\item the restriction $\delta \vert_{(0,1)}$ is real analytic (since $\R_{\text{\emph{an, exp}}}$ admits analytic cell decomposition);
\item the composition $ \pi_1\circ \delta: \left[0,1 \right] \rightarrow \R^M$ is semi-algebraic and its restriction to $ (0,1) $ is real analytic;
\item the composition $ \pi_2\circ \delta: \left[0,1 \right] \rightarrow \R^N$ is non-constant.
\end{enumerate}
\end{proposition}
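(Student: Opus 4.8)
This is a point-counting statement of Pila--Wilkie type, and I would deduce it from the Pila--Wilkie counting theorem --- in the refined form producing, for a definable family, boundedly many \emph{blocks} that contain all rational points of bounded height, uniformly across the family --- together with the fact, recalled in the statement, that $\R_{\text{an,exp}}$ admits analytic cell decomposition. The plan has four steps: dispose of a degenerate case; reduce ``algebraic points of degree $\le d$ in the $\pi_1$-coordinates'' to genuine rational points; run the block form of Pila--Wilkie and pigeonhole to obtain a single positive-dimensional block that varies non-trivially in the $\pi_2$-direction; transport this block into $Z$ and extract the arc. To begin, if some fibre $Z_{y_0}=\pg{z\in\R^N:(y_0,z)\in Z}$ of $\pi_1|_Z$ is infinite it is a positive-dimensional definable set, hence contains a real-analytic arc by analytic cell decomposition; precomposing $t\mapsto y_0$ with this arc already yields a $\delta$ of the required form ($\pi_1\circ\delta$ constant, hence semi-algebraic and analytic; $\pi_2\circ\delta$ non-constant). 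So we may assume that every fibre of $\pi_1|_Z$ is finite; by Proposition~\ref{prop_fibre_finite} their cardinalities are then uniformly bounded, say by $n_0$.

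By Definition~\ref{def_d_height} and Lemma~\ref{lemma:bounds_hpoly}, for $(y,z)\in Z^{\sim}(d,T)$ each coordinate $y_i$ is a root of a primitive integer polynomial of degree $\le d$ whose coefficients have absolute value $\le T$. For each root selector $\mathbf k=(k_1,\dots,k_M)\in\pg{1,\dots,d}^M$ introduce the semi-algebraic root maps $r_{k}:\R^{d+1}\to\R$ (``$k$-th real root of $\sum_j a_j x^j$'') and set
$$\widehat{Z}_{\mathbf k}=\pg{(a^{(1)},\dots,a^{(M)},z)\in\R^{M(d+1)+N}\;:\;(r_{k_1}(a^{(1)}),\dots,r_{k_M}(a^{(M)}),z)\in Z},$$
a definable set fibred over the last $N$ coordinates $z$. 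Each point of $Z^{\sim}(d,T)$ lifts, for a suitable $\mathbf k$, to a point of $\widehat Z_{\mathbf k}$ whose first $M(d+1)$ coordinates form a rational point of height $\le T$ while its last $N$ coordinates are unchanged, and two such points with distinct $z$-coordinates lift to points with distinct $z$-coordinates. Counting these lifts --- points that are rational in the first $M(d+1)$ coordinates --- reduces, via the bound $n_0$ on the fibres of $\pi_1|_Z$ and an o-minimal (Hardt-type) trivialization, to the Pila--Wilkie theorem applied to the projections of the $\widehat Z_{\mathbf k}$ to coefficient-space; in its uniform block form it supplies, for the given $\varepsilon$, a constant $c'=c'(Z,d,\varepsilon)$ and finitely many definable families of blocks such that all such points of height $\le T$ are covered by at most $c'T^{\varepsilon}$ blocks. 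Choosing the constant $c$ of the proposition larger than $d^M c'$, the hypothesis $\abs{\pi_2(Z^{\sim}(d,T))}>cT^{\varepsilon}$ forces, for a common $\mathbf k$, two of our lifts into one and the same block $B$; being connected, semi-algebraic, and containing two distinct points, $B$ has dimension $\ge1$, and since those two lifts carry distinct $z$-coordinates, $\pi_2$ is non-constant on $B$.

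Pushing $B$ forward by the semi-algebraic map $(a^{(1)},\dots,a^{(M)},z)\mapsto(r_{k_1}(a^{(1)}),\dots,r_{k_M}(a^{(M)}),z)$ lands inside $Z$ and produces a definable, positive-dimensional $G\subseteq Z$ whose projection $\pi_1(G)$, being the image of $B$ under semi-algebraic maps, is semi-algebraic, and on which $\pi_2$ is still non-constant. By analytic cell decomposition, choose an analytic cell $C\subseteq G$ of dimension $\ge1$ on which $\pi_2$ is non-constant --- such a cell exists because $\pi_2|_G$ is a non-constant definable function on the positive-dimensional $G$ --- and parametrise a relatively compact sub-cell of $C$ by a continuous definable $\delta:[0,1]\to Z$ that is analytic on $(0,1)$. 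Then $\pi_1\circ\delta$ traces a piece of the semi-algebraic set $\pi_1(G)$, hence is semi-algebraic and real-analytic on $(0,1)$, while $\pi_2\circ\delta$ is non-constant by the choice of $C$; this is the required map.

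The main obstacle is the counting step. The crude Pila--Wilkie bound (on the complement of the algebraic part) is not enough here: it would only place one of our points on \emph{some} positive-dimensional semi-algebraic subset of $\widehat Z_{\mathbf k}$, which a priori sits inside a single $z$-fibre and therefore carries only one value of $\pi_2$, giving no control on the non-constancy of $\pi_2\circ\delta$. One genuinely needs the refined, uniform version of the theorem producing boundedly many blocks from a fixed definable family, so that a pigeonhole argument can force two of the points --- with two \emph{different} $z$-coordinates --- into the same block; and one must then verify that pushing that block back into $Z$ keeps the $\pi_1$-projection semi-algebraic rather than merely definable, which is where the o-minimal finiteness of the fibres of $\pi_1|_Z$ and the semi-algebraicity of the root maps $r_k$ enter. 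Making this transport precise, together with the attendant connectedness bookkeeping (refining $B$ to stay in the locus of simple roots, semi-algebraic path-connectedness), is the delicate part of the argument.
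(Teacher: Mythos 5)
A preliminary remark: the paper does not prove this proposition at all — it is quoted verbatim from Habegger–Pila \cite[Corollary 7.2]{HP16} — so your attempt has to be measured against their argument. Your sketch does shadow its overall shape (encode the degree-$\le d$ algebraic coordinates by integer coefficient vectors of bounded height, invoke the block form of the Pila–Wilkie theorem, pigeonhole against the hypothesis $\abs{\pi_2(Z^{\sim}(d,T))}>cT^{\varepsilon}$, transport back into $Z$ by the semialgebraic root maps, extract an arc), and your observation that the crude Pila–Wilkie bound is insufficient is correct.

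However, two steps are genuinely gapped. First, \emph{where the blocks live}. You apply the block theorem to the projections of $\widehat Z_{\mathbf k}$ to coefficient space, yet then speak of ``$\pi_2$ non-constant on $B$'' and push $B$ forward by a map defined on $(a,z)$ — such a $B$ carries no $z$-coordinates. If the blocks really live downstairs, the pigeonhole only yields two points $(a_1,z_1),(a_2,z_2)$ with $z_1\neq z_2$ whose $a$-parts lie in one block; the part of $Z$ sitting over the pushed-forward block could consist of finitely many sheets on each of which $\pi_2$ is constant (e.g.\ two ``horizontal'' translates of the same curve), and then no arc with $\pi_2$ non-constant can be extracted from this block at all. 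What is needed — and what Habegger–Pila use (their Corollary 7.1, resting on Pila's block-family theorem) — is the semi-rational form of the counting result, in which only the coefficient coordinates are counted but the covering blocks are subsets of $\widehat Z_{\mathbf k}$ itself, hence carry the $z$-coordinates; one then argues that if no arc as in the conclusion exists, $\pi_2$ must be constant on every block, so each block contributes at most one value of $z$ and $\abs{\pi_2(Z^{\sim}(d,T))}\le d^{M}c'T^{\varepsilon}$. Your alternative route via an o-minimal trivialization of the finite map onto the downstairs block might be patchable, but you do not carry it out, and your constant $c>d^{M}c'$ is in any case too small: without the fiber bound $n_0$ (and the trivialization complexity) the two lifts may even have equal $a$-parts, so the common ``block'' could be a single point. (Also, blocks are definable, not semialgebraic; they are only contained in semialgebraic sets of the same dimension.) Second, \emph{semialgebraicity of $\pi_1\circ\delta$}. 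The inference ``$\pi_1\circ\delta$ traces a piece of the semialgebraic set $\pi_1(G)$, hence is semialgebraic'' is false: a definable path whose image lies in a semialgebraic set need not be a semialgebraic map (parametrize a segment by a non-semialgebraic definable function). This is exactly what the regularity clauses in the definition of a block are for: a positive-dimensional block is open in its ambient semialgebraic set of the same dimension, so near a point where $\pi_2$ is not locally constant one can choose the path with genuinely semialgebraic coefficient part, only its $z$-part being merely definable. As written, your final step does not deliver conclusion (2) of the proposition.
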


We conclude this section by showing that the set $\mathcal{Z}$ defined in (\ref{def:Z_CM}) is definable in $\R_{\mathrm{an, exp}}$. 

From now on, we use the term \quotes{definable} to mean definable in $\R_{\mathrm{an, exp}}$. A complex set or function is said to be definable if it is definable as a real object, considering its real and imaginary parts separately. 
We may assume that $\mathfrak{A}_g$ is embedded in some projective space. By Theorem 1.2 of \cite{PS13}, there is an open subset $U$ of $\H_g \times \C^g$ containing $\mathcal{F}_g$ such that the restriction of the uniformizing map $u$ to $U$ is definable. 
Since $\mathcal{F}_g$ is a semialgebraic subset of $\H_g \times \C^g$, it follows that $u$ is definable when restricted to $\mathcal{F}_g$. Consequently, as $\c$ is semi-algebraic, we conclude that $\mathcal{Z}$ is definable.

\section{Matrix bounds for endomorphisms of abelian varieties}\label{sect:matrix_bounds_endos}
Let $A$ be an abelian variety of dimension $g$ defined over $\C$, so that $A \cong \C^g/\Lambda$ for some lattice $\Lambda$. Fix a polarization $\mathcal{L}$ of type $\mathbf{D}=\mathrm{diag}(d_1, \ldots, d_g)$ and let $d=d_1 \cdot \ldots \cdot d_g$ be its degree. Fix also a symplectic basis $\l_1, \ldots, \l_{2g}$ of $\Lambda$ and a basis $e_1, \ldots, e_g$ of $\C^g$ such that the period matrix of $A$ with respect to these bases is $\pt{\tau, \mathbf{D}}$, where $\tau \in \H_g$ (see \cite[Section 8.1]{BL04}). 

As in Section \ref{sec_moduli}, denote by $\mathfrak{F}_g$ the fundamental domain for the action of $\mathrm{Sp}_{2g}(\Z)$ on $\H_g$, as described in \cite[Section V.4]{Igu72}. Fix a finite index subgroup $\Gamma$ of $\mathrm{Sp}_{2g}(\Z)$ and denote by $\mathfrak{F}_{\Gamma}$ the Siegel fundamental domain for $\Gamma$. Recall that $\mathfrak{F}_{\Gamma}$ was defined in (\ref{eqn:def_F_Gamma}) as $\mathfrak{F}_{\Gamma}=\bigcup\limits_{i=1}^n \s_i \cdot \mathfrak{F}_g$, where $\s_1=\mathbf{1}_{2g}, \s_2, \ldots, \s_n \in \mathrm{Sp}_{2g}(\Z)$ is a complete set of representatives for the right cosets of $\Gamma$ in $\mathrm{Sp}_{2g}(\Z)$.

In order to state and prove the result of this section, we introduce some matrix norms.
\begin{definition}
For a matrix $M=(m_{i,j})_{1\leq i,j\leq n} \in \mat_n(\C)$ we define the following norms:
\begin{itemize}
\item $\nmi{M}:=\max\limits_{i,j} \abs{m_{i,j}}$;
\item (Frobenius norm) $\nm{M}_F:=\sqrt{\tr\pt{\overline{M}^t M}}=\sqrt{\sum\limits_{i,j=1}^{n} \abs{m_{i,j}}^2}$;
\item (Spectral norm) $\nm{M}_2:=\sqrt{\r\pt{\overline{M}^t M}}$, where $\r(M)$ denotes the spectral radius of $M$, i.e.\ the maximum of the absolute values of the eigenvalues of $M$.
\end{itemize}
\end{definition}

Recall that the polarization $\mathcal{L}$ defines a Rosati involution $\RI{}$ (see Equation (\ref{eqn:def_rosati_inv})). Throughout this article, once the above symplectic basis is fixed, we identify rational representations of endomorphisms of $A$ with their matrices in that basis.

 Hence, $\nmi{\r_r(f)}$ and $\nm{f}_{Ros}= \sqrt{\mathrm{tr}(\rho_r(\RI{f}f))}$ (see Example \ref{example:euclidean_lattices}) are two norms on the finite dimensional $\Q$-vector space $\en^0(A)$ and, as such, they are equivalent. Thus, there exist two constants $c_1, c_2>0$ such that 
$$c_1 \cdot \nm{f}_{Ros} \leq \nmi{\r_r(f)} \leq c_2 \cdot \nm{f}_{Ros}$$ 
for every $f \in \en^0(A)$. 

The aim of this section is to make the constants $c_1, c_2$ effective by proving the following result.

\begin{proposition}\label{bound_rat_rep}
Let $A$ be an abelian variety of dimension $g$ defined over $\C$. Fix a polarization $\mathcal{L}$ and choose bases of $\Lambda$ and $\C^g$ as above. Consider the Rosati involution $\RI{}$ on $\en^0(A)$ defined by $\mathcal{L}$ and assume that $\tau \in \mathfrak{F}_{\Gamma}$. Then, for every $f \in \en^0(A)$, we have
$$c_1(A) \cdot \nm{f}_{Ros}\leq \nmi{\r_r(f)} \leq c_2(A) \cdot \nm{f}_{Ros}$$
where $c_1(A)=\dfrac{1}{2g} \sqrt{\dfrac{d_1}{d_g}}$ and $c_2(A)=\d(g, \mathfrak{F}_{\Gamma}) \cdot \dfrac{\nmi{\mathbf{D}}^{2g+2}}{d} \cdot \max\!\pg{1, \nmi{\Im(Z_{\tau})}}^{2g^3+3g^2+2g+1}$. Here, $\d(g, \mathfrak{F}_{\Gamma})$ is an effective positive constant depending only on $g$ and the choice of the representatives of the right cosets of $\Gamma$ in $\mathrm{Sp}_{2g}(\Z)$ and $Z_{\tau} \in \mathfrak{F}_g$ is in the same $\mathrm{Sp}_{2g}(\Z)$-orbit as $\tau$.
\end{proposition}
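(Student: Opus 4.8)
The plan is to compare the two sides by routing everything through the analytic representation: the period relation~\eqref{eqn:endom_matr_periods} ties $\r_r(f)$ to $\r_a(f)$, while the polarization ties $\tr\pt{\r_r(\RI{f}f)}$ to a weighted Euclidean norm of $\r_a(f)$. The left-hand inequality, which carries the clean constant $c_1(A)$, I would prove directly on the rational side. In the fixed symplectic basis the Riemann form $E_{\mathcal L}$ has matrix $J=\begin{psmallmatrix}0&\mathbf D\\-\mathbf D&0\end{psmallmatrix}$, and the Rosati involution is $\r_r(\RI{f})=J^{-1}\r_r(f)^{t}J$, so $\tr\pt{\r_r(\RI{f}f)}=\tr\!\pt{J^{-1}\r_r(f)^{t}J\,\r_r(f)}$. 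Writing $\r_r(f)$ in $g\times g$ blocks and expanding, every summand of this trace is of the form $\pm\sum_{i,k}(d_k/d_i)X_{ki}Z_{ki}$ with $X,Z$ blocks of $\r_r(f)$, because $\mathbf D$ and $\mathbf D^{-1}$ are diagonal; using $d_k/d_i\le d_g/d_1$ and bounding every entry by $\nmi{\r_r(f)}$ gives $\tr\pt{\r_r(\RI{f}f)}\le 4g^{2}(d_g/d_1)\,\nmi{\r_r(f)}^{2}$, which is exactly $c_1(A)\,\sqrt{\tr\pt{\r_r(\RI{f}f)}}\le\nmi{\r_r(f)}$.

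For the right-hand inequality I would first translate $\r_r(f)$ into $\r_a(f)$. Introducing the full period matrix $\widetilde\Pi=\begin{psmallmatrix}\tau&\mathbf D\\\overline\tau&\mathbf D\end{psmallmatrix}$ and using that $\r_r(f)$ is a real matrix, \eqref{eqn:endom_matr_periods} together with its complex conjugate gives $\widetilde\Pi\,\r_r(f)=\mathrm{diag}\!\pt{\r_a(f),\overline{\r_a(f)}}\widetilde\Pi$, and a determinant computation yields $\abs{\det\widetilde\Pi}=2^{g}d\,\det(\Im\tau)\neq0$. Hence $\r_r(f)=\widetilde\Pi^{-1}\mathrm{diag}\!\pt{\r_a(f),\overline{\r_a(f)}}\widetilde\Pi$, and the analogues for the norm $\nmi{\cdot}$ of the estimates in Proposition~\ref{prop:properties_mat_heights} (for matrix products, and for $\nmi{\widetilde\Pi^{-1}}$ via Cramer's rule) give $\nmi{\r_r(f)}\ll_{g}\nmi{\widetilde\Pi}^{2g}(d\,\det(\Im\tau))^{-1}\nmi{\r_a(f)}$. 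On the other hand, in the chosen basis the Hermitian form attached to $\mathcal L$ has matrix $(\Im\tau)^{-1}$, so $\RI{}$ is the corresponding adjoint, $\r_a(\RI{f})=(\Im\tau)\,\overline{\r_a(f)}^{\,t}(\Im\tau)^{-1}$, and therefore $\tr\pt{\r_r(\RI{f}f)}=2\,\tr\!\pt{(\Im\tau)\,\overline{\r_a(f)}^{\,t}(\Im\tau)^{-1}\r_a(f)}$, a non‑negative real number. Applying twice the inequality $\tr(MY)\ge\lambda_{\min}(Y)\,\tr(M)$, valid for $M\succeq0$ and $Y\succ0$, with $Y=\Im\tau$ and then $Y=(\Im\tau)^{-1}$, yields $\tr\pt{\r_r(\RI{f}f)}\ge 2\,\lambda_{\min}(\Im\tau)\,\lambda_{\max}(\Im\tau)^{-1}\nmi{\r_a(f)}^{2}$, hence $\nmi{\r_a(f)}\le\pt{\lambda_{\max}(\Im\tau)/2\lambda_{\min}(\Im\tau)}^{1/2}\sqrt{\tr\pt{\r_r(\RI{f}f)}}$.

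The main obstacle, as I see it, is the last step: bounding $\nmi{\widetilde\Pi}=\max\{\nmi{\tau},\nmi{\mathbf D}\}$, $\det(\Im\tau)^{-1}$ and $\lambda_{\max}(\Im\tau)\lambda_{\min}(\Im\tau)^{-1}$ purely in terms of $\nmi{\mathbf D}$, $\max\{1,\nmi{\Im Z_\tau}\}$ and a constant depending only on $g$ and the coset representatives $\s_i$. I would write $\tau=\s_i\cdot Z_\tau$ with $Z_\tau\in\mathfrak F_g$ and $\s_i=\begin{psmallmatrix}A_i&B_i\\C_i&D_i\end{psmallmatrix}$, and use that on $\mathfrak F_g$ one has $\nmi{\Re Z_\tau}\le\tfrac12$, $\Im Z_\tau$ reduced (so that $\det(\Im Z_\tau)$ is bounded below and the eigenvalues of $\Im Z_\tau$ are bounded above and below by powers of $\max\{1,\nmi{\Im Z_\tau}\}$, up to constants depending only on $g$), and $\abs{\det(C_iZ_\tau+D_i)}\ge1$, which is one of the defining inequalities of the Siegel domain. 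Feeding these facts, the identity $\tau=(A_iZ_\tau+B_i)(C_iZ_\tau+D_i)^{-1}$ and the transformation law $\Im\tau=\overline{(C_iZ_\tau+D_i)}^{-t}(\Im Z_\tau)(C_iZ_\tau+D_i)^{-1}$ into Cramer's rule — first to invert the $g\times g$ matrix $C_iZ_\tau+D_i$, then to invert the $2g\times2g$ matrix $\widetilde\Pi$ — and taking the maximum over the finitely many $\s_i$, one arrives at the stated value of $c_2(A)$; it is precisely this iterated, entry‑wise use of the inversion bound, and the bookkeeping of how powers of $\max\{1,\nmi{\Im Z_\tau}\}$, $\nmi{\mathbf D}$ and $d$ accumulate, that produces the exponent $2g^{3}+3g^{2}+2g+1$. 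Combining this estimate with the two inequalities of the previous paragraph completes the proof.
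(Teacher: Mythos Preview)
Your lower-bound argument is essentially the paper's: both expand $\tr\!\pt{\r_r(\RI{f}f)}=\tr\!\pt{E^{-1}\r_r(f)^{t}E\,\r_r(f)}$ in the symplectic basis and bound each term by $(d_g/d_1)\nmi{\r_r(f)}^{2}$.

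For the upper bound you take a genuinely different route. The paper never passes to $\r_a(f)$: it writes $\tr\!\pt{\r_r(\RI{f}f)}=\tr\!\pt{S^{-1}R^{t}SR}$ with $S$ the $2g\times 2g$ real matrix of $\Re H$ in the symplectic basis, Cholesky-factorises $S=U^{t}U$ so that the trace equals $\nm{URU^{-1}}_F^{2}$, and then bounds $\nmi{R}\le(2g)^{2}\nmi{U}\nmi{U^{-1}}\nm{URU^{-1}}_F$; the remaining work is to compute $S$ explicitly in terms of $X,Y,\mathbf D$, use $\det S=d^{2}$, and feed Proposition~\ref{prop:prop_F_Gamma} into the resulting bound for $\nmi{U}\nmi{U^{-1}}$. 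Your approach instead separates the two issues cleanly---a conjugation by $\widetilde\Pi$ to pass between $\r_r$ and $\r_a$, and an eigenvalue estimate for the Hermitian form $(\Im\tau)^{-1}$ to control $\nmi{\r_a(f)}$ by the trace. This is correct (your identification of the Hermitian matrix as $(\Im\tau)^{-1}$ is right, and the two applications of $\tr(MY)\ge\lambda_{\min}(Y)\tr(M)$ go through as written) and arguably more transparent. What it does \emph{not} do is reproduce the stated $c_{2}(A)$ verbatim: your bound assembles from $\nmi{\widetilde\Pi}^{2g}/(d\,\det(\Im\tau))$ and $\sqrt{\lambda_{\max}(\Im\tau)/\lambda_{\min}(\Im\tau)}$, which via Proposition~\ref{prop:prop_F_Gamma} yields a power $\nmi{\mathbf D}^{2g}$ and a noticeably smaller exponent in $\max\{1,\nmi{\Im Z_\tau}\}$ than $2g^{3}+3g^{2}+2g+1$. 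That still proves the proposition (a smaller admissible constant certainly implies the displayed one), but your final sentence asserting that the bookkeeping reproduces the exact exponent should be weakened.
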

Let $H$ be the Hermitian form associated with the polarization $\mathcal{L}$, and let $E=\Im(H)$ be the associated alternating form, which satisfies $E(\Lambda \times \Lambda) \subseteq \Z$. According to \cite[Lemma 2.1.7]{BL04}, the form $H$ can be expressed as: 
\begin{equation*}
H(u,v)=E(iu, v)+ i E(u,v)
\end{equation*}
for every $u,v \in \C^g$, with $S(u,v)=E(iu, v)=\Re(H(u,v))$ positive definite. 

Let $\RI{}$ be the Rosati involution defined by the polarization $\mathcal{L}$. By Proposition 5.1.1 of \cite{BL04}, we have
\begin{equation*}\label{eqn_Ros_adj}
H(\r_a(f)(u),v) = H(u,\r_a(\RI{f})(v))
\end{equation*} 
for any $f \in \en^0(A)$ and for all $u,v \in \C^g$. As in \cite{MW94}, evaluating this expression at $\l_1, \ldots, \l_{2g}$ and taking real and imaginary parts yields
$$\r_r(\RI{f})=S^{-1} \cdot \r_r(f)^t \cdot S= E^{-1} \cdot \r_r(f)^t \cdot E$$
where, with a slight abuse of notation, we denote by $S$ and $E$ the matrices representing the bilinear forms $S(u,v)$ and $E(u,v)$ with respect to the basis $\l_1, \ldots, \l_{2g}$ of $\Lambda$.
If we denote 
$R=\r_r(f),$
then
\begin{equation}\label{eqn:tr_RI_E/S}
\tr\pt{\r_r(\RI{f}f)}=\tr\pt{E^{-1} R^t\, E R}=\tr\pt{S^{-1} R^t\, S R}.
\end{equation}
Since $S$ is positive definite, there is a unique upper triangular matrix $U \in \mathrm{Mat}_{2g}(\R)$ with positive diagonal entries such that $S=U^t U$. Substituting this decomposition, we have:
\begin{align*}
S^{-1} R^t \, S R&=U^{-1} (U^{-1})^t \, R^t \, U^t U R\\
&=U^{-1}\cdot \pt{(U^{-1})^t \, R^t \, U^t} \cdot \pt{U R \, U^{-1}} \cdot U
\end{align*}
and the invariance of the trace under conjugation implies:
$$\nm{f}_{Ros}=\sqrt{\tr\pt{\r_r(\RI{f}f)}}=\sqrt{\tr\pt{Q^{t} \, Q}}=\nm{Q}_F$$
where $Q=U R \, U^{-1}$. 

Furthermore, by the triangle inequality, for any $M_1, M_2 \in \mathrm{Mat}_n(\C)$ we have 
\begin{equation}\label{eqn:submult_norm_infty}
\nmi{M_1 M_2}\leq n \nmi{M_1} \cdot \nmi{M_2}.
\end{equation}
Therefore, since $R=U^{-1} Q  U$, we get
$$\nmi{R}=\nmi{U^{-1} Q  U} \leq \pt{(2g)^2 \cdot \nmi{U^{-1}} \cdot \nmi{U}} \cdot \nmi{Q}.$$
We now prove a few general results about matrices.
\begin{lemma}\label{lemma_Chol}
If $M \in \mat_n(\R)$ is positive definite and $T \in \mat_n(\R)$ is an upper triangular matrix with positive diagonal entries such that $M=T^t \cdot T$, then $\nmi{T} \leq \sqrt{n \nmi{M}}$. 
\end{lemma}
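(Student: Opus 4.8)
The plan is to extract the entries of $T$ directly from the factorization $M=T^{t}T$, reading off bounds from the diagonal. First I would compare the diagonal entries of the two sides of $M=T^{t}T$: writing $T=(t_{ij})$ and $M=(m_{ij})$, the $(i,i)$-entry of $T^{t}T$ equals $\sum_{k=1}^{n}t_{ki}^{2}$, and since $T$ is upper triangular we have $t_{ki}=0$ for $k>i$, so this sum reduces to $\sum_{k=1}^{i}t_{ki}^{2}$. Hence $m_{ii}=\sum_{k=1}^{i}t_{ki}^{2}$ for each $i=1,\ldots,n$.

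The second, and essentially only, step is to observe that this expresses $m_{ii}$ as a sum of squares of real numbers, so that each summand is bounded by the whole sum: $t_{ki}^{2}\leq m_{ii}$ for every $k\leq i$, while $t_{ki}=0$ for $k>i$. Since $m_{ii}\leq\nmi{M}$ by definition of the norm, we conclude that $\abs{t_{ki}}\leq\sqrt{\nmi{M}}$ for all indices $i,k$, i.e. $\nmi{T}\leq\sqrt{\nmi{M}}\leq\sqrt{n\,\nmi{M}}$, which is the asserted inequality.

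There is no real obstacle here: the only point requiring (trivial) care is that the $t_{ki}$ and the $m_{ii}$ are real, so that bounding a single square by a sum of squares is legitimate — this is immediate since $T\in\mat_{n}(\R)$ and $m_{ii}=\sum_{k}t_{ki}^{2}\geq 0$ (one does not even need positive definiteness of $M$ for this step, only the existence of the real factorization). Note that the argument in fact yields the slightly stronger bound $\nmi{T}\leq\sqrt{\nmi{M}}$; the factor $\sqrt{n}$ in the statement is harmless and is presumably retained only for uniformity with the other matrix estimates appearing in this section.
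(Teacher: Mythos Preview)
Your argument is correct and in fact yields the sharper bound $\nmi{T}\leq\sqrt{\nmi{M}}$, as you note. The paper proceeds differently: it passes through the spectral norm, observing that $\nm{M}_2=\nm{T}_2^2$ (since $M=T^tT$ and $M$ is symmetric positive definite, so $\rho(M)=\nm{M}_2$), and then invokes the comparison $\nmi{N}\leq\nm{N}_2\leq n\nmi{N}$ from \cite[Eq.~(2.3.8)]{GVL13} to conclude $\nmi{T}\leq\nm{T}_2=\sqrt{\nm{M}_2}\leq\sqrt{n\,\nmi{M}}$. Your direct diagonal-entry computation is more elementary, avoids any appeal to eigenvalues or operator norms, and explains why the factor $\sqrt{n}$ is superfluous; the paper's route, by contrast, loses this factor in the passage from $\nm{M}_2$ back to $\nmi{M}$.
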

\begin{proof}
We clearly have $\nm{M}_2=\nm{T}_2^2$. Moreover, $\nmi{N}\leq \nm{N}_2 \leq n \nmi{N}$ for every $N \in \mat_n(\R)$ \cite[Eq. (2.3.8)]{GVL13}. 
Thus, $\nmi{T} \leq \nm{T}_2=\sqrt{\nm{M}_2} \leq \sqrt{n \nmi{M}}$.
\end{proof}
The following result is well known but we include it for completeness.
\begin{lemma}\label{lemma:Had_ineq}
For any matrix $M \in \mat_n(\C)$, we have $\abs{\det(M)}\leq n^{n/2} \cdot \nmi{M}^{n}$.
\end{lemma}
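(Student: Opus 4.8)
The plan is to reduce the statement to Hadamard's inequality. Write $M=(m_{i,j})$ and let $v_1,\ldots,v_n\in\C^n$ denote its columns, so that $\nm{v_j}_2=\bigl(\sum_{i=1}^n\abs{m_{i,j}}^2\bigr)^{1/2}$. Hadamard's inequality asserts $\abs{\det(M)}\leq\prod_{j=1}^n\nm{v_j}_2$. Granting this, I would simply bound each column norm by $\nm{v_j}_2^2=\sum_{i=1}^n\abs{m_{i,j}}^2\leq n\cdot\nmi{M}^2$, hence $\nm{v_j}_2\leq\sqrt{n}\cdot\nmi{M}$, and conclude $\abs{\det(M)}\leq\prod_{j=1}^n\sqrt{n}\,\nmi{M}=n^{n/2}\cdot\nmi{M}^n$.

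For completeness I would include the short proof of Hadamard's inequality. If the columns are linearly dependent then $\det(M)=0$ and there is nothing to prove, so assume they are independent and run the Gram--Schmidt process for the standard Hermitian inner product: this produces pairwise orthogonal vectors $w_1,\ldots,w_n$ and scalars $c_{j,k}$ (for $k<j$) with $v_j=w_j+\sum_{k<j}c_{j,k}w_k$. The matrix expressing the $v_j$ in the $w_j$ is unipotent upper triangular, so if $W$ has columns $w_1,\ldots,w_n$ then $\det(M)=\det(W)$; since the $w_j$ are orthogonal, $\overline{W}^t W$ is diagonal with entries $\nm{w_j}_2^2$, giving $\abs{\det(M)}^2=\det(\overline{W}^t W)=\prod_j\nm{w_j}_2^2$. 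Finally, by the Pythagorean theorem $\nm{v_j}_2^2=\nm{w_j}_2^2+\sum_{k<j}\abs{c_{j,k}}^2\nm{w_k}_2^2\geq\nm{w_j}_2^2$, so $\abs{\det(M)}=\prod_j\nm{w_j}_2\leq\prod_j\nm{v_j}_2$.

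All of this is elementary linear algebra, so there is no genuine obstacle; the only minor point requiring care is to dispose of the linearly dependent case separately before invoking Gram--Schmidt. One could alternatively phrase the same argument through the Cholesky-type factorization already used in this section, applying the Hadamard bound $\det(G)\leq\prod_j G_{j,j}$ to the positive semidefinite Gram matrix $G=\overline{M}^t M$, but the Gram--Schmidt route is the most self-contained.
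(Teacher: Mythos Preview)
Your proposal is correct and follows exactly the paper's approach: the paper's proof consists of the single sentence ``This follows easily from Hadamard's inequality,'' which is precisely the reduction you carry out. Your additional self-contained proof of Hadamard's inequality via Gram--Schmidt is a nice bonus that the paper omits.
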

\begin{proof}
This follows easily from Hadamard's inequality \cite{Had93}.
\end{proof}
\begin{lemma}\label{lemma_inv}
Let $M \in \mat_n(\C)$ be an invertible matrix. Then 
$$\nmi{M^{-1}} \leq \dfrac{n^{n/2}}{\abs{\det(M)}}\cdot \nmi{M}^{n-1}.$$ 
\end{lemma}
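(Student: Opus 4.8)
The plan is to use the adjugate (classical adjoint) formula for the inverse together with the Hadamard-type bound from Lemma~\ref{lemma:Had_ineq}. Recall that $M^{-1}=\frac{1}{\det(M)}\cdot\mathrm{adj}(M)$, where $\mathrm{adj}(M)$ is the transpose of the cofactor matrix, so each entry of $\mathrm{adj}(M)$ is $\pm$ one of the $(n-1)\times(n-1)$ minors of $M$, i.e. the determinant of a submatrix obtained by deleting one row and one column of $M$.

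First I would observe that every such $(n-1)\times(n-1)$ submatrix $M'$ of $M$ has entries that are among the entries of $M$, hence $\nmi{M'}\leq\nmi{M}$. Applying Lemma~\ref{lemma:Had_ineq} to $M'$ (with $n$ replaced by $n-1$) gives $\abs{\det(M')}\leq (n-1)^{(n-1)/2}\cdot\nmi{M}^{n-1}$, and since every entry of $\mathrm{adj}(M)$ is such a $\pm\det(M')$, we get $\nmi{\mathrm{adj}(M)}\leq (n-1)^{(n-1)/2}\cdot\nmi{M}^{n-1}$. Here we use the convention $0^0=1$ to cover $n=1$, where $\mathrm{adj}(M)$ is the $1\times 1$ identity.

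Then I would conclude by dividing by $\abs{\det(M)}$ and using the trivial inequality $(n-1)^{(n-1)/2}\leq n^{n/2}$ (valid for all $n\geq 1$, since $n-1\le n$ and $(n-1)/2\le n/2$), which yields
$$\nmi{M^{-1}}=\frac{1}{\abs{\det(M)}}\cdot\nmi{\mathrm{adj}(M)}\leq \frac{(n-1)^{(n-1)/2}}{\abs{\det(M)}}\cdot\nmi{M}^{n-1}\leq \frac{n^{n/2}}{\abs{\det(M)}}\cdot\nmi{M}^{n-1},$$
as claimed. There is no real obstacle here: the argument is entirely routine once one writes $M^{-1}$ via the adjugate and invokes Lemma~\ref{lemma:Had_ineq}; the only minor point to be careful about is the edge case $n=1$ and the harmless slack in passing from $(n-1)^{(n-1)/2}$ to $n^{n/2}$ (one could state a slightly sharper bound, but the stated form suffices for the applications).
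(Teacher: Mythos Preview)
Your proof is correct and follows essentially the same approach as the paper: write $M^{-1}$ via the cofactor matrix, bound each cofactor using Lemma~\ref{lemma:Had_ineq} applied to the $(n-1)\times(n-1)$ submatrices, and absorb the slack $(n-1)^{(n-1)/2}\leq n^{n/2}$. The only cosmetic difference is that the paper disposes of $n=1$ separately as trivial rather than via the convention $0^0=1$.
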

\begin{proof}
The case $n=1$ is trivial, so assume $n\geq 2$. Recall that $M^{-1}=\frac{1}{\det(M)} C^t$, where $C$ is the cofactor matrix (see also the proof of part (5) of Proposition \ref{prop:properties_mat_heights} for details). By Lemma \ref{lemma:Had_ineq}, $\nmi{C}\leq (n-1)^{\frac{n-1}{2}} \cdot \nmi{M}^{n-1}$, which implies
{\setlength{\belowdisplayskip}{-1em}
$$\nmi{M^{-1}} = \dfrac{1}{\abs{\det(M)}} \cdot \nmi{C} \leq \dfrac{(n-1)^{\frac{n-1}{2}}}{\abs{\det(M)}} \cdot \nmi{M}^{n-1}\leq \dfrac{n^{n/2}}{\abs{\det(M)}}\cdot \nmi{M}^{n-1}.$$}
\end{proof}
Next, we compute $S(u,v)$. Write $\tau=X + iY$, with $X=(x_{j,k})_{1\leq j,k \leq g}$ and $Y=(y_{j,k})_{1\leq j,k \leq g}$ real matrices. Recall that for the bases $(\l_1, \ldots, \l_{2g})$ and $(e_1, \ldots, e_g)$ that we fixed at the start we have 
$$\l_j=\begin{cases}
\sum\limits_{k=1}^{g} x_{j,k} \cdot e_k + y_{j,k}\cdot ie_k &j=1, \ldots g\\
d_{j-g} e_{j-g} &j=g+1, \ldots, 2g 
\end{cases}.$$ 
So, by doing the computations with the basis $(e_1, \ldots, e_g, i e_1, \ldots , i e_g)$ of $W= \Lambda \otimes \R$, the multiplication by $i$ on $W$ is represented in the basis $(\l_1, \ldots, \l_{2g})$ by the matrix
$$\begin{pmatrix}
X & \mathbf{D}\\
Y & 0
\end{pmatrix}^{-1} \begin{pmatrix}
0 & -\mathbf{1}_g \\ 
\mathbf{1}_g & 0
\end{pmatrix} \begin{pmatrix}
X & \mathbf{D}\\
Y & 0
\end{pmatrix}= \begin{pmatrix}
Y^{-1}X & Y^{-1}\mathbf{D}\\
-\mathbf{D}^{-1}Y-\mathbf{D}^{-1}XY^{-1}X & -\mathbf{D}^{-1}XY^{-1}\mathbf{D}
\end{pmatrix}.$$
Hence, the matrix representing $S(u,v)=E(iu,v)$ in the basis $(\l_1, \ldots, \l_{2g})$ is given by
\begin{align*}
S&=\begin{pmatrix}
Y^{-1}X & Y^{-1}\mathbf{D}\\
-\mathbf{D}^{-1}Y-\mathbf{D}^{-1}XY^{-1}X & -\mathbf{D}^{-1}XY^{-1}\mathbf{D}
\end{pmatrix}^t \begin{pmatrix}
0 & \mathbf{D} \\ 
-\mathbf{D} & 0
\end{pmatrix}\\
&= \begin{pmatrix}
XY^{-1}X+Y & XY^{-1}\mathbf{D} \\ 
\mathbf{D}Y^{-1}X & \mathbf{D}Y^{-1}\mathbf{D}
\end{pmatrix}.
\end{align*}
Furthermore, note that by \cite[Ex. 5.30]{AM05}
\begin{align*}
\det(S)&=\det(\mathbf{D}Y^{-1}\mathbf{D}) \det\pt{(XY^{-1}X+Y)-(XY^{-1}\mathbf{D})(\mathbf{D}Y^{-1}\mathbf{D})^{-1}(\mathbf{D}Y^{-1}X)}\\
&=\det(\mathbf{D})^2 \cdot \det(Y^{-1}) \cdot \det(Y)=\det(\mathbf{D})^2=d^2
\end{align*}
which also implies that $\det(U)=d$, since $S=U^t U$ and $U$ has positive diagonal entries.

Then, by Lemma \ref{lemma_Chol}, we have that $\nmi{U} \leq \sqrt{2g \nmi{S}}$ and using Lemma \ref{lemma_inv} we get
\begin{equation}\label{eqn:supnorm_inv_U}
\begin{aligned}
\nmi{U^{-1}}&\leq \dfrac{(2g)^g}{d} \cdot \nmi{U}^{2g-1} \\
&\leq \dfrac{(2g)^g}{d} \cdot \pt{2g \max\!\pg{1, \nmi{S}}}^g\\
&=\dfrac{(2g)^{2g}}{d} \cdot \max\!\pg{1, \nmi{S}}^g.
\end{aligned}
\end{equation}
Finally, in preparation for the proof of Proposition \ref{bound_rat_rep}, we establish some bounds for matrices in $\mathfrak{F}_{\Gamma}$. To this end, we first recall a few classical properties of the Siegel fundamental domain $\mathfrak{F}_g$.

\begin{lemma}\label{lemma:prop_F_g}
Let $\tau =X+ iY \in \mathfrak{F}_g$. Then, we have:
\begin{enumerate}[label=(\alph*)]
\item $\nmi{X}\leq \frac{1}{2}$;
\item $\det(Y)\geq \!\pt{\frac{\sqrt{3}}{2}}^{g^2}$;
\item $\abs{\det(C\tau+D)}\geq 1$, for every $\begin{psmallmatrix} A & B\\ C & D \end{psmallmatrix} \in \mathrm{Sp}_{2g}(\Z)$.
\end{enumerate}
\end{lemma}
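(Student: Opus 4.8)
The plan is to deduce all three properties directly from the explicit description of the Siegel fundamental domain given in \cite[Section V.4]{Igu72}. Recall that $\tau = X + iY$ lies in $\mathfrak{F}_g$ precisely when: (i) $Y$ is Minkowski-reduced; (ii) $\abs{\det(C\tau + D)} \geq 1$ for every $\begin{psmallmatrix} A & B\\ C & D \end{psmallmatrix} \in \mathrm{Sp}_{2g}(\Z)$; and (iii) $\abs{x_{j,k}} \leq \tfrac{1}{2}$ for all $1 \leq j,k \leq g$. With this in hand, assertion (a) is exactly condition (iii) and assertion (c) is exactly condition (ii), so the only point requiring an argument is the lower bound (b) for $\det(Y)$.

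For (b) I would first show that $y_{1,1} \geq \tfrac{\sqrt{3}}{2}$. Embed $\mathrm{SL}_2(\Z)$ into $\mathrm{Sp}_{2g}(\Z)$ via the block matrix with $A = D = \mathrm{diag}(0,1,\ldots,1)$ and $C = -B = \mathrm{diag}(1,0,\ldots,0)$; a direct check shows this lies in $\mathrm{Sp}_{2g}(\Z)$ and that $C\tau + D$ has first row equal to the first row of $\tau$ and remaining rows equal to the standard basis vectors $e_2, \ldots, e_g$, so that $\det(C\tau + D) = \tau_{1,1} = x_{1,1} + i\, y_{1,1}$. Condition (ii) then forces $\abs{\tau_{1,1}} \geq 1$, whence $y_{1,1}^2 \geq 1 - x_{1,1}^2 \geq \tfrac{3}{4}$ by (iii). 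Since $Y$ is Minkowski-reduced, its diagonal entries are non-decreasing, $y_{1,1} \leq y_{2,2} \leq \cdots \leq y_{g,g}$, and therefore $\prod_{j=1}^g y_{j,j} \geq \big(\tfrac{\sqrt{3}}{2}\big)^g$.

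It remains to pass from the product of the diagonal entries to the determinant. Here I would invoke the classical estimate from the reduction theory of positive definite quadratic forms: for a Minkowski-reduced $Y$ of rank $g$ one has $\prod_{j=1}^g y_{j,j} \leq (4/3)^{\binom{g}{2}} \det(Y)$ (proved by induction on $g$, the base case $g=2$ being the inequality $ac - b^2 \geq \tfrac{3}{4}ac$ for a reduced binary form $\begin{psmallmatrix} a & b \\ b & c\end{psmallmatrix}$; see also \cite[Section V.4]{Igu72}). Combining this with the previous display and using the identity $\binom{g}{2} + \tfrac{g}{2} = \tfrac{g^2}{2}$ yields
$$\det(Y) \geq \left(\tfrac{3}{4}\right)^{\binom{g}{2}} \prod_{j=1}^g y_{j,j} \geq \left(\tfrac{3}{4}\right)^{\binom{g}{2}} \left(\tfrac{3}{4}\right)^{g/2} = \left(\tfrac{3}{4}\right)^{g^2/2} = \left(\tfrac{\sqrt{3}}{2}\right)^{g^2},$$
which is (b).

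The only genuinely non-elementary input is the product–determinant inequality for Minkowski-reduced forms, and the delicate point is getting exactly the constant $(4/3)^{\binom{g}{2}}$ (rather than a larger $c_g$), since this is precisely what makes the bound in (b) come out as $(\sqrt{3}/2)^{g^2}$; everything else is either part of the definition of $\mathfrak{F}_g$ or a one-line computation, so I would be content to cite this inequality from the literature on reduction theory rather than reprove it in detail.
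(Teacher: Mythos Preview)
Your proposal is correct and follows essentially the same approach as the paper: parts (a) and (c) are read off directly from the defining conditions of $\mathfrak{F}_g$, and for part (b) both you and the paper combine the inequality $y_{1,1}\geq \sqrt{3}/2$ with the product--determinant bound $\prod_j y_{j,j}\leq (4/3)^{\binom{g}{2}}\det(Y)$ for Minkowski-reduced forms. The paper simply cites \cite[Lemmas V.13 and V.15]{Igu72} for these two ingredients, whereas you spell out the proof of $y_{1,1}\geq\sqrt{3}/2$ via the explicit symplectic matrix; your concern about the precise constant in the product--determinant inequality is well placed, but that is exactly what Igusa's Lemma~V.13 provides, so citing it (as the paper does) resolves the delicate point you identify.
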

\begin{proof}
Parts (a) and (c) are true by definition of $\mathfrak{F}_g$ (see \cite[p. 194]{Igu72}).

Moreover, by Lemmas V.13 and V.15 of \cite{Igu72}
$$\det(Y)\geq \pt{\dfrac{3}{4}}^{\frac{g(g-1)}{2}} \cdot (y_{1,1})^g \geq \pt{\dfrac{3}{4}}^{\frac{g(g-1)}{2}} \cdot \pt{\dfrac{\sqrt{3}}{2}}^g=\pt{\dfrac{\sqrt{3}}{2}}^{g^2}$$
which proves part (b).
\end{proof}

\begin{proposition}\label{prop:prop_F_Gamma}
Let $\tau =X+ iY \in \mathfrak{F}_{\Gamma}$ and let $Z_{\tau} \in \mathfrak{F}_g$ be in the same $\mathrm{Sp}_{2g}(\Z)$-orbit as $\tau$. Then, there are effective positive constants $\d_1, \d_2, \d_3, \d_4$, depending only on $g$ and the choices of the representatives for the right cosets of $\Gamma$ in $\mathrm{Sp}_{2g}(\Z)$, such that:
\begin{enumerate}[label=(\alph*)]
\item $\nmi{Y} \leq \d_1 \cdot \max\!\pg{1, \nmi{\Im(Z_{\tau})}}^{2g-1}$;
\item $\nmi{X} \leq \d_2 \cdot \max\!\pg{1, \nmi{\Im(Z_{\tau})}}^{g}$;
\item $\det(Y)\geq \dfrac{\d_3}{\max\!\pg{1, \nmi{\Im(Z_{\tau})}}^{2g}}$;
\item $\nmi{Y^{-1}}\leq \d_4 \cdot \max\!\pg{1, \nmi{\Im(Z_{\tau})}}^{2g^2-g+1}$.
\end{enumerate}
\end{proposition}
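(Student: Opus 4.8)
The plan is to transport the three classical facts about the Siegel domain $\mathfrak{F}_g$ collected in Lemma \ref{lemma:prop_F_g} through one of the finitely many coset representatives, by means of the standard transformation law for the imaginary part under the symplectic action. Write $\tau = \s \cdot Z_{\tau}$, where $\s = \begin{psmallmatrix} A & B \\ C & D \end{psmallmatrix}$ is one of the representatives $\s_1, \ldots, \s_n$ and $Z_{\tau} = X_0 + i Y_0 \in \mathfrak{F}_g$, and set $W := C Z_{\tau} + D$. The input I would use is the identity $\Im(\s \cdot Z_{\tau}) = {}^{t}\overline{W}^{-1}\, Y_0\, W^{-1}$, valid for any element of $\mathrm{Sp}_{2g}(\R)$; in particular $Y = {}^{t}\overline{W}^{-1} Y_0 W^{-1}$, and taking determinants, $\det(Y) = \det(Y_0)/\abs{\det W}^{2}$.

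The first step is to bound $W$ and $W^{-1}$. The entries of $A, B, C, D$ are bounded by a quantity depending only on $g$ and the chosen representatives, while $\nmi{Z_{\tau}} \leq \nmi{X_0} + \nmi{Y_0} \leq \frac{1}{2} + \nmi{Y_0}$ by Lemma \ref{lemma:prop_F_g}(a); hence the submultiplicativity bound (\ref{eqn:submult_norm_infty}) gives $\nmi{W} \ll \max\pg{1, \nmi{\Im(Z_{\tau})}}$ and $\nmi{A Z_{\tau} + B} \ll \max\pg{1, \nmi{\Im(Z_{\tau})}}$, with implied constants of the allowed type. Since $\abs{\det W} = \abs{\det(C Z_{\tau} + D)} \geq 1$ by Lemma \ref{lemma:prop_F_g}(c), Lemma \ref{lemma_inv} then yields $\nmi{W^{-1}} \ll \max\pg{1, \nmi{\Im(Z_{\tau})}}^{g-1}$.

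The four estimates then follow by tracking exponents. For (a), apply (\ref{eqn:submult_norm_infty}) to $Y = {}^{t}\overline{W}^{-1} Y_0 W^{-1}$, using $\nmi{{}^{t}\overline{W}^{-1}} = \nmi{W^{-1}}$: the two copies of $\nmi{W^{-1}}$ contribute exponent $2(g-1)$ and $\nmi{Y_0}$ contributes $1$, for a total of $2g-1$. For (b), since $\abs{x_{j,k}} \leq \abs{\tau_{j,k}}$ it suffices to bound $\nmi{\tau} = \nmi{(A Z_{\tau} + B) W^{-1}} \leq g \cdot \nmi{A Z_{\tau} + B} \cdot \nmi{W^{-1}}$, giving exponent $1 + (g-1) = g$. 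For (c), combine $\det(Y) = \det(Y_0)/\abs{\det W}^{2}$ with the lower bound $\det(Y_0) \geq (\sqrt{3}/2)^{g^{2}}$ of Lemma \ref{lemma:prop_F_g}(b) and the Hadamard-type bound of Lemma \ref{lemma:Had_ineq}, which gives $\abs{\det W}^{2} \leq g^{g} \nmi{W}^{2g} \ll \max\pg{1, \nmi{\Im(Z_{\tau})}}^{2g}$. For (d), feed the bounds from (a) and (c) into Lemma \ref{lemma_inv} applied to the invertible $g \times g$ matrix $Y$: this produces exponent $(2g-1)(g-1) + 2g = 2g^{2} - g + 1$. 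In all four cases the implied constants depend only on $g$ and the choice of coset representatives, as required.

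I do not expect a genuine obstacle: everything reduces to the elementary matrix inequalities established earlier in Section \ref{sect:matrix_bounds_endos}, once the transformation law for $\Im$ is in hand. The only point requiring care is arranging the integer exponents to match the statement, which dictates which estimate to use where — in particular, invoking the determinant lower bound for $Y_0$ from Lemma \ref{lemma:prop_F_g}(b) rather than a crude norm bound, and keeping $\abs{\det W} \geq 1$ unestimated. (One can even sharpen (a), and hence (d), slightly by bounding $\nmi{Y}$ through $\nmi{\tau} = \nmi{(A Z_{\tau} + B) W^{-1}}$ directly, but this is not needed for the sequel.) The one thing worth noting is that the transformation law for $\Im$ is applied here to the coset representative $\s \in \mathrm{Sp}_{2g}(\Z)$, which is legitimate because that law holds for every element of $\mathrm{Sp}_{2g}(\R)$, not only for the symplectic matrices intervening in the construction of $\mathfrak{F}_g$.
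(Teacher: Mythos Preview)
Your proposal is correct and follows essentially the same approach as the paper's own proof: both arguments write $\tau = \s \cdot Z_{\tau}$ for one of the finitely many coset representatives, invoke the transformation law $\Im(\s \cdot Z_{\tau}) = (CZ_{\tau}+D)^{-t}\,\Im(Z_{\tau})\,(C\overline{Z_{\tau}}+D)^{-1}$, bound $\nmi{CZ_{\tau}+D}$ and its inverse via Lemma~\ref{lemma:prop_F_g} and Lemma~\ref{lemma_inv}, and then read off the four exponents exactly as you do. The only cosmetic difference is that you package the bounds on $W$ and $W^{-1}$ once at the outset, whereas the paper repeats them inside each of the four parts.
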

\begin{proof}
Let $\tau$ and $Z_{\tau}$ as above and take $\s=\begin{psmallmatrix} A & B\\ C & D \end{psmallmatrix} \in \mathrm{Sp}_{2g}(\Z)$ such that $\tau= \s \cdot Z_{\tau}$.  The definition of $\mathfrak{F}_{\Gamma}$ (see (\ref{eqn:def_F_Gamma})) implies that we can take $\s$ to be one of the chosen representatives $\s_1, \ldots, \s_n$ for the right cosets of $\Gamma$ in $\mathrm{Sp}_{2g}(\Z)$ and thus all the constants that appear will depend on the choice of such representatives.
\begin{enumerate}[label=(\alph*)]
\item It is well-known that
\begin{equation}\label{eqn:Im_transf_form}
Y=\Im(\tau)=\Im(\s \cdot Z_{\tau})=\pq{(C Z_{\tau} + D)^t}^{-1}  \Im(Z_{\tau})  \pt{C \overline{Z_{\tau}} + D}^{-1}
\end{equation}
(see for example \cite[Section I.6]{Igu72}). So (\ref{eqn:submult_norm_infty}) implies that
$$\nmi{Y} \leq g^2 \cdot \nmi{(C Z_{\tau}+D)^{-1}}^2 \cdot \nmi{\Im(Z_{\tau})}$$
since $\nmi{\, \cdot \,}$ is invariant under transposition and complex conjugation. Then, as $Z_{\tau} \in \mathfrak{F}_g$, Lemma \ref{lemma_inv} and Lemma \ref{lemma:prop_F_g}(c) imply 
$$\nmi{(C Z_{\tau}+D)^{-1}}\leq \frac{g^{g/2}}{\abs{\det(C Z_{\tau}+D)}} \cdot \nmi{CZ_{\tau}+D}^{g-1}\leq g^{g/2} \cdot \nmi{CZ_{\tau}+D}^{g-1}.$$
Moreover, 
\begin{equation}\label{eqn:bound_nmi_CZ+D}
\begin{aligned}
\nmi{CZ_{\tau}+D}&\leq g \nmi{C} \nmi{Z_{\tau}}+ \nmi{D}\\
& \leq \dfrac{5g}{2} \cdot \max\!\pg{\nmi{C}, \nmi{D}} \cdot \max\!\pg{1, \nmi{\Im(Z_{\tau})}}
\end{aligned}
\end{equation}
since Lemma \ref{lemma:prop_F_g}(a) implies 
\begin{equation}\label{eqn:bound_nmi_Z_tau}
\begin{aligned}
\nmi{Z_{\tau}} +1 &\leq \nmi{\Re(Z_{\tau})}+ \nmi{\Im(Z_{\tau})}+1 \\
&\leq \frac{3}{2} + \nmi{\Im(Z_{\tau})} \leq \frac{5}{2} \cdot \max\!\pg{1, \nmi{\Im(Z_{\tau})}}.
\end{aligned}
\end{equation}
Combining the inequalities above yields 
\begin{align*}
\nmi{Y} &\leq g^2 \cdot \nmi{(C Z_{\tau}+D)^{-1}}^2 \cdot \nmi{\Im(Z_{\tau})}\\
&\leq g^{g+2} \cdot \nmi{CZ_{\tau}+D}^{2g-2} \cdot \nmi{\Im(Z_{\tau})}\\
&\leq \pt{\frac{5}{2}}^{2g-2} \cdot g^{3g} \cdot \max\!\pg{\nmi{C}, \nmi{D}}^{2g-2} \cdot \max\!\pg{1, \nmi{\Im(Z_{\tau})}}^{2g-1}.
\end{align*}
Hence we can take $\d_1=\pt{\frac{5}{2}}^{2g-2} \cdot g^{3g} \cdot \max\limits_{\s \in \pg{\s_1, \ldots, \s_n}}\!\pg{\max\!\pg{\nmi{C}, \nmi{D}}}^{2g-2}$.
\item We have that
\begin{align*}
\nmi{X}=\nmi{\Re(\s \cdot Z_{\tau})} &\leq \nmi{\s \cdot Z_{\tau}} = \nmi{(A Z_{\tau} +B)(C Z_{\tau} +D)^{-1}}\\ &\leq g \cdot \nmi{A Z_{\tau} +B} \cdot \nmi{(C Z_{\tau} +D)^{-1}} \\
&\leq g \cdot \pt{g \nmi{A} \nmi{Z_{\tau}}+ \nmi{B}} \cdot \nmi{(C Z_{\tau} +D)^{-1}}.
\end{align*}
From the computations above we also have that
\begin{align*}
\nmi{(C Z_{\tau} +D)^{-1}} &\leq  g^{g/2} \cdot \nmi{C Z_{\tau}+D}^{g-1}\\
&\leq \pt{\frac{5}{2}}^{g-1} g^{\frac{3}{2}g-1} \max\!\pg{\nmi{C}, \nmi{D}}^{g-1} \max\!\pg{1, \nmi{\Im(Z_{\tau})}}^{g-1}.
\end{align*} 
This implies that
\begin{align*}
\nmi{X}\leq \pt{\frac{5}{2}}^{g} g^{\frac{3}{2}g+1} \nmi{\s}^{g} \max\!\pg{1, \nmi{\Im(Z_{\tau})}}^{g}.
\end{align*}
Hence, we can take $\d_2=\pt{\frac{5}{2}}^{g} g^{\frac{3}{2}g+1} \max\limits_{\s \in \pg{\s_1, \ldots, \s_n}}\!\pg{\nmi{\s}}^{g}$.
\item Taking the determinant of the first and last part of Equation (\ref{eqn:Im_transf_form}) yields
$$\det(Y)=\det(C Z_{\tau}+ D)^{-1} \cdot \det(\Im(Z_{\tau})) \cdot \det(C \overline{Z_{\tau}}+D)^{-1}=\dfrac{\det(\Im(Z_{\tau}))}{\abs{\det(C Z_{\tau}+ D)}^2}.$$
Furthermore, it follows from Lemma \ref{lemma:Had_ineq} and Equation (\ref{eqn:bound_nmi_CZ+D}) that
\begin{align*}
\abs{\det(C Z_{\tau} + D)} &\leq g^{g/2} \cdot \nmi{C Z_{\tau} + D}^g\\
&\leq \pt{\dfrac{5}{2}}^g \cdot g^{3g/2} \cdot \max\!\pg{\nmi{C}, \nmi{D}}^g \cdot \max\!\pg{1, \nmi{\Im(Z_{\tau})}}^g.
\end{align*}
Therefore, using Lemma \ref{lemma:prop_F_g}(b), we get
\vspace*{-1em}
$$\det(Y)=\dfrac{\det(\Im(Z_{\tau}))}{\abs{\det(C Z_{\tau}+ D)}^2}\geq \dfrac{\frac{\pt{\frac{\sqrt{3}}{2}}^{g^2}}{\pt{\frac{5}{2}}^{2g} \cdot g^{3g} \cdot \max\!\pg{\nmi{C}, \nmi{D}}^{2g}}}{\max\!\pg{1, \nmi{\Im(Z_{\tau})}}^{2g}}$$
so that we can take $\d_3=\pt{\frac{\sqrt{3}}{2}}^{g^2} \pt{\frac{2}{5}}^{2g} \cdot g^{-3g} \cdot \max\limits_{\s \in \pg{\s_1, \ldots, \s_n}}\!\pg{\max\!\pg{\nmi{C}, \nmi{D}}}^{-2g}$.
\item Applying Lemma \ref{lemma_inv} and parts (a) and (c) yields
$$\nmi{Y^{-1}} \leq \dfrac{g^{g/2}}{\abs{\det(Y)}} \cdot \nmi{Y}^{g-1} \leq g^{g/2} \cdot \frac{\g_1^{g-1}}{\g_3} \cdot \max\!\pg{1, \nmi{\Im(Z_{\tau})}}^{2g^2-g+1}.$$
Thus, we can take $\d_4=g^{g/2} \cdot \frac{\d_1^{g-1}}{\d_3}$.
\end{enumerate}
\vspace*{-1.5em}
\end{proof}
We are now ready to prove the main result of this section.
\begin{proof}[Proof of Proposition \ref{bound_rat_rep}]
We start from the lower bound. By Equation (\ref{eqn:tr_RI_E/S}), we have that $\tr\pt{\r_r(\RI{f}f)}=\tr\pt{E^{-1} \r_r(f)^t\, E \, \r_r(f)}$. As recalled in Section \ref{sect:prelim_AVs}, in the symplectic basis that we fixed at the start of this section, the alternating Riemann form $E$ is represented by the matrix
$$\begin{pmatrix}
0 & \mathbf{D} \\
-\mathbf{D} & 0
\end{pmatrix}$$ 
where $\mathbf{D}:=\mathrm{diag}(d_1, \ldots, d_g)$ is the type of $\mathcal{L}$. So, it is easy to see that 
$$\tr\pt{\r_r(\RI{f}f)}=2 \sum \limits_{i,j=1}^{g}\dfrac{d_i}{d_j}\pt{m_{i,j} m_{i+g,j+g} - m_{i,j+g} m_{i+g,j}}$$
where $m_{i,j}$ ($i,j=1, \ldots, 2g$) are the entries of the matrix $\r_r(f)$ with respect to the symplectic basis. Moreover, by Equation (\ref{eqn:RI_pos}), we also obtain
\begin{align*}
\tr\pt{\r_r(\RI{f}f)}=\abs{\tr\pt{\r_r(\RI{f}f)}} &\leq 2\sum \limits_{i,j=1}^{g}\dfrac{d_i}{d_j}\pt{\abs{m_{i,j}}\abs{m_{i+g,j+g}} + \abs{m_{i,j+g}} \abs{m_{i+g,j}}}\\
&\leq 4 \nmi{\r_r(f)}^2 \sum\limits_{i,j=1}^{g}\dfrac{d_i}{d_j} \leq 4g^2 \dfrac{d_g}{d_1} \nmi{\r_r(f)}^2
\end{align*}
since $1 \leq d_1 \leq d_2 \leq \ldots \leq d_g$ (see Section \ref{sect:prelim_AVs}). This yields the lower bound.
  
Next, we prove the upper bound. We already proved that
$$\nmi{R} \leq \pt{(2g)^2 \cdot \nmi{U^{-1}} \cdot \nmi{U}} \cdot \nmi{Q}.$$
Now, by Equation (\ref{eqn:supnorm_inv_U}), we have that
\begin{align*}
(2g)^2 \cdot \nmi{U^{-1}} \cdot \nmi{U}&\leq (2g)^2 \cdot \dfrac{(2g)^{2g}}{d} \cdot \max\!\pg{1, \nmi{S}}^g \cdot (2g)^{1/2} \cdot \nmi{S}^{1/2}\\
& \leq \dfrac{(2g)^{2g+3}}{d} \cdot \max\!\pg{1, \nmi{S}}^{g+1}.
\end{align*}
Let $\tau = X + iY$. Then, using (\ref{eqn:submult_norm_infty}), we get
\begin{align*}
\nmi{S}&=\max\!\pg{\nmi{XY^{-1}X+Y}, \nmi{XY^{-1}\mathbf{D}}, \nmi{\mathbf{D}Y^{-1}X}, \nmi{\mathbf{D}Y^{-1}\mathbf{D}}}\\
&\leq \max\!\pg{\nmi{Y}+g^2\nmi{X}^2\nmi{Y^{-1}}, g^2\nmi{X}\nmi{Y^{-1}}\nmi{\mathbf{D}}, g^2 \nmi{\mathbf{D}}^2\nmi{Y^{-1}}}.
\end{align*}
Moreover, if $\tau \in \mathfrak{F}_{\Gamma}$, let $Z_{\tau} \in \mathfrak{F}_g$ be in the same $\mathrm{Sp}_{2g}(\Z)$-orbit as $\tau$, as before. Then, by Proposition \ref{prop:prop_F_Gamma}, we also obtain:
$$\nmi{Y}+g^2\nmi{X}^2\nmi{Y^{-1}} \leq 2 g^2 \d_2^2 \d_4 \cdot \max\!\pg{1, \nmi{\Im(Z_{\tau})}}^{2g^2+g+1},$$
$$g^2\nmi{X}\nmi{Y^{-1}}\nmi{\mathbf{D}} \leq g^2 \d_2 \d_4\nmi{\mathbf{D}} \cdot \max\!\pg{1, \nmi{\Im(Z_{\tau})}}^{2g^2+1},$$
$$g^2 \nmi{\mathbf{D}}^2\nmi{Y^{-1}} \leq g^2 \d_4 \nmi{\mathbf{D}}^2\cdot \max\!\pg{1, \nmi{\Im(Z_{\tau})}}^{2g^2-g+1}.$$
So, we get
\begin{align*}
\nmi{S}&\leq \max\!\pg{\nmi{Y}+g^2\nmi{X}^2\nmi{Y^{-1}}, g^2\nmi{X}\nmi{Y^{-1}}\nmi{\mathbf{D}}, g^2 \nmi{\mathbf{D}}^2\nmi{Y^{-1}}}\\
&\leq 2 g^2 \d_2^2 \d_4 \nmi{\mathbf{D}}^2 \cdot \max\!\pg{1, \nmi{\Im(Z_{\tau})}}^{2g^2+g+1}.
\end{align*}
Thus,
$$(2g)^2 \cdot \nmi{U^{-1}} \cdot \nmi{U}\leq \dfrac{(2g)^{2g+3}}{d} \cdot \max\!\pg{1, \nmi{S}}^{g+1}\leq c_2(A)$$
where 
$$c_2(A)=2^{2g+4} \cdot g^{2g+5} \cdot \d_2^2 \d_4 \cdot \dfrac{\nmi{\mathbf{D}}^{2g+2}}{d} \cdot \max\!\pg{1, \nmi{\Im(Z_{\tau})}}^{2g^3+3g^2+2g+1}.$$ 
Note that $\d=2^{2g+4} \cdot g^{2g+5} \cdot \d_2^2 \d_4$ is an effective positive constant that depends only on $g$ and the choice of the representatives for the right cosets of $\Gamma$ in $\mathrm{Sp}_{2g}(\Z)$.

Therefore, we have that
$$\nmi{R}\leq  c_2(A) \cdot \nmi{Q} \leq c_2(A) \cdot \nm{Q}_F.$$
Recalling that $R=\r_r(f)$ and $\nm{Q}_F=\nm{f}_{Ros}$ concludes the proof.
\end{proof}
\begin{remark}
If $\Gamma=\mathrm{Sp}_{2g}(\Z)$ (so that $\mathfrak{F}_{\Gamma}=\mathfrak{F}_g$ and $Z_{\tau}=\tau$), one can obtain a better value for the constant $c_2(A)$, namely
$$c_2(A)=2^{4g+5} \cdot g^{g^2+3g+3} \cdot \pt{\dfrac{2\sqrt{3}}{3}}^{g^2(g+1)} \cdot \dfrac{\nmi{\mathbf{D}}^{2g+2}}{d} \cdot \max\!\pg{1, \nmi{\Im(\tau)}}^{g(g+1)}.$$
The argument is the same as in the proof above, but here one may use the sharper bounds specific to $\mathfrak{F}_g$ given by Lemma \ref{lemma:prop_F_g} instead of Proposition \ref{prop:prop_F_Gamma}. 
\end{remark}

\section{The main estimate}\label{sect:main_estim_CM}
For every $T\geq 1$ we define the set
\begin{align*}
\mathcal{Z}(T)=\Big\lbrace&\pt{\tau, z} \in \mathcal{Z}: \exists M \in \mat_{g}(\C)\setminus\pg{\mathbf{0}} \text{ s.t. } \\
& M z \in \Z^g+\tau \Z^g, H_{2g}(\tau),H_{2g}(M) \leq T \text{ and } \det(\Im(\tau))\geq \frac{1}{T}\Big\rbrace
\end{align*}
where $\mathcal{Z}$ is the set defined in \eqref{def:Z_CM} and $H_{2g}$ is the height defined in Definition \ref{def_d_height}. 

We want to prove the following upper bound for the cardinality of $\mathcal{Z}(T)$.
\begin{proposition}\label{prop:main_estim_CM}
Under the hypotheses of Theorem \ref{main_thm}, for all $\eps>0$, we have $\# \mathcal{Z}(T)\ll_{\eps} T^{\eps}$, for all $T\geq 1$.
\end{proposition}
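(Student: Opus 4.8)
The plan is to carry out the point-counting step of the Pila--Zannier method: feed the set $\mathcal{Z}(T)$ into the Habegger--Pila counting theorem (Proposition~\ref{prop:thm_HP_CM}) and use the functional transcendence statement of Lemma~\ref{lemma:func_transc_CM} to exclude the blocks it would otherwise produce. Throughout we work in the reduced situation $\AA=\mathfrak{A}_g\times_{\A_g}S$, $S\subseteq\A_g$ (Section~\ref{sec_red}), with $\mathcal{Z}$ definable. The first task is to encode $\mathcal{Z}(T)$ as a counting problem on a definable set. Given $(\tau,z)\in\mathcal{Z}(T)$ with a witness $M\in\mat_g(\C)\setminus\{\mathbf{0}\}$ satisfying $Mz\in\Z^g+\tau\Z^g$, $H_{2g}(\tau),H_{2g}(M)\le T$, $\det(\Im\tau)\ge 1/T$, write $Mz=a+\tau b$ with $a,b\in\Z^g$. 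First I would bound $a,b$: since $z$ lies in the fundamental parallelogram $L_\tau$, $\abs{z}\ll\nmi{\tau}$, and from the real period matrix $\Pi_\tau=\begin{psmallmatrix}\mathbf{1}_g&\Re\tau\\0&\Im\tau\end{psmallmatrix}$ one has $\begin{psmallmatrix}a\\b\end{psmallmatrix}=\Pi_\tau^{-1}\begin{psmallmatrix}\Re(Mz)\\\Im(Mz)\end{psmallmatrix}$, so $\abs{a},\abs{b}\ll\nmi{\Pi_\tau^{-1}}\,\nmi{M}\,\nmi{\tau}$; combining Lemma~\ref{lemma:bounds_hpoly} (to bound the entries of $\tau$ and $M$ by $T$), Lemma~\ref{lemma_inv} and the hypothesis $\det(\Im\tau)\ge 1/T$ (to bound $\nmi{\Pi_\tau^{-1}}$) gives $\abs{a},\abs{b}\le T'$, hence $H_{2g}(a),H_{2g}(b)\le T'$, where $T':=c_0T^{\kappa_0}$ with $c_0\ge 1$ and $\kappa_0>0$ depending only on $g$. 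Then I would consider the definable set
\[
Z=\bigl\{(\tau',M,a,b;\,\tau,z)\ :\ (\tau,z)\in\mathcal{Z},\ \tau'=\tau,\ Mz-\tau b=a,\ M\neq\mathbf{0}\bigr\},
\]
with first coordinate block $(\tau',M,a,b)$ and second block $(\tau,z)$; the duplicate $\tau'$ is there only so that $\tau$ is recorded in both blocks. With $d=2g$ and $\pi_1,\pi_2$ the two projections, every $(\tau,z)\in\mathcal{Z}(T)$ is the $\pi_2$-image of a point of $Z^{\sim}(2g,T')$, so $\#\mathcal{Z}(T)\le\#\pi_2\bigl(Z^{\sim}(2g,T')\bigr)$.

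Next, fix $\eps>0$, set $\eps'=\eps/\kappa_0$, and let $c=c(Z,2g,\eps')$ be the constant of Proposition~\ref{prop:thm_HP_CM}. I would prove $\#\pi_2(Z^{\sim}(2g,T'))\le c(T')^{\eps'}$ for every $T\ge 1$; by the previous display this yields $\#\mathcal{Z}(T)\le c\,c_0^{\eps'}\,T^{\eps}\ll_{\eps}T^{\eps}$, as desired. Otherwise $\#\pi_2(Z^{\sim}(2g,T'))>c(T')^{\eps'}$ for some $T\ge 1$, and since $T'\ge 1$, Proposition~\ref{prop:thm_HP_CM} produces a continuous definable map $\delta\colon[0,1]\to Z$, real analytic on $(0,1)$, with $\pi_1\circ\delta$ semialgebraic and $\pi_2\circ\delta$ non-constant; writing $\delta(t)=(\tau(t),M(t),a(t),b(t);\,\tau(t),z(t))$, it remains to contradict Lemma~\ref{lemma:func_transc_CM}.

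Along $\delta|_{(0,1)}$ the tuple $(\tau(t),M(t),a(t),b(t))$ is semialgebraic and real analytic, $(\tau(t),z(t))\in\mathcal{Z}$ so $P(t):=u(\tau(t),z(t))\in\c(\C)$, and $M(t)z(t)=a(t)+\tau(t)b(t)$ with $M(t)\neq\mathbf{0}$. Since $u|_{\mathcal{F}_g}$ is finite-to-one and $\pi_2\circ\delta$ is non-constant, $P$ is non-constant; moreover $\tau(t)$ must be non-constant, for otherwise $P(t)$ would remain in a single finite fiber of $\c\to S$ (recall $\c$ is not contained in a fiber) and $\pi_2\circ\delta$ would be constant. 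For a generic $t_0\in(0,1)$ (one with $P(t_0)$ a smooth point of $\c$ and $\tfrac{d}{dt}\tau(t_0)\neq 0$, $\tfrac{d}{dt}P(t_0)\neq 0$) I would take a small disc $D\subseteq\c(\C)$ around $P(t_0)$ with holomorphic coordinate $s$; over $D$ one has holomorphic branches $\widetilde\tau(s),\widetilde z(s)$ of the uniformizing data agreeing with $\tau(t),z(t)$ along the real-analytic arc $s(P(t))$. The image of $t\mapsto\tau(t)$ is a connected real-analytic semialgebraic arc, hence lies on a $1$-dimensional complex algebraic curve $\mathcal B\subseteq\H_g$ (complexify its real Zariski closure); as $\widetilde\tau$ is holomorphic, non-constant and carries $s(P(t))$ into $\mathcal B$, the analytic set $\widetilde\tau^{-1}(\mathcal B)\cap D$ contains a real curve, hence equals $D$, so $\widetilde\tau(D)\subseteq\mathcal B$ and $\mathrm{tr.deg.}_{\C}\C(\widetilde\tau)=1$. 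Furthermore, near $t_0$ the map $t\mapsto\tau(t)$ is injective, so $M,a,b$ are semialgebraic functions of $\tau$ there, hence branches of algebraic functions; evaluating their algebraic continuations at $\widetilde\tau(s)$ gives holomorphic $\widetilde M,\widetilde a,\widetilde b$ on $D$, algebraic over $\C(\widetilde\tau)$, with $\widetilde M\not\equiv\mathbf{0}$ and, by the identity theorem, $\widetilde M\widetilde z=\widetilde a+\widetilde\tau\widetilde b$ throughout $D$. A row of this identity in which $\widetilde M$ is not identically zero then gives a non-trivial linear relation over $\overline{\C(\widetilde\tau)}$ among $\widetilde z_1,\dots,\widetilde z_g$, so $\mathrm{tr.deg.}_{\C(\widetilde\tau)}\C(\widetilde\tau)(\widetilde z)\le g-1$, contradicting Lemma~\ref{lemma:func_transc_CM}.

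The height bookkeeping in the first paragraph is routine, and the application of Proposition~\ref{prop:thm_HP_CM} is formal. The crux is the last paragraph: extracting genuine algebraic relations over $\C(\tau)$ on an analytic disc of $\c$ from the merely semialgebraic block (in particular, that $\widetilde\tau(D)$ lies on an algebraic curve and that the data $M,a,b$ descend to functions algebraic over $\C(\tau)$), which is precisely what makes Lemma~\ref{lemma:func_transc_CM} applicable; one must also attend to the genericity of $t_0$, the shrinking of $D$ past branch points of the algebraic continuations, and the arrangement $c_0\ge 1$ ensuring $T'\ge 1$ so that Proposition~\ref{prop:thm_HP_CM} applies.
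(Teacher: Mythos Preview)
Your argument is correct and follows the same Pila--Zannier template as the paper: encode $\mathcal{Z}(T)$ in a definable set, apply the Habegger--Pila block counting (Proposition~\ref{prop:thm_HP_CM}), and rule out the resulting semialgebraic blocks via Lemma~\ref{lemma:func_transc_CM}. The height bookkeeping and the extraction of a non-trivial affine relation among $z_1,\dots,z_g$ over $\overline{\C(\tau)}$ are done essentially as in the paper.

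The one genuine organizational difference is your choice of projections. The paper puts only $z$ in the second block (so $\pi_2$ lands in $\R^{2g}$) and $\tau,M,\mu_1,\mu_2$ in the first; it then needs an extra finite-fiber lemma (Lemma~\ref{lemma:bound_preimages_CM}) showing that, for each $z$, only boundedly many $\tau$ can occur with $(\tau,z)\in\mathcal{Z}(T)$, in order to pass from $\#\pi_2(W^{\sim})$ to $\#\mathcal{Z}(T)$. You instead put $(\tau,z)$ in the second block and duplicate $\tau$ as $\tau'$ in the first; this gives $\#\mathcal{Z}(T)\le\#\pi_2(Z^{\sim}(2g,T'))$ directly, so no analogue of Lemma~\ref{lemma:bound_preimages_CM} is needed. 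The price is that $\pi_2\circ\delta$ non-constant only tells you $(\tau,z)$ is non-constant (not $z$ alone), but your argument that $\tau$ must then be non-constant---else $P(t)$ sits in a finite set $\c\cap\AA_\tau$, forcing $(\tau,z)$ constant by continuity and finite-to-one-ness of $u|_{\mathcal{F}_g}$---handles this. Both routes reach the same contradiction with Lemma~\ref{lemma:func_transc_CM}; yours is slightly more streamlined, while the paper's separates the o-minimal fiber-finiteness from the counting step.
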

In order to prove this, consider the definable set $W$ whose elements are tuples of the form
\begin{align*}
(\a_{1,1}, \ldots, \a_{g,g}, &\b_{1,1}, \ldots, \b_{g,g}, \m_{1,1}, \ldots, \m_{1,g}, \m_{2,1}, \ldots, \m_{2,g},\\
 &x_{1,1}, \ldots, x_{g,g}, y_{1,1}, \ldots, y_{g,g}, u_1, \ldots, u_g, v_1, \ldots, v_g)
\end{align*}

in $\R^{4g^2+2g} \times \R^{2g}$, satisfying the following relations:
$$M\neq \mathbf{0}, \qquad \pt{\tau, z} \in \mathcal{Z}, \qquad M z =\m_1+ \tau \m_2$$
where $$M=\pt{\a_{i,j}+\i \b_{i,j}}_{i,j=1, \ldots, g}, \quad \m_1=\pt{\m_{1,1}, \ldots, \m_{1,g}}^t, \quad \m_2=\pt{\m_{2,1}, \ldots, \m_{2,g}}^t,$$
$$\tau=\pt{x_{i,j}+\i y_{i,j}}_{i,j=1, \ldots, g}, \quad z=\pt{z_1, \ldots, z_g}^t=\pt{u_1 + \i v_1, \ldots, u_g + \i v_g}^t$$
and $\i$ is the imaginary unit. Moreover, for any integer $n\geq 1$ and for any $T\geq 1$, let
$$W^{\sim}(n,T):=\pg{(\a_{1,1}, \ldots, v_g) \in W: H_{n}(\a_{1,1}, \ldots, y_{g,g})\leq T}.$$
Recall that $H_{n}(\a_{1,1}, \ldots, y_{g,g})$ is finite if and only if $\a_{1,1}, \ldots, y_{g,g}$ are all algebraic numbers of degree at most $n$. 

Now, let $\pi_1, \pi_2$ be the projections on the first $4g^2+2g$ and the last $2g$ coordinates, respectively.

\begin{lemma}\label{lemma:bound_ll_Teps_CM}
Under the hypotheses of Theorem \ref{main_thm}, for any integer $n\geq 1$ and for every $\eps>0$, we have 
$$\# \pi_2\pt{W^{\sim}(n,T)}\ll_{\eps, n} T^{\eps}$$ for all $T\geq 1$.
\end{lemma}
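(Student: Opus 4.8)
The plan is to apply the Habegger--Pila counting result (Proposition~\ref{prop:thm_HP_CM}) to the definable set $W$, so the entire argument reduces to \emph{ruling out} the alternative conclusion of that proposition: the existence of a non-constant continuous definable curve $\d:[0,1]\to W$ with $\pi_1\circ\d$ semi-algebraic and $\pi_2\circ\d$ non-constant. I would argue by contradiction: suppose that for some $\eps>0$ the bound fails, so $\#\pi_2(W^{\sim}(2g,T))>cT^{\eps}$ for arbitrarily large $T$ (with $c$ the constant from Proposition~\ref{prop:thm_HP_CM} applied to $Z=W$, $d=2g$). Then we obtain such a curve $\d$, and the point is to show this contradicts the functional transcendence statement of Lemma~\ref{lemma:func_transc_CM}.

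First I would unwind what $\d$ gives us. Writing $\d(t)=(M(t),\m_1(t),\m_2(t),\tau(t),z(t))$, the semi-algebraicity of $\pi_1\circ\d$ means that the coordinates of $M(t),\m_1(t),\m_2(t),\tau(t)$ all lie in a common semi-algebraic (hence, restricted to an interval, real-analytic) one-parameter family; in particular the entries of $\tau(t)$ satisfy polynomial relations over $\R$ along the curve, so $\tau(t)$ traces a real-algebraic arc in $\mathfrak F_\Gamma$. Meanwhile $z(t)$ is a real-analytic arc on $\mathcal Z$, hence (projecting through $u$) $t\mapsto(\tau(t),z(t))$ parametrizes a real arc inside the analytic surface $\mathcal Z\subseteq\mathcal F_g$, so it lies over an arc of $\c$. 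The defining relation $M(t)z(t)=\m_1(t)+\tau(t)\m_2(t)$ with $M(t)\neq\mathbf 0$ says that along this arc the point $z(t)$ satisfies, for each fixed $t$, a nontrivial $\C$-linear relation with coefficients that are \emph{algebraic functions of $\tau(t)$} (they are semi-algebraic functions of $t$, and $\tau(t)$ is too).

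Next I would pass to a complex disc. Since $\c$ is an irreducible curve and the arc of $\c$ traced out by $t\mapsto u(\tau(t),z(t))$ is infinite (because $\pi_2\circ\d$ is non-constant, $z(t)$ is non-constant, so the arc is not a single point), it is Zariski-dense in $\c$; choosing an open disc $D$ on $\c(\C)$ meeting this arc, the holomorphic functions $\tau,z$ on $D$ restrict on a subarc to the real-analytic functions above. By analytic continuation / the identity principle, the relation $M z=\m_1+\tau\m_2$ forces a corresponding holomorphic relation on $D$: the entries of $z$ satisfy a nontrivial $\C$-linear equation whose coefficients lie in the field $F=\C(\tau)$ (indeed the entries of $\tau(t)$ are algebraically dependent along the semi-algebraic arc, so $F(z)$ is generated over $F$ by fewer than $g$ algebraically independent functions). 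Concretely, $M$ being a nonzero matrix with entries algebraic over $\C(\tau)$ gives a nontrivial $F$-linear dependence among $z_1,\dots,z_g$ over $\overline F$, hence $\mathrm{tr.deg.}_F F(z)\le g-1<g$, contradicting Lemma~\ref{lemma:func_transc_CM}. This contradiction shows that the counting bound holds, i.e. $\#\pi_2(W^{\sim}(2g,T))\ll_\eps T^\eps$.

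The main obstacle, and the step requiring the most care, is the transition from the \emph{real} semi-algebraic/real-analytic data along $\d$ to a \emph{holomorphic} relation on a complex disc $D$, and in particular extracting from ``$M(t)\neq\mathbf 0$ for each $t$ and $M(t)$ semi-algebraic in $t$'' a single nonzero vector of $F$-coefficients witnessing a genuine drop in transcendence degree: one must ensure the linear relation does not degenerate to the trivial one after continuation, and must correctly bookkeep that the coefficients indeed lie in $\C(\tau)$ rather than some larger field (using that $\pi_1\circ\d$ is semi-algebraic, so $\tau$ and $M,\m_1,\m_2$ are algebraically interdependent along the arc). Handling this cleanly likely uses the fact that the set of $\tau$ for which such a nonzero $M$ exists is itself definable, together with a dimension/fiber-counting argument to pick a coherent family; once that is in place, Lemma~\ref{lemma:func_transc_CM} finishes the job immediately.
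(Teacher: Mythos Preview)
Your proposal is correct and follows essentially the same route as the paper: argue by contradiction, apply Proposition~\ref{prop:thm_HP_CM} to produce the definable arc $\d$, observe that the semi-algebraicity of $\pi_1\circ\d$ forces the entries of $M,\m_1,\m_2$ to lie in $\overline{\C(\tau)}$, so that $Mz=\m_1+\tau\m_2$ with $M\neq\mathbf{0}$ yields a nontrivial $\overline{\C(\tau)}$-linear relation among $z_1,\dots,z_g$, and then analytically continue to an open disc to contradict Lemma~\ref{lemma:func_transc_CM}. The only point the paper makes explicit that you leave implicit is the non-constancy of $\tau$ along the arc (needed so that $\C(\tau)$ genuinely has transcendence degree~$1$ and the other semi-algebraic coordinates are algebraic over it): this follows because if $\tau$ were constant, the non-constant arc $z(t)$ would map via $u$ into $\c\cap\AA_s$ for a fixed $s$, which is finite since $\c$ is not contained in a fiber.
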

\begin{proof}
Consider an arbitrary $\eps>0$ and suppose that for some $T_0\geq 1$,
$$\# \pi_2\pt{W^{\sim}(n,T_0)}> c  \cdot T_0^{\eps}$$
where $c=c(W, n, \eps)$ is the constant given by Proposition \ref{prop:thm_HP_CM}. 

Then, by Proposition \ref{prop:thm_HP_CM}, there exists a continuous definable function $\delta: \pq{0,1} \rightarrow W$ such that $\d_1=\pi_1 \circ \d:\pq{0,1} \rightarrow \R^{4g^2+2g}$ is semi-algebraic and $\d_2=\pi_2 \circ \d:\pq{0,1} \rightarrow \R^{2g}$ is non-constant. Hence, there exists an infinite connected $J \subseteq \pq{0,1}$ such that $\d_1(J)$ is contained in an algebraic curve and $\d_2(J)$ has positive dimension.

Let $M, \tau, \m_1, \m_2, z=(z_1, \ldots, z_g)^t$ be as above and consider the coordinates 
\begin{align*}
\a_{1,1}, \ldots, \a_{g,g}, &\b_{1,1}, \ldots, \b_{g,g}, \m_{1,1}, \ldots, \m_{1,g}, \m_{2,1}, \ldots, \m_{2,g},\\
 &x_{1,1}, \ldots, x_{g,g}, y_{1,1}, \ldots, y_{g,g}, u_1, \ldots, u_g, v_1, \ldots, v_g
\end{align*}
as functions on $J$.

Note that $\tau$ cannot be constant on $J$, otherwise there would be infinitely many points on $\c$ (since $\d_2(J)$ has positive dimension) that lie on the same fiber, which contradicts the assumption that $\c$ is not contained in any fiber.

Moreover, on $J$, the functions $\a_{1,1}, \ldots, y_{g,g}$ generate a field of transcendence degree at most 1 over $\C$, because they are functions on a curve. Therefore, on $J$, $\C(\tau)$ is a field of transcendence degree 1 over $\C$ and $\a_{1,1}, \ldots, \m_{2,g} \in \overline{\C(\tau)}$. 
Since $M\neq \mathbf{0}$ and $M z =\m_1+ \tau \m_2$, it follows that $z_1, \ldots, z_g$ are linearly dependent over $\overline{\C(\tau)}$. In particular, $z_1, \ldots, z_g$ are algebraically dependent over $F=\C\pt{\tau}$ on $J$.

Now, consider the set $\mathcal{W}=(\tau, z)(J) \subseteq \mathcal{Z}$. As the restriction of $\d$ to $(0,1)$ is real analytic, we can view $\tau$ and $z$ as holomorphic functions on $u(\mathcal{W}) \subseteq \c(\C)$. Then, $\tau$ and $z$ satisfy an algebraic relation on $u(\mathcal{W})$ which can be analytically continued to an open disc in $\c(\C)$.

Therefore, we have $\mathrm{tr.deg.}_F F\pt{z}< g$ on an open disc in $\c(\C)$, contradicting Lemma \ref{lemma:func_transc_CM} and thus proving the proposition.
\end{proof}

\begin{lemma}\label{lemma:bound_preimages_CM}
There exists a positive constant $c'=c'(\mathcal{Z})$ such that for all $z \in \C^{g}$ and for all $T\geq 1$, there are at most $c'$ elements $\tau \in \H_g$ such that $(\tau, z) \in \mathcal{Z}(T)$.
\end{lemma}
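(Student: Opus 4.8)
The plan is to deduce the statement from a finiteness property of the definable set $\mathcal{Z}$ together with the functional transcendence input of Lemma~\ref{lemma:func_transc_CM}. First, since $\mathcal{Z}(T) \subseteq \mathcal{Z}$ for every $T \geq 1$, it suffices to find a constant $c' = c'(\mathcal{Z})$ such that for every $z \in \C^g$ the fibre $\{\tau \in \H_g : (\tau, z) \in \mathcal{Z}\}$ has at most $c'$ elements; such a bound is then automatically uniform in $T$. Viewing $\mathcal{Z} \subseteq \H_g \times \C^g$ as a definable family over the second factor $\C^g = \R^{2g}$, Proposition~\ref{prop_fibre_finite} provides an integer $n = n(\mathcal{Z})$ bounding the number of connected components of every such fibre, so it is enough to prove that each fibre is \emph{finite}; then $c' = n$ works.

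Suppose, towards a contradiction, that for some $z_0 \in \C^g$ the fibre $\Sigma = \{\tau \in \H_g : (\tau, z_0) \in \mathcal{Z}\}$ is infinite. Being a definable set of positive dimension, $\Sigma$ contains a non-constant arc $\tau \colon (0,1) \to \mathfrak{F}_{\Gamma}$ with $(\tau(t), z_0) \in \mathcal{Z}$ for all $t$. Set $P(t) = u(\tau(t), z_0) \in \c(\C)$. Since $\pi(P(t)) = u_b(\tau(t))$ and, with our choice of $\Gamma$, the map $u_b \colon \H_g \to \A_g^{an}$ is a covering map (so has discrete fibres), the arc $\pi \circ P$, and hence $P$ itself, is non-constant. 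Thus $\{P(t)\}$ is an infinite arc inside $\c(\C)$.

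Now fix $t_0$ with $P(t_0)$ a smooth point of $\c$, and let $D \subseteq \c(\C)$ be a small simply connected disc around $P(t_0)$ on which, as in Lemma~\ref{lemma:func_transc_CM}, the coordinate functions (which I write $\tau_D \colon D \to \H_g$ and $z_D \colon D \to \C^g$) are defined and holomorphic, so that $u(\tau_D(P), z_D(P)) = P$ with $\tau_D$ a holomorphic lift of $\pi|_D$; shrink $D$ so that $P(t) \in D$ for $t$ near $t_0$. For such $t$, both $\tau(t)$ and $\tau_D(P(t))$ are lifts of $\pi(P(t))$ through $u_b$, hence differ by an element of $\mathrm{Sp}_{2g}(\Z)$; by continuity and discreteness this element is a fixed $\gamma \in \mathrm{Sp}_{2g}(\Z)$ along the arc. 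Comparing the two presentations of the point $P(t)$ in its fibre, $z_D(P(t))$ is obtained from $z_0$ by the $\C$-linear isomorphism $\C^g/(\Z^g + \tau_D(P(t))\Z^g) \cong \C^g/(\Z^g + \tau(t)\Z^g)$ attached to $\gamma$, followed by a lattice translation; the matrix of this isomorphism has entries rational in the entries of $\tau_D(P(t))$ with no poles on $\H_g$, and the translation vector $\mu_1 + \tau_D(P(t))\mu_2$ has $\mu_1,\mu_2 \in \Z^g$ fixed along the arc, again by discreteness. Hence $z_D = R \circ \tau_D$ on $\{P(t)\}$ for a fixed map $R$ whose components lie in $\C(\text{entries of }\tau_D)$. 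Since $z_D$ and the entries of $\tau_D$ are holomorphic on $D$ and $\{P(t)\}$ accumulates at a point of $D$, the identity theorem gives $z_D = R \circ \tau_D$ on all of $D$, so $\mathrm{tr.deg.}_{\C(\tau_D)} \C(\tau_D)(z_D) = 0 < g$ on $D$, contradicting Lemma~\ref{lemma:func_transc_CM}. Therefore every fibre of $\mathcal{Z} \to \C^g$ is finite, and the proof is complete with $c' = n$.

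I expect the only real work to be in the third step: making precise the comparison of the two presentations $(\tau(t), z_0)$ and $(\tau_D(P(t)), z_D(P(t)))$ of the same point of $\mathfrak{A}_g^{an}$, checking that it is governed by a single $\gamma \in \mathrm{Sp}_{2g}(\Z)$ and a single lattice vector along the arc, and verifying that the resulting identity forces $z_D$ into the field generated over $\C$ by the entries of $\tau_D$, so that the transcendence degree drops to $0$. The reduction to finiteness of the fibres of $\mathcal{Z}$, and the appeal to o-minimality via Proposition~\ref{prop_fibre_finite}, should be routine.
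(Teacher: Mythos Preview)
Your proof is correct and follows the same overall strategy as the paper: reduce to showing that every fibre of the projection $\mathcal{Z} \to \C^g$ is finite (so that o-minimality via Proposition~\ref{prop_fibre_finite} gives the uniform bound), and then derive a contradiction with Lemma~\ref{lemma:func_transc_CM} from a positive-dimensional fibre.

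The only difference lies in how the contradiction is obtained. The paper argues very tersely: on the positive-dimensional fibre $\widetilde{\pi}^{-1}(z_0)$ the function $z$ is identically $z_0$, hence algebraic over $\C(\tau)$; this relation analytically continues to an open disc in $\c(\C)$, forcing $\mathrm{tr.deg.}_{\C(\tau)}\C(\tau)(z)=0<g$. Your argument spells this out in greater detail but is also more elaborate than necessary: you introduce a \emph{separate} holomorphic section $(\tau_D,z_D)$ and then compare it to the arc $(\tau(t),z_0)$ via a fixed $\gamma\in\mathrm{Sp}_{2g}(\Z)$ and a fixed lattice translation, concluding $z_D\in\C(\tau_D)$. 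This works, but the whole comparison can be avoided. Since $u$ restricts to a local biholomorphism from $u^{-1}(\c)$ onto $\c$ near a generic point, you may simply \emph{choose} the holomorphic section so that $(\tau_D(P(t_0)),z_D(P(t_0)))=(\tau(t_0),z_0)$; by discreteness of the fibres of $u$ and continuity, the section then coincides with $(\tau(t),z_0)$ along the entire arc, so $z_D\equiv z_0$ on the arc and hence on $D$ by the identity principle---no $\gamma$, no lattice vector. Either route reaches the same contradiction.
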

\begin{proof}
Let 
$$\setlength{\arraycolsep}{0pt}
\renewcommand{\arraystretch}{1.2}
  \begin{array}{ c c c c }
    \widetilde{\pi}:& {} \mathcal{Z} & {} \longrightarrow {} & \C^g \\
     &{} (\tau, z)      & {} \longmapsto {} & z
  \end{array}$$
Fix $z_0 \in \C^g$. By o-minimality, if $\widetilde{\pi}^{-1}(z_0)$ has dimension 0, then Proposition \ref{prop_fibre_finite} implies that its cardinality is uniformly bounded by a constant depending only on $\mathcal{Z}$. Therefore, it suffices to show that for any $T \geq 1$, if $z_0 \in \widetilde{\pi}\!\pt{\mathcal{Z}(T)}$, then $\widetilde{\pi}^{-1}(z_0)$ has dimension 0.

Now suppose that it has positive dimension, and let $\tau_0 \in \H_g$ be such that $(\tau_0, z_0) \in \mathcal{Z}(T)$. Then $z_0$ and $\tau_0$ are algebraically dependent over $\C$, and this relation extends to the whole $\widetilde{\pi}^{-1}(z_0)$, hence to an open disc in $\c(\C)$. This contradicts Lemma~\ref{lemma:func_transc_CM}.
\end{proof}

\begin{proof}[Proof of Proposition \ref{prop:main_estim_CM}]
If $(\tau, z) \in \mathcal{Z}(T)$, then there exists a matrix $M \in \mat_{g}(\Qal)$ satisfying $H_{2g}(M)\leq T$, and vectors $\m_1, \m_2 \in \Z^g$ such that 
$$M z = \m_1+ \tau \m_2.$$
If we write $M=\pt{m_{i,j}}_{1 \leq i,j\leq g}$ and $\tau=\pt{\tau_{i,j}}_{1 \leq i,j\leq g}$, then, for every $i,j=1, \ldots, g$, we can use Lemma \ref{lemma:bound_mod_hpoly} and deduce
$$\abs{m_{i,j}}\leq \sqrt{2g+1} \cdot H_{2g}(M) \ll T, \qquad \abs{\tau_{i,j}}\leq \sqrt{2g+1} \cdot H_{2g}(\tau) \ll T.$$
Furthermore, since $z=(z_1, \ldots, z_g) \in L_{\tau}$, there exist $u,v \in [0,1)^g$ such that $z=u+\tau v$.
Thus, for each $i=1, \ldots, g$, we get
$$\abs{z_{i}}=\abs{u_i+\sum\limits_{j=1}^g \tau_{i,j} v_j} \leq 1+\sum\limits_{j=1}^g \abs{\tau_{i,j}}\ll T.$$
As a consequence, for every $i=1, \ldots, g$ we have
\begin{equation}\label{eqn:bound_nmi_MZ}
\abs{\sum\limits_{j=1}^g m_{i,j} z_j} \leq \sum\limits_{j=1}^g \abs{m_{i,j}} \abs{z_j} \ll T^2.
\end{equation}
Since $M z = \m_1+ \tau \m_2$, we have $\Im(\tau) \m_2=\Im(M z)$ and thus
\begin{align*}
\nmi{\m_2} =\nmi{\Im(\tau)^{-1} \cdot \Im(M z)} \leq g \nmi{\Im(\tau)^{-1}} \cdot \nmi{\Im(M z)} 
\end{align*}
and, by Lemma \ref{lemma_inv}, we get 
$$\nmi{\Im(\tau)^{-1}}\leq \frac{g^{g/2}}{\det(\Im(\tau))} \nmi{\Im(\tau)}^{g-1} \leq g^{g/2} \cdot T \cdot \nmi{\tau}^{g-1}\ll T^g,$$
since the definition of $\mathcal{Z}(T)$ implies $\det(\Im(\tau))\geq \frac{1}{T}$. 
Hence, using (\ref{eqn:bound_nmi_MZ}), we obtain
\begin{align*}
\nmi{\m_2} \leq g \nmi{\Im(\tau)^{-1}} \cdot \nmi{\Im(M z)} \ll T^g \cdot \nmi{M z} \ll T^{g+2}.
\end{align*}
Moreover, we have $\m_1=M z - \tau \m_2$, so that
$$\nmi{\m_1}\leq \nmi{M z} + \nmi{\tau \m_2} \leq \nmi{M z} + g \nmi{\tau} \cdot \nmi{\m_2} \ll T^2 + T \cdot T^{g+2} \ll T^{g+3}.$$
Additionally, Lemmas \ref{lemma:bound_mod_hpoly} and \ref{lemma:bounds_hpoly_ReIm} imply that 
$$H_{N_g}(\Re(M)), H_{N_g}(\Re(\tau)) \ll_g T^{N_g} \qquad \text{and} \qquad H_{2N_g}(\Im(M)), H_{2N_g}(\Im(\tau)) \ll_g T^{2N_g}$$
where $N_g=\max\!\pg{2, \frac{g(g-1)}{2}}$. This allows us to deduce that 
$$\pt{\Re(M),\Im(M), \m_1, \m_2, \Re(\tau), \Im(\tau), \Re(z), \Im(z)} \in W^{\sim}(2N_g, \nu T^{\max\!\pg{2N_g, g+3}})$$ 
for some positive constant $\nu$. Then, by Lemma \ref{lemma:bound_preimages_CM}, each element of 
$$\pi_2(W^{\sim} (2N_g, \nu T^{\max\!\pg{2N_g, g+3}}))$$
corresponds to at most $c'$ distinct elements of $\mathcal{Z}(T)$. Finally, the proof follows from Lemma \ref{lemma:bound_ll_Teps_CM}.
\end{proof}

\section{Canonical height bounds under endomorphisms}\label{sec_can_bounds}
Let $A$ be an abelian variety of dimension $g$ defined over $\Qal$, $D$ be a symmetric divisor on $A$, and $\widehat{h}_{A,D}$ the canonical height on $A(\Qal)$ associated with $D$ (see Section \ref{sect:prelim_heights}).

As mentioned in the introduction, our aim is to generalize the usual identity $\widehat{h}_{A,D}([n]P)=n^2 \cdot \widehat{h}_{A,D}(P)$ to general endomorphisms of $A$.

It was noted by Naumann \cite{Nau04} that, if $\en^0(A)$ is $\Q$, an imaginary quadratic field or a definite quaternion algebra over $\Q$, and if $D$ is an ample symmetric divisor, then
$$\widehat{h}_{A,D}(f(P)) = \pt{\deg f}^{1/g} \cdot \widehat{h}_{A,D}(P)$$
for any $f \in \en(A)$ and any $P\in A(\Qal)$, recovering a well known fact for elliptic curves.

In general, however, we cannot expect an identity of this form, as illustrated by the following examples.

\begin{example}
Consider $A=E \times E$, where $E$ is any elliptic curve with identity element $O$, and let $D=\pt{O \times E} + \pt{E \times O}$. Define the endomorphism $f: A \rightarrow A$ by 
$$f(P_1,P_2)=(P_1,2P_2).$$ 
Since we can write $D=\pi_1^*(O)+ \pi_2^*(O)$, where $\pi_1$ and $\pi_2$ are the projections onto the two factors, we obtain
$$\widehat{h}_{A,D}(P_1,P_2)=\widehat{h}_{E, (O)}(\pi_1(P_1,P_2))+\widehat{h}_{E, (O)}(\pi_2(P_1,P_2))=\widehat{h}_{E, (O)}(P_1)+\widehat{h}_{E, (O)}(P_2)$$
by applying Proposition \ref{prop:propr_can_heights}. Choosing either $P_1=O$ or $P_2=O$, we conclude that there is no constant $\g$ such that
$$\widehat{h}_{A,D}(f(P))=\widehat{h}_{E, (O)}(P_1)+4\widehat{h}_{E, (O)}(P_2)=\g\cdot \pt{\widehat{h}_{E, (O)}(P_1)+\widehat{h}_{E, (O)}(P_2)}=\g \cdot \widehat{h}_{A,D}(P)$$
for every $P=(P_1,P_2) \in A(\Qal)$.
Nonetheless, since the divisor $(O)$ is ample, it follows that 
$$\widehat{h}_{A,D}(P)\leq \widehat{h}_{A,D}(f(P))\leq 4 \widehat{h}_{A,D}(P).$$
\end{example}

\begin{example}
Let $A/\Qal$ be a simple abelian variety such that $\en^0_{\Qal}(A)$ is a quadratic field with discriminant $N>0$. Let $f = a + b\sqrt{N} \in \en(A)$. Naumann proved (see \cite[Section 3]{Nau04}) that there exists a constant $c(f)$ such that $\widehat{h}_{A,D}(f(P))=c(f) \cdot \widehat{h}_{A,D}(P)$ for every $P \in A(\Qal)$ and every symmetric divisor $D$ if and only if $ab = 0$. However, for every $f = a + b\sqrt{N} \in \en(A)$, one can deduce from \cite[Theorem 3]{Nau04} that, for every symmetric divisor $D$ and every $P \in A(\Qal)$, one has
$$\a \cdot \widehat{h}_{A,D}(P)\leq \widehat{h}_{A,D}(f(P))\leq \b \cdot \widehat{h}_{A,D}(P)$$
where $\a=\min\!\pg{(a+b\sqrt{N})^2, (a-b\sqrt{N})^2}$ and $\b=\max\!\pg{(a+b\sqrt{N})^2, (a-b\sqrt{N})^2}$.
\end{example}

More generally, if $D$ is ample and symmetric, there exist constants $0 \leq \g_1 \leq \g_2$ such that
$$\g_1 \cdot \widehat{h}_{A,D}(P)\leq \widehat{h}_{A,D}(f(P))\leq \g_2 \cdot \widehat{h}_{A,D}(P).$$
In particular, as anticipated in the introduction, one must take $\gamma_1=0$ if $f$ is not an isogeny, while $\gamma_1$ can be chosen strictly positive when $f$ is an isogeny. We now provide the proof of this fact.

To prove the upper bound, recall that since $D$ is ample, there exists an integer $N_2>0$ such that $nD - f^* D$ is ample for all $n\geq N_2$, see for instance \cite[Example 1.2.10]{Laz04}.
This implies
$$N_2 \cdot \widehat{h}_{A,D}(P)-\widehat{h}_{A,D}(f(P))=\widehat{h}_{A,N_2 D- f^* D}(P)\geq 0$$
giving the upper bound with $\g_2=N_2$.

For the lower bound, first observe that if $f$ is not finite, then the dimension of $\ker(f)$ is positive and, in particular, there is a non-torsion point $P \in A(\Qal)$ for which $f(P)=O$. Therefore, we must have $\g_1=0$ in this case. On the other hand, if $f$ is finite then $f^* D$ is ample. Thus, as before, there exists an integer $N_1>0$ such that $n f^* D - D$ is ample for any $n\geq N_1$. This means that
$$N_1 \cdot \widehat{h}_{A,D}(f(P))-\widehat{h}_{A,D}(P)=\widehat{h}_{A,N_1 f^* D- D}(P)\geq 0$$
from which we deduce the lower bound, with $\g_1 =\frac{1}{N_1}>0$.

If $f$ is an isogeny, the existence of these bounds also follows from Theorem B in \cite{Lee16}.

Unfortunately, this method does not provide effective values for $\g_1$ and $\g_2$, although explicit computations may be possible for specific choices of $D$ and $f$.

However, as mentioned in the introduction, we are able to provide general explicit values for $\g_1$ and $\g_2$. As before, let $\RI{}$ be the Rosati involution associated to the divisor $D$ (or, more formally, to the line bundle $\O(D)$) and let $\a_1, \ldots, \a_g$ be the eigenvalues (counted with multiplicities) of $\r_a(\RI{f}f)$. We will prove in Lemma \ref{lemma:eigen_pos} that these eigenvalues are real and non-negative. As above, we set
$$\a^{-}_{D}(f)=\min\pg{\a_1, \ldots, \a_g} \quad \text{ and } \quad \a^{+}_{D}(f)=\max\!\pg{\a_1, \ldots, \a_g}.$$
We repeat the statement of the main result of this section, already mentioned in the introduction (Theorem \ref{thm:can_heights_endo_intro}).
\begin{theorem}\label{thm:can_heights_endomorphisms}
Let $A$ be an abelian variety defined over $\Qal$, and let $D$ be an ample symmetric divisor on $A$. Then, for every endomorphism $f : A \rightarrow A$, we have
$$\a^{-}_{D}(f) \cdot \widehat{h}_{A,D}(P)\leq \widehat{h}_{A,D}(f(P))\leq \a^{+}_{D}(f) \cdot \widehat{h}_{A,D}(P)$$
for every $P \in A(\Qal)$. Moreover, these constants are the best possible, meaning that we cannot replace $\a^{+}_{D}(f)$ and $\a^{-}_{D}(f)$ with a smaller and a larger constant, respectively.
\end{theorem}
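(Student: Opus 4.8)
The plan is to convert the two inequalities into assertions about ample divisors, using the linearity and functoriality of the canonical height (Proposition~\ref{prop:propr_can_heights}), and then to verify those assertions by passing to Hermitian forms on $\C^g$. Write $H=c_1(\O(D))$ for the positive definite Hermitian form attached to $D$, and put $F=\rho_a(f)$; by Proposition~5.1.1 of \cite{BL04} the analytic representation of $\RI{f}$ is the adjoint $F^{\dagger}$ of $F$ with respect to $H$, i.e.\ $H(Fu,v)=H(u,F^{\dagger}v)$ for all $u,v\in\C^g$, so that $\rho_a(\RI{f}f)=F^{\dagger}F$. Since $(\RI{f}f)^{\dagger}=\RI{f}f$ and $\rho_a$ intertwines the Rosati involution with the $H$-adjoint, the operator $F^{\dagger}F$ is self-adjoint for $H$, and $H(F^{\dagger}Fu,u)=H(Fu,Fu)\ge0$; hence it is diagonalizable with real eigenvalues $\a_1,\dots,\a_g\ge0$ (this is Lemma~\ref{lemma:eigen_pos}), strictly positive precisely when $F$ is injective, i.e.\ when $f$ is an isogeny. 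By the Rayleigh-quotient description of the extreme eigenvalues of a self-adjoint operator, $\a^{-}_{D}(f)\,H(u,u)\le H(Fu,Fu)\le\a^{+}_{D}(f)\,H(u,u)$ for every $u$, with equality on the corresponding eigenvectors.

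For the inequalities themselves, note that $f^*D$ is symmetric (because $f$ commutes with $[-1]$) and, by Proposition~\ref{prop:propr_can_heights}(1),(3),(4), $\widehat h_{A,f^*D}(P)=\widehat h_{A,D}(f(P))$ and $\widehat h_{A,\mu D-f^*D}=\mu\,\widehat h_{A,D}-\widehat h_{A,D}\circ f$ for $\mu\in\Q$. The Hermitian form of the $\Q$-divisor $\mu D-f^*D$ is $\mu H-H(F\,\cdot\,,F\,\cdot\,)$, which is positive definite as soon as $\mu>\a^{+}_{D}(f)$; for such rational $\mu$ the divisor $\mu D-f^*D$ is thus ample and symmetric (ampleness on an abelian variety being equivalent to positive definiteness of the associated Hermitian form, \cite[Section~4.5]{BL04}), so Proposition~\ref{prop:propr_can_heights}(6) gives $0\le\widehat h_{A,\mu D-f^*D}(P)=\mu\,\widehat h_{A,D}(P)-\widehat h_{A,D}(f(P))$. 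Letting $\mu\downarrow\a^{+}_{D}(f)$ proves the upper bound; the same reasoning applied to $f^*D-\mu D$ for rational $\mu<\a^{-}_{D}(f)$ proves the lower bound.

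For optimality I would argue by contradiction. Suppose a real $\gamma<\a^{+}_{D}(f)$ satisfied $\widehat h_{A,D}(f(P))\le\gamma\,\widehat h_{A,D}(P)$ for all $P\in A(\Qal)$; since $\widehat h_{A,D}\ge0$, any larger constant also works, so we may take $\gamma\in\Q$. Then $\delta:=\gamma D-f^*D$ is a symmetric $\Q$-divisor with $\widehat h_{A,\delta}(P)=\gamma\,\widehat h_{A,D}(P)-\widehat h_{A,D}(f(P))\ge0$ for all $P\in A(\Qal)$. On the other hand, evaluating the Hermitian form $\gamma H-H(F\,\cdot\,,F\,\cdot\,)$ of $\delta$ at an eigenvector $w$ of $F^{\dagger}F$ for the eigenvalue $\a^{+}_{D}(f)$ yields $(\gamma-\a^{+}_{D}(f))\,H(w,w)<0$, so $\delta$ is not nef; hence there is an irreducible curve $C\subseteq A$ with $\delta\cdot C<0$. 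Then a Weil height $h_{A,\delta}$ restricts on $C(\Qal)$ to a Weil height attached to a divisor of negative degree on $C$, and is therefore unbounded below on $C(\Qal)$; as $\widehat h_{A,\delta}=h_{A,\delta}+O(1)$, this contradicts $\widehat h_{A,\delta}\ge0$. The symmetric argument—an eigenvector for $\a^{-}_{D}(f)$ and the divisor $f^*D-\gamma D$ with a rational $\gamma>\a^{-}_{D}(f)$—shows $\a^{-}_{D}(f)$ cannot be replaced by anything larger.

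I expect the last paragraph to contain the only real difficulty: the implication that a symmetric divisor whose canonical height is everywhere non-negative must be nef, which I would deduce from the fact that a Weil height attached to a negative-degree divisor on a curve is unbounded below. The two inequalities are, by contrast, a formal consequence of Lemma~\ref{lemma:eigen_pos}, the ampleness criterion on abelian varieties, and Proposition~\ref{prop:propr_can_heights}; the degenerate case $f=0$ (where $\a^{\pm}_{D}(f)=0$) is immediate from $\widehat h_{A,D}(O)=0$ and the positivity of $\widehat h_{A,D}$ off torsion.
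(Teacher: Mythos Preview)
Your argument is correct, and it reaches the same destination as the paper by a noticeably more direct road.

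For the two inequalities, the paper proceeds algebraically: it computes $\chi(f^*L^{-b}\otimes L^a)$ via a Riemann--Roch--type identity (Lemma~\ref{Lemma_Lange}) and then invokes Kempf--Mumford vanishing (Theorem~\ref{Teo_Kempf}) together with Proposition~\ref{Prop_L_ampio} to characterize exactly when $aD-bf^*D$ is ample in terms of the roots of $P^a_{\RI{f}f}$. You bypass this machinery entirely by reading off ampleness from the positive-definiteness of the first Chern class and identifying $\a^{\pm}_D(f)$ as the extreme values of the Rayleigh quotient of $F^{\dagger}F$ for $H$. This is shorter and uses only the Lefschetz criterion; the paper's route, on the other hand, makes the link with the characteristic polynomial of $\RI{f}f$ explicit, which it exploits elsewhere.

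For optimality, the paper first observes that $\lambda D-f^*D$ is ample precisely when $\lambda>\a^+_D(f)$, deduces that it is not nef when $\lambda<\a^+_D(f)$, and then cites \cite[Lemma~4.1]{Lee16} to conclude that an inequality $\widehat h_{A,D}\circ f\le\widetilde\alpha\,\widehat h_{A,D}$ would force $\widetilde\alpha D-f^*D$ to be nef. Your proof replaces the citation with an in-line argument: non-nefness yields a curve with negative intersection, hence a Weil height unbounded below, contradicting $\widehat h_{A,\delta}\ge 0$. This is essentially a self-contained proof of the special case of Lee's lemma that is needed. The one step you might spell out more carefully is the implication ``Hermitian form not positive semi-definite $\Rightarrow$ not nef'': it follows from the Lefschetz criterion (ample $\Leftrightarrow$ positive definite) together with the fact that the nef cone is the closure of the ample cone in $\mathrm{NS}(A)_\R$, but a reader might appreciate a sentence to that effect.
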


\begin{remark}
Note that the ampleness hypothesis for $D$ is necessary, as shown by the following example. As before, take $A=E \times E$, where $E$ is any elliptic curve with identity element $O$, and $D=\pi_1^*(O)$. Observe that $D$ is nef, symmetric but not ample. By Proposition \ref{prop:propr_can_heights}, we get
$$\widehat{h}_{A,D}(P_1,P_2)=\widehat{h}_{E, (O)}(\pi_1(P_1,P_2))=\widehat{h}_{E, (O)}(P_1).$$
If $g\in \en(A)$ is given by $g(P_1,P_2)=(P_2, P_1)$, then we get that
$$\widehat{h}_{A,D}(g(P))=\widehat{h}_{E, (O)}(P_2)$$
for every $P=(P_1,P_2) \in A(\Qal)$, and it is easy to see that there is no positive constant $\g$ such that 
$$\widehat{h}_{A,D}(g(P)) \leq \g \cdot \widehat{h}_{A,D}(P)$$ 
for every $P=(P_1,P_2) \in A(\Qal)$.
\end{remark}

\subsection{Properties of endomorphisms and line bundles of abelian varieties}
Fix an ample divisor $D$ on $A=\C^g/\Lambda$ and let $L = \mathcal{O}_A(D)$ be the associated line bundle. In the following, $\RI{}$ denotes the Rosati involution induced by the polarization $\Phi_L$ corresponding to $L$.
 
We start with a classical result about the eigenvalues of $\rho_a(\RI{f}f)$. The following proof is inspired by an argument by Masser and W\"ustholz \cite{MW94}.
\begin{lemma}\label{lemma:eigen_pos}
Let $f \in \en^0(A)$. Then all the eigenvalues of $\rho_a(\RI{f}f)$ are real and non-negative. If $f \neq 0$, then at least one eigenvalue is positive.
\end{lemma}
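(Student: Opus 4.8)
The plan is to reinterpret $\rho_a(\RI{f}f)$ as an operator on $\C^g$ that is self-adjoint for the positive definite Hermitian form $H = H_L$ associated to the polarization $\Phi_L$, and then to apply the spectral theorem. First I would use that $\rho_a$ is a ring homomorphism, so $\rho_a(\RI{f}f) = \rho_a(\RI{f})\,\rho_a(f)$, together with the fact that $\rho_a(\RI{f})$ is the $H$-adjoint of $\rho_a(f)$, i.e.\ $H(\rho_a(f)u,v) = H(u,\rho_a(\RI{f})v)$ for all $u,v \in \C^g$ (see \cite[Proposition~5.1.1]{BL04}). Conjugating this identity also yields $H(\rho_a(\RI{f})u,v) = H(u,\rho_a(f)v)$, and since $\RI{}$ is an involution the $H$-adjoint of $\rho_a(\RI{f})$ is $\rho_a(f)$ again; hence $\rho_a(\RI{f}f)$ is self-adjoint with respect to $H$.

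Next I would invoke the spectral theorem for Hermitian forms: an operator on $\C^g$ that is self-adjoint for a positive definite Hermitian form is diagonalizable with real eigenvalues (if one prefers, change basis so that $H$ becomes the standard form). This already shows $\a_1,\dots,\a_g \in \R$. For non-negativity, pick an eigenvector $u \neq 0$ with $\rho_a(\RI{f}f)u = \a u$ and use the two adjoint relations above to get
\[
\a\,H(u,u) \;=\; H\bigl(\rho_a(\RI{f})\rho_a(f)u,\,u\bigr) \;=\; H\bigl(\rho_a(f)u,\,\rho_a(f)u\bigr) \;\geq\; 0,
\]
and since $H$ is positive definite, $H(u,u) > 0$, so $\a \geq 0$.

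For the final claim, assume $f \neq 0$. Then $\rho_a(f) \neq 0$ because $\rho_a$ is injective, so $\rho_a(f)u \neq 0$ for some $u \in \C^g$, and the displayed computation gives $H(\rho_a(\RI{f}f)u,u) = H(\rho_a(f)u,\rho_a(f)u) > 0$; thus the self-adjoint operator $\rho_a(\RI{f}f)$ is nonzero, and being diagonalizable with non-negative eigenvalues it must have at least one positive eigenvalue. (Alternatively, one could use the decomposition $\rho_r \otimes \C \cong \rho_a \oplus \overline{\rho_a}$ to write $\tr\rho_r(\RI{f}f) = 2(\a_1 + \dots + \a_g)$ and conclude from the positivity of the Rosati involution, \cite[Theorem~5.1.8]{BL04}, that $\a_1 + \dots + \a_g > 0$.)

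I do not expect any serious obstacle here: the whole argument is the translation of the Rosati involution into an $H$-adjoint, followed by the spectral theorem. The only points requiring a little care are that the spectral theorem is applied for a possibly non-standard (but positive definite) Hermitian form, which is harmless, and that all statements are for $\en^0(A) = \en(A)\otimes\Q$ rather than just $\en(A)$, which is immediate by $\Q$-linearity of $\rho_a$ and of $\RI{}$.
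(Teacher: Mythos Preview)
Your proposal is correct and follows essentially the same approach as the paper: both use \cite[Proposition~5.1.1]{BL04} to identify $\rho_a(\RI{f})$ as the $H_L$-adjoint of $\rho_a(f)$ and then exploit positive definiteness of $H_L$. The only difference is stylistic: where you invoke the spectral theorem for self-adjoint operators relative to $H_L$ and mention in passing that one may change basis to make $H_L$ standard, the paper carries out that change of basis explicitly via a factorization $\overline{\mathcal{H}_L}=\overline{S}^t S$ and rewrites $\rho_a(\RI{f}f)$ as $S^{-1}\overline{X}^t X S$ with $X=S\rho_a(f)S^{-1}$, reading off the conclusion from the positive semidefiniteness of $\overline{X}^t X$.
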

\begin{proof}
By Proposition 5.1.1 of \cite{BL04}, we have that $H_L(\r_a(f)v, w)=H_L(v, \r_a(\RI{f})w)$, for every $v,w \in \C^g$, where $H_L: \C^g \times \C^g \to \C$ is the Hermitian form associated with the ample line bundle $L$. Thus, if $\mathcal{H}_L$ is the matrix representing $H_L$, we have 
$$\r_a(\RI{f})=\pt{\overline{\mathcal{H}_L}}^{-1} \cdot \overline{\r_a(f)}^t \cdot \overline{\mathcal{H}_L}$$
where $\overline{M}^t$ is the conjugate transpose of the matrix $M$.

Since $L$ is ample, $H_{L}$ is positive definite, and therefore there is an invertible matrix $S$ such that $\overline{\mathcal{H}_{L}}=\overline{S}^t S$. Thus, we have
$$\rho_a(\RI{f}f)=\pt{\overline{\mathcal{H}_L}}^{-1} \cdot \overline{\r_a(f)}^t \cdot \overline{\mathcal{H}_L} \cdot \rho_a\pt{f}=S^{-1} (\overline{S}^t)^{-1} \overline{\r_a(f)}^t \overline{S}^t S \r_a(f).$$
By setting $X=S \cdot \r_a(f) \cdot S^{-1}$, we have that 
$$\rho_a(\RI{f}f)=S^{-1} (\overline{S}^t)^{-1} \cdot \overline{\rho_a(f)}^t  \cdot \overline{S}^t S \cdot \rho_a(f) \cdot S^{-1} S=S^{-1} \overline{X}^t X S,$$
proving that every eigenvalue of $\rho_a(\RI{f}f)$ is real and non-negative, since $\overline{X}^t X$ is a positive semidefinite matrix and eigenvalues are invariant under change of basis. In particular, as Hermitian matrices are diagonalizable, this also implies that $\overline{X}^t X$ cannot have all zero eigenvalues unless it is the zero matrix. However, if $X$ has entries $x_{i,j} \in \C$ and $\overline{X}^t X=\mathbf{0}$, then $0=\tr(\overline{X}^t X)=\sum_{i,j=1}^g \abs{x_{i,j}}^2$, which implies that $X=S \cdot \rho_a(f) \cdot S^{-1}=\mathbf{0}$ and thus $\r_a(f)=\mathbf{0}$.
\end{proof}
Notice that for $f \in \en(A)$, the matrix $\rho_a(\RI{f}f)$ has only positive eigenvalues if and only if $X$ is invertible, which is the case precisely when $\rho_a(f)$ is invertible, i.e.\ when $f$ is an isogeny.

Denote by $P_{\RI{f}f}^a(x)$ and $P_{\RI{f}f}^r(x)$ the characteristic polynomial of $\RI{f}f$ with respect to the analytic and the rational representations, respectively. 
Using \cite[Proposition 5.1.2]{BL04} and the previous lemma, $P_{\RI{f}f}^a$ and $P_{\RI{f}f}^r$ are real polynomials and we have
\begin{equation}\label{eq:poly_char_RI}
P_{\RI{f}f}^r(x) = \pt{P_{\RI{f}f}^a(x)}^2.
\end{equation}

\begin{remark}
The equation above implies, in particular, that the eigenvalues of $\r_r(\RI{f} f)$ coincide with those of $\r_a(\RI{f} f)$, each occurring with twice the multiplicity. Since the entries of $\r_r(\RI{f} f)$ are rational, it follows that its eigenvalues (and hence also those of $\r_a(\RI{f} f)$) are (real) algebraic numbers. Moreover, if the polarization $\Phi_L$ is principal and $f \in \en(A)$, then $\RI{f} \in \en(A)$ as well, so that $\r_r(\RI{f} f)$ has integer entries. Consequently, all its eigenvalues are (real) algebraic integers.
\end{remark}

With these notations, we have the following generalization of Lemma 2.1 of \cite{Lan88} (see also \cite[Proposition 5.1.6]{BL04}).
\begin{lemma}\label{Lemma_Lange}
Let $L$ be an ample line bundle, $f \in \en(A)$ and $a, b \in \Z$, with $b>0$. Then, 
$$\chi(f^*L^{-b} \otimes L^a) = \chi(L) \cdot b^{g} \cdot P_{\RI{f}f}^a\!\pt{\dfrac{a}{b}}.$$
\end{lemma}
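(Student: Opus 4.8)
\emph{Plan.} Write $M:=f^{*}L^{-b}\otimes L^{a}$; note that $M$ need not be ample or even nondegenerate, so I cannot simply quote positivity of $\chi$. The plan is to evaluate $\chi(M)$ by computing both the size and the sign of the Euler characteristic from the alternating form $E_{M}=\Im H_{M}$ on $\Lambda$. The one external input I would use is the Riemann--Roch theorem for line bundles on abelian varieties (see \cite[Ch.~3]{BL04}, \cite{Mum08}): a line bundle $N$ on $A$ is nondegenerate iff $\chi(N)\neq 0$, and then $\chi(N)=(-1)^{s(N)}\mathrm{Pf}(E_{N})$, where $s(N)$ is the number of negative eigenvalues of $H_{N}=c_{1}(N)$ and $\mathrm{Pf}(E_{N})^{2}=\det(E_{N})$ (the determinant of a Gram matrix of $E_{N}$ in any $\Z$-basis of $\Lambda$, which is well defined). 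In particular $\chi(N)^{2}=\det(E_{N})$ for nondegenerate $N$, both sides of the asserted identity vanish when $M$ is degenerate, and $\chi(L)=\mathrm{Pf}(E_{L})>0$ as $L$ is ample. So everything reduces to computing $\det(E_{M})$ and pinning down one sign.

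\emph{Step 1 (size).} Since $c_{1}$ is additive and $c_{1}(f^{*}L)=c_{1}(L)(\rho_{a}(f)\cdot,\rho_{a}(f)\cdot)$, the Gram matrix of $E_{M}$ on $\Lambda$ is $a\,E_{L}-b\,\rho_{r}(f)^{t}E_{L}\rho_{r}(f)$ (abusing notation, $E_{L}$ also denotes its Gram matrix). Using the matrix form of the adjointness relation for the Rosati involution attached to $L$, namely $\rho_{r}(\RI{f})=E_{L}^{-1}\rho_{r}(f)^{t}E_{L}$ (cf.\ Section~\ref{sect:matrix_bounds_endos} and \cite[Proposition~5.1.1]{BL04}; it holds over $\Q$ on $\Lambda\otimes\Q$ even though $L$ is not assumed principal), I would factor
$$E_{M}=E_{L}\bigl(a\,\mathbf{1}_{2g}-b\,\rho_{r}(\RI{f}f)\bigr).$$
Taking determinants and using $\det(E_{L})=\mathrm{Pf}(E_{L})^{2}=\chi(L)^{2}$ together with \eqref{eq:poly_char_RI},
$$\det(E_{M})=\chi(L)^{2}\,\det\bigl(a\,\mathbf{1}_{2g}-b\,\rho_{r}(\RI{f}f)\bigr)=\chi(L)^{2}\,b^{2g}\,P^{r}_{\RI{f}f}\!\Bigl(\tfrac{a}{b}\Bigr)=\chi(L)^{2}\,b^{2g}\,\Bigl(P^{a}_{\RI{f}f}\!\bigl(\tfrac{a}{b}\bigr)\Bigr)^{2}.$$
Hence, when $M$ is nondegenerate, $\lvert\chi(M)\rvert=\chi(L)\,b^{g}\,\bigl\lvert P^{a}_{\RI{f}f}(a/b)\bigr\rvert$; and $\chi(M)=0$ precisely when $P^{a}_{\RI{f}f}(a/b)=0$, i.e.\ when $a/b$ is an eigenvalue of $\rho_{a}(\RI{f}f)$, in which case the asserted identity reads $0=0$.

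\emph{Step 2 (sign).} By Lemma~\ref{lemma:eigen_pos} the eigenvalues $\alpha_{1},\dots,\alpha_{g}$ of $\rho_{a}(\RI{f}f)$ are real and non-negative, and $\rho_{a}(\RI{f}f)$ is self-adjoint for the positive-definite Hermitian form $H_{L}$. Choosing an $H_{L}$-orthonormal eigenbasis of $\C^{g}$ and using $H_{M}=H_{L}\bigl((a\,\mathbf{1}_{g}-b\,\rho_{a}(\RI{f}f))\,\cdot\,,\,\cdot\,\bigr)$, the form $H_{M}$ becomes $\mathrm{diag}(a-b\alpha_{1},\dots,a-b\alpha_{g})$; since $b>0$ its number of negative eigenvalues is $s(M)=\#\{\,i:\alpha_{i}>a/b\,\}$, so $(-1)^{s(M)}=\mathrm{sign}\prod_{i=1}^{g}\bigl(\tfrac{a}{b}-\alpha_{i}\bigr)=\mathrm{sign}\,P^{a}_{\RI{f}f}(a/b)$. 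Combining with Step~1 and $\chi(L)>0$, $b>0$, I get $\chi(M)=(-1)^{s(M)}\lvert\chi(M)\rvert=\chi(L)\,b^{g}\,P^{a}_{\RI{f}f}(a/b)$, which is the claim (and in the degenerate case both sides are $0$).

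\emph{Main obstacle.} The computation in Step~1 is routine once the Riemann--Roch input is in hand; the genuinely delicate point is Step~2 — matching the index $(-1)^{s(M)}$ to $\mathrm{sign}\,P^{a}_{\RI{f}f}(a/b)$ while simultaneously treating the degenerate case $P^{a}_{\RI{f}f}(a/b)=0$ uniformly, which is where one must be careful to get an equality rather than an equality up to sign. A minor bookkeeping subtlety is that, since $L$ is only assumed ample, $^{\dagger}$ need not preserve $\en(A)$, so $\rho_{r}(\RI{f})=E_{L}^{-1}\rho_{r}(f)^{t}E_{L}$ is used as an identity of rational matrices on $\Lambda\otimes\Q$; this causes no real difficulty.
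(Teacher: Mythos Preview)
Your proof is correct. Step~1 is essentially the paper's computation rephrased: where the paper uses $\chi(M)^{2}=\deg\Phi_{M}$ and then factors $\Phi_{M}=\Phi_{L}\circ(-[b]\,\RI{f}f+[a])$ to obtain $\chi(M)^{2}=\chi(L)^{2}b^{2g}\bigl(P^{a}_{\RI{f}f}(a/b)\bigr)^{2}$, you compute the same quantity as $\det(E_{M})$ via the factorisation $E_{M}=E_{L}\bigl(a\mathbf{1}_{2g}-b\,\rho_{r}(\RI{f}f)\bigr)$ on the lattice. These are the same identity read on $\Lambda$ rather than on $A$.

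The genuine difference is in the sign determination. The paper fixes~$b$, observes that both $a\mapsto\chi(f^{*}L^{-b}\otimes L^{a})$ and $a\mapsto\chi(L)\,b^{g}P^{a}_{\RI{f}f}(a/b)$ are polynomials in~$a$ whose squares agree, and checks that they coincide for $a\gg 0$ (where $M$ is ample and $P^{a}$ is monic), hence everywhere. You instead invoke the index theorem directly: diagonalising $H_{M}$ in an $H_{L}$-orthonormal eigenbasis for the self-adjoint operator $\rho_{a}(\RI{f}f)$ gives $s(M)=\#\{i:\alpha_{i}>a/b\}$, whence $(-1)^{s(M)}=\mathrm{sign}\,P^{a}_{\RI{f}f}(a/b)$. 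Your route is a bit longer but yields more: it actually computes the index of $f^{*}L^{-b}\otimes L^{a}$ in terms of the eigenvalues~$\alpha_{i}$, which is exactly the information extracted later via Theorem~\ref{Teo_Kempf} in the proof of Theorem~\ref{thm:can_heights_endomorphisms}. The paper's route is shorter but relies (tacitly) on the polynomiality of $\chi$ in~$a$ to propagate the sign from large~$a$ to all~$a$.
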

\begin{proof}
Fix $b>0$ an integer. By Corollary 3.6.2 of \cite{BL04}, we have
$$\chi(f^*L^{-b} \otimes L^a)^2 = \deg(\Phi_{f^*L^{-b} \otimes L^a})$$
where the map $\Phi_{L}$ was defined in (\ref{eqn:def_Phi_L}). By \cite[Corollary 2.4.6]{BL04} we have 
$$\Phi_{f^*L^{-b} \otimes L^a}=-[b] \Phi_{f^*L} + [a] \Phi_{L} \quad \text{and} \quad \Phi_{f^*L}=\widehat{f} \Phi_L f=\Phi_L \RI{f} f.$$
Then, recalling that for every $\f \in \en(A)$, $\deg(\f)=\det(\r_r(\f))$ \cite[eq. (1.2)]{BL04}, we get
\begin{align*}
\chi(f^*L^{-b} \otimes L^a)^2 &= \deg\pt{-[b]\Phi_L \RI{f} f + [a]\Phi_{L}} \\
&= \deg\Phi_{L} \cdot \deg\pt{-[b] \RI{f} f  + [a]} \\
&= \deg\Phi_{L} \cdot \det\pt{\r_r\!\pt{-[b] \cdot \RI{f} f  + [a]}} \\
&= \deg\Phi_{L} \cdot \det\pt{-b \cdot \r_r(\RI{f} f)  + a \cdot \mathbf{1}_{2g}} \\
&= \deg\Phi_{L} \cdot b^{2g} \cdot \det\pt{-\r_r(\RI{f} f) + \dfrac{a}{b}\cdot \mathbf{1}_{2g}} \\
&= \chi(L)^2  \cdot b^{2g} \cdot P_{\RI{f}f}^r\pt{\dfrac{a}{b}}=\chi(L)^2  \cdot b^{2g} \cdot \pt{P_{\RI{f}f}^a\!\pt{\dfrac{a}{b}}}^2
\end{align*}
by Equation (\ref{eq:poly_char_RI}). Here $\mathbf{1}_{2g}$ is the $2g \times 2g$ identity matrix. It follows that
$$\chi(f^*L^{-b} \otimes L^a) = \pm \chi(L) \cdot b^{g} \cdot P_{\RI{f}f}^a\!\pt{\dfrac{a}{b}}.$$
Fix $b>0$ arbitrary. Since $L$ is ample, we have $\chi(L) > 0$. Moreover, for all sufficiently large $a > 0$, the divisor $f^*L^{-b} \otimes L^a$ is ample by Kleiman's criterion, hence $\chi(f^*L^{-b} \otimes L^a) > 0$. Finally, since $P_{\RI{f}f}^a$ is a monic polynomial (see \cite[after proof of Proposition 5.1.2]{BL04}), $P_{\RI{f}f}^a\!\pt{\frac{a}{b}}$ is also positive for all sufficiently large $a>0$, completing the proof.
\end{proof}
For the reader's convenience, we also recall the following theorem, which combines results by Kempf \cite[Theorem 2]{Kem} and by Mumford \cite[Section 16]{Mum08}. Here, given a line bundle $M$ on $A$, we denote by $H^i(A,M)$ the $i$-th cohomology group of $M$. Recall also that we denote by $K(M)$ the kernel of the homomorphism $\Phi_M: A \rightarrow \widehat{A}$.
\begin{theorem} \label{Teo_Kempf}
Let $M$ and $M'$ be line bundles on an abelian variety $A$ of dimension $g$, with $M$ ample. Consider the polynomial $P_{M,M'}(x) \in \Q[x]$ (of degree $g$) such that
$$P_{M,M'}(n)=\chi(M^n \otimes M')$$
for every $n \in \Z$. Then:
\begin{enumerate}[label=(\roman*)]
\item All roots of $P_{M,M'}$ are real and $\dim K(M')$ is equal to the multiplicity of 0 as a root,
\item (Mumford's vanishing theorem) If $K(M')$ is finite, there is a unique integer $i=i(M')$, with $0 \leq i(M') \leq g$, such that $H^{k}(A,M')=0$ for $k\neq i$ and $H^{i}(A,M')\neq 0$. Moreover, $K(M'^{-1})$ is finite\footnote{This follows from \cite[Lemma 2.4.7 (c)]{BL04}.} and $i(M'^{-1})=g-i(M')$.
\item Counting roots with multiplicities, assume that $P_{M,M'}$ has $N_{-}$ negative roots and $N_{+}$ positive roots, then:
\begin{align*}
  H^{k}(A,M')=0, \quad &\text{ if } 0 \leq k <  N_{+}\\
H^{g-k}(A,M')=0, \quad &\text{ if } 0 \leq k <  N_{-}.
\end{align*}
\end{enumerate}
\end{theorem}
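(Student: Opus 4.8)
The plan is to derive the statement by combining two classical theorems --- Mumford's index theorem \cite[\S 16]{Mum08} for part (ii) and Kempf's vanishing theorem \cite[Theorem~2]{Kem} for part (iii) --- with an elementary computation of the roots of $P_{M,M'}$ via Hermitian forms. Since $\chi$, the kernels $K(-)$, and the dimensions $\dim H^{k}(A,-)$ are all unchanged under extension of the base field, I would first base-change to $\C$. That $n\mapsto\chi(M^{n}\otimes M')$ is a polynomial of degree exactly $g$ is then immediate from Riemann--Roch for abelian varieties: writing $M=\O_A(D)$ and $M'=\O_A(D')$ one has $\chi(M^{n}\otimes M')=\frac{1}{g!}\bigl((nD+D')^{g}\bigr)$, a polynomial in $n$ of degree $\leq g$ whose leading coefficient is $(D^{g})/g!=\chi(M)>0$ since $M$ is ample.

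For part (i) I would pass to the analytic picture $A=V/\Lambda$ and the Appell--Humbert description. Let $H=H_M$, positive definite because $M$ is ample, and $H'=H_{M'}$; then the Hermitian form attached to $M^{n}\otimes M'$ is $nH+H'$ and the associated alternating forms satisfy $E_{M^{n}\otimes M'}=nE_M+E_{M'}$. Diagonalizing $H'$ with respect to $H$ produces real numbers $\mu_1,\dots,\mu_g$ (the eigenvalues of $H'$ relative to $H$), and from the identity $\chi(L)^{2}=\deg\Phi_L=\det E_L$ (already used in the proof of Lemma~\ref{Lemma_Lange} via \cite[Cor.~3.6.2]{BL04}) one computes, as polynomials in $n$, $\det(nE_M+E_{M'})=(\det E_M)\prod_i(n+\mu_i)^{2}=\chi(M)^{2}\prod_i(n+\mu_i)^{2}$; extracting the square root and fixing the sign by letting $n\to+\infty$ (where $M^{n}\otimes M'$ is ample, so $\chi>0$) gives the polynomial identity
\[
P_{M,M'}(x)=\chi(M)\prod_{i=1}^{g}(x+\mu_i).
\]
Thus every root of $P_{M,M'}$ is one of the real numbers $-\mu_i$, and the multiplicity of $0$ as a root equals $\#\{i:\mu_i=0\}=\dim_{\C}\bigl(\mathrm{rad}\,H_{M'}\bigr)$; since $\mathrm{rad}\,H_{M'}$ is exactly the tangent space at the origin of $K(M')=\ker\Phi_{M'}$, this number is $\dim K(M')$ \cite{BL04}. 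This proves (i).

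Part (ii) is then precisely Mumford's index theorem \cite[\S 16]{Mum08} applied to $M'$: when $K(M')$ is finite --- equivalently, by (i), when no $\mu_i$ vanishes --- there is a unique $i(M')$ with $H^{k}(A,M')=0$ for $k\neq i(M')$ and $H^{i(M')}(A,M')\neq 0$, and $i(M')$ is the number of negative eigenvalues of $H_{M'}$, i.e.\ $\#\{i:\mu_i<0\}$, which by the factorization above is the number $N_{+}$ of positive roots of $P_{M,M'}$. For the duality clause, $\Phi_{M'^{-1}}=-\Phi_{M'}$ gives $K(M'^{-1})=K(M')$ (so $M'^{-1}$ is again nondegenerate), and Serre duality together with the triviality $\omega_A\cong\O_A$ of the canonical bundle yields $\bigl(H^{k}(A,M'^{-1})\bigr)^{*}\cong H^{g-k}(A,M')$ for all $k$; comparing the unique nonvanishing degrees on the two sides forces $i(M'^{-1})=g-i(M')$.

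For (iii): when $K(M')$ is finite the statement is immediate from (ii), since there $N_{+}=i(M')$ and $N_{-}=g-i(M')$, so $H^{k}(A,M')$ vanishes for every $k\neq i(M')$ --- in particular for $0\leq k<N_{+}$, and, since $N_{-}=g-N_{+}$, also $H^{g-k}(A,M')=0$ for $0\leq k<N_{-}$. The remaining case, notably when $M'$ is degenerate (so that $N_{+}+N_{-}=g-\dim K(M')<g$ and a genuine middle range of cohomology can be nonzero), is exactly the content of Kempf's Theorem~2 in \cite{Kem}, which I would invoke directly; alternatively one could degenerate $M'$ within a flat family to a nondegenerate line bundle and conclude by semicontinuity of the $h^{k}$, but citing \cite{Kem} is cleaner. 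The step I expect to require the most care is keeping the dictionary straight between the analytic data (the signature and radical of $H_{M'}$ relative to $H_M$) and the algebraic invariants ($\dim K(M')$, the index $i(M')$, and the sign pattern of the roots of $P_{M,M'}$); and within (iii) the genuinely non-formal ingredient, not covered by Mumford's index theorem, is Kempf's theorem in the degenerate case.
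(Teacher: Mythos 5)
Your proposal is correct and follows essentially the same route as the paper, which states this theorem without proof as a combination of Kempf's Theorem~2 and Mumford's index theorem from \cite[Section 16]{Mum08}, with the finiteness of $K(M'^{-1})$ referred to \cite[Lemma 2.4.7 (c)]{BL04}. Your additional details (the factorization $P_{M,M'}(x)=\chi(M)\prod_i(x+\mu_i)$ via the relative eigenvalues of the Hermitian forms, and the Serre-duality argument for $i(M'^{-1})=g-i(M')$) are accurate reconstructions of the cited classical material, with the degenerate case of (iii) correctly attributed to Kempf.
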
 
Finally, we have the following characterization of ample line bundles. 
\begin{proposition}{\cite[Proposition 4.5.2]{BL04}}\label{Prop_L_ampio}
A line bundle $M$ on $A$ is ample if and only if $K(M)$ is finite and $H^0(A,M)\neq 0$.
\end{proposition}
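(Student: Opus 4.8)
The plan is to reduce the statement to the classical dictionary, on $A=\C^{g}/\Lambda$, between a line bundle $M$, its polarization homomorphism $\Phi_{M}$, and the Hermitian form $H_{M}=c_{1}(M)$ with integral alternating part $E_{M}=\Im(H_{M})$, combined with Mumford's computation of the cohomology of $M$, of which Theorem~\ref{Teo_Kempf} is the form we shall invoke. Three standard facts enter, for which one would simply refer to \cite[Chapters~3--4]{BL04}: (a) $\deg\Phi_{M}=\det(E_{M})$, so that $K(M)=\ker\Phi_{M}$ is finite if and only if $H_{M}$ is nondegenerate; (b) when $H_{M}$ is nondegenerate, the index $i(M)$ of Theorem~\ref{Teo_Kempf}(ii) equals exactly the number of negative eigenvalues of $H_{M}$ (the signature refinement of Mumford's vanishing theorem); and (c) the Lefschetz--Riemann criterion: $M$ is ample if and only if $H_{M}$ is positive definite.

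Granting these, both implications are short. If $M$ is ample, then by (c) the form $H_{M}$ is positive definite, hence nondegenerate; by (a), $K(M)$ is finite, and by (b) its index is $i(M)=0$, so the nonvanishing clause of Theorem~\ref{Teo_Kempf}(ii) yields $H^{0}(A,M)=H^{i(M)}(A,M)\neq 0$. Conversely, suppose $K(M)$ is finite and $H^{0}(A,M)\neq 0$. By (a), $H_{M}$ is nondegenerate, so Theorem~\ref{Teo_Kempf}(ii) applies and produces a unique index $i(M)$ with $H^{k}(A,M)=0$ for $k\neq i(M)$; since $H^{0}(A,M)\neq 0$, necessarily $i(M)=0$. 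By (b), $H_{M}$ then has no negative eigenvalue, and being nondegenerate it is positive definite; by (c), $M$ is ample.

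The genuine content — and the only delicate point — lies entirely in the analytic inputs (b) and (c): the identification of the Mumford index with the signature of $H_{M}$, and the Lefschetz embedding theorem, both of which rest on the Appell--Humbert classification of line bundles on a complex torus and the explicit evaluation of $\overline{\partial}$-cohomology there. As these are precisely the classical results underlying the material recalled in Section~\ref{sect:prelim_AVs}, and are proved in detail in \cite{BL04}, one would not reproduce them here. If a more self-contained treatment of the converse were wanted, one could instead follow \cite[\S16]{Mum08}: writing $M=\O_{A}(D)$ with $D\geq 0$ an effective divisor coming from a nonzero section, the theorem of the square exhibits $D_{x}+D_{-x}\in|M^{\otimes 2}|$ for every $x\in A$ (here $D_{x}$ denotes the translate of $D$ by $x$), and the finiteness of $K(M)$ is then used to separate points and tangent directions, showing that a fixed power of $M$ is very ample.
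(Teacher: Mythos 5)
Your argument is correct, but note that the paper does not prove this proposition at all: it is quoted verbatim as \cite[Proposition 4.5.2]{BL04}, so there is no internal proof to compare against. What you have written is essentially a reconstruction of the standard proof of that cited result: the dictionary $K(M)$ finite $\Leftrightarrow$ $H_M$ nondegenerate, Mumford's index theorem identifying the unique nonvanishing degree $i(M)$ of Theorem~\ref{Teo_Kempf}(ii) with the number of negative eigenvalues of $H_M$, and the Lefschetz criterion that ampleness is equivalent to $H_M>0$; the deduction in both directions (ample $\Rightarrow$ $i(M)=0$ and $K(M)$ finite; conversely $H^0\neq 0$ forces $i(M)=0$, hence no negative eigenvalues, hence positive definiteness) is sound, and your remark that the real content sits in the analytic inputs (b) and (c) from \cite{BL04} is accurate. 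The alternative algebraic route you sketch via effective divisors, the theorem of the square and finiteness of $K(M)$ (as in \cite{Mum08}) is also a legitimate proof of the converse. In the context of this paper, simply citing \cite[Proposition 4.5.2]{BL04}, as the author does, is the appropriate level of detail, but your reconstruction contains no gap.
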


\subsection{Proof of Theorem \ref{thm:can_heights_endomorphisms}}\label{sect:proof_thm_heights}

Given an abelian variety $A$ of dimension $g$ defined over a number field, an ample symmetric divisor $D$ and $f \in \en(A)$, let $\a_1, \ldots, \a_g$ be the eigenvalues (counted with multiplicities) of $\rho_a(\RI{f}f)$, where the Rosati involution is defined with respect to the polarization $L=\mathcal{O}_A(D)$. 

Define $\a^{-}_{D}(f)=\min\pg{\a_1, \ldots, \a_g}$ and $\a^{+}_{D}(f)=\max\!\pg{\a_1, \ldots, \a_g}$, as before. Notice that, by Lemma \ref{lemma:eigen_pos}, $\a^{-}_{D}(f)$ is non-negative and it is positive if and only if $f$ is surjective, which is compatible with what we said before. Moreover, $\a^{+}_{D}(f)>0$ for every $f\neq 0$.

\begin{proof}[Proof of Theorem \ref{thm:can_heights_endomorphisms}] 
The claim is trivially true for $f=0$, so we will assume that $f\neq 0$ for the rest of the proof. Let $\l =\frac{a}{b}$ be a rational number, with $b>0$, and let $L$ be the line bundle associated to $D$. As above, consider $L$ as a polarization on $A$ and define the Rosati involution with respect to this line bundle. 

We start by proving the upper bound. 
Consider the line bundle $M= f^* L^{-b} \otimes L^a$. Then, for every $n \in \Z$,
$$P_{L,M}(n)=\chi(L^n \otimes M)= \chi(f^* L^{-b} \otimes L^{n+a})=\chi(L) \cdot b^{g} \cdot P_{\RI{f}f}^a\!\pt{\dfrac{n+a}{b}}$$
by Lemma \ref{Lemma_Lange}. Thus, we have
\begin{align*}
P_{L,M}(x)&= \chi(L) \cdot b^{g} \cdot P_{\RI{f}f}^a\!\pt{\dfrac{x+a}{b}}\\
		  &= \chi(L) \cdot b^g \cdot \prod\limits_{i=1}^g \pt{\dfrac{x+a}{b}-\a_i}\\
		  &= \chi(L) \cdot \prod\limits_{i=1}^g \pt{x-(b\a_i-a)}.
\end{align*}
Combining Proposition \ref{Prop_L_ampio} and Theorem \ref{Teo_Kempf}, we obtain that $M$ is ample if and only if all the roots of $P_{L,M}$ are negative, which is equivalent to say that $\frac{a}{b}> \a_i$ for every $i=1, \ldots, g$.

This implies that if $\l =\frac{a}{b} > \a^{+}_D(f)$, then the divisor $a D - b f^* D$ is ample and symmetric and therefore, by Proposition \ref{prop:propr_can_heights}, we have
$$a\cdot\widehat{h}_{A,D}(P)-b \cdot \widehat{h}_{A,D}(f(P))=a\cdot \widehat{h}_{A,D}(P)-b \cdot \widehat{h}_{A,f^* D}(P)=\widehat{h}_{A,a D - b f^* D}(P)\geq 0$$
for every $P \in A(\Qal)$, which is equivalent to $\widehat{h}_{A,D}(f(P)) \leq \l \cdot \widehat{h}_{A,D}(P)$. Since this is true for every $\l \in \Q$ such that $\l > \a^{+}_D(f)$, this implies that $\widehat{h}_{A,D}(f(P))\leq \a^{+}_D(f) \cdot \widehat{h}_{A,D}(P)$.

In order to prove the lower bound, we consider the line bundle $M=f^* L^{b} \otimes L^{-a}$. By Theorem \ref{Teo_Kempf} and Proposition \ref{Prop_L_ampio}, $M$ is ample if and only if $K(M)$ is finite and $H^g(A,M^{-1})\neq 0$. Using Lemma \ref{Lemma_Lange} as before, we get that 
$$P_{L,M^{-1}}(x)= \chi(L) \cdot b^{g} \cdot P_{\RI{f}f}^a\!\pt{\dfrac{x+a}{b}}= \chi(L) \cdot \prod\limits_{i=1}^g \pt{x-(b\a_i-a)}.$$
By \cite[Lemma 2.4.7(c)]{BL04}, $K(M)=K(M^{-1})$, and Theorem \ref{Teo_Kempf} implies that $K(M^{-1})$ is finite and $H^g(A,M^{-1})\neq 0$ if and only if all the roots of $P_{L,M^{-1}}$ are positive, that is, if and only if $\frac{a}{b} < \a_i$ for every $i=1, \ldots, g$.

Again, this means that for every $\l =\frac{a}{b} < \a^{-}_D(f)$, the divisor $b f^* D - a D$ is ample and symmetric and thus we have
$$b \cdot \widehat{h}_{A,D}(f(P))-a\cdot\widehat{h}_{A,D}(P)=b \cdot \widehat{h}_{A,f^* D}(P)-a\cdot\widehat{h}_{A,D}(P)=\widehat{h}_{A,b f^* D - a D}(P)\geq 0$$
for every $P \in A(\Qal)$, which is equivalent to $\widehat{h}_{A,D}(f(P)) \geq \l \cdot \widehat{h}_{A,D}(P)$. Since this is true for every $\l \in \Q$ such that $\l < \a^{-}_D(f)$, this implies that $\widehat{h}_{A,D}(f(P))\geq \a^{-}_D(f) \cdot \widehat{h}_{A,D}(P)$.

We now prove that the constants $\a^{-}_D(f), \a^{+}_D(f)$ are optimal. 

Consider the $\Q$-divisor $\l D- f^* D$. Observe that the proof above shows that $\l D- f^* D$ is ample if and only if $\l> \a^+_D(f)$. 
From this we deduce that, if $\l \in \Q$ and $\l < \a^+_D(f)$, then $\l D- f^* D$ is not nef. Otherwise, $(\l + \eps) D- f^* D$ would be ample for every $\eps>0$ \cite[Corollary 1.4.10]{Laz04}, which is impossible for $\eps$ small enough.

Then, assume that $0 \leq \widetilde{\a}< \a^{+}_D(f)$ is such that $\widehat{h}_{A,D}(f(P))\leq \widetilde{\a} \cdot \widehat{h}_{A,D}(P)$ for every $P \in A(\Qal)$. Without loss of generality we can assume that $\widetilde{\a}$ is rational. Then, since $D$ is ample, $f^* D$ is nef and, thus, $f^* D + D$ is ample. Therefore, we have that 
$$\widehat{h}_{A,f^* D+ D}(P)\leq \widehat{h}_{A,(\widetilde{\a}+1)D}(P)$$
from which we can deduce, using \cite[Lemma 4.1]{Lee16}, that 
$$(\widetilde{\a}+1)D-\pt{f^* D+ D}=\widetilde{\a}D-f^* D$$
is nef, which is impossible. 

A similar argument, using the $\Q$-divisor $f^* D -\l D$, shows that one cannot have 
$$\widehat{h}_{A,D}(f(P))\geq \widetilde{\a} \cdot \widehat{h}_{A,D}(P)$$
for some $\widetilde{\a}> \a^{-}_D(f)$ and every $P \in A(\Qal)$.
\end{proof}

\begin{example}
Let $A=E \times E$, where $E$ is an elliptic curve with identity element $O$, and let $D=\pi_1^*(O)+ \pi_2^*(O)$, where $\pi_1$ and $\pi_2$ denote the projections onto the two factors. As above, one has
$$\widehat{h}_{A,D}(P_1,P_2)=\widehat{h}_{E, (O)}(P_1)+\widehat{h}_{E, (O)}(P_2).$$
Consider first the endomorphism $f: A \rightarrow A$ defined by
$$f(P_1,P_2)=(P_1+P_2,P_1-P_2).$$ 
Then $\r_a(f)=\begin{psmallmatrix} 1 & 1\\ 1 & -1 \end{psmallmatrix}=\r_a(\RI{f})$, and hence $\r_a(\RI{f} f)=\begin{psmallmatrix} 2 & 0\\ 0 & 2 \end{psmallmatrix}$. Applying Theorem \ref{thm:can_heights_endomorphisms}, we recover the parallelogram identity \cite[Theorem B.5.1.(c)]{HS13}:
$$\widehat{h}_{A,D}(f(P_1,P_2))=\widehat{h}_{E, (O)}(P_1+P_2)+\widehat{h}_{E, (O)}(P_1-P_2)=2\pt{\widehat{h}_{E, (O)}(P_1)+\widehat{h}_{E, (O)}(P_2)}.$$
On the other hand, if we consider the endomorphism $g(P_1,P_2)=(P_1+P_2, P_2)$, we get $\r_a(g)=\begin{psmallmatrix} 1 & 1\\ 0 & 1 \end{psmallmatrix}=\r_a(\RI{g})^{t}$, so that $\r_a(\RI{g} g)=\begin{psmallmatrix} 1 & 1\\ 1 & 2 \end{psmallmatrix}$, whose eigenvalues are $\frac{3 \pm \sqrt{5}}{2}$. Therefore, by Theorem \ref{thm:can_heights_endomorphisms}, we obtain
$$\widehat{h}_{A,D}(g(P_1,P_2))=\widehat{h}_{E, (O)}(P_1+P_2)+\widehat{h}_{E, (O)}(P_2)\leq \frac{3+\sqrt{5}}{2}\pt{\widehat{h}_{E, (O)}(P_1)+\widehat{h}_{E, (O)}(P_2)}.$$
It follows that
$$\widehat{h}_{E, (O)}(P_1+P_2)\leq \frac{3+\sqrt{5}}{2} \cdot \widehat{h}_{E, (O)}(P_1)+ \frac{1+\sqrt{5}}{2}\cdot \widehat{h}_{E, (O)}(P_2).$$
Note that this bound improves upon the bound 
$$\widehat{h}_{E, (O)}(P_1+P_2)\leq 2\pt{\widehat{h}_{E, (O)}(P_1)+\widehat{h}_{E, (O)}(P_2)}$$
coming from the parallelogram identity, whenever $\widehat{h}_{E, (O)}(P_2)\geq \frac{1+\sqrt{5}}{2} \cdot \widehat{h}_{E, (O)}(P_1)$.
\end{example}

\begin{remark}
Assume that $A$ is simple. If the endomorphism algebra $\en^0(A)$ is a totally real number field, a totally definite quaternion algebra or a CM field, then the Albert classification \cite[Theorem 2 (p.186)]{Mum08} implies that there is a unique positive involution on $\en^0(A)$. Thus, the Rosati involution associated with any line bundle must be equal to this unique positive involution. Hence, this proves that in those cases the constants $\a^{-}_D(f), \a^{+}_D(f)$ do not depend on $D$, generalizing the above-mentioned result by Naumann.
\end{remark}

Since all the eigenvalues of $\rho_a(\RI{f}f)$ are real and non-negative, 
$$\tr(\r_a(\RI{f} f))=\a_1+ \ldots+ \a_g\geq \max\!\pg{\a_1, \ldots, \a_g}=\a^+_D(f)$$
so we also have the following consequence.

\begin{corollary}\label{cor_height_trace}
Fix an abelian variety $A$ defined over $\Qal$ with an ample symmetric divisor $D$. Then, for every endomorphism $f : A \rightarrow A$, we have that
$$\widehat{h}_{A,D}(f(P))\leq \tr(\r_a(\RI{f} f)) \cdot \widehat{h}_{A,D}(P)$$
for every $P \in A(\Qal)$.
\end{corollary}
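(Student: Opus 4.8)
The plan is to obtain the corollary as an immediate consequence of Theorem \ref{thm:can_heights_endomorphisms}, combined with the non-negativity of the eigenvalues of $\r_a(\RI{f}f)$ established in Lemma \ref{lemma:eigen_pos}. First I would recall that, by that lemma, the eigenvalues $\a_1, \ldots, \a_g$ of $\r_a(\RI{f}f)$ are all real and non-negative, so that their sum dominates their maximum:
$$\tr(\r_a(\RI{f}f)) = \a_1 + \cdots + \a_g \geq \max\pg{\a_1, \ldots, \a_g} = \a^{+}_{D}(f).$$

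Next I would invoke the upper bound of Theorem \ref{thm:can_heights_endomorphisms}, namely $\widehat{h}_{A,D}(f(P)) \leq \a^{+}_{D}(f) \cdot \widehat{h}_{A,D}(P)$ for every $P \in A(\Qal)$. To chain this with the previous inequality I need $\widehat{h}_{A,D}(P) \geq 0$, which holds because $D$ is ample and symmetric, by part (6) of Proposition \ref{prop:propr_can_heights}. Multiplying the inequality $\a^{+}_{D}(f) \leq \tr(\r_a(\RI{f}f))$ by the non-negative real number $\widehat{h}_{A,D}(P)$ and combining with the theorem then gives
$$\widehat{h}_{A,D}(f(P)) \leq \a^{+}_{D}(f) \cdot \widehat{h}_{A,D}(P) \leq \tr(\r_a(\RI{f}f)) \cdot \widehat{h}_{A,D}(P),$$
which is precisely the assertion.

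There is no genuine obstacle here, since the statement is a direct corollary of Theorem \ref{thm:can_heights_endomorphisms}. The only points deserving a moment of care are the two inputs used above: the fact, supplied by Lemma \ref{lemma:eigen_pos}, that all eigenvalues of $\r_a(\RI{f}f)$ are non-negative — without this the trace need not dominate the largest eigenvalue — and the non-negativity of $\widehat{h}_{A,D}$, which guarantees that replacing $\a^{+}_{D}(f)$ by the possibly larger quantity $\tr(\r_a(\RI{f}f))$ preserves the direction of the inequality.
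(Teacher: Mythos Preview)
Your proposal is correct and follows exactly the paper's own argument: the paper deduces the corollary by observing that non-negativity of the eigenvalues (Lemma \ref{lemma:eigen_pos}) gives $\tr(\r_a(\RI{f}f)) = \a_1 + \cdots + \a_g \geq \a^{+}_D(f)$, and then invokes the upper bound of Theorem \ref{thm:can_heights_endomorphisms}. Your explicit remark that $\widehat{h}_{A,D}(P) \geq 0$ is needed to preserve the inequality direction is a welcome clarification that the paper leaves implicit.
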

\subsection{Height bounds for homomorphisms between abelian varieties}
We can now generalize Theorem \ref{thm:can_heights_endomorphisms} to homomorphisms between different abelian varieties. 

Let $A, B$ be two abelian varieties defined over $\Qal$, $D_1, D_2$ be two ample symmetric divisors on $A$ and $B$, respectively, and let $\phi: A \rightarrow B$ be a homomorphism. As before, it is straightforward to see that the ratio $\widehat{h}_{B,D_2}(\phi(P))/\widehat{h}_{A,D_1}(P)$ must be bounded for non-torsion points $P \in A(\Qal)$ (see, for example, \cite[Lemma 16]{Mas84} for the upper bound). However, if $\ker \phi$ is not finite, then there exists a non-torsion $P \in A(\Qal)$ such that $\phi(P)=O_B$, showing that there is no positive constant $\g_1$ such that $\widehat{h}_{B,D_2}(\phi(P)) \geq \g_1 \cdot \widehat{h}_{A,D_1}(P)$. 

\begin{theorem}\label{thm:can_heights_isogenies}
Let $A,B$ be two abelian varieties defined over $\Qal$ and consider two ample symmetric divisors $D_1, D_2$ on $A$ and $B$, respectively. Let also $\phi: A \rightarrow B$ be a nonzero homomorphism. Then there is an explicit constant $\g_2>0$ such that
$$\widehat{h}_{B,D_2}(\phi(P))\leq \g_2 \cdot \widehat{h}_{A,D_1}(P)$$
for every $P \in A(\Qal)$. Moreover, if $\phi$ is an isogeny, there exists an explicit constant $\g_1>0$ such that
$$\widehat{h}_{B,D_2}(\phi(P)) \geq \g_1 \cdot \widehat{h}_{A,D_1}(P)$$
for every $P \in A(\Qal)$.
\end{theorem}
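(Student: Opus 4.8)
The plan is to reduce to the endomorphism case of Theorem~\ref{thm:can_heights_endomorphisms} by passing to the product abelian variety $C = A \times B$. Let $p_1 : C \to A$ and $p_2 : C \to B$ be the two projections and set $D = p_1^* D_1 + p_2^* D_2$. Since $D_1$ and $D_2$ are ample, $D$ is ample on $C$ (a standard fact about products), and it is symmetric because $[-1]_C = ([-1]_A, [-1]_B)$, whence $[-1]_C^* p_i^* D_i = p_i^* [-1]^* D_i \sim p_i^* D_i$. Using parts (3) and (4) of Proposition~\ref{prop:propr_can_heights} together with $p_i(O_C) = O$, I would first record the decomposition
$$\widehat{h}_{C,D}(a,b) = \widehat{h}_{A,D_1}(a) + \widehat{h}_{B,D_2}(b) \qquad \text{for all } (a,b) \in C(\Qal),$$
so that in particular $\widehat{h}_{C,D}(P, O_B) = \widehat{h}_{A,D_1}(P)$ and $\widehat{h}_{C,D}(O_A, Q) = \widehat{h}_{B,D_2}(Q)$.

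For the upper bound, consider the endomorphism $f \in \en(C)$ given by $f(a,b) = (O_A, \phi(a))$; it is a homomorphism because $\phi$ is, and it is nonzero because $\phi \neq 0$. Since $f(P, O_B) = (O_A, \phi(P))$, applying Theorem~\ref{thm:can_heights_endomorphisms} to $f$ and $D$ on $C$ and using the decomposition above gives
$$\widehat{h}_{B,D_2}(\phi(P)) = \widehat{h}_{C,D}\bigl(f(P, O_B)\bigr) \leq \a^+_D(f) \cdot \widehat{h}_{C,D}(P, O_B) = \a^+_D(f) \cdot \widehat{h}_{A,D_1}(P),$$
so one may take $\g_2 = \a^+_D(f)$, which is positive by Lemma~\ref{lemma:eigen_pos} and explicit: it is the largest eigenvalue of $\r_a(\RI{f} f)$, where the Rosati involution on $\en^0(C)$ is the one attached to $\O_C(D)$. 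Because $\O_C(D)$ is a product polarization, this matrix is an explicit block matrix built from the Hermitian forms of $D_1$ and $D_2$ and the linear map lifting $\phi$.

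For the lower bound I would assume $\phi$ is an isogeny and fix a positive integer $m$ (for instance $m = \deg\phi$) together with an isogeny $\psi : B \to A$ such that $\psi \circ \phi = [m]_A$ and $\phi \circ \psi = [m]_B$. Consider then $h \in \en(C)$ given by $h(a,b) = (\psi(b), \phi(a))$: its kernel is $\ker\phi \times \ker\psi$, which is finite, and $h$ is surjective, so $h$ is an isogeny, whence $\a^-_D(h) > 0$ by the observation following Lemma~\ref{lemma:eigen_pos}. Since $h(P, O_B) = (O_A, \phi(P))$, Theorem~\ref{thm:can_heights_endomorphisms} yields
$$\widehat{h}_{B,D_2}(\phi(P)) = \widehat{h}_{C,D}\bigl(h(P, O_B)\bigr) \geq \a^-_D(h) \cdot \widehat{h}_{C,D}(P, O_B) = \a^-_D(h) \cdot \widehat{h}_{A,D_1}(P),$$
so one may take $\g_1 = \a^-_D(h) > 0$. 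Alternatively, the lower bound follows from the upper bound applied to $\psi$: if $\widehat{h}_{A,D_1}(\psi(Q)) \leq \g_2'\, \widehat{h}_{B,D_2}(Q)$ for all $Q \in B(\Qal)$, then taking $Q = \phi(P)$ and combining with $\psi\phi = [m]_A$ and Proposition~\ref{prop:propr_can_heights}(5) gives $m^2 \widehat{h}_{A,D_1}(P) \leq \g_2'\, \widehat{h}_{B,D_2}(\phi(P))$, so $\g_1 = m^2/\g_2'$ works.

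I do not expect a serious obstacle: the substance of the theorem is just this product trick combined with Theorem~\ref{thm:can_heights_endomorphisms}. The only points needing care are checking that $D = p_1^* D_1 + p_2^* D_2$ is ample and symmetric and that $\widehat{h}_{C,D}$ splits as stated — both routine — and, for the explicitness, writing $\a^+_D(f)$ and $\a^-_D(h)$ out in terms of the Hermitian forms of $D_1, D_2$ and the linear maps lifting $\phi$ and $\psi$, which is immediate from the block structure of a product polarization.
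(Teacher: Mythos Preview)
Your proof is correct and follows essentially the same route as the paper: pass to $C=A\times B$ with the product divisor $D=p_1^*D_1+p_2^*D_2$, check that $\widehat{h}_{C,D}$ splits, and apply Theorem~\ref{thm:can_heights_endomorphisms} to $f(a,b)=(O_A,\phi(a))$ to obtain $\g_2=\a_D^+(f)$. For the lower bound the paper uses only your alternative argument (upper bound for $\psi$, then substitute $Q=\phi(P)$, with $m=e(\phi)$ the exponent of $\ker\phi$ rather than $\deg\phi$); your first argument, applying the lower bound of Theorem~\ref{thm:can_heights_endomorphisms} directly to the isogeny $h(a,b)=(\psi(b),\phi(a))$, is a pleasant variant that avoids the substitution step.
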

\begin{proof}
If $\pi_1, \pi_2$ are the projections of $A\times B$ onto $A$ and $B$ respectively, we consider the divisor $D=\pi_1^* D_1 + \pi_2^* D_2$ on $A \times B$, which is again ample and symmetric. 

By the functorial properties of the canonical height, we have that
$$\widehat{h}_{A\times B, D}(P,Q)=\widehat{h}_{A \times B,\pi_1^* D_1}(P,Q)+\widehat{h}_{A \times B,\pi_2^* D_2}(P,Q)=\widehat{h}_{A,D_1}(P)+ \widehat{h}_{B,D_2}(Q)$$  
for every $(P,Q) \in (A \times B)(\Qal)$.

Let also $f$ be the endomorphism of $A\times B$ defined as $f(P,Q)=(O_A, \phi(P))$. 
We can then apply Theorem \ref{thm:can_heights_endomorphisms} to get that
\begin{align*}
\widehat{h}_{B,D_2}(\phi(P))&=\widehat{h}_{A\times B, D}(f(P,Q)) \\
&\leq \a_D^+(f) \cdot \widehat{h}_{A\times B, D}(P,Q) = \a_D^+(f) \cdot \pt{\widehat{h}_{A,D_1}(P)+ \widehat{h}_{B,D_2}(Q)}.
\end{align*}
Since this inequality holds for arbitrary $P \in A(\Qal)$ and $Q \in B(\Qal)$, we can choose $Q=O_B$ and thus we have
$$\widehat{h}_{B,D_2}(\phi(P)) \leq \a_D^+(f) \cdot \widehat{h}_{A, D_1}(P)$$
so that we can choose $\g_2=\a_D^+(f)$.

Now assume that $\phi$ is an isogeny, and let $e(\phi)$ be the exponent of the finite group $\ker \phi$, i.e.\ $e(\phi)$ is the smallest positive integer $n$ such that $[n]P=O_A$ for every $P \in \ker \phi$. Then, by \cite[Proposition 1.2.6]{BL04}, there exists a unique isogeny $\psi: B \rightarrow A$ such that $\psi \circ \phi= [e(\phi)]_A$ and $\phi \circ \psi =[e(\phi)]_B$. We then apply Theorem \ref{thm:can_heights_endomorphisms} to the endomorphism $g$ of $A \times B$ such that $g(P,Q)=(\psi(Q),O_B)$ in order to get
\begin{align*}
\widehat{h}_{A,D_1}(\psi(Q))&=\widehat{h}_{A\times B, D}(g(P,Q))\\
&\leq \a_D^+(g) \cdot \widehat{h}_{A\times B, D}(P,Q) = \a_D^+(g) \cdot \pt{\widehat{h}_{A,D_1}(P)+ \widehat{h}_{B,D_2}(Q)}.
\end{align*}
As before, this implies that
$$\widehat{h}_{A,D_1}(\psi(Q)) \leq \a_D^+(g) \cdot \widehat{h}_{B, D_2}(Q)$$
for every $Q \in B(\Qal)$. Then, for each $P \in A(\Qal)$ we can choose $Q=\phi(P)$. Thus, the inequality above becomes
$$e(\phi)^2 \cdot \widehat{h}_{A,D_1}(P) = \widehat{h}_{A,D_1}((\psi \circ \phi)(P)) \leq \a_D^+(g) \cdot \widehat{h}_{B, D_2}(\phi(P))$$
since $D_1$ is symmetric. Therefore, we can take $\g_1=\frac{e(\phi)^2}{\a_D^+(g)}$.
\end{proof}

Applying this theorem with $B=A$ and $\phi=\pq{1}$ the identity gives the following comparison of canonical heights defined by different divisors (see also \cite[Exercise B.3]{HS13} for a slightly more general but ineffective statement).

\begin{corollary}
Let $A$ be an abelian variety defined over $\Qal$ and consider two ample symmetric divisors $D_1, D_2$ on $A$ . Then there are explicit constants $0< \g_1 \leq \g_2$ such that
$$\g_1 \cdot \widehat{h}_{A,D_1}(P)\leq \widehat{h}_{A,D_2}(P)\leq \g_2 \cdot \widehat{h}_{A,D_1}(P)$$
for every $P \in A(\Qal)$.
\end{corollary}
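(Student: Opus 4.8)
The plan is to deduce this corollary as the special case $B = A$, $\phi = [1]_A$ of Theorem~\ref{thm:can_heights_isogenies}. The identity map is a nonzero homomorphism and, in fact, an isogeny, with $\ker\phi = \{O_A\}$, so that $e(\phi) = 1$ and the associated inverse isogeny is $\psi = [1]_A$; moreover $D_1, D_2$ are ample and symmetric by hypothesis. Hence every hypothesis of Theorem~\ref{thm:can_heights_isogenies} is satisfied. Applying its upper bound with this $\phi$ and observing that $\widehat{h}_{B,D_2}(\phi(P)) = \widehat{h}_{A,D_2}(P)$, one obtains an explicit $\gamma_2 > 0$ with $\widehat{h}_{A,D_2}(P) \leq \gamma_2 \cdot \widehat{h}_{A,D_1}(P)$ for every $P \in A(\Qal)$; tracing through the proof of the theorem, one may take $\gamma_2 = \a^{+}_{D}(f)$, where $D = \pi_1^* D_1 + \pi_2^* D_2$ is the ample symmetric divisor on $A \times A$ and $f$ is the endomorphism $(P,Q) \mapsto (O_A, P)$.

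Next I would invoke the lower bound of Theorem~\ref{thm:can_heights_isogenies}, which is available precisely because $\phi = [1]_A$ is an isogeny. It produces an explicit $\gamma_1 > 0$ with $\widehat{h}_{A,D_2}(P) \geq \gamma_1 \cdot \widehat{h}_{A,D_1}(P)$ for all $P \in A(\Qal)$; in the notation of the proof of the theorem, $\gamma_1 = e(\phi)^2 / \a^{+}_{D}(g) = 1/\a^{+}_{D}(g)$, where $g$ is the endomorphism $(P,Q) \mapsto (Q, O_A)$ of $A \times A$. Finally, should the constants so obtained fail to satisfy $\gamma_1 \leq \gamma_2$, one simply replaces $\gamma_1$ by $\min\{\gamma_1, \gamma_2\}$: this is still an explicit positive constant, and the lower bound remains valid because $\widehat{h}_{A,D_1} \geq 0$ by Proposition~\ref{prop:propr_can_heights}(6). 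This gives the chain of inequalities in the stated form.

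\textbf{On the difficulty.} There is essentially no obstacle: all of the substance has already been packaged into Theorem~\ref{thm:can_heights_isogenies}, and the only things left to check are the trivial facts that $[1]_A$ is a nonzero isogeny and that $D_1, D_2$ are ample symmetric. The single point worth spelling out — and the justification for the word ``explicit'' in the statement — is that $\gamma_1$ and $\gamma_2$ are given in closed form through $\a^{\pm}_{D}$, hence through the eigenvalues of the analytic representation of the relevant Rosati-composed endomorphisms of $A \times A$, which can be bounded effectively via Proposition~\ref{bound_rat_rep} together with the relation (\ref{eq:poly_char_RI}) between the rational and analytic characteristic polynomials of $\RI{f}f$.
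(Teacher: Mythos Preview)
Your proposal is correct and matches the paper's own approach exactly: the corollary is stated immediately after Theorem~\ref{thm:can_heights_isogenies} with the one-line justification ``Applying this theorem with $B=A$ and $\phi=[1]$ the identity''. Your additional remarks on the explicit form of the constants and on enforcing $\gamma_1\leq\gamma_2$ are fine elaborations that the paper leaves implicit.
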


Lastly, we consider the special case of elliptic curves. Given an elliptic curve $E$, a symmetric ample divisor $D$ and an endomorphism $f \in \en(E)$, we clearly have $\a_D^-(f)=\a_D^+(f)=\deg f$, since $\RI{f}=\widehat{f}$. Thus, Theorem \ref{thm:can_heights_endomorphisms} reduces to the well known identity $\widehat{h}_{E,D}(f(P))=\deg f \cdot \widehat{h}_{E,D}(P)$ (see for example Section 3.6 of \cite{Ser97}).

However, for elliptic curves we may strengthen Theorem \ref{thm:can_heights_isogenies}, getting again an identity instead of an inequality. We prove this using a different method from the one used before.

\begin{proposition}\label{prop:can_height_isog_EC}
Let $E_1, E_2$ be two elliptic curves defined over $\Qal$, $D_1, D_2$ be two ample symmetric divisors on $E_1, E_2$, respectively, and $f: E_1 \rightarrow E_2$ be an isogeny. Then, we have
$$\widehat{h}_{E_2,D_2}(f(P))=\dfrac{\deg D_2}{\deg D_1}\cdot \deg f \cdot \widehat{h}_{E_1,D_1}(P)$$
for every $P \in E_1(\Qal)$.
\end{proposition}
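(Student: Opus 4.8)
The plan is to reduce everything to the canonical heights attached to the origin divisors $(O_1)$ and $(O_2)$, for which the behaviour under isogenies is completely classical, and then keep track of the degrees. The starting point is the observation that on an elliptic curve the canonical height attached to an ample symmetric divisor depends only on the degree of that divisor: I claim that $\widehat{h}_{E,D}=(\deg D)\cdot\widehat{h}_{E,(O)}$ for any elliptic curve $E$ over $\Qal$ and any ample symmetric divisor $D$ on it.

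To prove this claim I would write $D\sim (\deg D)(O)+c$ with $c\in\mathrm{Pic}^0(E)$, using that every divisor of degree $d$ is algebraically equivalent to $d(O)$. Since both $(O)$ and $D$ are symmetric, the class $c$ is fixed by $[-1]^*$, which under the isomorphism $\mathrm{Pic}^0(E)\cong E$ corresponds to negation; hence $c$ is a $2$-torsion class, so $2c\sim 0$ and therefore $2D\sim 2(\deg D)(O)$. Then properties (2) and (3) of Proposition \ref{prop:propr_can_heights} (invariance under linear equivalence and additivity in the divisor) give $2\,\widehat{h}_{E,D}=\widehat{h}_{E,2D}=2(\deg D)\,\widehat{h}_{E,(O)}$, whence the claim.

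Applying the claim to $(E_1,D_1)$ and to $(E_2,D_2)$, and writing $d_i=\deg D_i$, the statement reduces to the identity $\widehat{h}_{E_2,(O_2)}(f(P))=\deg f\cdot\widehat{h}_{E_1,(O_1)}(P)$. To obtain this I would consider the pullback divisor $f^*(O_2)$ on $E_1$: it has degree $\deg f$, and it is symmetric, because $f$ is a homomorphism (so $f\circ[-1]_{E_1}=[-1]_{E_2}\circ f$) and $(O_2)$ is symmetric on $E_2$. Applying the claim once more, now to $E_1$, gives $\widehat{h}_{E_1,f^*(O_2)}=\deg f\cdot\widehat{h}_{E_1,(O_1)}$. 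On the other hand, functoriality of the canonical height (property (4) of Proposition \ref{prop:propr_can_heights}), together with $f(O_1)=O_2$ and $\widehat{h}_{E_2,(O_2)}(O_2)=0$ from property (1), gives $\widehat{h}_{E_1,f^*(O_2)}(P)=\widehat{h}_{E_2,(O_2)}(f(P))$. Combining these two displays, and then restoring the factors $d_1,d_2$ via the first step, yields $\widehat{h}_{E_2,D_2}(f(P))=d_2\,\widehat{h}_{E_2,(O_2)}(f(P))=d_2\deg f\cdot\widehat{h}_{E_1,(O_1)}(P)=\tfrac{d_2}{d_1}\deg f\cdot\widehat{h}_{E_1,D_1}(P)$, which is the asserted identity.

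I do not expect a serious obstacle here; the only point requiring a little care is the reduction step for symmetric divisors, since an ample symmetric divisor on $E$ need not be linearly equivalent to $(\deg D)(O)$ (it may differ from it by a nontrivial $2$-torsion class in $\mathrm{Pic}^0(E)$), which is precisely why one passes to $2D$ before invoking linear equivalence. Everything else is a direct bookkeeping of the properties collected in Proposition \ref{prop:propr_can_heights}, and it is this reduction to $(O)$ — rather than the product-variety argument used for Theorem \ref{thm:can_heights_isogenies} — that lets one upgrade the inequality to an equality.
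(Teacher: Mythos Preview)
Your proof is correct and takes a genuinely different route from the paper's argument. The paper proceeds by setting $a=\deg D_2\cdot\deg f$ and $b=\deg D_1$, observing that the divisor $aD_1 - bf^*D_2$ on $E_1$ has degree zero and is therefore nef; by Proposition~\ref{prop:propr_can_heights}(6) the associated canonical height is nonnegative, which yields one inequality, and the same argument applied to $bf^*D_2 - aD_1$ (also of degree zero, hence also nef) gives the reverse inequality. This stays entirely within the nef/ample framework of Section~\ref{sec_can_bounds} and never unpacks the structure of symmetric divisors on an elliptic curve.

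Your approach instead first isolates the lemma $\widehat{h}_{E,D}=(\deg D)\,\widehat{h}_{E,(O)}$ for any symmetric $D$, via the $2$-torsion observation in $\mathrm{Pic}^0(E)$ and passage to $2D$, and then reduces to the classical identity $\widehat{h}_{E_2,(O_2)}\circ f=\deg f\cdot\widehat{h}_{E_1,(O_1)}$ obtained from functoriality applied to $f^*(O_2)$. This is essentially the alternative the paper itself sketches in the remark immediately following the proposition, where it is phrased as writing $D\sim n(O)+(T)$ and computing the pull-back explicitly; your formulation is a bit cleaner, since multiplying by $2$ lets you avoid tracking the $2$-torsion point under $f$. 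The paper's nef argument has the virtue of matching the template used for Theorem~\ref{thm:can_heights_endomorphisms}; yours is more elementary and makes the dependence on $\deg D_1,\deg D_2$ visible from the outset.
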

\begin{proof}
Let $a=\deg D_2 \cdot \deg f$ and $b=\deg D_1$. Then, we have 
$$\deg(a D_1 - b f^* D_2)=a \cdot \deg D_1 - b \cdot \deg f \cdot \deg D_2=0.$$
So the divisor $a D_1 - b f^* D_2$ on $E_1$ is nef. As shown in Proposition \ref{prop:propr_can_heights}, the canonical height associated to a nef symmetric divisor is nonnegative, therefore
\begin{align*}
a \cdot \widehat{h}_{E_1,D_1}(P)- b \cdot \widehat{h}_{E_2,D_2}(f(P))&=
\widehat{h}_{E_1,a D_1}(P)- \widehat{h}_{E_1,b f^* D_2}(P)\\
&=\widehat{h}_{E_1, a D_1 - b \cdot f^* D_2}(P)\geq 0
\end{align*}
implying that $$\widehat{h}_{E_2,D_2}(f(P))\leq \dfrac{\deg D_2}{\deg D_1}\cdot \deg f \cdot \widehat{h}_{E_1,D_1}(P)$$ since ample divisors on curves have positive degree \cite[Corollary 3.3]{Har77}.

Similarly, $\deg( b f^* D_2 - a D_1)=0$, so that the same argument gives
$$\widehat{h}_{E_2,D_2}(f(P))\geq \dfrac{\deg D_2}{\deg D_1}\cdot \deg f \cdot \widehat{h}_{E_1,D_1}(P)$$
concluding the proof.
\end{proof}

\begin{remark}
Since any ample symmetric divisor on an elliptic curve is linearly equivalent to $n (O) + (T)$, where $O$ is the identity element, $n\geq 0$ is an integer and $T$ is a 2-torsion point, one can also prove Proposition \ref{prop:can_height_isog_EC} more directly, by explicitly computing the pull-back $f^*(n (O) + (T))$ (see for example \cite[Proposition 2.3]{Fer26} for the special case $D_1=3(O_1)$ and $D_2=3(O_2)$).
\end{remark}

\section{A height inequality}\label{sect:height_ineq}
The aim of this section is to give a bound on the canonical height of the points $P \in \c(\Qal)$ in terms of the Faltings height $h_F(\AA_{\pi(P)})$ of the corresponding fiber. In order to do that we recall the setting of Theorem \ref{main_thm} and the reductions made in Section \ref{sec_red}, and also define some height functions that will be used to prove this bound.

Let $S \subseteq \A_g=\A_{g, \mathbf{1},3}$ be a smooth, irreducible, locally closed curve defined over $\Qal$, let $\AA=\mathfrak{A}_g \times_{\A_g} S$, with $\pi: \AA \rightarrow S$ being the structural morphism, and let $\c \subseteq \AA$ be an irreducible curve as in Theorem \ref{main_thm}. Recall that $\AA$ has a level-3-structure and that there is a principal polarization $\l: \AA \rightarrow \widehat{\AA}$, where $\widehat{\AA}$ denotes the dual abelian scheme of $\AA$.

By \cite[Proposition 27.284]{GW23}, the pullback of the Poincaré bundle $\mathscr{P}$ via the morphism $(\mathrm{id}_{\AA}, \l)$ is relatively ample. Thus, the line bundle
$$\mathcal{L}=\pt{(\mathrm{id}_{\AA}, \l)^*\mathscr{P} \otimes [-1]_{\AA}^* (\mathrm{id}_{\AA}, \l)^*\mathscr{P}}^{\otimes 3}$$
is relatively very ample (see \cite[Theorem 27.279]{GW23}), symmetric and its associated isogeny $\Phi_{\mathcal{L}}$ is equal to $12\l$. This line bundle gives an embedding $\AA \hookrightarrow \P^n_{S} \cong \P^n_{\Qal} \times S$. Moreover, for every fiber $\AA_s$ of $\AA \rightarrow S$, the induced closed immersion $\AA_s \rightarrow \P^n_{\Qal}$ comes from the restriction $\mathcal{L}_s=\mathcal{L}\vert_{\AA_s}$.

The minimal compactification $\overline{\A_{g,\mathbf{1},3}}$ of $\A_{g, \mathbf{1},3}$ can be realized as a closed subvariety of some projective space $\P^m_{\Qal}$ and we define $\mathcal{M}=\O_{\P^{m}}(1)\vert_{\overline{\A_{g,\mathbf{1},3}}}$. Thus, we obtain an embedding $\A_{g, \mathbf{1},3} \hookrightarrow \P^m_{\Qal}$ and we denote by $\overline{S}$ the Zariski closure of $S$ in $\overline{\A_{g,\mathbf{1},3}}\subseteq \P^m_{\Qal}$.

We then denote by $\overline{\AA}$ the Zariski closure of $\AA$ inside $\P^n_{\Qal} \times \overline{S} \subseteq \P^n_{\Qal} \times \P^m_{\Qal}$ and let $\overline{\mathcal{L}}=\O(1,1)\vert_{\overline{\AA}}=\mathcal{L} \otimes \pi^*\!\pt{ \mathcal{M}\vert_{\overline{S}}}$. Using the properties of the Weil height (e.g.\ \cite[Theorem B.3.6]{HS13}), we define the \emph{naive height} on $\AA(\Qal)$ as
$$h_{\overline{\AA}, \overline{\mathcal{L}}}(P)=h_{\AA_{\pi(P)}, \mathcal{L}_{\pi(P)}}(P)+ h_{\overline{S}, \mathcal{M}\vert_{\overline{S}}}(\pi(P)).$$
Moreover, as $\mathcal{L}$ is symmetric, we can also define a fiberwise canonical height $\widehat{h}_{\AA_{\pi(P)}, \mathcal{L}_{\pi(P)}}(P)$ as in Section \ref{sect:prelim_heights}.

Furthermore, recall that the coarse moduli space $\A_{g,\mathbf{1}}$ of principally polarized abelian varieties of dimension $g$ is a quasi-projective variety. More precisely, its minimal compactification $\overline{\A_{g,\mathbf{1}}}$ can be realized as a closed subvariety of some projective space $\P^\ell_{\Qal}$.

Let $L=\O_{\P^{\ell}}(1)\vert_{\overline{\A_{g,\mathbf{1}}}}$. Then, by \cite[Section II.3]{FW12}, $L$ has an Hermitian metric on $\A_{g, \mathbf{1}}$ with logarithmic singularities along $\overline{\A_{g,\mathbf{1}}}\setminus \A_{g,\mathbf{1}}$. Hence, we can define two height functions: $h_{L}$ on $\A_{g,\mathbf{1}}$ using the metric cited just now; and $\widetilde{h}_{L}$ on $\overline{\A_{g,\mathbf{1}}}$ given by the Hermitian metric which at the archimedean places is the standard Fubini–Study metric coming from the embedding of $\overline{\A_{g, \mathbf{1}}}$ into $\P^{\ell}_{\Qal}$ and at the non-archimedean places is the usual metric. Note that $\widetilde{h}_L$ differs from a fixed Weil height $h_{\overline{\A_{g,\mathbf{1}}}, L}$ by a bounded function on $\P^{\ell}(\Qal)$ (see \cite[Remark 2.8.3]{BG06} or \cite[Example B.10.5]{HS13}).

From this point forward, $\xi_1, \xi_2, \ldots$ will be positive constants depending only on $g$, $S$, $\AA$, $\c$ and the choices of the various Weil heights, unless otherwise specified.

\begin{proposition}\label{prop:can_height_points}
There exist positive constants $\xi_1, \xi_2$ such that
$$\widehat{h}_{\AA_{\pi(P)},\mathcal{L}_{\pi(P)}}(P)\leq \xi_1 \cdot h_F(\AA_{\pi(P)})+\xi_2$$
for every $P \in \c(\Qal)$.
\end{proposition}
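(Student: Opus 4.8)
The strategy is to route everything through the naive height on the total space. First one compares the fibrewise canonical height $\widehat{h}_{\AA_{\pi(P)},\mathcal{L}_{\pi(P)}}(P)$ with $h_{\overline{\AA},\overline{\mathcal{L}}}(P)$ by a Tate telescoping that is uniform in the family; then one uses the finiteness of $\pi$ on $\c$ to control $h_{\overline{\AA},\overline{\mathcal{L}}}(P)$ by the moduli height $h_{\overline{S},\mathcal{M}}(\pi(P))$; and finally one bounds the moduli height by the Faltings height. For the first step, note that on every fibre $\mathcal{L}_{s}$ is ample and symmetric, so $[2]_{\AA_{s}}^{*}\mathcal{L}_{s}\cong\mathcal{L}_{s}^{\otimes 4}$; by the theorem of the cube (seesaw over $S$), $[2]_{\AA}^{*}\mathcal{L}$ and $\mathcal{L}^{\otimes 4}$ therefore differ by $\pi^{*}\mathcal{N}$ for some line bundle $\mathcal{N}$ on $S$, which we extend to a line bundle $\overline{\mathcal{N}}$ on the projective curve $\overline{S}$. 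The height machine then gives, with an implied constant depending only on the family,
$$h_{\overline{\AA},\overline{\mathcal{L}}}([2]P)=4\,h_{\overline{\AA},\overline{\mathcal{L}}}(P)+h_{\overline{S},\overline{\mathcal{N}}}(\pi(P))+O(1)\qquad\text{for all }P\in\AA(\Qal).$$
Since $\pi([2^{n}]P)=\pi(P)$ for every $n$, iterating, dividing by $4^{n}$ and letting $n\to\infty$ yields
$$\widehat{h}_{\AA_{\pi(P)},\mathcal{L}_{\pi(P)}}(P)=\lim_{n\to\infty}4^{-n}h_{\overline{\AA},\overline{\mathcal{L}}}([2^{n}]P)=h_{\overline{\AA},\overline{\mathcal{L}}}(P)+\tfrac{1}{3}\,h_{\overline{S},\overline{\mathcal{N}}}(\pi(P))+O(1).$$

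Next, because $\c$ is irreducible and not contained in a fibre, $\pi\colon\c\to S$ is dominant, hence finite, and extends to a finite morphism $\overline{\c}\to\overline{S}$ of projective curves. On $\overline{\c}$ both $\overline{\mathcal{L}}\vert_{\overline{\c}}$ and the pullback of $\mathcal{M}\vert_{\overline{S}}$ are ample; since $N\cdot(\text{pullback of }\mathcal{M}\vert_{\overline{S}})-\overline{\mathcal{L}}\vert_{\overline{\c}}$ is ample for $N$ large, the height machine (together with functoriality of heights under the closed immersion $\overline{\c}\hookrightarrow\overline{\AA}$ and under $\pi\vert_{\overline{\c}}$) gives a constant $\xi$ with
$$h_{\overline{\AA},\overline{\mathcal{L}}}(P)\leq\xi\,h_{\overline{S},\mathcal{M}}(\pi(P))+O(1)\qquad\text{for }P\in\c(\Qal),$$
and, likewise, $h_{\overline{S},\overline{\mathcal{N}}}(s)\leq\xi\,h_{\overline{S},\mathcal{M}}(s)+O(1)$. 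Combining with the previous display, $\widehat{h}_{\AA_{\pi(P)},\mathcal{L}_{\pi(P)}}(P)\leq\xi'\bigl(1+h_{\overline{S},\mathcal{M}}(\pi(P))\bigr)$ for every $P\in\c(\Qal)$.

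It remains to bound the moduli height $h_{\overline{S},\mathcal{M}}(s)$ by the Faltings height of the corresponding fibre. The forgetful morphism $q\colon\overline{\A_{g,\mathbf{1},3}}\to\overline{\A_{g,\mathbf{1}}}$ is finite and $q^{*}L$ is ample, so, as above, $h_{\overline{S},\mathcal{M}}(s)\leq\xi\,h_{\overline{S},q^{*}L}(s)+O(1)=\xi\,\widetilde{h}_{L}(q(s))+O(1)$. Since the Fubini–Study metric on $L$ dominates, up to a bounded factor, the Hodge metric with logarithmic singularities (the latter vanishing at the boundary), we have $\widetilde{h}_{L}(q(s))\leq h_{L}(q(s))+O(1)$; and by \cite{FW12} (building on \cite{Fal83}) the height $h_{L}$ attached to the Hodge metric equals, up to a positive multiplicative constant and $O(1)$, the stable Faltings height of the principally polarized abelian variety parametrised by $q(s)$, namely $\AA_{s}$ (the level structure being irrelevant for $h_{F}$, and semistable reduction being achievable after a finite extension). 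Hence $h_{\overline{S},\mathcal{M}}(s)\leq\xi''\bigl(1+h_{F}(\AA_{s})\bigr)$, and stringing the three steps together produces constants $\xi_{1},\xi_{2}$ with $\widehat{h}_{\AA_{\pi(P)},\mathcal{L}_{\pi(P)}}(P)\leq\xi_{1}h_{F}(\AA_{\pi(P)})+\xi_{2}$ for all $P\in\c(\Qal)$.

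The delicate point is the uniformity of the first step: the error terms produced by the height machine must be made independent of $P$, which needs a little care because $[2]_{\AA}$ need not extend to $\overline{\AA}$ — one handles this, e.g., by extending a suitable power of $[2]$ to an auxiliary projective compactification of $\AA$ on which the comparison of the relevant line bundles still makes sense. The arithmetically deepest ingredient, used here as a black box, is the comparison in the last step between a Weil height on $\A_{g}$ and the Faltings height.
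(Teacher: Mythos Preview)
Your argument follows the same three-step route as the paper's: compare the fibrewise canonical height with the naive height on the total space, bound the naive height on $\c$ by the moduli height on $S$, then bound the moduli height by the Faltings height. The paper packages the first step by quoting Theorem~A.1 of \cite{DGH21}, whose proof is precisely the Tate telescoping you sketch (with the compactification issue for $[2]_{\AA}$ handled there), and it packages the curve-to-curve height comparisons in Steps~2 and~3 via Silverman's theorem \cite{Sil11} for dominant rational maps between equal-dimensional varieties, which amounts to your ampleness argument on projective curves.

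One point to correct: your claim that the Petersson metric ``vanishes at the boundary'' and is therefore dominated by the Fubini--Study metric is backwards---the Petersson norm of a local frame of the extended Hodge bundle \emph{grows} like a power of $\log|q|^{-1}$ near the boundary, so no pointwise domination yielding $\widetilde{h}_L\le h_L+O(1)$ is available. What \cite[Lemma~II.1.2]{FW12} actually provides is the two-sided estimate
$$\abs{h_L-\widetilde{h}_L}\le \xi_5+\xi_6\log\max\{1,\widetilde{h}_L\},$$
from which one still extracts $\widetilde{h}_L\ll h_L+1$ after a short rearrangement (using also that $h_F$, hence $h_L$, is bounded below). That is all you need, but the justification must go through this logarithmic comparison rather than a direct metric inequality.
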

\begin{proof}
By \cite[Theorem II.3.1]{FW12} there exist positive constants $\xi_3,\xi_4$ depending only on $g$ such that
$$\abs{h_{L}\pt{\pq{A}}- \xi_3 \cdot h_F(A) } \leq \xi_4$$
for every principally polarized $A/\Qal$ of dimension $g$. Here, we denote by $[A]$ the isomorphism class of $A$ in $\A_{g, \mathbf{1}}$.
By \cite[Lemma II.1.2, last displayed equation]{FW12}, there are positive constants $\xi_5, \xi_6$, depending only on $g$, such that 
$$\abs{h_{L}\pt{\pq{A}}- \widetilde{h}_L\pt{\pq{A}} } \leq \xi_5+ \xi_6 \log \max \pg{1, \widetilde{h}_L\pt{\pq{A}}}$$
for each $[A] \in \A_{g, \mathbf{1}}$. In particular, this means that $\widetilde{h}_L\pt{\pq{A}} \ll h_{L}\pt{\pq{A}}+1$, which combined with the inequality above yields $\widetilde{h}_L\pt{\pq{A}} \ll h_{F}\pt{A}+1$. As noted above, $\widetilde{h}_L$ differs from $h_{\overline{\A_{g,\mathbf{1}}}, L}$ by a bounded function, so we get
\begin{equation}\label{eqn:bound_WH_FH}
h_{\overline{\A_{g,\mathbf{1}}}, L}\pt{\pq{A}} \leq \xi_7 \cdot h_F(A)+ \xi_8
\end{equation}
for every principally polarized $A/\Qal$ of dimension $g$, where $\xi_7, \xi_8$ depend only on $g$ and the choice of the Weil height $h_{\overline{\A_{g,\mathbf{1}}}, L}$.

Let $\r: \A_{g, \mathbf{1},3} \rightarrow \A_{g,\mathbf{1}}$ be the natural morphism which forgets the level structure. It extends to a rational map
$$\overline{\r}: \overline{\A_{g, \mathbf{1},3}} \dashrightarrow \overline{\A_{g,\mathbf{1}}}.$$
Let $S'$ be the Zariski closure of $\overline{\r}(\overline{S})$ in $\overline{\A_{g,\mathbf{1}}}$ and fix Weil heights $h_{\overline{S}, \mathcal{M}\vert_{\overline{S}}}$ and $h_{S', L\vert_{S'}}$.
Therefore, as $\dim S'= \dim \overline{S}$ and $\overline{\r}\vert_{\overline{S}} : \overline{S} \dashrightarrow S'$ is dominant, Theorem 1 of \cite{Sil11} yields positive constants $\xi_7, \xi_8$ and a non-empty Zariski open set $U_1 \subseteq \overline{S}$ such that
$$h_{\overline{S}, \mathcal{M}\vert_{\overline{S}}}(s) \leq \xi_9 \cdot h_{S', L\vert_{S'}}(\overline{\r}(s))+ \xi_{10}$$
for every $s \in U_1(\Qal) \subseteq \overline{S}(\Qal)$. Since $\dim \overline{S}=1$, $U_1$ is obtained by removing finitely many points from $\overline{S}$. Note also that $\overline{\r}$ is well defined on $S$ and it is equal to $\r$. Thus, we deduce that 
$$h_{\overline{S}, \mathcal{M}\vert_{\overline{S}}}(s) \leq \xi_{11} \cdot h_{\overline{\A_{g,\mathbf{1}}}, L}(\r(s))+ \xi_{12}$$
for every $s \in S(\Qal)$. Combining this with (\ref{eqn:bound_WH_FH}) gives
\begin{equation}\label{eqn:bound_hSbar_FalH}
h_{\overline{S}, \mathcal{M}\vert_{\overline{S}}}(s) \leq \xi_{13} \cdot h_{F}(s)+ \xi_{14}
\end{equation}
for every $s \in S(\Qal)$ and for some positive constants $\xi_{13}, \xi_{14}$. Note that $h_F(\r(s))=h_F(s)$, since the Faltings height is independent of the level structure.

Now, let $\overline{\c}$ be the Zariski closure of $\c$ inside $\overline{\AA} \subseteq \P^n_{\Qal} \times \P^m_{\Qal}$. As $\c$ is not contained in any fixed fiber of $\AA$, we have that $\pi\vert_{\c}: \c \to S$ is surjective and thus we get a dominant rational map $\overline{\pi}\vert_{\overline{\c}} : \overline{\c} \dashrightarrow \overline{S}$. As above, Theorem 1 of \cite{Sil11} yields positive constants $\xi_{15}, \xi_{16}$ and a non-empty Zariski open set $U_2 \subseteq \overline{\c}$ such that
$$h_{\overline{\c}, \overline{\mathcal{L}}\vert_{\overline{\c}}}(P) \leq \xi_{15} \cdot h_{\overline{S}, \mathcal{M}\vert_{\overline{S}}}(\overline{\pi}(P))+ \xi_{16}$$
for every $P \in U_2(\Qal) \subseteq \overline{\c}(\Qal)$. As before, we can assume that $U_2$ contains $\c$, so that 
\begin{equation}\label{eqn:bound_hC_hSbar}
h_{\overline{\AA}, \overline{\mathcal{L}}}(P) \leq \xi_{15} \cdot h_{\overline{S}, \mathcal{M}\vert_{\overline{S}}}(\pi(P))+ \xi_{16}
\end{equation}
for every $P \in \c(\Qal)$. Observe that $h_{\overline{\c}, \overline{\mathcal{L}}\vert_{\overline{\c}}}$ is equal to the restriction of the naive height $h_{\overline{\AA}, \overline{\mathcal{L}}}$ to $\overline{\c}$.

Finally, by Theorem A.1 of \cite{DGH21}, there exists a positive constant $\xi_{15}$ such that
$$\widehat{h}_{\AA_{\pi(P)}, \mathcal{L}_{\pi(P)}}(P) \leq h_{\overline{\AA}, \overline{\mathcal{L}}}(P) + \xi_{17} \cdot \max\!\pg{1, h_{\overline{S}, \mathcal{M}\vert_{\overline{S}}}(\pi(P))}$$
for every $P \in \AA(\Qal)$. Combining this with (\ref{eqn:bound_hSbar_FalH}) and (\ref{eqn:bound_hC_hSbar}) we get
$$\widehat{h}_{\AA_{\pi(P)}, \mathcal{L}_{\pi(P)}}(P) \leq \xi_{18} \cdot h_F(\AA_{\pi(P)})+\xi_{19}$$
for some positive constants $\xi_{18}, \xi_{19}$ and for every $P \in \c(\Qal)$.
\end{proof}

\begin{remark}
In light of a possible effective version of Proposition \ref{prop:can_height_points}, we mention that the constants $\xi_7, \xi_8$ appearing in Equation (\ref{eqn:bound_WH_FH}) can be made effective using Corollary 1.3 of \cite{Paz12}. Indeed, for an abelian variety $A$ with a principal polarization given by a ample symmetric line bundle $M$, the Theta height $h_{\Theta}(A,M)$ defined in \cite{Paz12} is equivalent to the Weil height $\widetilde{h}_L$ of the point $[(A, \Phi_M)] \in \overline{\A_{g,\mathbf{1}}}$.
\end{remark}

\section{Arithmetic bounds}\label{sect:arithmetic_estims_CM}
Recall the setting of Theorem \ref{main_thm} and the reductions made in Section \ref{sec_red}: let $S \subseteq \A_g=\A_{g, \mathbf{1},3}$ be a smooth, irreducible, locally closed curve, and let $\pi: \AA=\mathfrak{A}_g \times_{\A_g} S \rightarrow S$. Let $\c$ be as in Theorem \ref{main_thm} and define $\c '$ as the set of points $P \in \c (\C)$ such that $\AA_{\pi(P)}$ has CM and there exists a nonzero endomorphism $f \in \en(\AA_{\pi(P)})$ satisfying $f(P)=O_{\pi(P)}$. Equivalently, $P$ lies in a proper algebraic subgroup of $\AA_{\pi(P)}$.

Assume that $S, \AA$ and $\c$ are defined over the same number field $k$. Notice that if $P \in \c(\C)$, then $\AA_{\pi(P)}$ is defined over $k(\pi(P))$ and, since $\pi$ is non-constant,
\begin{equation}\label{bound_deg_points}
\pq{k(P):k}\ll \pq{k(\pi(P)):k} \leq \pq{k(P):k}.
\end{equation}
Moreover, since $\c$ is defined over $\Qal$ and complex abelian varieties with complex multiplication are defined over $\Qal$ (see Proposition 26 from Section 12.4 of \cite{Shi98}), it follows that $\pi(P) \in S(\Qal) \subseteq \A_{g, \mathbf{1},3}(\Qal)$ for every $P \in \c'$. By (\ref{bound_deg_points}), this shows that $\c'$ is a subset of $\c(\Qal)$.

From this point forward, $\g_1, \g_2, \ldots$ will be positive constants depending only on $g$, $S$, $\AA$ and $\c$, unless otherwise specified.
\begin{lemma}\label{lemma_Falt_height}
Let $A$ be a CM abelian variety of dimension $g$ defined over a number field $K$. Then there exist positive constants $\g_1, \g_2$ depending only on $g$ such that $h_F(A)\leq \g_1 \cdot \pq{K:\Q}^{\g_2}$.
\end{lemma}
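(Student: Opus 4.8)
The plan is to transport the question to the moduli space of abelian varieties and then feed in the known arithmetic lower bounds for Galois orbits of special points. First I would reduce to the principally polarized case by Zarhin's trick: the abelian variety $B:=A^{4}\times\widehat{A}^{4}$ carries a principal polarization defined over $K$, has dimension $8g$, is again a CM abelian variety (its endomorphism algebra contains $E^{8}$, where $E\subseteq\en^{0}(A)$ is a commutative semisimple subalgebra of degree $2g$ over $\Q$), and satisfies $h_{F}(B)=8\,h_{F}(A)$ since the stable Faltings height is additive under products and invariant under passing to the dual. Thus it is enough to prove the bound for $B$, and we may assume from now on that $A$ is principally polarized.

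Next I would pass to $\A_{g,\mathbf{1}}$. Since $A$ is defined over $K$, its isomorphism class $[A]$ is a $K$-rational point of $\A_{g,\mathbf{1}}(\Qal)$, so the orbit $\Gal(\Qal/\Q)\cdot[A]$ has cardinality at most $[K:\Q]$. Moreover $[A]$ is a special (CM) point, to whose CM data one attaches a discriminant invariant $\Delta$, comparable up to a power depending only on $g$ to the product of the discriminants of the CM fields occurring in $\en^{0}(A)$. I would then combine two inputs. The comparison between the stable Faltings height and the height on $\A_{g,\mathbf{1}}$ of \cite[Theorem II.3.1]{FW12} — one direction of which is already used in the proof of Proposition~\ref{prop:can_height_points} — gives $h_{F}(A)\ll_{g}h_{\overline{\A_{g,\mathbf{1}}}, L}([A])+1$. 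And the height of a special point is polynomially bounded in its discriminant, say $h_{\overline{\A_{g,\mathbf{1}}}, L}([A])\ll_{g,\eps}\Delta^{\eps}$ for every $\eps>0$; alternatively this follows from the averaged Colmez formula together with the standard estimate for logarithmic derivatives of Artin $L$-functions and Bost's lower bound for Faltings heights. Hence $h_{F}(A)\ll_{g,\eps}\Delta^{\eps}+1$.

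Finally I would invoke the lower bound for the Galois orbit of a special point: there is a constant $c(g)>0$ with $\#\bigl(\Gal(\Qal/\Q)\cdot[A]\bigr)\gg_{g,\eps}\Delta^{\,c(g)-\eps}$ for every $\eps>0$, which comes from lower bounds for class numbers of CM fields via Brauer–Siegel, and already from Siegel's theorem in the quadratic case; see \cite{Tsi18}. Since this orbit has size at most $[K:\Q]$, we obtain $\Delta\ll_{g,\eps}[K:\Q]^{1/(c(g)-\eps)}$, and therefore $h_{F}(A)=\tfrac18 h_{F}(B)\ll_{g,\eps}[K:\Q]^{\eps/(c(g)-\eps)}+1$, which is of the required form with $\g_{1},\g_{2}$ depending only on $g$; in fact $\g_{2}$ may be taken arbitrarily small, e.g.\ $\g_{2}=1$.

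The main obstacle is this last step: the lower bound for the size of the Galois orbit — equivalently, for the class numbers of the CM fields involved — is the only genuinely arithmetic ingredient, everything else being bookkeeping or an application of already-established comparisons. When only Siegel's theorem is used it renders $\g_{1}$ ineffective, which is however harmless for the finiteness statement of Theorem~\ref{main_thm}.
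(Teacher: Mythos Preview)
Your approach is correct and follows a genuinely different route from the paper. The paper proceeds by passing to a bounded-degree extension $K'/K$ over which all endomorphisms are defined (Silverberg), applying R\'emond's structure theorem to obtain an isogeny $\phi:A'=\prod A_i^{e_i}\to A$ with each $A_i$ simple and having CM by a maximal order, bounding $\deg\phi$ via Gaudron--R\'emond, bounding each $h_F(A_i)$ by a power of $|\mathrm{Disc}(E_i)|$ via Tsimerman's Corollary~3.3, bounding the discriminants in terms of $[K':\Q]$ via Tsimerman's Theorem~4.2, and finally transporting the bound back to $A$ through Faltings' isogeny lemma. Your route bypasses the decomposition into simple factors and the isogeny machinery entirely: after Zarhin's trick you work directly on the coarse moduli space, combining the Faltings height comparison with the Pila--Tsimerman height bound for CM points and Tsimerman's Galois-orbit lower bound --- precisely the pair of inputs the paper itself invokes later, in Lemma~\ref{lemma:ht_tau_Ztau}. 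Both arguments ultimately rest on the same deep arithmetic (the averaged Colmez formula and Tsimerman's orbit bound); yours is more streamlined, while the paper's makes the simple-factor structure explicit and stays closer to the Faltings height throughout. One minor quibble: the appeal to Bost's lower bound is out of place --- Bost gives $h_F\geq -c(g)$, which does not contribute to an upper bound; the estimate $h_{\overline{\A_{g,\mathbf{1}}}, L}([A])\ll_{g,\eps}\Delta^{\eps}$ already follows from Colmez and Pila--Tsimerman without it.
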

\begin{proof}
By \cite{Sil92}, there exists a finite extension $K'/K$ of degree at most $2 \cdot (9g)^{4g}$ such that all endomorphisms of $A$ are defined over $K'$. Théorème~6.1 of \cite{Rem17} (see also the remarks following its proof) then guarantees the existence of abelian varieties $A_1,\ldots,A_t$ defined over $K'$ and positive integers $e_1,\ldots,e_t$  with the following properties: each $A_i$ is $\overline{K'}$-simple, the $A_i$ are pairwise non-isogenous over $\overline{K'}$, $\en_{K'}(A_i)=\en_{\overline{K'}}(A_i)$ is a maximal order in $\en^0_{\overline{K'}}(A_i)$, and $A$ is $\overline{K'}$-isogenous to $A':=\prod_{i=1}^{t} A_i^{e_i}$. So, there exists an isogeny $\phi: A' \rightarrow A$ with
$$\deg \phi \leq \g_3 \cdot \max\!\pg{h_F(A'), [K':\Q]}^{\g_4},$$
where $\g_3, \g_4$ are positive constants depending only on $g$, by \cite[Théorème 1.4]{GR14b}.

Since $A$ has CM, each $A_i$ has CM as well, and we may consider the corresponding primitive CM types $(E_i,\Phi_i)$. Note that $\en_{K'}(A_i)=\O_{E_i}$ by construction. Then, by Corollary 3.3 of \cite{Tsi18}, there is a positive constant $\g_{5}$ depending only on $g$ such that $h_F(A_i)\leq \abs{\mathrm{Disc}(E_i)}^{\g_{5}}$. In addition, Theorem 4.2 of the same article yields positive constants $\g_6,\g_7$, again depending only on $g$, such that $\abs{\mathrm{Disc}(E_i)} \leq \g_{6} \cdot \pq{K':\Q}^{\g_{7}}$. 
Combining these two estimates gives 
$$h_F(A_i) \leq \g_8 \cdot \pq{K':\Q}^{\g_{9}}$$
for some positive constants $\g_8, \g_9$. Since for abelian varieties $A$ and $B$ over a number field one has $h_F(A\times B)=h_F(A)+h_F(B)$, it follows that
$$h_F(A')=h_F\pt{\prod_{i=1}^{t} A_i^{e_i}}=\sum_{i=1}^{t} e_i \cdot h_F(A_i)\leq \g_{10} \cdot \pq{K':\Q}^{\g_{9}}.$$ 
Applying \cite[Lemma 5]{Fal83}, we deduce
\begin{align*}
h_F(A) 	&\leq h_F(A')+\dfrac{1}{2}\log(\deg \phi)\\
		&\leq h_F(A') + \dfrac{\g_4}{2}\log\max\!\pg{h_F(A'), [K':\Q]} + \g_{11}\\
		&\leq \g_{12} \cdot \pq{K':\Q}^{\g_{9}}.
\end{align*}
where $\g_{12}$ is a positive constant depending only on $g$.

Finally, recalling that $[K':K] \leq 2 \cdot (9g)^{4g}$, we obtain
$$h_F(A) \leq \g_{12} \cdot \pq{K':\Q}^{\g_{9}} \leq \g_{13} \cdot \pq{K:\Q}^{\g_{14}}$$
for suitable positive constants $\g_{13},\g_{14}$ depending only on $g$.
\end{proof}

\begin{lemma}\label{lemma:small_Ros_nm}
Let $A$ be an abelian variety of dimension $g$ and let $K$ be a number field such that $A$ and all endomorphisms of $A$ are defined over $K$. Let also $L$ be an ample symmetric line bundle on $A$ and $P \in A(K)$ be a point contained in the kernel of a non-zero endomorphism of $A$. Then, there exist explicit positive constants $\g_{15}, \g_{16}, \g_{17}$ depending only on $g$, and a non-zero endomorphism $F \in \en(A)=\en_{K}(A)$ such that $F(P)=O$ and 
$$\nm{F}_{Ros}\leq \g_{15} \cdot \pq{K:\Q}^{\g_{16}} \cdot \max\!\pg{\widehat{h}_{A,L}(P),1}^{2g^2} \cdot \max \pg{1, h_F(A),\log \pq{K:\Q}}^{\g_{17}}.$$
\end{lemma}
\begin{proof}
If $P \in A(K)$ is a torsion point, then we can take $F=[\#A(K)_{\mathrm{tors}}]$, whose Rosati norm is $\# A(K)_{\mathrm{tors}}$. We can bound this quantity using \cite[Théorème 1.2]{GR25}, which yields
$$\nm{F}_{Ros}\leq (6g)^{8g} \cdot \pq{K:\Q}^g \cdot \max \pg{1, h_F(A),\log \pq{K:\Q}}^{g},$$
so that it suffices to choose $\g_{15}\geq (6g)^{8g}$ and $\g_{16}, \g_{17}\geq g$. This will be true for the choice of the constants that appear at the end of this proof.

If $P$ is not torsion, we consider the evaluation map
\begin{equation*}
\setlength{\arraycolsep}{0pt}
\renewcommand{\arraystretch}{1.2}
  \begin{array}{ c c c c }
    \epsilon_P :& {} \en(A) & {} \longrightarrow {} & A(K)/A(K)_{\mathrm{tors}} \\
     		   &{} f      & {} \longmapsto {} & f(P)
  \end{array}.
\end{equation*}
We regard $\epsilon_P$ as a homomorphism between the Euclidean lattices $\Lambda_1=\en(A)$ endowed with the Rosati norm $\nm{f}_{Ros}=\sqrt{\tr(\r_r(\RI{f}f))}$, where the Rosati involution is the one defined by the line bundle $L$, and $\Lambda_2=A(K)/A(K)_{\mathrm{tors}}$ with the Néron-Tate norm $\nm{Q}_{NT}= \sqrt{\widehat{h}_{A,L}(Q)}$ (see Example \ref{example:euclidean_lattices}). Since $P$ lies in the kernel of a non-zero endomorphism of $A$ by hypothesis, the kernel of $\epsilon_P$ is non-trivial, and since $P$ is not torsion, $\epsilon_P$ is not the zero map. We therefore seek a non-zero element of $\ker(\epsilon_P)$ of small Rosati norm. 

By Lemma 3.1 of \cite{GR14} we have that there exists $f_0 \in \ker(\epsilon_P) \setminus \pg{0}$ such that
$$\nm{f_0}_{Ros} \leq \sqrt{\rk \ker(\epsilon_P)} \cdot \mathrm{Vol}(\ker(\epsilon_P))^{\frac{1}{\rk \ker(\epsilon_P)}}.$$
Moreover, $\sqrt{\rk \ker(\epsilon_P)} \leq \sqrt{\rk \en(A)} \leq 2g$, where the last inequality follows from \cite[Proposition 1.2.2]{BL04}, so that
$$\nm{f_0}_{Ros} \leq 2g \cdot \max\!\pg{1,\mathrm{Vol}(\ker(\epsilon_P))}$$
since, by assumption, $\ker(\epsilon_P)$ is non-trivial and thus $\rk \ker(\epsilon_P) \geq 1$. 

Next, consider the lattice $\Lambda_3=\Lambda_1/\ker(\epsilon_P)$ with the quotient norm $\nm{\cdot}_3$. The map $\epsilon_P$ then induces an injective homomorphism 
$$\overline{\epsilon_P}: \Lambda_3 \to \mathrm{im}(\epsilon_P) \subseteq \Lambda_2.$$
For every $\phi \in \en(A)$, we have
$$\nm{\epsilon_P(\phi)}_{NT}= \nm{\overline{\epsilon_P}(\overline{\phi})}_{NT}\leq \nm{\overline{\epsilon_P}}_{op} \cdot \nm{\overline{\phi}}_3$$
where $\nm{\cdot}_{op}$ denotes the operator norm.

Applying Lemma 3.1 of \cite{GR14} once again, there exists a non-zero element $\overline{\psi}\in\Lambda_3$ such that
$$\nm{\overline{\psi}}_{3} \leq \sqrt{\rk \Lambda_3} \cdot \mathrm{Vol}(\Lambda_3)^{\frac{1}{\rk \Lambda_3}}.$$
Observe that, by \cite[Proposition 5]{Gau20}, $\mathrm{Vol}(\Lambda_3)=\mathrm{Vol}(\Lambda_1)/\mathrm{Vol}(\ker(\epsilon_P))$.

Since $\overline{\psi}\neq 0$ in $\Lambda_1/\ker(\epsilon_P)$, any lift $\psi\in\Lambda_1$ is not contained in $\ker(\epsilon_P)$, thus $\epsilon_P(\psi)=\psi(P)\neq 0$. Hence, recalling the notation $\l_1(\Lambda)$ for the first minimum of a Euclidean lattice $\Lambda$, by 
\begin{align*}
\l_1(\Lambda_2)
&\leq \nm{\epsilon_P(\psi)}_{NT}= \nm{\overline{\epsilon_P}(\overline{\psi})}_{NT}\\
&\leq \nm{\overline{\epsilon_P}}_{op}\,\nm{\overline{\psi}}_3\\
&\leq \nm{\overline{\epsilon_P}}_{op}\,\sqrt{\rk \Lambda_3}\,
\left(\frac{\mathrm{Vol}(\Lambda_1)}{\mathrm{Vol}(\ker(\epsilon_P))}\right)^{1/\rk \Lambda_3}.
\end{align*}
which implies
$$\mathrm{Vol}(\ker (\epsilon_P))\leq (\nm{\overline{\epsilon_P}}_{op} \cdot \sqrt{\rk \Lambda_3})^{\rk \Lambda_3}  \cdot \dfrac{\mathrm{Vol}(\Lambda_1)}{\l_1(\Lambda_2)^{\rk \Lambda_3}}\leq (\nm{\overline{\epsilon_P}}_{op} \cdot 2g)^{4g^2}\cdot \dfrac{\mathrm{Vol}(\Lambda_1)}{\l_1(\Lambda_2)^{\rk \Lambda_3}}$$
since $\rk \Lambda_3\leq \rk \Lambda_1 \leq 4g^2$.

We shall estimate separately the operator norm $\nm{\overline{\epsilon_P}}_{op}$, $\mathrm{Vol}(\Lambda_1)$, and the first minimum $\l_1(\Lambda_2)$.

By Théorème 1.9(3) of \cite{GR23}, we have
$$\mathrm{Vol}(\Lambda_1)=\mathrm{Vol}(\en_{\overline{K}}(A))\leq (7g)^{8g^5}\pt{[K:\Q]\max\!\pg{1, h_F(A),\log [K:\Q]}}^{g^3}.$$
Furthermore, by Théorème 1.3 of \cite{GR25},
\begin{align*}
\l_1(\Lambda_2)=\l_1(A(K)/A(K)_{\mathrm{tors}})&=\min\pg{\sqrt{\widehat{h}_{A,L}(Q)}: Q \in A(K)\setminus A(K)_{\mathrm{tors}}}\\
&\geq \dfrac{1}{(6g)^{8g}\cdot [K:\Q]^{g+1/2}\max\!\pg{1, h_F(A),\log [K:\Q]}^{g}}.
\end{align*}
Finally, by Lemma \ref{lemma:operator_norms_equal}, we have that
$$\nm{\overline{\epsilon_P}}_{op}= \nm{\epsilon_P}_{op}=\sup\limits_{\varphi \in \Lambda_1 \setminus \pg{0}}\pt{\dfrac{\nm{\epsilon_P(\varphi)}_2}{\nm{\varphi}_1}}=\sup\limits_{\varphi \in \en(A) \setminus \pg{0}}\pt{\dfrac{\sqrt{\widehat{h}_{A,L}(\varphi(P))}}{\nm{\varphi}_{Ros}}}.$$
However, by Corollary \ref{cor_height_trace}, for every $\varphi \in \en(A)$
$$\widehat{h}_{A,L}(\varphi(P)) \leq \tr(\r_a(\RI{\f}\f)) \cdot \widehat{h}_{A,L}(P)=\dfrac{1}{2}\nm{\f}^2_{Ros}\cdot \widehat{h}_{A,L}(P)$$
since by Lemma 5.1.4 and Corollary 5.1.3 (b) of \cite{BL04}, we have $\mathrm{tr}(\rho_r(\RI{\f}\f)) = 2\,\mathrm{tr}(\rho_a(\RI{\f}\f))$. This implies
$$\nm{\overline{\epsilon_P}}_{op}= \nm{\epsilon_P}_{op}=\sup\limits_{\varphi \in \en^0(A) \setminus \pg{0}}\pt{\dfrac{\sqrt{\widehat{h}_{A,L}(\varphi(P))}}{\nm{\varphi}_{Ros}}}\leq \dfrac{\sqrt{\widehat{h}_{A,L}(P)}}{\sqrt{2}}.$$
Therefore, combining the above estimates, we obtain
\begin{align*}
\nm{f_0}_{Ros} &\leq 2g \cdot \max\!\pg{1,\mathrm{Vol}(\ker(\epsilon_P))}\\
&\leq 2g \cdot (\nm{\overline{\epsilon_P}}_{op} \cdot 2g)^{4g^2}\cdot \dfrac{\mathrm{Vol}(\Lambda_1)}{\l_1(\Lambda_2)^{\rk \Lambda_3}}\\
&\leq \g_{18} \cdot \pq{K:\Q}^{2g^2+5g^3} \cdot \max\!\pg{\widehat{h}_{A,L}(P),1}^{2g^2} \cdot \max\!\pg{1, h_F(A),\log \pq{K:\Q}}^{5g^3}
\end{align*} 
where $\g_{18}=(2^{2g^2+1}\cdot 6^{32g^3} \cdot 7^{8g^5}) \cdot g^{1+4g^2+32g^3+8g^5}$. At this stage, $f_0\in\ker(\epsilon_P)$ only implies that $f_0(P) \in A(K)_{\mathrm{tors}}$. In order to have a $F \in \en(A)$ such that $F(P)=O$, it is enough to compose $f_0$ with the multiplication-by-$\#A(K)_{\mathrm{tors}}$ map. Thus, using again \cite[Théorème 1.2]{GR25}, we get
\begin{align*}
\nm{F}_{Ros}&=\nm{[\# A(K)_{\mathrm{tors}}] \circ f_0}_{Ros}=\# A(K)_{\mathrm{tors}} \cdot \nm{f_0}_{Ros}\\
&\leq \g_{15} \cdot \pq{K:\Q}^{g+2g^2+5g^3} \cdot \max\!\pg{\widehat{h}_{A,L}(P),1}^{2g^2} \cdot \max\!\pg{1, h_F(A),\log \pq{K:\Q}}^{g+5g^3}
\end{align*}
where $\g_{15}=6^{8g} \cdot g^{8g} \cdot \g_{18}$. This concludes the proof upon taking $\g_{16}=g+2g^2+5g^3$ and $\g_{17}=g+5g^3$.
\end{proof}

\begin{remark}
Note that a similar result could be obtained by fixing a particular basis of the additive group $\en_{\overline{K}}(A)$ using \cite[Lemma 5.1]{MW94}, and then showing that there exists a $\Z$-linear combination of the basis elements with small coefficients that vanishes at $P$, by applying \cite[Proposition 6.1]{BC20}. For more details, see Lemma 4.27 of the author's PhD thesis \cite{Fer25}. Although this alternative approach is somewhat shorter, the argument presented above has the advantage of yielding completely explicit constants.
\end{remark}

\begin{corollary}\label{cor:small_endom}
If $P_0 \in \c'$, then there exists a non-zero endomorphism $f_{_{P_0}} \in \en(\AA_{\pi(P_0)})$ such that $f_{_{P_0}}\!\pt{P_0}=O_{\pi(P_0)}$ and
$$\nm{f_{_{P_0}}}_{Ros} \leq \g_{19} \pq{k(P_0):\Q}^{\g_{20}}$$
for some positive constants $\g_{19}, \g_{20}$.
\end{corollary}
\begin{proof}
Since $\AA_{\pi(P_0)}$ is defined over $k(\pi(P_0))$, \cite{Sil92} implies that there exists a finite extension $K/k(\pi(P_0))$ of degree at most $2(9g)^{4g}$ over which all endomorphisms of $\AA_{\pi(P_0)}$ are defined.
Since $k(\pi(P_0)) \subseteq k(P_0)$, we have
$$[K:\Q]=[K:k(\pi(P_0))]\,[k(\pi(P_0)):\Q]\leq 2(9g)^{4g}[k(P_0):\Q].$$
Applying Lemma \ref{lemma:small_Ros_nm}, we obtain a non-zero endomorphism $f_{_{P_0}}\!\in\en_K(\AA_{\pi(P_0)})$ such that $f_{_{P_0}}\!(P_0)=O_{\pi(P_0)}$ and
$$ \nm{f_{_{P_0}}}_{Ros}\leq \g_{15}[K:\Q]^{\g_{16}}\max\!\left\{\widehat{h}_{\AA_{\pi(P_0)},\mathcal{L}_{\pi(P_0)}}(P_0),1\right\}^{2g^2}\max\!\left\{1,h_F(\AA_{\pi(P_0)}),\log[K:\Q]\right\}^{\g_{17}}.$$
Finally, Proposition \ref{prop:can_height_points} and Lemma \ref{lemma_Falt_height} show that $\widehat{h}_{\AA_{\pi(P_0)},\mathcal{L}_{\pi(P_0)}}(P_0)$ and $h_F(\AA_{\pi(P_0)})$ are bounded above by a constant times a power of $[K:\Q]$.

Since $[K:\Q]\leq 2(9g)^{4g}[k(P_0):\Q]$, it follows that every factor in the bound of Lemma \ref{lemma:small_Ros_nm} is polynomially bounded from above in terms of $[k(P_0):\Q]$.
\end{proof}

Now, let $P_0 \in \c'$ and choose $\tau_{P_0}\!\in u_b^{-1}(\pi(P_0)) \cap \mathfrak{F}_{\Gamma}$, where $\Gamma=\Gamma_{\mathbf{1},3}$, $\mathfrak{F}_{\Gamma}$ and the uniformization map $u_b:\H_g \to \A_{g, \mathbf{1},3}(\C)$ were introduced in Section~\ref{sec_moduli}. The set $u_b^{-1}(\pi(P_0)) \cap \mathfrak{F}_{\Gamma}$ contains a single element unless some preimage of $\pi(P_0)$ lies on the boundary of $\mathfrak{F}_{\Gamma}$, in which case it contains $O(g)$ elements. 

Let $Z_{P_0} \in \mathfrak{F}_g$ be a point in the $\mathrm{Sp}_{2g}(\Z)$-orbit of $\tau_{P_0}$. Then one can choose a symplectic basis of the period lattice of $\AA_{\pi(P_0)}$ such that the corresponding period matrix is $(Z_{P_0},\mathbf{1})$, once the level structure is disregarded.\footnote{If the level structure is taken into account, then one can choose a symplectic basis so that the period matrix is $(\tau_{P_0}, \mathbf{1})$.} In the sequel, we fix this symplectic basis, and all analytic and rational representations of endomorphisms of $\AA_{\pi(P_0)}$ will be defined with respect to it.

Since $\AA_{\pi(P_0)}$ has CM, it is known (see for instance Section~6.2 of \cite{Tsi18} or \cite{Shi92}) that 
$$\pq{\Q(Z_{P_0}):\Q}\leq 2g.$$
Moreover, if we write $\tau_{P_0}=\s \cdot Z_{P_0}$ for some $\s \in \mathrm{Sp}_{2g}(\Z)$, then we easily see that $\Q(\tau_{P_0}) \subseteq \Q(Z_{P_0})$, since $\s$ has integer entries.

We now establish bounds for the heights of $\tau_{P_0}$ and $Z_{P_0}$.
\begin{lemma}\label{lemma:ht_tau_Ztau}
Let $P_0 \in \c'$ and let $\tau_{P_0}$ and $Z_{P_0}$ be as above. Then, there are positive constants $\g_{21}$, $\g_{22}$, $\g_{23}$, $\g_{24}$, such that
$H_{\max}(Z_{P_0})\leq \g_{21} \cdot [k(P_0):\Q]^{\g_{22}}$ and $H_{\max}(\tau_{P_0}) \leq \g_{23} \cdot [k(P_0):\Q]^{\g_{24}}$, where $H_{\max}$ is the entry-wise height on $\mat_g(\Qal)$ defined in Section \ref{sect:prelim_heights}.
\end{lemma}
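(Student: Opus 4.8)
The plan is to bound $H_{\max}(Z_{P_0})$ first and then transfer the estimate to $\tau_{P_0}$. As recalled just before the statement, the CM hypothesis gives $\pq{\Q(Z_{P_0}):\Q}\le 2g$, $\Q(\tau_{P_0})\subseteq\Q(Z_{P_0})$, and $\tau_{P_0}$, $Z_{P_0}$ lie in the same $\mathrm{Sp}_{2g}(\Z)$-orbit with $Z_{P_0}\in\mathfrak F_g$ and $\tau_{P_0}\in\mathfrak F_{\Gamma}=\bigcup_{i=1}^n\sigma_i\mathfrak F_g$. Since $\mathfrak F_g$ is a fundamental domain for $\mathrm{Sp}_{2g}(\Z)$, we may write $\tau_{P_0}=(AZ_{P_0}+B)(CZ_{P_0}+D)^{-1}$ where $\begin{psmallmatrix}A&B\\C&D\end{psmallmatrix}$ is one of the fixed coset representatives $\sigma_1,\dots,\sigma_n$, whose entries are bounded in terms of $g$ and the choice of representatives. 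As $CZ_{P_0}+D$ is invertible (because $\begin{psmallmatrix}A&B\\C&D\end{psmallmatrix}\in\mathrm{Sp}_{2g}(\Z)$ and $Z_{P_0}\in\H_g$), applying Proposition~\ref{prop:properties_mat_heights} to sums, products and inverses of $g\times g$ matrices shows that any bound $H_{\max}(Z_{P_0})\le\g_{32}[k(P_0):\Q]^{\g_{33}}$ forces a bound of the same shape for $H_{\max}(\tau_{P_0})$. Hence it suffices to bound $H_{\max}(Z_{P_0})$.

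For this, the key input is that a period matrix of a CM abelian variety, once reduced into the Siegel fundamental domain, has entry-wise height polynomially bounded in the discriminants of the CM fields of its simple factors — the quantitative content of the explicit analytic theory of complex multiplication (Shimura; see also Section~6.2 of \cite{Tsi18}, from which the degree bound $\pq{\Q(Z_{P_0}):\Q}\le 2g$ was quoted). I would make this explicit as in the proof of Lemma~\ref{lemma_Falt_height}: over a field $K'$ with $[K':k(\pi(P_0))]\le 2(9g)^{4g}$ the fibre $\AA_{\pi(P_0)}$ is isogenous to $A'=\prod_i A_i^{e_i}$ with each $A_i$ simple, $\en_{K'}(A_i)=\O_{E_i}$ for a CM field $E_i$; a period matrix of each $A_i$ in a symplectic basis adapted to its polarization is built from a $\Z$-basis of $\O_{E_i}$ and the CM type $\Phi_i$, hence has $H_{\max}\ll_g\abs{\mathrm{Disc}(E_i)}^{\g}$ for some $\g>0$ depending only on $g$; pulling back the resulting period matrix of $A'$ along the isogeny $\phi:A'\to\AA_{\pi(P_0)}$ multiplies it by a rational symplectic-similitude matrix whose denominators divide $\deg\phi$, and reducing the outcome $W$ into $\mathfrak F_g$ changes it by an element of $\mathrm{Sp}_{2g}(\Z)$; using $\nmi{\Re(Z_{P_0})}\le\frac12$, Minkowski reduction of $\Im(Z_{P_0})$ (which yields $\nmi{\Im(Z_{P_0})}\ll_g\det(\Im(Z_{P_0}))$ on $\mathfrak F_g$), and a suitable lower bound for the automorphy factor of the reducing element, one should keep polynomial control throughout and obtain $H_{\max}(Z_{P_0})\ll_g\bigl(\prod_i\abs{\mathrm{Disc}(E_i)}\cdot\deg\phi\bigr)^{\g'}$ for some $\g'>0$ depending only on $g$.

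To conclude, Théorème~1.4 of \cite{GR14b} bounds $\deg\phi\ll_g\max\{h_F(A'),[K':\Q]\}^{\g}$, and $h_F(A')=\sum_i e_i h_F(A_i)\ll_g[K':\Q]^{\g}$ by the estimates used in Lemma~\ref{lemma_Falt_height}, so $\deg\phi\ll_g[K':\Q]^{\g}$; moreover $\abs{\mathrm{Disc}(E_i)}\le\g_6[K':\Q]^{\g_7}$ by Theorem~4.2 of \cite{Tsi18}. Since $[K':\Q]\le 2(9g)^{4g}\,[k(\pi(P_0)):\Q]\le 2(9g)^{4g}\,[k(P_0):\Q]$ by \eqref{bound_deg_points}, these combine to give $H_{\max}(Z_{P_0})\le\g_{32}[k(P_0):\Q]^{\g_{33}}$, and the bound for $\tau_{P_0}$ then follows from the first paragraph. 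I expect the main obstacle to be the end of the second paragraph: keeping polynomial control of $\nmi{\Im(Z_{P_0})}$ and of the denominators of the entries of $Z_{P_0}$ after reducing the explicit CM period matrix $W$ into $\mathfrak F_g$, since the reducing element of $\mathrm{Sp}_{2g}(\Z)$ has a priori uncontrolled size. A cleaner alternative, if preferred, is to quote directly the polynomial height bound for CM points of $\A_g$ underlying Tsimerman's proof of the André--Oort conjecture, rather than re-derive it here.
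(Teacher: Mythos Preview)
Your transfer step from $Z_{P_0}$ to $\tau_{P_0}$ via Proposition~\ref{prop:properties_mat_heights}, using that $\tau_{P_0}=(AZ_{P_0}+B)(CZ_{P_0}+D)^{-1}$ for one of the finitely many fixed coset representatives $\sigma_1,\dots,\sigma_n$, is exactly what the paper does.

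For the bound on $H_{\max}(Z_{P_0})$, the paper takes precisely your ``cleaner alternative'': it quotes Theorem~1.3 of \cite{PT13} (Pila--Tsimerman), which bounds the entry-wise height of a CM point $Z_{P_0}\in\mathfrak F_g$ polynomially in the size of its Galois orbit $\#\bigl(\Gal(\Qal/\Q)\cdot\pi(P_0)\bigr)$, and then bounds the latter by $[k(P_0):\Q]$ via Theorem~5.2 of \cite{Tsi18}. This gives the result in two lines.

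Your main sketch, by contrast, is an attempt to re-derive the Pila--Tsimerman height bound from scratch via explicit CM period matrices and the isogeny estimates of \cite{GR14b}. The obstacle you flag is genuine: after building a controlled period matrix $W$ for $\AA_{\pi(P_0)}$, the reducing element $\gamma\in\mathrm{Sp}_{2g}(\Z)$ taking $W$ into $\mathfrak F_g$ has no a priori size bound, and controlling the archimedean size and the denominators of $Z_{P_0}=\gamma\cdot W$ simultaneously is exactly the technical core of \cite{PT13}. Your heuristic $\nmi{\Im(Z_{P_0})}\ll_g\det(\Im(Z_{P_0}))$ on $\mathfrak F_g$ is correct and helps with the archimedean side, but the denominator control still requires a separate argument. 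So your sketch is on the right track but does not close the gap; the paper sidesteps all of this by citing the result.
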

\begin{proof}
Since $\AA_{\pi(P_0)}$ has CM, $Z_{P_0}$ is a CM point in $\mathfrak{F}_g$. Thus, by Theorem~1.3 of \cite{PT13} together with Theorem~5.2 of \cite{Tsi18}, there exist positive constants $\g_{25}$, $\g_{26}$, $\g_{27}$, $\g_{28}$, depending only on $g$, such that
\begin{equation}\label{eqn_ht_ZP0}
H_{\max}(Z_{P_0}) \leq \g_{25} \cdot \#\left(\Gal(\Qal/\Q)\cdot \pi(P_0)\right)^{\g_{26}}
\leq \g_{27} \cdot [k(P_0):\Q]^{\g_{28}}.
\end{equation}
Now, take $\s=\begin{psmallmatrix} A & B\\ C & D \end{psmallmatrix} \in \mathrm{Sp}_{2g}(\Z)$ such that $\tau_{P_0}= \s \cdot Z_{P_0}=(AZ_{P_0}+B)(CZ_{P_0}+D)^{-1}$. 
Recall that the definition of $\mathfrak{F}_{\Gamma}$ (see (\ref{eqn:def_F_Gamma})) implies that we can take $\s$ to be one of the chosen representatives $\s_1, \ldots, \s_n$ for the right cosets of $\Gamma$ in $\mathrm{Sp}_{2g}(\Z)$.

Then, using Proposition \ref{prop:properties_mat_heights}, we get
\begin{align*}
H_{\max}(\tau_{P_0})&\leq g \cdot H_{\max}(AZ_{P_0}+B)^g \cdot H_{\max}\!\pt{(CZ_{P_0}+D)^{-1}}^g\\
&\ll_g H_{\max}(AZ_{P_0})^g H_{\max}(B)^g \cdot H_{\max}(CZ_{P_0})^{2g^4-g^3} H_{\max}(D)^{2g^4-g^3}\\
&\ll_g H_{\max}(A)^{g^2} H_{\max}(B)^g H_{\max}(C)^{2g^5-g^4} H_{\max}(D)^{2g^4-g^3} \cdot H_{\max}(Z_{P_0})^{2g^5-g^4+g^2}
\end{align*}
This implies that there exist a constant $\g_{29}$, depending only on $g$ and $\s$, such that 
$$H_{\max}(\tau_{P_0}) \leq \g_{29} H_{\max}(Z_{P_0})^{2g^5-g^4+g^2}.$$
Taking the maximum of all such constants over all possible choices of $\s \in \pg{\s_1, \ldots, \s_n}$, we get a constant $\g_{30}$ that depends only on $g$ and the choice of $\s_1, \ldots, \s_n$, such that 
$$H_{\max}(\tau_{P_0}) \leq \g_{30} H_{\max}(Z_{P_0})^{2g^5-g^4+g^2}.$$
Finally, substituting the bound (\ref{eqn_ht_ZP0}) for $H_{\max}(Z_{P_0})$, gives the desired bound for $H_{\max}(\tau_{P_0})$.
\end{proof}
\begin{lemma}\label{lemma_2g-ht_anal_rep}
Let $P_0 \in \c'$ and $f_{_{P_0}}$ be the endomorphism given by Corollary \ref{cor:small_endom}. 
Then, $\r_a(f_{_{P_0}}) \in \mat_{g}(\C)$ has algebraic entries and 
$H_{2g}\!\pt{\r_a(f_{_{P_0}})} \leq \g_{31} \cdot \pq{k(P_0):\Q}^{\g_{32}}$, for some positive constants $\g_{31}, \g_{32}$.
\end{lemma}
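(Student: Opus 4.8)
The plan is to reconstruct $\r_a(f_{P_0})$ from its rational representation and the period matrix, and then propagate polynomial bounds through the matrix-height inequalities of Proposition~\ref{prop:properties_mat_heights}. With respect to the fixed symplectic basis the period matrix of $\AA_{\pi(P_0)}$ is $(Z_{P_0}, \mathbf{1}_g)$ with $Z_{P_0} \in \mathfrak{F}_g$, so relation~(\ref{eqn:endom_matr_periods}) reads $\r_a(f_{P_0})\cdot(Z_{P_0},\mathbf{1}_g) = (Z_{P_0},\mathbf{1}_g)\cdot\r_r(f_{P_0})$. Comparing the block of $g$ columns coming from $\mathbf{1}_g$ gives
$$\r_a(f_{P_0}) = Z_{P_0}\cdot R' + R'',$$
where $R', R'' \in \mat_g(\Z)$ are $g\times g$ sub-blocks of the integer matrix $\r_r(f_{P_0})$, hence have entries bounded in absolute value by $\nmi{\r_r(f_{P_0})}$. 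In particular every entry of $\r_a(f_{P_0})$ lies in $\Q(Z_{P_0})$, a number field of degree at most $2g$ over $\Q$ (recall $\pq{\Q(Z_{P_0}):\Q}\le 2g$), so $\r_a(f_{P_0})$ has algebraic entries of degree at most $2g$ and $H_{2g}(\r_a(f_{P_0}))$ is finite; it remains to bound it by a fixed power of $\pq{k(P_0):\Q}$.

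The core step is to bound $\nmi{\r_r(f_{P_0})}$. Writing $f_{P_0} = \sum_{i=1}^{N} a_i\f_i$ with $N\le 4g^2$ and $\max_i\abs{a_i}\le \g_{19}\pq{k(P_0):\Q}^{\g_{20}}$ by Lemma~\ref{lemma_coeff_endo}, the triangle inequality reduces matters to bounding each $\nmi{\r_r(\f_i)}$. Here I would apply Proposition~\ref{bound_rat_rep} to $A=\AA_{\pi(P_0)}$ with its principal polarization (so $\mathbf{D}=\mathbf{1}_g$, $d=1$), $\Gamma=\mathrm{Sp}_{2g}(\Z)$ and $\tau=Z_{P_0}\in\mathfrak{F}_g$ — the Rosati involution used to select the $\f_i$ is precisely the one attached to this polarization — obtaining
$$\nmi{\r_r(\f_i)} \le c_2(\AA_{\pi(P_0)})\cdot\sqrt{\tr\pt{\r_r(\RI{\f_i}\f_i)}}, \qquad c_2(\AA_{\pi(P_0)}) = \d(g,\mathfrak{F}_g)\cdot\max\pg{1,\nmi{\Im(Z_{P_0})}}^{2g^3+3g^2+2g+1}.$$
Now $\tr\pt{\r_r(\RI{\f_i}\f_i)}\ll_g \pq{k(P_0):\Q}^{\g_{18}}$ by~(\ref{bound_basis_end}) and~(\ref{bound_deg_points}); and since each entry of $Z_{P_0}$ is algebraic of degree $\le 2g$, Lemma~\ref{lemma:bounds_hpoly} gives $\abs{(Z_{P_0})_{jk}}\le \sqrt{2g+1}\,H_{2g}((Z_{P_0})_{jk})\le \sqrt{2g+1}\cdot 2^{2g}\,H_{\mathrm{max}}(Z_{P_0})^{2g}$, hence $\nmi{\Im(Z_{P_0})}\le\nmi{Z_{P_0}}\ll_g H_{\mathrm{max}}(Z_{P_0})^{2g}\ll_g \pq{k(P_0):\Q}^{2g\g_{33}}$ by Lemma~\ref{lemma:ht_tau_Ztau}. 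Combining these yields $\nmi{\r_r(\f_i)}$, hence $\nmi{\r_r(f_{P_0})}$, bounded by a constant times a fixed power of $\pq{k(P_0):\Q}$.

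Finally, $R'$ and $R''$ have integer entries bounded by $\nmi{\r_r(f_{P_0})}$, so $H_{\mathrm{max}}(R'),H_{\mathrm{max}}(R'')\ll \pq{k(P_0):\Q}^{c}$ for some constant $c$ (for $n\in\Z$ one has $H(n)=\max\pg{1,\abs{n}}$), and Proposition~\ref{prop:properties_mat_heights}(2)--(3) together with Lemma~\ref{lemma:ht_tau_Ztau} give
$$H_{\mathrm{max}}(\r_a(f_{P_0})) = H_{\mathrm{max}}(Z_{P_0}R'+R'') \le 2g\,H_{\mathrm{max}}(Z_{P_0})^{g}H_{\mathrm{max}}(R')^{g}H_{\mathrm{max}}(R'')\ll \pq{k(P_0):\Q}^{c'}.$$
Applying Lemma~\ref{lemma:bounds_hpoly} once more entry-wise yields $H_{2g}(\r_a(f_{P_0}))=\max_{j,k}H_{2g}((\r_a(f_{P_0}))_{jk})\le 2^{2g}H_{\mathrm{max}}(\r_a(f_{P_0}))^{2g}\le \g_{42}\pq{k(P_0):\Q}^{\g_{43}}$, as required. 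The step I expect to be the crux — used twice above — is that the imaginary part (indeed the absolute value) of an algebraic number of degree $\le 2g$ is controlled by a fixed power of its Weil height, so that the ineffective equivalence of norms hidden in Proposition~\ref{bound_rat_rep} can be fed the effective input from Lemma~\ref{lemma:ht_tau_Ztau}; everything else is routine propagation of polynomial bounds through Proposition~\ref{prop:properties_mat_heights}, the only real work being the bookkeeping of exponents.
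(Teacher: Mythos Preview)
Your proof is correct and follows essentially the same approach as the paper: both extract $\r_a(f_{P_0})=Z_{P_0}M_2+M_4$ from (\ref{eqn:endom_matr_periods}), bound $\nmi{\r_r(f_{P_0})}$ via the triangle inequality combined with Lemma~\ref{lemma_coeff_endo}, Proposition~\ref{bound_rat_rep} and (\ref{bound_basis_end}), control $\nmi{\Im(Z_{P_0})}$ through Lemmas~\ref{lemma:bounds_hpoly} and~\ref{lemma:ht_tau_Ztau}, and finish with Proposition~\ref{prop:properties_mat_heights} and Lemma~\ref{lemma:bounds_hpoly}. Your choice to apply Proposition~\ref{bound_rat_rep} directly with $\Gamma=\mathrm{Sp}_{2g}(\Z)$ and $\tau=Z_{P_0}\in\mathfrak{F}_g$ (so that $Z_\tau=\tau$) is a harmless simplification of the paper's phrasing, which keeps the general $\mathfrak{F}_\Gamma$ in the statement but ends up with the same $\nmi{\Im(Z_{P_0})}$ factor anyway.
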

\begin{proof}
Write 
$$\r_r(f_{_{P_0}})=\begin{pmatrix} M_1 & M_2\\ M_3 & M_4 \end{pmatrix},$$
where $M_{\ell}=\pt{m^{(\ell)}_{i,j}}_{1 \leq i,j \leq g} \in \mat_g(\Z)$ for $\ell=1,2,3,4$. 

Then, by Equation (\ref{eqn:endom_matr_periods}), $\r_a(f_{_{P_0}})=Z_{P_0} M_2 + M_4$, as $\AA_{\pi(P_0)}$ is principally polarized by assumption. This proves that $\r_a(f_{_{P_0}}) \in \mat_{g}(\Q(Z_{P_0})) \subseteq \mat_g(\Qal)$. Note also that all entries of $\r_a(f_{_{P_0}})$ have degree at most $2g$.

Hence, Proposition \ref{prop:properties_mat_heights} implies
\begin{align*}
H_{\max}(\r_a(f_{_{P_0}}))&\leq 2H_{\max}(Z_{P_0}M_2)H_{\max}(M_4) \\
&\leq 2g H_{\max}(Z_{P_0})^g H_{\max}(M_2)^g H_{\max}(M_4)\\
&\leq 2g \nmi{\r_r(f_{_{P_0}})}^{g+1} H_{\max}(Z_{P_0})^g
\end{align*}
and, by Lemma \ref{lemma:bounds_H_hpoly},
$$H_{2g}(\r_a(f_{_{P_0}}))\leq 2^{2g} \cdot H_{\max}(\r_a(f_{_{P_0}}))^{2g} \leq (4g)^{2g} \cdot \nmi{\r_r(f_{_{P_0}})}^{2g(g+1)} \cdot H_{\max}(Z_{P_0})^{2g^2}.$$
Furthermore, by Proposition \ref{bound_rat_rep}, there are positive constants $\g_{33}, \g_{34}$ such that
$$\nmi{\r_r(f_{_{P_0}})} \leq \g_{33} \cdot \max\!\pg{1, \nmi{\Im(Z_{P_0})}}^{\g_{34}} \cdot \nm{f_{_{P_0}}}_{Ros}.$$
We then use Lemma \ref{lemma:bound_mod_hpoly} and Lemma \ref{lemma:ht_tau_Ztau} to get
\begin{equation}\label{eqn:nmi_Im_ZP0}
\begin{aligned}
\nmi{\Im(Z_{P_0})} &\leq \nmi{Z_{P_0}} \leq \sqrt{2g+1} \cdot H_{2g}(Z_{P_0})\\
&\leq 2^{2g} \sqrt{2g+1} \cdot  H_{\max}(Z_{P_0})^{2g}\\
&\leq \g_{35} \cdot \pq{k(P_0):\Q}^{\g_{36}}
\end{aligned}
\end{equation}
which, combined with the definition of $f_{_{P_0}}$ in Corollary \ref{cor:small_endom}, implies that 
$$\nmi{\r_r(f_{_{P_0}})} \leq \g_{37} \cdot \pq{k(P_0):\Q}^{\g_{38}}.$$
Finally, we get
\begin{align*}
H_{2g}(\r_a(f_{_{P_0}}))&\leq (4g)^{2g} \cdot \nmi{\r_r(f_{_{P_0}})}^{2g(g+1)} \cdot H_{\max}(Z_{P_0})^{2g^2}\\
&\leq (4g)^{2g} \cdot \g_{39} \cdot \pq{k(P_0):\Q}^{\g_{40}} \cdot \g_{21}^{2g^2} \cdot [k(P_0):\Q]^{2g^2 \g_{22}}\\
&\leq \g_{41} \cdot \pq{k(P_0):\Q}^{\g_{42}}
\end{align*}
by Lemma \ref{lemma:ht_tau_Ztau}.
\end{proof}

\begin{remark}
It is possible to deduce an upper bound for $\nmi{Z_{P_0}}$ similar to the one given in (\ref{eqn:nmi_Im_ZP0}), by using Proposition C.2 and Equation (C.8) of \cite{Bos96} and Lemma \ref{lemma_Falt_height}.
\end{remark}

\begin{lemma}\label{lemma:bound_det_Im_tauP0}
Let $P_0 \in \c'$ and let $\tau_{P_0}$ be as above. Then, there are positive constants $\g_{43}$, $\g_{44}$ such that 
$$\det(\Im(\tau_{P_0}))\geq \dfrac{\g_{43}}{[k(P_0):\Q]^{\g_{44}}}.$$
\end{lemma}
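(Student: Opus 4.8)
The plan is to deduce this lower bound directly from the $\mathfrak{F}_{\Gamma}$-estimate of Proposition~\ref{prop:prop_F_Gamma} together with the entry-wise height bound of Lemma~\ref{lemma:ht_tau_Ztau}, exploiting only the fact that $Z_{P_0}$, being a CM point, has coordinates of degree at most $2g$.

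First I would apply Proposition~\ref{prop:prop_F_Gamma}(c) with $\tau=\tau_{P_0}\in\mathfrak{F}_{\Gamma}$ (where $\Gamma=\Gamma_{\mathbf{1},3}$) and $Z_{\tau}=Z_{P_0}\in\mathfrak{F}_g$, which lies in the same $\mathrm{Sp}_{2g}(\Z)$-orbit by construction. This yields
$$\det(\Im(\tau_{P_0}))\geq \frac{\d_3}{\max\pg{1,\nmi{\Im(Z_{P_0})}}^{2g}},$$
with $\d_3>0$ depending only on $g$ and the chosen representatives $\s_1,\ldots,\s_n$ of the right cosets of $\Gamma$ in $\mathrm{Sp}_{2g}(\Z)$; recall that the matrix $\s$ with $\tau_{P_0}=\s\cdot Z_{P_0}$ may be taken among these finitely many representatives, so this constant is admissible. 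It then remains to bound $\nmi{\Im(Z_{P_0})}$ from above by a power of $[k(P_0):\Q]$.

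For this I would use $\nmi{\Im(Z_{P_0})}\leq\nmi{Z_{P_0}}$ and recall that $[\Q(Z_{P_0}):\Q]\leq 2g$, so every entry of $Z_{P_0}$ is algebraic of degree at most $2g$; Lemma~\ref{lemma:bounds_hpoly} then gives $\nmi{Z_{P_0}}\leq\sqrt{2g+1}\,H_{2g}(Z_{P_0})\leq 2^{2g}\sqrt{2g+1}\,H_{\max}(Z_{P_0})^{2g}$, which is precisely the chain of inequalities already recorded in \eqref{eqn:nmi_Im_ZP0}. Inserting the bound $H_{\max}(Z_{P_0})\leq\g_{32}\,[k(P_0):\Q]^{\g_{33}}$ from Lemma~\ref{lemma:ht_tau_Ztau} gives $\nmi{\Im(Z_{P_0})}^{2g}\ll [k(P_0):\Q]^{4g^2\g_{33}}$, and substituting into the displayed inequality produces $\det(\Im(\tau_{P_0}))\geq\g_{54}/[k(P_0):\Q]^{\g_{55}}$ for suitable positive $\g_{54},\g_{55}$ (one may take $\g_{55}=4g^2\g_{33}$).

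I do not expect a genuine obstacle here: the statement is essentially a bookkeeping consequence of results already established, and the only point requiring care — that $\s$ ranges over finitely many fixed coset representatives, so that all implied constants stay uniform — has already been handled inside Proposition~\ref{prop:prop_F_Gamma}. As an alternative that avoids quoting that proposition, the same conclusion follows directly from the identity $\det(\Im(\tau_{P_0}))=\det(\Im(Z_{P_0}))/\abs{\det(CZ_{P_0}+D)}^2$ of \eqref{eqn:Im_transf_form}: one bounds the numerator below by $(\sqrt{3}/2)^{g^2}$ using Lemma~\ref{lemma:prop_F_g}(b), and the denominator above using Lemma~\ref{lemma:Had_ineq} and the height bound for $Z_{P_0}$.
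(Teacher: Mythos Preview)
Your proposal is correct and follows exactly the paper's own argument: apply Proposition~\ref{prop:prop_F_Gamma}(c) to reduce to bounding $\nmi{\Im(Z_{P_0})}$, then invoke the chain \eqref{eqn:nmi_Im_ZP0} (via Lemma~\ref{lemma:bounds_hpoly} and Lemma~\ref{lemma:ht_tau_Ztau}) to conclude. The alternative you sketch at the end is just the direct unwinding of Proposition~\ref{prop:prop_F_Gamma}(c) and is equally valid.
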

\begin{proof}
By Proposition \ref{prop:prop_F_Gamma}, we have that $\det(\Im(\tau_{P_0}))\geq \d_3 \max\!\pg{1, \nmi{\Im(Z_{P_0})}}^{-2g}$. Hence, (\ref{eqn:nmi_Im_ZP0}) implies that
$$\det(\Im(\tau_{P_0}))\geq \dfrac{\d_3}{\max\!\pg{1, \nmi{\Im(Z_{P_0})}}^{2g}} \geq \dfrac{\d_3}{\g_{35}^{2g} \cdot [k(P_0):\Q]^{2g\g_{36}}}$$
which gives the desired bound.
\end{proof}

\section{Proof of Theorem \ref{main_thm}}
We need to establish the finiteness of the set $\c'$, introduced at the beginning of the previous section.

Let $P_0 \in \c'$ and let $\s \in \Gal(\overline{k}/k)$. We aim to show that $\s(P_0) \in \c'$. 

Since the abelian varieties $\AA_{\pi(\s(P_0))}$ and $\AA_{\pi(P_0)}$ have isomorphic endomorphism rings, it follows that both are CM abelian varieties. Moreover, the action of $\s$ sends subgroups of $\AA_{\pi(P_0)}$ to subgroups of $\AA_{\pi(\s(P_0))}$, preserving their dimensions. Consequently, if $P_0$ is contained in a proper algebraic subgroup of $\AA_{\pi(P_0)}$, then $\s(P_0)$ must be also contained in a proper algebraic subgroup of $\AA_{\pi(\s(P_0))}$. Thus, $\s(P_0) \in \c'$.

To simplify notation, we set $d_0:=\pq{k(P_0):\Q}=[k(\s(P_0)):\Q]$. Then, Corollary \ref{cor:small_endom} and Lemma \ref{lemma_2g-ht_anal_rep} imply the existence of a nonzero endomorphism $f_{\s(P_0)} \in \en\pt{\AA_{\pi(\s(P_0))}}$ such that
$$f_{\s(P_0)}\pt{\s(P_0)}=O_{\pi(\s(P_0))} \quad \text{ and } \quad H_{2g}\pt{\r_a(f_{\s(P_0)})} \leq \g_{31} \cdot d_0^{\g_{32}}.$$ 
Moreover, combining Lemmas~\ref{lemma:bounds_H_hpoly} and \ref{lemma:ht_tau_Ztau} yields
$$H_{2g}(\tau_{\s(P_0)})\leq 2^{2g} \cdot H_{\max}(\tau_{\s(P_0)})^{2g} \leq \g_{45} \cdot d_0^{\g_{46}}.$$
In addition, Lemma~\ref{lemma:bound_det_Im_tauP0} gives the lower bound 
$$\det(\Im(\tau_{\s(P_0)}))\geq \dfrac{\g_{43}}{d_0^{\g_{44}}}.$$
Hence, as $\s$ varies in $\Gal(\overline{k}/k)$, the elements of $u^{-1}(\s(P_0)) \cap \mathcal{F}_g$ are all contained in the set $\mathcal{Z}(\g d_0^{\eta})$, where $\mathcal{Z}(T)$ is the set defined at the start of Section \ref{sect:main_estim_CM}, with $\g=\max\!\pg{\g_{31},\g_{45}, \frac{1}{\g_{43}}}$ and $\eta=\max\!\pg{\g_{32}, \g_{46}, \g_{44}}$.

However, this implies that there are at least $[k(P_0):k]=d_0/[k:\Q]$ distinct points contained in $\mathcal{Z}(\g d_0^{\eta})$. Applying Proposition \ref{prop:main_estim_CM} with $\varepsilon=\tfrac{1}{2\eta}$, we deduce that $d_0$ is uniformly bounded for all $P_0 \in \c'$. 

Hence, by Lemma \ref{lemma_Falt_height}, the Faltings height $h_F(\AA_{\pi(P_0)})$ is bounded above by a constant independent of $P_0 \in \c'$. In view of (\ref{eqn:bound_hSbar_FalH}), it follows that the height $h_{\overline{S}, \mathcal{M}\vert_{\overline{S}}}$ is bounded on $\pi(\c') \subseteq S(\Qal)$. Consequently, $\pi(\c') \subseteq \overline{S}(\Qal)$ is a set of bounded height and bounded degree, as $[k(\pi(P_0)):\Q] \leq d_0$. Since $\mathcal{M}\vert_{\overline{S}}$ is ample, the Northcott property of the Weil height \cite[Theorem 2.4.9]{BG06} ensures that $\pi(\c')$ is finite. 

Therefore, $\c'$ is contained in the intersection of $\c$ with the union of finitely many fibers of $\AA \to S$. As $\c$ is irreducible and not contained in any fiber, we conclude that $\c'$ itself is finite.

\section*{Acknowledgements}
We are grateful to Fabrizio Barroero for his invaluable guidance, insightful suggestions, and for proposing this problem.

We thank Eric Gaudron for explaining the proof of Lemma \ref{lemma:small_Ros_nm}, and Fabien Pazuki for pointing out a result of \cite{Paz12} that allows one to make certain constants explicit.

We also thank Gabriel Dill, Davide Lombardo, and Francesco Veneziano for their careful corrections and suggestions, which significantly improved the exposition.

Finally, we are indebted to Laura Capuano, Amos Turchet, Valerio Talamanca, Francesco Tropeano, Guido Lido, Roberto Vacca, Nelson Alvarado, and Nicola Ottolini for many stimulating discussions and helpful comments, especially regarding the material in Section \ref{sec_can_bounds}.

The author was supported by the PRIN 2022 project \emph{2022HPSNCR: Semiabelian varieties, Galois representations and related Diophantine problems}, the \emph{National Group for Algebraic and Geometric Structures, and their Applications} (GNSAGA INdAM) and by the French National Research Agency (ANR) under the \emph{ANR-23-CE40-0006 GAEC project}. This work has been produced as part of the author's Ph.D. thesis.

\bibliographystyle{alpha}
\bibliography{CM_relations+canonical_height_bounds_biblio}

@Misc{Mil20,
  author = {Milne, J. S.},
  title  = {Complex {Multiplication}},
  note   = {Available at \url{https://www.jmilne.org/math/CourseNotes/CM.pdf}},
  url    = {https://www.jmilne.org/math/CourseNotes/CM.pdf},
  groups = {Number Theory},
  year   = {2020},
}

@Article{PT13,
  author   = {Pila, J. and Tsimerman, J.},
  title    = {The {A}ndr\'{e}-{O}ort conjecture for the moduli space of abelian surfaces},
  doi      = {https://doi.org/10.1112/S0010437X12000589},
  number   = {2},
  pages    = {204--216},
  volume   = {149},
  fjournal = {Compositio Mathematica},
  groups   = {Unlikely Intersections},
  journal  = {Compos. Math.},
  year     = {2013},
}

@Article{Tsi18,
  author    = {Tsimerman, J.},
  title     = {The {A}ndr{é}-{O}ort conjecture for $\mathcal{A}_g$},
  doi       = {10.4007/annals.2018.187.2.2},
  issn      = {0003-486X},
  number    = {2},
  pages     = {379--390},
  url       = {https://www.semanticscholar.org/paper/8cbe316c20a60db81d9b2a378fc5d56c194de422},
  volume    = {187},
  abstract  = {We give a proof of the André-Oort conjecture for Ag-the moduli space of principally polarized abelian varieties. In particular, we show that a recently proven "averaged" version of the Colmez conjecture yields lower bounds for Galois orbits of CM points. The André-Oort conjecture then follows from previous work of Pila and the author.},
  file      = {:C\:/Users/lucaf/Scuola/Università/Articoli/Unlikely intersections/CM relations abelian varieties/Referenze/(Tsi18) The André-Oort conjecture for Ag.pdf:PDF},
  groups    = {Unlikely Intersections},
  journal   = {Ann. Math.},
  keywords  = {André-Oort,Complex Multiplication,Faltings Height,11G18},
  publisher = {Annals of Mathematics},
  year      = {2018},
}

@Article{BMZ99,
  author  = {Bombieri, E. and Masser, D. and Zannier, U.},
  title   = {Intersecting a curve with algebraic subgroups of multiplicative groups},
  number  = {20},
  pages   = {1119-1140},
  volume  = {1999},
  groups  = {Unlikely Intersections},
  journal = {Int. Math. Res. Notices},
  year    = {1999},
}

@Article{Dil21,
  author   = {Dill, G.},
  title    = {Unlikely intersections between isogeny orbits and curves},
  language = {en},
  number   = {7},
  pages    = {2405–2438},
  volume   = {23},
  groups   = {Unlikely Intersections},
  journal  = {J. Eur. Math. Soc.},
  year     = {2021},
}

@Article{HP16,
  author    = {P. Habegger and J. Pila},
  title     = {O-minimality and certain atypical intersections},
  doi       = {10.24033/asens.2296},
  number    = {4},
  pages     = {813--858},
  url       = {https://doi.org/10.24033%2Fasens.2296},
  volume    = {49},
  abstract  = {We show that the strategy of point counting in o-minimal structures can be applied to various problems on unlikely intersections that go beyond the conjectures of Manin-Mumford and Andr\'e-Oort. We verify the so-called Zilber-Pink Conjecture in a product of modular curves on assuming a lower bound for Galois orbits and a sufficiently strong modular Ax-Schanuel Conjecture. In the context of abelian varieties we obtain the Zilber-Pink Conjecture for curves unconditionally when everything is defined over a number field. For higher dimensional subvarieties of abelian varieties we obtain some weaker results and some conditional results.},
  groups    = {Unlikely Intersections},
  journal   = {Ann. Sci. {\'{E}}c. Norm. Sup{\'{e}}r.},
  publisher = {Societe Mathematique de France},
  year      = {2016},
}

@Article{Sil92,
  author  = {Silverberg, A.},
  title   = {Fields of definition for homomorphisms of abelian varieties},
  doi     = {10.1016/0022-4049(92)90141-2},
  issn    = {0022-4049},
  number  = {3},
  pages   = {253--262},
  volume  = {77},
  groups  = {Algebraic Geometry},
  journal = {J. Pure Appl. Algebra},
  year    = {1992},
}

@Book{GVL13,
  author    = {Golub, G. and Van Loan, C. F.},
  title     = {{Matrix Computations}},
  doi       = {10.56021/9781421407944},
  edition   = {4th},
  isbn      = {9781421408590},
  publisher = {Johns Hopkins University Press},
  year      = {2013},
}

@Book{GW23,
  author    = {Görtz, U. and Wedhorn, T.},
  title     = {{Algebraic Geometry II: Cohomology of Schemes}},
  doi       = {10.1007/978-3-658-43031-3},
  isbn      = {9783658430313},
  publisher = {Springer},
  groups    = {Algebraic Geometry},
  issn      = {2509-9329},
  year      = {2023},
}

@Article{Mas84,
  author   = {Masser, D.},
  title    = {Small values of the quadratic part of the {Néron}-{Tate} height on an abelian variety},
  issn     = {0010-437X},
  language = {eng},
  number   = {2},
  pages    = {153--170},
  url      = {https://eudml.org/doc/89682},
  urldate  = {2024-12-29},
  volume   = {53},
  groups   = {Diophantine Geometry},
  journal  = {Compos. Math.},
  year     = {1984},
}

@Book{FW12,
  author    = {Faltings, G. and W{\"u}stholz, G.},
  title     = {{Rational Points: Seminar Bonn/Wuppertal 1983/84}},
  isbn      = {9783322803405},
  publisher = {Vieweg+Teubner Verlag},
  series    = {Aspects of Mathematics},
  url       = {https://books.google.it/books?id=4nP0BwAAQBAJ},
  groups    = {Diophantine Geometry, Number Theory},
  year      = {2012},
}

@Article{Rem17,
  author  = {Rémond, G.},
  title   = {Variétés abéliennes et ordres maximaux},
  doi     = {10.4171/rmi/967},
  issn    = {2235-0616},
  number  = {4},
  pages   = {1173--1195},
  volume  = {33},
  groups  = {Diophantine Geometry},
  journal = {Rev. Mat. Iberoamericana},
  year    = {2017},
}

@Article{Bar19,
  author    = {F. Barroero},
  title     = {{CM} relations in fibered powers of elliptic families},
  doi       = {10.1017/s1474748017000287},
  number    = {5},
  pages     = {941--956},
  url       = {https://doi.org/10.1017%2Fs1474748017000287},
  volume    = {18},
  groups    = {Unlikely Intersections},
  journal   = {J. Inst. Math. Jussieu},
  publisher = {Cambridge University Press ({CUP})},
  year      = {2019},
}

@Book{BG06,
  author     = {Bombieri, E. and Gubler, W.},
  title      = {Heights in {D}iophantine {G}eometry},
  doi        = {10.1017/CBO9780511542879},
  publisher  = {Cambridge University Press},
  series     = {New Mathematical Monographs},
  collection = {New Mathematical Monographs},
  groups     = {Diophantine Geometry},
  place      = {Cambridge},
  year       = {2006},
}

@Book{Lan83b,
  author    = {Lang, S.},
  title     = {Complex {Multiplication}},
  doi       = {https://doi.org/10.1007/978-1-4612-5485-0},
  isbn      = {9783540907862},
  language  = {en},
  publisher = {Springer-Verlag},
  series    = {Grundlehren der mathematischen Wissenschaften},
  volume    = {255},
  abstract  = {The small book by Shimura-Taniyama on the subject of complex multi is a classic. It gives the results obtained by them (and some by Weil) plication in the higher dimensional case, generalizing in a non-trivial way the method of Deuring for elliptic curves, by reduction mod p. Partly through the work of Shimura himself (cf. [Sh 1] [Sh 2], and [Sh 5]), and some others (Serre, Tate, Kubota, Ribet, Deligne etc.) it is possible today to make a more snappy and extensive presentation of the fundamental results than was possible in 1961. Several persons have found my lecture notes on this subject useful to them, and so I have decided to publish this short book to make them more widely available. Readers acquainted with the standard theory of abelian varieties, and who wish to get rapidly an idea of the fundamental facts of complex multi plication, are advised to look first at the two main theorems, Chapter 3, §6 and Chapter 4, §1, as well as the rest of Chapter 4. The applications of Chapter 6 could also be profitably read early. I am much indebted to N. Schappacher for a careful reading of the manu script resulting in a number of useful suggestions. S. LANG Contents CHAPTER 1 Analytic Complex Multiplication 4 I. Positive Definite Involutions . . . 6 2. CM Types and Subfields. . . . . 8 3. Application to Abelian Manifolds. 4. Construction of Abelian Manifolds with CM 14 21 5. Reflex of a CM Type . . . . .},
  groups    = {Number Theory},
  year      = {1983},
}

@Article{Sil11,
  author  = {Silverman, J. H.},
  title   = {Height {Estimates} for {Equidimensional} {Dominant} {Rational} {Maps}},
  number  = {2},
  pages   = {145--163},
  volume  = {26},
  groups  = {Diophantine Geometry},
  journal = {J. Ramanujan Math. Soc.},
  year    = {2011},
}

@Book{Laz04,
  author    = {Lazarsfeld, R.},
  title     = {Positivity in {A}lgebraic {G}eometry {I}},
  publisher = {Springer Berlin Heidelberg},
  groups    = {Algebraic Geometry},
  year      = {2004},
}

@Article{Zil02,
  author    = {Zilber, B.},
  title     = {Exponential sums equations and the {Schanuel} conjecture},
  doi       = {10.1112/s0024610701002861},
  issn      = {1469-7750},
  number    = {01},
  pages     = {27--44},
  volume    = {65},
  groups    = {Unlikely Intersections},
  journal   = {J. London Math. Soc.},
  publisher = {Wiley},
  year      = {2002},
}

@Book{BLR90,
  author    = {Bosch, S. and Lütkebohmert, W. and Raynaud, M.},
  title     = {{Néron Models}},
  doi       = {10.1007/978-3-642-51438-8},
  isbn      = {9783642514388},
  publisher = {Springer Berlin Heidelberg},
  groups    = {Algebraic Geometry},
  year      = {1990},
}

@Article{BD25,
  author  = {F. Barroero and G. Dill},
  journal = {Amer. J. Math.},
  title   = {{Distinguished categories and the Zilber-Pink Conjecture}},
  year    = {2025},
  issn    = {1080-6377},
  number  = {3},
  pages   = {715--778},
  volume  = {147},
  doi     = {https://doi.org/10.1353/ajm.2025.a961345},
  groups  = {Unlikely Intersections},
}

@Unpublished{Ott25,
  author        = {Ottolini, N.},
  title         = {Singular intersections in families of abelian varieties},
  note          = {Preprint},
  abstract      = {Let $S$ be a smooth irreducible curve defined over $\overline{\mathbb{Q}}$, let $\mathcal{A}$ be an abelian scheme over $S$ and $\mathcal{C}$ a curve inside $\mathcal{A}$, both defined over $\overline{\mathbb{Q}}$. In this paper we prove that the set of points in which $\mathcal{C}$ intersects proper flat subgroup schemes of $\mathcal{A}$ tangentially is finite. The crucial case of elliptic curves already follows from a result by Corvaja, Demeio, Masser and Zannier: in this case we provide an alternative proof using the Pila-Zannier method. Such a proof may lead to an effective result using an effective point-counting theorem. This fits in the framework of the so-called problems of unlikely intersections, and can be seen as a variation of the relative Pink conjecture for abelian varieties.},
  archiveprefix = {arXiv},
  doi           = {https://doi.org/10.48550/arXiv.2506.15344},
  eprint        = {2506.15344},
  groups        = {Unlikely Intersections},
  primaryclass  = {math.NT},
  year          = {2025},
}

@Book{Mum08,
  author    = {Mumford, D.},
  title     = {Abelian varieties},
  isbn      = {978-81-85931-86-9; 81-85931-86-0},
  note      = {With appendices by C. P. Ramanujam and Y. Manin, Corrected reprint of the second (1974) edition},
  pages     = {xii+263},
  publisher = {Tata Institute of Fundamental Research, Bombay},
  series    = {Tata Institute of Fundamental Research Studies in Mathematics},
  volume    = {5},
  groups    = {Algebraic Geometry},
  mrclass   = {14Kxx},
  mrnumber  = {2514037},
  year      = {2008},
}

@Book{Ser97,
  author    = {Serre, J.-P.},
  title     = {Lectures on the {M}ordell-{W}eil theorem},
  doi       = {10.1007/978-3-663-10632-6},
  edition   = {Third},
  editor    = {Brown, Martin and Waldschmidt, Michel},
  isbn      = {3-528-28968-6},
  note      = {With a foreword by Brown and Serre},
  pages     = {x+218},
  publisher = {Friedr. Vieweg \& Sohn, Braunschweig},
  series    = {Aspects of Mathematics},
  url       = {https://doi.org/10.1007/978-3-663-10632-6},
  groups    = {Diophantine Geometry, Number Theory},
  mrclass   = {11G10 (11D41 11G30 14G25)},
  mrnumber  = {1757192},
  year      = {1997},
}

@Article{BC20,
  author    = {Barroero, F. and Capuano, L.},
  title     = {Unlikely intersections in families of abelian varieties and the polynomial {P}ell equation},
  doi       = {10.1112/plms.12289},
  number    = {2},
  pages     = {192--219},
  url       = {https://doi.org/10.1112%2Fplms.12289},
  volume    = {120},
  cites     = {Via08},
  file      = {:BC20 - Unlikely Intersections in Families of Abelian Varieties and the Polynomial Pell Equation.pdf:PDF:http\://arxiv.org/pdf/1801.02885v2},
  groups    = {Unlikely Intersections},
  journal   = {Proc. London Math. Soc. (3)},
  publisher = {Wiley},
  year      = {2020},
}

@Unpublished{Pin05,
  author = {Pink, R.},
  title  = {A common generalization of the conjectures of {A}ndré-{O}ort, {M}anin-{M}umford and {M}ordell-{L}ang},
  note   = {Unpublished manuscript},
  url    = {https://people.math.ethz.ch/~pink/ftp/AOMMML.pdf},
  groups = {Unlikely Intersections},
  year   = {2005},
}

@Article{Fal83,
  author  = {Faltings, G.},
  title   = {{Endlichkeitssätze für abelsche Varietäten über Zahlkörpern}},
  doi     = {https://doi.org/10.1007/BF01388432},
  issn    = {1432-1297},
  number  = {3},
  pages   = {349--366},
  url     = {https://eudml.org/doc/143051},
  volume  = {73},
  groups  = {Diophantine Geometry},
  journal = {Invent. Math.},
  year    = {1983},
}

@Article{Gao20,
  author  = {Gao, Z.},
  title   = {Generic rank of {B}etti map and unlikely intersections},
  doi     = {10.1112/s0010437x20007435},
  issn    = {1570-5846},
  number  = {12},
  pages   = {2469--2509},
  volume  = {156},
  groups  = {Unlikely Intersections},
  journal = {Compos. Math.},
  year    = {2020},
}

@Book{Igu72,
  author    = {Igusa, J.},
  title     = {Theta {Functions}},
  language  = {en},
  publisher = {Springer Berlin Heidelberg},
  series    = {Grundlehren der mathematischen Wissenschaften},
  volume    = {194},
  file      = {:C\:/Users/lucaf/Scuola/Università/Math Complete/Algebraic geometry/Theta Functions -- Jun-ichi Igusa .pdf:PDF},
  groups    = {Algebraic Geometry},
  year      = {1972},
}

@Article{MZ10,
  author    = {Masser, D. and Zannier, U.},
  title     = {Torsion anomalous points and families of elliptic curves},
  doi       = {10.1353/ajm.2010.a404145},
  issn      = {1080-6377},
  number    = {6},
  pages     = {1677--1691},
  volume    = {132},
  groups    = {Unlikely Intersections},
  journal   = {Am. J. Math.},
  publisher = {Johns Hopkins University Press},
  year      = {2010},
}

@Article{MZ12,
  author   = {Masser, D. and Zannier, U.},
  title    = {Torsion points on families of squares of elliptic curves},
  language = {en},
  number   = {2},
  pages    = {453–484},
  volume   = {352},
  groups   = {Unlikely Intersections},
  journal  = {Math. Ann.},
  year     = {2012},
}

@Article{Cap23,
  author   = {Capuano, L.},
  title    = {An overview on problems of {Unlikely} {Intersections} in families of abelian varieties},
  doi      = {10.1016/j.exmath.2023.04.003},
  issn     = {0723-0869},
  number   = {3},
  pages    = {603--617},
  url      = {https://www.sciencedirect.com/science/article/pii/S0723086923000397},
  volume   = {41},
  abstract = {This short survey is part of a minicourse I gave during the CMI-HIMR Summer School “Unlikely Intersections in Diophantine Geometry” on the Zilber–Pink conjecture, formulated independently by Zilber (2002), Bombieri, Masser and Zannier (1999) in the case of tori and by Pink (2005) in the more general setting of mixed Shimura varieties. This conjecture, which includes in its general formulation many important results in number theory, has been intensively studied by several mathematicians in the past 20 years. We will mainly focus on these problems in the special setting of semiabelian varieties and families of abelian varieties.},
  file     = {:Unlikely intersections/Capuano - Overview of problems of unlikely intersections.pdf:PDF},
  groups   = {Unlikely Intersections},
  journal  = {Expo. Math.},
  year     = {2023},
}

@Article{KS16,
  author    = {Kawaguchi, S. and Silverman, J. H.},
  title     = {Dynamical canonical heights for {J}ordan blocks, arithmetic degrees of orbits, and nef canonical heights on abelian varieties},
  number    = {7},
  pages     = {5009--5035},
  volume    = {368},
  groups    = {Diophantine Geometry},
  journal   = {Trans. Amer. Math. Soc.},
  publisher = {American Mathematical Society},
  year      = {2016},
}

@Book{AM05,
  author    = {Abadir, K. M. and Magnus, J. R.},
  title     = {{Matrix Algebra}},
  doi       = {10.1017/cbo9780511810800},
  isbn      = {9780511810800},
  publisher = {Cambridge University Press},
  year      = {2005},
}

@Misc{Kem,
  author       = {Kempf, G.},
  title        = {Appendix to {``V}arieties defined by quadratic equations{"} by {D. Mumford}.},
  howpublished = {In: {Q}uestions on algebraic varieties. {CIME}, 29-100},
  groups       = {Algebraic Geometry},
}

@Book{MFK94,
  author    = {Mumford, D. and Fogarty, J. and Kirwan, F.},
  title     = {Geometric {Invariant} {Theory}},
  doi       = {10.1007/978-3-662-00095-3},
  edition   = {Third},
  isbn      = {978-3-540-56963-3},
  publisher = {Springer Berlin, Heidelberg},
  series    = {Ergebnisse der Mathematik und ihrer Grenzgebiete. 2. Folge},
  url       = {http://link.springer.com/10.1007/978-3-662-00095-3},
  urldate   = {2025-01-23},
  copyright = {http://www.springer.com/tdm},
  groups    = {Algebraic Geometry},
  year      = {1994},
}

@Article{MZ14,
  author   = {Masser, D. and Zannier, U.},
  title    = {Torsion points on families of products of elliptic curves},
  doi      = {10.1016/j.aim.2014.03.016},
  issn     = {0001-8708},
  pages    = {116--133},
  url      = {https://www.sciencedirect.com/science/article/pii/S0001870814001108},
  volume   = {259},
  abstract = {In a recent paper we proved a special case of a variant of Pink's Conjecture for a variety inside a semiabelian scheme: namely for any curve inside any scheme isogenous to a fibred product of two isogenous elliptic schemes. Here we go ahead with the programme of settling the conjecture for general abelian surface schemes by completing the proof for all non-simple surfaces. This involves some entirely new and crucial issues.},
  groups   = {Unlikely Intersections},
  journal  = {Adv. Math.},
  year     = {2014},
}

@Article{PZ08,
  author   = {Pila, J. and Zannier, U.},
  title    = {Rational points in periodic analytic sets and the {M}anin–{M}umford conjecture},
  doi      = {10.4171/RLM/514},
  issn     = {1120-6330},
  language = {en},
  pages    = {149–162},
  url      = {https://ems.press/doi/10.4171/rlm/514},
  groups   = {Unlikely Intersections},
  journal  = {Rend. Lincei - Mat. Appl.},
  year     = {2008},
}

@Article{BC17,
  author    = {Barroero, F. and Capuano, L.},
  title     = {Unlikely intersections in products of families of elliptic curves and the multiplicative group},
  doi       = {10.1093/qmath/hax014},
  number    = {4},
  pages     = {1117--1138},
  url       = {https://doi.org/10.1093%2Fqmath%2Fhax014},
  volume    = {68},
  groups    = {Unlikely Intersections},
  journal   = {Q. J. Math.},
  publisher = {Oxford University Press ({OUP})},
  year      = {2017},
}

@Article{BC16,
  author    = {Barroero, F. and Capuano, L.},
  title     = {Linear relations in families of powers of elliptic curves},
  doi       = {10.2140/ant.2016.10.195},
  number    = {1},
  pages     = {195--214},
  url       = {https://doi.org/10.2140%2Fant.2016.10.195},
  volume    = {10},
  groups    = {Unlikely Intersections},
  journal   = {Algebra Number Theory},
  publisher = {Mathematical Sciences Publishers},
  year      = {2016},
}

@Article{CDMZ21,
  author  = {Corvaja, P. and Demeio, J. and Masser, D. and Zannier, U.},
  title   = {On the torsion values for sections of an elliptic scheme},
  doi     = {10.1515/crelle-2021-0056},
  issn    = {1435-5345},
  number  = {782},
  pages   = {1--41},
  volume  = {2022},
  groups  = {Unlikely Intersections},
  journal = {J. Reine Angew. Math.},
  year    = {2021},
}

@Book{Har77,
  author    = {Hartshorne, R.},
  title     = {Algebraic {Geometry}},
  doi       = {10.1007/978-1-4757-3849-0},
  isbn      = {9781441928078 9781475738490},
  publisher = {Springer},
  series    = {Graduate {Texts} in {Mathematics}},
  url       = {http://link.springer.com/10.1007/978-1-4757-3849-0},
  volume    = {52},
  address   = {New York, NY},
  groups    = {Algebraic Geometry},
  year      = {1977},
}

@Article{PS13,
  author    = {Peterzil, Y. and Starchenko, S.},
  title     = {Definability of restricted theta functions and families of abelian varieties},
  doi       = {10.1215/00127094-2080018},
  number    = {4},
  pages     = {765},
  volume    = {162},
  file      = {:Unlikely intersections/CM relations abelian varieties/Referenze/(PS13) Definability of Restricted Theta Functions and Families of Abelian Varieties.pdf:PDF},
  groups    = {O-minimality},
  journal   = {Duke Math. J.},
  publisher = {Duke University Press},
  year      = {2013},
}

@Book{Zan12,
  author    = {Zannier, U.},
  title     = {{Some Problems of Unlikely Intersections in Arithmetic and Geometry}},
  doi       = {10.1515/9781400842711},
  note      = {With appendixes by D. Masser},
  pages     = {xiv+160},
  publisher = {Princeton University Press},
  series    = {Annals of Mathematics Studies},
  url       = {https://www.semanticscholar.org/paper/25f2de34f600e39695710900d50482ce0c432dba},
  volume    = {181},
  abstract  = {This book considers the so-called Unlikely Intersections, a topic that embraces well-known issues, such as Lang's and Manin-Mumford's, concerning torsion points in subvarieties of tori or abelian varieties. More generally, the book considers algebraic subgroups that meet a given subvariety in a set of unlikely dimension. The book is an expansion of the Hermann Weyl Lectures delivered by Umberto Zannier at the Institute for Advanced Study in Princeton in May 2010. The book consists of four chapters and seven brief appendixes, the last six by David Masser. The first chapter considers multiplicative algebraic groups, presenting proofs of several developments, ranging from the origins to recent results, and discussing many applications and relations with other contexts. The second chapter considers an analogue in arithmetic and several applications of this. The third chapter introduces a new method for approaching some of these questions, and presents a detailed application of this (by Masser and the author) to a relative case of the Manin-Mumford issue. The fourth chapter focuses on the Andre-Oort conjecture (outlining work by Pila).},
  groups    = {Unlikely Intersections},
  year      = {2012},
}

@Book{HS13,
  author    = {Hindry, M. and Silverman, J. H.},
  title     = {{Diophantine Geometry: An Introduction}},
  publisher = {Springer New York},
  series    = {Graduate Texts in Mathematics},
  volume    = {201},
  groups    = {Diophantine Geometry},
  year      = {2013},
}

@Article{Lee16,
  author    = {Lee, C. G.},
  title     = {Height estimates for dominant endomorphisms on projective varieties},
  number    = {1},
  pages     = {61--75},
  volume    = {32},
  groups    = {Diophantine Geometry},
  journal   = {East Asian mathematical journal},
  publisher = {The Youngnam Mathematical Society},
  year      = {2016},
}

@Book{BL04,
  author    = {Birkenhake, C. and Lange, H.},
  title     = {{C}omplex {A}belian {V}arieties},
  doi       = {https://doi.org/10.1007/978-3-662-06307-1},
  publisher = {Springer Berlin Heidelberg},
  series    = {Grundlehren der mathematischen Wissenschaften},
  volume    = {302},
  file      = {:C\:/Users/lucaf/Scuola/Università/Math Complete/Algebraic geometry/Birkenhake, Lange - Complex Abelian Varieties-Springer (2010).pdf:PDF},
  groups    = {Algebraic Geometry},
  year      = {2004},
}

@Misc{Mil08,
  author = {Milne, J. S.},
  title  = {{Abelian Varieties (v2.00)}},
  note   = {Available at \url{https://jmilne.org/math/CourseNotes/av.html}},
  groups = {Algebraic Geometry},
  pages  = {166+vi},
  year   = {2008},
}

@Book{vdDri98,
  author    = {L. van den Dries},
  title     = {Tame topology and o-minimal structures},
  publisher = {Cambridge university press},
  volume    = {248},
  groups    = {O-minimality},
  year      = {1998},
}

@Unpublished{Ge24,
  author        = {T. Ge},
  title         = {Intersecting subvarieties of abelian schemes with group subschemes {I}},
  note          = {Preprint},
  url           = {https://arxiv.org/abs/2411.16108},
  archiveprefix = {arXiv},
  eprint        = {2411.16108},
  groups        = {Unlikely Intersections},
  primaryclass  = {math.NT},
  year          = {2024},
}

@Article{MW94,
  author   = {Masser, D. and Wüstholz, G.},
  title    = {Endomorphism estimates for abelian varieties},
  language = {en},
  number   = {1},
  pages    = {641–653},
  volume   = {215},
  groups   = {Diophantine Geometry},
  journal  = {Math. Z.},
  year     = {1994},
}

@Article{Had93,
  author  = {Hadamard, J.},
  title   = {Resolution d'une question relative aux determinants},
  pages   = {240--246},
  volume  = {2},
  journal = {Bull. des sciences math.},
  year    = {1893},
}

@Article{Lan88,
  author   = {Lange, H.},
  title    = {Equations for {E}ndomorphisms of {A}belian {V}arieties.},
  doi      = {10.1007/bf01450080},
  number   = {4},
  pages    = {613-624},
  volume   = {280},
  fjournal = {Mathematische Annalen},
  groups   = {Algebraic Geometry},
  journal  = {Math. Ann.},
  year     = {1988},
}

@Book{Lan83,
  author    = {Lang, S.},
  title     = {Abelian {Varieties}},
  doi       = {https://doi.org/10.1007/978-1-4419-8534-7},
  isbn      = {9780470514801},
  language  = {en},
  publisher = {Springer New York, NY},
  abstract  = {This text treats exclusively abelian varieties, and does not pretend to be a treatise on algebraic groups. Roughly speaking, it can be said that the theory of the Albanese and Picard variety incorporates in purely algebraic terms the theory which in the classical case would be that of the first homology group. An acquaintance with the language of algebraic geometry should suffice for a general appreciation of the results stated and the methods of proofs.},
  groups    = {Algebraic Geometry},
  year      = {1983},
}

@Article{vdDM96,
  author    = {L. van den Dries and C. Miller},
  title     = {Geometric categories and o-minimal structures},
  number    = {2},
  pages     = {497--540},
  volume    = {84},
  groups    = {O-minimality},
  journal   = {Duke Math. J.},
  publisher = {Duke University Press},
  year      = {1996},
}

@Book{Pil22,
  author     = {Pila, J.},
  title      = {Point-{C}ounting and the {Z}ilber–{P}ink Conjecture},
  publisher  = {Cambridge University Press},
  series     = {Cambridge Tracts in Mathematics},
  collection = {Cambridge Tracts in Mathematics},
  groups     = {Unlikely Intersections},
  year       = {2022},
}

@Article{vdDM94,
  author    = {L. van den Dries and C. Miller},
  title     = {On the real exponential field with restricted analytic functions},
  pages     = {19--56},
  volume    = {85},
  groups    = {O-minimality},
  journal   = {Israel J. Math.},
  publisher = {Springer},
  year      = {1994},
}

@InCollection{Shi92,
  author    = {Shiga, H.},
  booktitle = {Journ\'ees arithm\'etiques de Gen\`eve - 9-13 septembre 1991},
  title     = {On the transcendency of the values of the modular function at algebraic points},
  doi       = {https://doi.org/10.24033/ast.174},
  editor    = {Coray D. and P\'etermann Y.},
  number    = {209},
  pages     = {293--305},
  publisher = {Soci\'et\'e math\'ematique de France},
  series    = {Ast\'erisque},
  groups    = {Number Theory},
  year      = {1992},
}

@Book{Shi98,
  author    = {Shimura, G.},
  title     = {Abelian {Varieties} with {Complex} {Multiplication} and {Modular} {Functions}},
  doi       = {10.1515/9781400883943},
  isbn      = {9780691016566},
  publisher = {Princeton University Press},
  abstract  = {Reciprocity laws of various kinds play a central role in number theory. In the easiest case, one obtains a transparent formulation by means of roots of unity, which are special values of exponential functions. A similar theory can be developed for special values of elliptic or elliptic modular functions, and is called complex multiplication of such functions. In 1900 Hilbert proposed the generalization of these as the twelfth of his famous problems. In this book, Goro Shimura provides the most comprehensive generalizations of this type by stating several reciprocity laws in terms of abelian varieties, theta functions, and modular functions of several variables, including Siegel modular functions.  This subject is closely connected with the zeta function of an abelian variety, which is also covered as a main theme in the book. The third topic explored by Shimura is the various algebraic relations among the periods of abelian integrals. The investigation of such algebraicity is relatively new, but has attracted the interest of increasingly many researchers. Many of the topics discussed in this book have not been covered before. In particular, this is the first book in which the topics of various algebraic relations among the periods of abelian integrals, as well as the special values of theta and Siegel modular functions, are treated extensively.},
  groups    = {Number Theory},
  year      = {1998},
}

@Article{UU21,
  author    = {Ulmer, D. and Urzúa, G.},
  title     = {Transversality of sections on elliptic surfaces with applications to elliptic divisibility sequences and geography of surfaces},
  doi       = {10.1007/s00029-021-00747-x},
  issn      = {1420-9020},
  number    = {2},
  volume    = {28},
  journal   = {Selecta Math. (N.S.)},
  publisher = {Springer Science and Business Media LLC},
  year      = {2021},
}

@Article{UU20,
  author    = {Ulmer, D. and Urzúa, G.},
  title     = {{Bounding Tangencies of Sections on Elliptic Surfaces}},
  doi       = {10.1093/imrn/rnaa222},
  issn      = {1687-0247},
  number    = {6},
  pages     = {4768--4802},
  volume    = {2021},
  journal   = {Int. Math. Res. Notices},
  publisher = {Oxford University Press (OUP)},
  year      = {2020},
}

@Article{DGH21,
  author  = {Dimitrov, V. and Gao, Z. and Habegger, P.},
  title   = {Uniformity in {Mordell}–{Lang} for curves},
  doi     = {10.4007/annals.2021.194.1.4},
  issn    = {0003-486X, 1939-8980},
  number  = {1},
  pages   = {237--298},
  url     = {https://projecteuclid.org/journals/annals-of-mathematics/volume-194/issue-1/Uniformity-in-MordellLang-for-curves/10.4007/annals.2021.194.1.4.full},
  urldate = {2025-09-08},
  volume  = {194},
  file    = {Full Text PDF:https\://projecteuclid.org/journals/annals-of-mathematics/volume-194/issue-1/Uniformity-in-MordellLang-for-curves/10.4007/annals.2021.194.1.4.pdf:application/pdf;:Unlikely intersections/Dimitrov, Gao, Habegger - Uniformity in Mordell-Lang for curves.pdf:PDF},
  groups  = {Unlikely Intersections, Diophantine Geometry},
  journal = {Ann. of Math. (2)},
  year    = {2021},
}

@Article{Nau04,
  author    = {Naumann, N.},
  title     = {{Linear Relations Among the Values of Canonical Heights from the Existence of Non-Trivial Endomorphisms}},
  doi       = {10.4153/cmb-2004-027-5},
  issn      = {1496-4287},
  number    = {2},
  pages     = {271--279},
  volume    = {47},
  groups    = {Diophantine Geometry},
  journal   = {Can. Math. Bulletin},
  publisher = {Canadian Mathematical Society},
  year      = {2004},
}

@Article{Paz12,
  author  = {F. Pazuki},
  journal = {Bull. Soc. Math. France},
  title   = {{T}heta height and {F}altings height},
  year    = {2012},
  number  = {1},
  pages   = {19--49},
  volume  = {140},
  doi     = {10.24033/BSMF.2623},
  groups  = {Diophantine Geometry},
  url     = {https://www.semanticscholar.org/paper/ab8e9288c7e5f13910786c9a4d058ed7116e8b1e},
}

@Book{Zan14,
  author    = {Zannier, U.},
  publisher = {Edizioni della Normale Pisa},
  title     = {Lecture {{Notes}} on {{Diophantine Analysis}}},
  year      = {2014},
  edition   = {1st},
  isbn      = {978-88-7642-517-2},
  series    = {Publications of the {{Scuola Normale Superiore}}},
  doi       = {10.1007/978-88-7642-517-2},
}

@InCollection{Bos96,
  author    = {Bost, J.-B.},
  booktitle = {Séminaire Bourbaki : volume 1994/95, exposés 790-804},
  publisher = {Société mathématique de France},
  title     = {{Périodes et isogénies des variétés abéliennes sur les corps de nombres (d’après D. Masser et G. Wüstholz)}},
  year      = {1996},
  number    = {237},
  pages     = {115--161},
  series    = {Astérisque},
  doi       = {10.24033/ast.352},
  issn      = {2492-5926},
  mrnumber  = {1423622},
  url       = {https://www.numdam.org/item/SB_1994-1995__37__115_0/},
  zbl       = {0936.11042},
}

@InCollection{Gau20,
  author    = {Gaudron, E.},
  booktitle = {Arakelov Geometry and Diophantine Applications},
  publisher = {Springer International Publishing},
  title     = {{Minima and Slopes of Rigid Adelic Spaces}},
  year      = {2020},
  editor    = {E. Peyre and G. Rémond},
  isbn      = {9783030575595},
  pages     = {37--76},
  doi       = {10.1007/978-3-030-57559-5_3},
  issn      = {1617-9692},
}

@Article{GR14b,
  author  = {Gaudron, E. and R{\'e}mond, G.},
  journal = {Comment. Math. Helv.},
  title   = {Th\'eor\`eme Des P\'eriodes et Degr\'es Minimaux d'isog\'enies},
  year    = {2014},
  issn    = {1420-8946},
  number  = {2},
  pages   = {343--403},
  volume  = {89},
  doi     = {10.4171/cmh/322},
}

@Article{GR23,
  author  = {Gaudron, E. and Rémond, G.},
  journal = {Mém. Soc. Math. Fr. (N.S.)},
  title   = {Nouveaux théorèmes d’isogénie},
  year    = {2023},
  issn    = {2275-3230},
  volume  = {176},
  doi     = {10.24033/msmf.484},
}

@Article{GR25,
  author  = {Gaudron, E. and Rémond, G.},
  journal = {J. Inst. Math. Jussieu},
  title   = {Nombre de petits points sur une variété abélienne},
  year    = {2025},
  issn    = {1475-3030},
  number  = {3},
  pages   = {705--761},
  volume  = {24},
  doi     = {10.1017/s1474748024000549},
}

@Article{GR14,
  author  = {Gaudron, E. and Rémond, G.},
  journal = {Duke Math. J.},
  title   = {Polarisations et isogénies},
  year    = {2014},
  issn    = {0012-7094},
  number  = {11},
  pages   = {2057--2108},
  volume  = {163},
  doi     = {10.1215/00127094-2782528},
  groups  = {Diophantine Geometry},
}

@PhdThesis{Fer25,
  author = {Ferrigno, L.},
  school = {Roma Tre University},
  title  = {Some problems of unlikely intersections infamilies of abelian varieties},
  year   = {2025},
}

@Article{Fer26,
  author   = {Ferrigno, L.},
  journal  = {Forum Math.},
  title    = {Isogeny relations in products of families of elliptic curves},
  year     = {2026},
  issn     = {1435-5337},
  number   = {3},
  pages    = {939--953},
  volume   = {38},
  abstract = {Let $E_{\lambda}$ be the Legendre family of elliptic curves with equation $Y^2=X(X-1)(X-\lambda)$. Given a curve $\mathcal{C}$, satisfying a condition on the degrees of some of its coordinates and parametrizing $m$ points $P_1, \ldots, P_m \in E_{\lambda}$ and $n$ points $Q_1, \ldots, Q_n \in E_{\mu}$ and assuming that those points are generically linearly independent over the generic endomorphism ring, we prove that there are at most finitely many points $\mathbf{c}_0$ on $\mathcal{C}$, such that there exists an isogeny $\phi: E_{\mu(\mathbf{c}_0)} \rightarrow E_{\lambda(\mathbf{c}_0)}$ and the $m+n$ points $P_1(\mathbf{c}_0), \ldots, P_m(\mathbf{c}_0), \phi(Q_1(\mathbf{c}_0)), \ldots, \phi(Q_n(\mathbf{c}_0)) \in E_{\lambda(\mathbf{c}_0)}$ are linearly dependent over $\mathrm{End}(E_{\lambda(\mathbf{c}_0)})$.},
  doi      = {10.1515/forum-2025-0128},
}

\end{document}